\newcommand{\ARXIV}{} 
\newtheorem{theorem}{Theorem}[section]
\newtheorem*{theorem*}{Theorem}
\newtheorem{corollary}[theorem]{Corollary}
\newtheorem{prop}[theorem]{Proposition}
\crefname{prop}{Proposition}{Propositions}
\newtheorem{lemma}[theorem]{Lemma}
\newtheorem{remark}[theorem]{Remark}
\newtheorem{remark*}{Remark}
\newtheorem{fact}[theorem]{Fact}
\newtheorem{problem}[theorem]{Problem}
\newtheorem*{problem*}{Problem}
\theoremstyle{definition}
\newtheorem{definition}[theorem]{Definition}
\newtheorem{example}[theorem]{Example}
\crefname{Algorithm}{Algorithm}{Algorithms}
\numberwithin{equation}{section}
\DeclareMathOperator{\poly}{poly}
\DeclareMathOperator{\op}{op}
\DeclareMathOperator{\vol}{vol}
\DeclareMathOperator{\cut}{cut}
\DeclareMathOperator{\ch}{ch}
\DeclareMathOperator{\Mat}{Mat}
\DeclareMathOperator{\GL}{GL}
\DeclareMathOperator{\tr}{Tr}
\DeclareMathOperator{\PD}{PD}
\DeclareMathOperator{\SSPD}{SPD}
\DeclareMathOperator{\vect}{vec}
\DeclareMathOperator*{\argmin}{arg\,min}
\DeclarePairedDelimiter{\abs}{\lvert}{\rvert}
\DeclarePairedDelimiter{\norm}{\lVert}{\rVert}
\newcommand{\R}{{\mathbb{R}}}
\renewcommand{\P}{{\mathbb{P}}}
\newcommand{\C}{{\mathbb{C}}}
\renewcommand{\H}{{\mathbb{H}}}
\newcommand{\G}{{\mathbb{G}}}
\newcommand{\N}{{\mathbb{N}}}
\newcommand{\otheta}{\overline{\Theta}}
\newcommand{\htheta}{\widehat{\Theta}}
\newcommand{\wttheta}{\widetilde{\Theta}}
\newcommand{\ot}{\otimes}
\newcommand{\E}{\mathbb{E}}
\newcommand{\eps}{\varepsilon}
\newcommand{\cN}{\mathcal{N}}
\newcommand{\smallSym}{S}
\newcommand{\SPD}{\mathbb{P}}
\newcommand{\samp}{x}
\newcommand{\rv}{x}
\newcommand{\ef}{f}
\newcommand{\dFR}{d_{\operatorname{FR}}}
\newcommand{\DF}{D_{\operatorname{F}}}
\newcommand{\Dop}{D_{\operatorname{op}}}
\newcommand{\DKL}{D_{\operatorname{KL}}}
\newcommand{\DTV}{D_{\operatorname{TV}}}
\newcommand{\dop}{d_{\operatorname{op}}}
\def\dmin{d_{\min}}
\def\dmax{d_{\max}}
\newcommand{\CREFsupp}[2]{\cref{#1}}
\newcommand{\CREFsupp}[2]{\cite[#2]{FORW25supp}}
\newcommand{\DRAFT}{} 
\begin{document}

\begin{frontmatter}
\title{Near optimal sample complexity for matrix and tensor normal models via geodesic convexity}
\runtitle{Near optimal sample complexity for matrix and tensor normal models}

\begin{aug}
\author[A]{\fnms{Cole}~\snm{Franks}\ead[label=e1]{franks@mit.edu}},
\author[B]{\fnms{Rafael}~\snm{Oliveira}\ead[label=e2,mark]{rafael@uwaterloo.ca}},
\author[C]{\fnms{Akshay}~\snm{Ramachandran}\ead[label=e3,mark]{aramach@cs.ubc.ca}}
\and
\author[D]{\fnms{Michael}~\snm{Walter}\ead[label=e4]{michael.walter@lmu.de}}
\address[A]{Department of Mathematics, Massachusetts Institute of Technology, \printead{e1}}
\address[B]{Cheriton School of Computer Science, University of Waterloo, \printead{e2}}
\address[C]{Computer Science Department, University of British Columbia, \printead{e3}}
\address[D]{Ludwig-Maximilians-Universit\"at M\"unchen, \printead{e4}}
\runauthor{C.\ Franks, R.\ Oliveira, A.\ Ramachandran, M.\ Walter}
\end{aug}

\begin{abstract}
The matrix normal model, i.e., the family of Gaussian matrix-variate distributions whose covariance matrices are the Kronecker product of two lower dimensional factors, is frequently used to model matrix-variate data.
The tensor normal model generalizes this family to Kronecker products of three or more factors.
We study the estimation of the Kronecker factors of the covariance matrix in the matrix and tensor normal models.

For the above models, we show that the maximum likelihood estimator (MLE) achieves \emph{nearly optimal nonasymptotic sample complexity} and \emph{nearly tight error rates} in the Fisher-Rao and Thompson metrics.
In contrast to prior work, our results do not rely on the factors being well-conditioned or sparse, nor do we need to assume an accurate enough initial guess.
For the matrix normal model, all our bounds are minimax optimal up to logarithmic factors, and for the tensor normal model our bounds for the largest factor and for overall covariance matrix are minimax optimal up to constant factors provided there are enough samples for any estimator to obtain constant Frobenius error.
In the same regimes as our sample complexity bounds, we show that the flip-flop algorithm, a practical and widely used iterative procedure to compute the MLE, converges linearly with high probability.

Our main technical insight is that, given enough samples, the negative log-likelihood function is \emph{strongly geodesically convex} in the geometry on positive-definite matrices induced by the Fisher information metric.
This strong convexity is determined by the expansion of certain random quantum channels.
\end{abstract}

\begin{keyword}[class=MSC2020]
\kwd[Primary ]{62F12}
\kwd[; secondary ]{62F30}
\end{keyword}

\begin{keyword}
\kwd{Covariance estimation}
\kwd{matrix normal model}
\kwd{tensor normal model}
\kwd{maximum likelihood estimation}
\kwd{geodesic convexity}
\kwd{operator scaling}
\kwd{quantum expansion}
\end{keyword}

\end{frontmatter}
\ifdefined\ARXIV\tableofcontents\fi

\section{Introduction}\label{section: intro}

Covariance matrix estimation is an important task in statistics, machine learning, and the empirical sciences.
We consider covariance estimation for centered matrix-variate and tensor-variate Gaussian data, that is, when individual data points are matrices or tensors.
Matrix and tensor-variate data arise naturally in numerous applications, such as gene microarrays, clinical trials, spatio-temporal data, signal processing and brain imaging (see \cite{mardia1993spatial,brown2001bayesian, mitchell2004learning, werner2008estimation} and references therein).
A significant challenge in this setting is that the dimensionality of these problems is much higher than the number of samples, making estimation information-theoretically impossible without structural assumptions.

To remedy this issue, matrix-variate data is commonly assumed to follow the \emph{matrix normal distribution} \citep{mardia1993spatial,dutilleul1999mle,werner2008estimation}.
Here the matrix follows a multivariate Gaussian distribution and the covariance between any two entries in the matrix is a product of an inter-row factor and an inter-column factor.
In spatio-temporal statistics this is referred to as a separable covariance structure \cite{mardia1993spatial}.
Formally, if a matrix normal random variable~$X$ takes values in the space of~$d_1\times d_2$ matrices, then its covariance matrix $\Sigma$ is a $d_1d_2\times d_1 d_2$ matrix that is the Kronecker product~$\Sigma_1 \ot \Sigma_2$ of two positive-semidefinite matrices~$\Sigma_1$ and~$\Sigma_2$ of dimensions~$d_1\times d_1$ and~$d_2\times d_2$, respectively.
This naturally extends to the \emph{tensor normal model}, where $X$ is a $k$-dimensional array, with covariance matrix equal to the Kronecker product of $k$ many positive semidefinite matrices~$\Sigma_1, \dots, \Sigma_k$.
Hence, a centered tensor normal distribution is denoted by $\cN(0, \Sigma_1 \otimes \cdots \otimes \Sigma_k)$.
In this work, we study the estimation of the covariance factors $\Sigma_1, \dots, \Sigma_k$ or (equivalently) the precision factors $\Theta_{1} := \Sigma_{1}^{-1}, \dots, \Theta_{k} := \Sigma_{k}^{-1}$ from $n$ samples of $\cN(0, \Sigma_1 \otimes \cdots \otimes \Sigma_k)$.
We emphasize that the goal is to estimate \emph{each of the factors}, rather than estimating the overall product $\Theta = \Theta_{1} \otimes ... \otimes \Theta_{k}$ or $\Sigma := \Sigma_{1} \otimes ... \otimes \Sigma_{k}$
by an arbitrary precision or covariance matrix (that may not be of tensor product form).

This problem falls into the field of estimation theory: for a family $\mathcal{P} := \{p_{\Theta}\}_{\Theta \in \P}$ of distributions with parameter space $\P$, given samples from an unknown distribution $X_{1}, ..., X_{n} \sim p_{\Theta}$, compute an estimate $\hat{\Theta} \approx \Theta$ of the true parameter value.
The quality of our estimate depends on some \emph{error measure}, chosen based on the downstream application of the estimation problem.
Our parameter space $\P$ is the set of Kronecker products of $k$ precision matrices, each of dimension $d_i$, which will be taken from the space of positive definite matrices (denoted~$\PD(d_i)$).

\phantomsection\label{phantom:distance}
The error measures in our work will be given by the \emph{Fisher-Rao} and \emph{Thompson} metrics.
These are the relevant error metrics for statistical applications, as they are intimately tied to error measures for the corresponding distributions, such as total variation and relative entropy.
Further theoretical justification is given by Chentsov's Theorem~\cite[Theorem 3]{cencov1978algebraic}, which states that
for smooth parameter manifolds,
the Fisher information metric\footnote{The Fisher-Rao distance is the distance function arising from the Fisher information metric.} is the unique Riemannian metric that preserves all relevant information with respect to parameter estimation.
We refer the reader to \CREFsupp{app:rel-error}{Section~A} in the supplement for further details on these metrics, as well as their connection to other natural metrics used for the matrix and tensor normal models.

\begin{definition}[Fisher-Rao and Thompson distances]\label{definition: fisher-rao and thompson}
The Fisher-Rao distance for centered Gaussians parameterized by their precision matrices is given by
\begin{align}\label{eq:fisher rao}
  \dFR(\htheta, \Theta) = \frac{1}{\sqrt{2}} \norm{\log (\Theta^{-1/2} \htheta \Theta^{-1/2})}_F.
\end{align}
The Thompson distance is given by
\begin{align}\label{eq:def d_op}
  \dop(\htheta, \Theta) := \norm{\log (\Theta^{-1/2} \htheta \Theta^{-1/2})}_{\op}.
\end{align}
\end{definition}

We have the following simple relation between the two metrics, that follows directly from the same relation between the operator and Frobenius norms.

\begin{fact} \label{f:dopvsdFR}
For $A, B$ positive definite matrices of dimension $d$, i.e. $A, B \in \PD(d)$, the Fisher-Rao and Thompson metrics are related by
    \[ \dop(A,B) \leq \sqrt{2} \cdot \dFR(A,B) \leq \sqrt{d} \cdot \dop(A,B) .    \]
\end{fact}

Now that we are equipped with our error measures, we can formally ask the foundational questions on the parameter estimation problem for the tensor normal model.\footnote{Since the matrix normal model is a special case of the tensor normal model (when $k = 2$), we will refer to our model as the tensor normal model whenever we treat the general case.}
We begin with the sample complexity questions.

\begin{problem}[Sample complexity upper bound]\label{problem: sample complexity upper bound}
Let $\varepsilon > 0$ be an error parameter and $\delta \in (0,1)$ be a failure parameter.
Given sample access to an unknown tensor normal distribution $\cN(0, \Theta_1^{-1} \otimes \cdots \otimes \Theta_k^{-1})$, how many samples $n(\eps,\delta)$ are \textbf{\emph{sufficient}} for the existence of estimator $\htheta_a$ satisfying, with probability $1 - \delta$,
\[ \dFR(\htheta_a, \Theta_a) \leq \varepsilon, \text{ for all } a \in [k]? \]
\end{problem}

In practical settings often the number of samples $n$ is fixed, so many results in the literature give bounds on the error $\eps$ and failure probability $\delta$ for fixed value of $n$.
The first consideration for such a result is its \textbf{\emph{sample threshold}}: this is the number of samples $n_{0}$ that is required in order for the proposed estimator to give any non-trivial guarantees, i.e. better than an arbitrary guess in $\P$.
The second consideration is the \textbf{\emph{error rate}} achieved by the proposed estimator, that is, how fast the error decreases as the number of samples grows.

\Cref{problem: sample complexity upper bound} is only concerned with \emph{upper bounds} on the number of samples needed to obtain good enough estimates for the true precision factors.
It is natural to ask what is the \emph{optimal} upper bound on the number of samples, that is, the minimum number of samples required to estimate the precision factors.
This leads us to the following problem:

\begin{problem}[Sample complexity lower bound]\label{problem: sample complexity lower bound}
    Let $\varepsilon > 0$ be an error parameter and $\delta \in (0,1)$ be a failure parameter.
    How many samples from a distribution $\cN(0, \Theta_1^{-1} \otimes \cdots \otimes \Theta_k^{-1})$ are \textbf{\emph{necessary}} for existence of estimator $\htheta_a$ such that, with probability $1 - \delta$
    \[ \dFR(\htheta_a, \Theta_a) \leq \varepsilon, \text{ for all } a \in [k]? \]
\end{problem}

\begin{remark} \label{rem:Minimax}
The above notion of sample complexity lower bound can be used to derive a minimax lower bound as follows: if $n \geq n(\eps,\delta)$ samples are required to achieve $\dFR(\hat{\Theta}_{a}, \Theta_{a}) \leq \eps$ error with probability at least $1-\delta$, then given $n < n(\eps,\delta)$ samples,
\[ \inf_{\hat{\Theta}} \sup_{\Theta \in \P} \E \left[ \max_{i \in [k]} \dFR(\htheta_a, \Theta_a) \right] \geq \delta \cdot \eps ,   \]
where $\inf$ is over all possible estimators~$\hat{\Theta}$, the $\sup$ is over the parameter space $\P$, and the expectation is over the distribution corresponding to parameter $\Theta$.
\end{remark}

A complete solution to the sample complexity problem requires one to prove tight \emph{upper} and \emph{lower} bounds on the number of samples to estimate the factors of the covariance matrix for a given error and probability guarantee.

The above questions are concerned with the mathematical existence of an estimator with a prescribed number of samples which accurately estimates the true precision factors.
However, a more relevant question for practical purposes is whether the estimator proposed for \cref{problem: sample complexity upper bound} can be computed efficiently.
More succinctly, one can ask whether there is a gap between statistical estimation versus computational estimation.
This is captured by the following computational variant of \cref{problem: sample complexity upper bound}

\begin{problem}\label{problem: computational estimation}
Let $\varepsilon > 0$ be an error parameter and $\delta \in (0,1)$ be a failure parameter.
Given sample access to an unknown tensor normal distribution $\cN(0, \Theta_1^{-1} \otimes \cdots \otimes \Theta_k^{-1})$, how many samples from the above distribution are \emph{sufficient} for there to exist estimators $\htheta_a$ that are \textbf{\emph{efficiently computable}} and satisfy, with probability $1 - \delta$
\[ \dFR(\htheta_a, \Theta_a) \leq \varepsilon, \text{ for all } a \in [k]? \]
Moreover, give an algorithm to compute this estimator which runs in polynomial time and achieves the above error bounds and success probability.
\end{problem}

\noindent This work fully addresses the three problems above for the matrix and tensor normal models.

Our solution to \cref{problem: sample complexity upper bound} comes from the analysis of the most natural candidate: the \emph{maximum likelihood estimator} (MLE).
Informally, we give the following sample complexity bounds for this estimator:


\begin{theorem*}[Sample complexity, tensor normal model]
    Let $\cN(0, \Theta_1^{-1} \otimes \cdots \otimes \Theta_k^{-1})$ be a tensor normal distribution with $k \geq 2$, where each $\Theta_i$ is a positive definite matrix of dimension~$d_i$, and let $D := \prod_{i=1}^k d_i$.
    Given a number of samples $n$ respecting the sample threshold $n \gtrsim \frac{k^{2} d_{\max}^{3}}{D}$, the MLE achieves minimax optimal error rate in Fisher-Rao distance
    \[ \dFR(\hat{\Theta}, \Theta) \lesssim \sqrt{ \frac{k d_{\max}^{2}}{n}} , \qquad  \dFR(\hat{\Theta}_{a}, \Theta_{a}) \lesssim \sqrt{ \frac{k d_{a} d_{\max}^{2}}{n D}}   \]
    with high probability.
    Further, for the matrix normal model (i.e., $k=2$), the sample threshold is improved to $n \gtrsim \frac{d_{\max}^{2} \log^{2} d_{\min}}{D}$, and the error can be bounded in the Thompson metric as
    \[ \dop(\hat{\Theta}_{a}, \Theta_{a}) \lesssim \sqrt{ \frac{d_{a}^{2} \log^{2} d_{\min}}{n D}}  .  \]
\end{theorem*}

Our estimation guarantees are \emph{distribution independent}, in particular the above bounds hold regardless of condition number or sparsity or other properties of the true precision matrix. This means that they apply to the most general model where the precision factors are allowed to be arbitrary positive definite matrices with no restrictions.

By \cref{f:dopvsdFR}, the $\dop$ bound for the matrix normal model recovers the $\dFR$ error rate for the tensor normal bound up to logarithmic factors; furthermore, it implies strong estimation guarantees in the operator norm, which are useful in spectral applications (see \cite{BL08}).

The above guarantees are \emph{tight} compared to classical lower bounds (see \cref{cor:relative-lower}), matching the sample complexity lower bounds even for the simpler $k=1$ setting.
The $\dFR$ error rate for the full precision matrix as well as the largest tensor factor are tight up to the factor $\sqrt{k}$.
And the sample threshold matches the lower bound for estimating the largest tensor factor up to a single $d_{\max}$ factor.
In the $k=2$ matrix normal model, the error rate is tight in the more refined $\dop$ metric, matching the classical lower bound for estimating a single tensor up to $\log$ factors.
The sample threshold matches the classical lower bound up to $\log$ factors.

We solve \cref{problem: sample complexity lower bound} by extending the lower bound for the unstructured Gaussian estimation problem to the matrix and tensor normal model.
While the above results are near-optimal for estimation of the largest tensor factor (via the classical lower bound), one could hope for better results for the smaller tensor factors,\footnote{In certain applications, such as brain fMRI, one is interested only in the smaller factor, whereas the larger factor is treated as a nuisance parameter.} as they intuitively receive more information from each tensor data.
Our next contribution is a stronger sample complexity lower bound which shows this is not the case.

\begin{theorem}[Lower bound for matrix normal models]\label{thm:matrix-lower-intro}
Let $\htheta_1$ be any estimator for~$\Theta_1$ given $n$ samples $X_{1}, ..., X_{n} \sim \cN(0, \Theta_1^{-1} \ot \Theta_2^{-1})$.
For $d_1 \leq d_2$, there exist $\Theta_{1} \in \PD(d_{1})$ and~$\Theta_{2} \in \PD(d_{2})$ such that
\[ \dFR(\htheta_{1}, \Theta_{1})  \gtrsim \sqrt{ \frac{d_{1}^{2}}{n \cdot \min\{ nd_{1}, d_{2}\}}} , \qquad
\dop(\htheta_{1}, \Theta_{1}) \gtrsim \sqrt{ \frac{d_{1}}{n \cdot \min\{ nd_{1}, d_{2}\}}}  \]
with constant probability.
\end{theorem}

When $nd_{1} \ll d_{2}$, our lower bound is significantly stronger than the classical lower bound for estimating~$\Theta_1$ \emph{assuming~$\Theta_2$ is known}, namely~$\sqrt{d_{1}^{2}/n d_{2}}$ for~$\dFR$ and $\sqrt{d_{1}/n d_{2}}$ for~$\dop$.
Our result generalizes naturally to the tensor normal model, as we discuss further in \cref{sec:lower}.
This implies that the matrix and tensor estimation problems are strictly harder than separate instances of the classical Gaussian estimation problem.
We are also able to show that a simple modification of the MLE obtains a matching upper bound for the matrix normal model.

Lastly, our solution to \cref{problem: computational estimation} comes from analyzing the \emph{flip-flop algorithm} to compute the MLE.
This is the \emph{first rigorous convergence analysis} of the flip-flop algorithm, which was proposed in the independent works~\cite{mardia1993spatial,dutilleul1999mle, brown2001bayesian} and is widely used in practice.

\begin{theorem*}[Computational estimation, informal]
    With high probability, the MLE can be computed efficiently. Namely, the Flip-Flop algorithm enjoys exponential convergence rate $\log(1/\delta)$ to achieve a $\delta$ approximation to the MLE.
\end{theorem*}

For a full comparison and relation between our results above and previous works, we refer the reader to \CREFsupp{appendix: previous works}{Section~B} in the supplement.

\paragraph*{Technical Contributions and Overview}
We now discuss the main conceptual ideas and principles behind our results.
In the matrix and tensor normal models (i.e. $k \geq 2$ case), the MLE is a solution to an explicit optimization problem over the space of tensor products of positive definite matrices, which we denote by $\P$.
When we endow the parameter space $\P$ with a natural Riemannian metric induced by the Fisher information, the negative log-likelihood becomes a \emph{geodesically convex} function of the parameter space (first observed in \cite{wiesel2012geodesic}).
In this work, we use \textit{geodesic convexity} of the negative log-likelihood function to show that the MLE for the tensor normal model indeed recovers all the benefits of the unstructured Gaussian setting ($k=1$).
Our strategy, as we outline in \cref{subsec:proof-sketch}, proceeds as follows: provided one is given enough samples, we prove that the negative log-likelihood function is \emph{strongly geodesically convex}, and the gradient at the true precision matrix is small.
With these two facts, we are able to conclude our bounds via a generalization of the usual argument that with a strongly convex function, any point with a small enough gradient (in our case the true precision matrix) is close to the optimizer (the MLE).

The global geodesic perspective is also key when analyzing algorithms to compute the MLE.
Inspired by recent research in computer science~\cite{GGOW19,burgisser2017alternating,burgisser2018efficient,burgisser2019towards}, we view the flip-flop algorithm as a natural geodesic extension of the block-coordinate geodesic gradient descent method, which is a standard convex optimization method.
Once we establish strong geodesic convexity of the negative log-likelihood function, we can show that the iterates of the Flip-Flop algorithm converge exponentially quickly to the MLE once the gradient of our current guess is sufficiently small.
Our proof generalizes to any descent method with reasonable guarantees.

This geodesic geometry perspective induces a natural error metric under which our analysis becomes linearly-invariant, and this allows us to prove sample complexity and error bounds that are independent of condition number.
Furthermore, by using global geodesic convexity of the negative log-likelihood function, we are able to decouple our analysis of the estimator from our algorithm to compute the MLE, and therefore we are able to remove the initial guess assumption from our error bounds.
The bounds we achieve are tight in general, as we show in \cref{sec:lower}, and our bounds even improve upon the previous results in the sparse setting as soon as the condition number or initialization error becomes moderately large (square root of the maximum dimension of the Kronecker factors).
For detailed comparison of our bounds with prior work, we point the reader to \CREFsupp{appendix: previous works}{Section~B} in the supplement.

We believe that the strength of the derived bounds, along with the principled analysis of a very simple and practical algorithm, make strong arguments in favor of the geodesic perspective for understanding the tensor normal model.
We now present the formal definitions of our problems and state our main results.

\subsection{Formal definitions and our results}\label{subsection: formal definitions}

We write $\Mat(d)$ for the space of real $d\times d$ matrices and $\PD(d)$ for the convex cone of $d\times d$ real symmetric positive definite matrices; $\GL(d)$ denotes the group of real invertible $d\times d$ matrices.
We write~$\succeq$ for the L\"{o}wner order.
For matrices~$A$ and~$B$, $\norm{A}_{\op}$ denotes the operator norm, $\norm{A}_F = (\tr A^T A)^{\frac12}$ the Frobenius norm, and $\braket{A,B} = \tr A^T B$ the Hilbert-Schmidt inner product.
We say $A$ is a traceless matrix if $\tr A = 0$.
We denote by $\kappa(A) = \norm{A}_{\op} \norm{A^{-1}}_{\op}$ the condition number of~$A$.
For functions~$f,g\colon S \to \R$ on any set $S$, we say $f = O(g)$ if there is a constant $C > 0$ such that~$f(x) \leq C g(x)$ for all~$x \in S$, and similarly $f = \Omega(g)$ if there is a constant $c > 0$ such that~$f(x) \geq c g(x)$ for all $x \in S$.
If~$f = O(g)$ and~$g = O(f)$ we write $f = \Theta(g)$.
In case~$C,c$ depend on another parameter $\lambda$, we write $O_\lambda$ and $\Omega_\lambda$, respectively.
We abbreviate~$[k]=\{1,\dots,k\}$ for $k\in\N$.
All other notation is introduced in the remainder of the text as needed.

We can now formally define the tensor normal model, of which the matrix normal model is a particular case.

\begin{definition}
For dimensions $d_{1}, \dots, d_{k} \in \mathbb{N}$, the \emph{tensor normal model} is the family of centered multivariate Gaussian distributions with covariance matrix given by a Kronecker product $\Sigma = \Sigma_1 \ot \dots \ot \Sigma_k$ of positive definite matrices, with $\Sigma_{a} \in \PD(d_{a})$, $a \in [k]$, that is, the distributions $\cN(0, \Sigma_1 \ot \dots \ot \Sigma_k)$.
For $k=2$, this is known as the \emph{matrix normal model}.
\end{definition}

Note that each $\Sigma_a$ is a $d_a\times d_a$ matrix and $\Sigma$ is a $D\times D$-matrix, where $D=d_1 \cdots d_k$.
Our goal is to estimate $k$ Kronecker factors $\widehat{\Sigma}_1, \dots, \widehat{\Sigma}_k$ such that $\widehat{\Sigma}_{a} \approx \Sigma_{a}$ for each $a \in [k]$ given access to $n$ i.i.d.\ random samples $x_1, \dots, x_n \in \R^D$ drawn from the model. A weaker requirement is to only approximate the full covariance, that is, $\widehat{\Sigma}_1 \ot \cdots \ot \widehat{\Sigma}_k \approx \Sigma$.

One may also think of each random sample $x_j$ as taking values in the set of $d_1 \times \dots \times d_k$ arrays of real numbers.
There are $k$ natural ways to ``flatten'' $x_j$ to a matrix:
for example, we may think of it as a matrix with $d_1$ rows and $D/d_{1}$ columns, where a column is indexed by a tuple $(i_2 \in [d_{2}],\dots, i_k \in [d_{k}])$ and given by the vector in $\R^{d_1}$ with $i_1^{\text{st}}$ entry equal to~$(x_j)_{i_1, \dots, i_k}$.
In the tensor normal model, the $d_2d_3\cdots{}d_k$ many columns are each distributed as a Gaussian random vector with covariance proportional to~$\Sigma_1$.
In an analogous way we may flatten it to a $d_2 \times d_1d_3\cdots{}d_k$ matrix, and so on.
As such, the columns of the $a^{\text{th}}$ flattening can be used to estimate~$\Sigma_a$ up to a scalar.
However, doing so na\"ively (e.g., using the sample covariance matrix of the columns) can result in an estimator with very high variance.
This is because the columns of the flattenings are not independent.
In fact they may be so highly correlated that they effectively constitute only one random sample rather than $d_2\cdots{}d_k$ many.
The MLE attempts to decorrelate the columns to obtain rates such as those one would obtain if the columns were independent.

The MLE is easier to describe in terms of the precision matrices, which we now define.

\begin{definition}[Precision matrices] \label{def:precisionMatrices}
For a $D\times D$-covariance matrix~$\Sigma$ arising in the tensor normal model, we refer to $\Theta = \Sigma^{-1}$ as the \emph{precision matrix}.
We also define the \emph{Kronecker factor precision matrices} $\Theta_1, \dots, \Theta_k$ as the unique positive-definite matrices such that $\Theta = \Theta_1 \ot \cdots \ot \Theta_k$ and $(\det \Theta_a)^{1/d_a}$ is the same for each $a \in [k]$.
In other words, we choose $\Theta_a = \lambda \Theta'_a$ where $\det \Theta'_a = 1$ and $\lambda>0$ is a constant (not depending on $a\in [k]$). We make this choice because the Kronecker factors of $\Theta$ are determined only up to a scalar.
\end{definition}

Let~$\P$ denote the parameter space of all precision matrices $\Theta$ for the tensor normal model with fixed dimensions $d_1,\dots,d_k$, i.e.,
\begin{align*}
  \P &= \bigl\{ \Theta = \Theta_1 \ot \cdots \ot \Theta_k \;:\; \Theta_a \in \PD(d_a) \bigr\}.
\end{align*}

Given a tuple $x$ of samples $\samp_1,\dots,\samp_n\in\R^D$, the following function $f_x : \P \to \R$ is proportional to the negative log-likelihood:
\begin{align}\label{eq:neg log likelihood}
  \ef_\samp(\Theta)
=  \frac{1}{nD}\sum_{i = 1}^n \samp_i^T \Theta \samp_i -  \frac{1}{D}\log\det\Theta.
\end{align}
The \emph{maximum likelihood estimator (MLE)} for $\Theta$ is then defined as
\begin{align}\label{eq:mle}
  \widehat{\Theta} := \underset{\Theta \in \P}{ \argmin} f_x(\Theta)
\end{align}
whenever the minimizer exists and is unique.
We write $\widehat\Theta = \widehat\Theta(x)$ when we want to emphasize the dependence of the MLE on the samples~$x$, and we say $(\htheta_1, \dots, \htheta_k)$ is \emph{an} MLE for~$(\Theta_1, \dots, \Theta_k)$ if $\otimes_{a = 1}^k \htheta_a = \htheta$.
Note that $\P$ is \emph{not} a convex domain under the Euclidean geometry on the $D\times D$ matrices.

To state our results, and throughout this paper, we write $\dmin = \min_{1 \leq a \leq k} d_a$, $\dmax = \max_{1 \leq a \leq k} d_a$, and $D = \prod_{i=1}^k d_a$. Recall that we identify factors $\Theta_1,\dots,\Theta_k$ from $\Theta$ using the convention $\det\Theta_1^{1/d_1}=\dots=\det\Theta_k^{1/d_k}$, and likewise for the MLE $\htheta$.

\subsection{Results on sample complexity \& error bounds}
We begin with our result on the sample complexity for the tensor normal model.

\newcommand{\TensorFrob}{%
There are universal constants~$C, c_1, c_2>0$ such that the following holds.
Suppose that $t\geq 1$ and
\begin{equation}\label{eq:eps sqr assm}
  n \geq C k^2 \frac{\dmax^3}{D} t^2.
\end{equation}
Then, with probability at least
$
  1 - k e^{-c_1 t^2 \dmax } - k^2 \left( \frac{\sqrt{nD}}{k \dmax} \right)^{-c_2 \dmin},
$
the MLE~$\htheta$ for $n$ independent samples of the tensor normal model with precision matrix~$\Theta$ is unique and satisfies
\begin{equation*}
  \dFR(\htheta,\Theta) = O\left(\frac{\sqrt{k} \, \dmax }{\sqrt{n}} t\right) \quad \text{ and } \quad
    \dFR(\htheta_a,\Theta_a) = O\left(\frac{\sqrt{kd_a} \, \dmax }{\sqrt{nD}} t\right), \; \text{ for all } a \in [k].
\end{equation*}
}

\begin{theorem}[Tensor normal model sample complexity upper bounds]\label{thm:tensor-frobenius}
\TensorFrob
\end{theorem}

Our error guarantees are tight for both the full precision matrix and the largest factor, as they match the lower bound for the simpler Gaussian estimation problem described in \cref{cor:relative-lower} up to the factor $\sqrt{k}$.
Also note that the parameter $t$ allows a trade-off between error guarantees and probabilistic guarantees. In particular, choosing $t^{2} \approx \log n$ guarantees vanishing failure probability as $n \to \infty$.

For the matrix normal model $k=2$, we obtain a stronger result:\footnote{The key technical tool we use for our matrix normal model result is a sophisticated analysis of operator scaling from \cite{KLR19}. In order to lift this to the tensor normal model, we would need a similar analysis of the tensor scaling problem. This is significantly more difficult, as is discussed in more detail in e.g. \cite{burgisser2018efficient}.}
firstly, we improve the \emph{sample threshold} by a polynomial factor; secondly, we are able to bound the \emph{error rate} for the individual factors in the tighter Thompson metric;
and finally we improve the dependence on the \emph{failure probability} from polynomial to exponential.
Recall that we identify~$\Theta_1, \Theta_2$ from~$\Theta$ using the convention $\det\Theta_1^{1/d_1}=\det\Theta_2^{1/d_2}$.

\newcommand{\MatrixSpec}{%
There are universal constants~$c, C>0$ with the following property.
Suppose $t\geq 1$ and
\begin{align*}
  n \geq C \frac{\dmax}{\dmin} \max\{\log \dmax, t^2 \log^2 \dmin \}.
\end{align*}
Then the MLE $\htheta = \htheta_1 \ot \htheta_2$ \ for $n$ independent samples from the matrix normal model with precision matrix $\Theta = \Theta_1 \ot \Theta_2$ satisfies
\begin{align*}
  \dop(\widehat{\Theta}_1, \Theta_1) = O\left(t \sqrt{\frac{d_1}{nd_2}} \log \dmin \right)
\quad\text{and}\quad
\dop(\widehat{\Theta}_2, \Theta_2) = O\left(t \sqrt{\frac{d_2}{nd_1}} \log \dmin \right)
\end{align*}
with probability at least  $1 - e^{ - c \dmin t^2}$.}

\begin{theorem}[Matrix normal model sample complexity upper bounds]\label{thm:matrix-normal}
\MatrixSpec
\end{theorem}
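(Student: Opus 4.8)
The plan is to prove the statement by combining the geodesic convexity of $f_x$ on $\P$ with a precise analysis of the first-order (``flip-flop'') optimality conditions, reducing everything to concentration of two Wishart matrices. First I would use the invariance of $\Dop$ under the group action (see \cref{sec:rel-error}) to act by $\Sigma_1^{1/2}\ot\Sigma_2^{1/2}$ and reduce to $\Theta_1=I_{d_1}$, $\Theta_2=I_{d_2}$, so that the data are i.i.d.\ $X_1,\dots,X_n\in\R^{d_1\times d_2}$ with standard Gaussian entries and we must show the MLE factors are close to the identity. Since the Kronecker factors are only defined up to a scalar and $\Dop(\htheta_a\Vert I)$ of the canonical factors is controlled, up to constants, by $\Dop$ of \emph{any} scale representative (a short determinant computation shows the correcting scalar is $1+O(\text{error})$), I would work with the representative normalized so that $\tr\htheta_2=d_2$; this representative still satisfies the scale-covariant stationarity equations
$\htheta_1^{-1}=\tfrac1{nd_2}\sum_i X_i\htheta_2 X_i^T=:\Phi_2(\htheta_2)$ and $\htheta_2^{-1}=\tfrac1{nd_1}\sum_i X_i^T\htheta_1 X_i=:\Phi_1(\htheta_1)$, where $\Phi_1,\Phi_2$ are the (random, completely positive) channels with $\E\Phi_a(I)=I$.

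The next step is a crude control requiring only $\Omega(1)$ strong convexity. The geodesic Hessian of $f_x$ on $\P$ at a point near $I$, in a tangent direction $(H_1,H_2)$, equals to leading order $\tfrac1D\bigl(\langle\tfrac1n\sum_i X_iX_i^T,\,H_1^2\rangle+\langle\tfrac1n\sum_i X_i^TX_i,\,H_2^2\rangle+2\langle S,H_1\ot H_2\rangle\bigr)$, where $S$ is the sample covariance of the vectorized samples. The two diagonal blocks are governed by the partial-trace Wishart matrices $\tfrac1{nd_2}\sum_i X_iX_i^T\to I_{d_1}$ and $\tfrac1{nd_1}\sum_i X_i^TX_i\to I_{d_2}$: the first concentrates for any $n\geq 1$, and the binding constraint is that the second concentrate around $I_{d_2}$, which by Davidson--Szarek holds once $nd_1\gtrsim d_2$, up exactly to the logarithmic and $t$ factors in the hypothesis; the cross term is a lower-order mean-zero bilinear form (the ``expansion'' of $\Phi_1$ on the traceless subspace). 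This yields, with probability $1-e^{-\Omega(d_1t^2)}$, that $f_x$ is $\Omega(1)$-geodesically strongly convex on a Fisher--Rao ball about $I$; hence the MLE exists and is unique, and the gradient bound at $I$ then gives the crude estimates $\Dop(\htheta_a\Vert I)=O(1)$, which I use only to know $\htheta_a,\htheta_a^{-1}$ are $O(1)$ in operator norm and that $\htheta_2-I$ has Frobenius norm $O(\sqrt{d_2})$.

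The sharp rates then come from the stationarity equations. Write $\htheta_1^{-1}=\Phi_2(I)+\Phi_2(\htheta_2-I)$. The first term $\Phi_2(I)=\tfrac1{nd_2}\sum_i X_iX_i^T$ is the sample covariance of $nd_2$ standard Gaussians in $\R^{d_1}$, so $\norm{\Phi_2(I)-I}_{\op}=O\bigl(\sqrt{d_1/(nd_2)}+(1+t^2)d_1/(nd_2)\bigr)=O\bigl(t\sqrt{d_1/(nd_2)}\bigr)$ with probability $1-e^{-\Omega(d_1t^2)}$ (the sample hypothesis absorbing the lower-order term). For the correction term, $N_2:=\htheta_2-I$ is traceless by our normalization, so for \emph{fixed} traceless $N_2$ the quantity $u^T\Phi_2(N_2)u=\tfrac1{nd_2}\sum_i (X_i^Tu)^TN_2(X_i^Tu)$ is a mean-zero Gaussian quadratic form; Hanson--Wright plus an $\eps$-net over $u\in S^{d_1-1}$ gives $\norm{\Phi_2(N_2)}_{\op}=O\bigl(\tfrac{t\sqrt{d_1}}{d_2\sqrt n}\norm{N_2}_F+\tfrac{t^2d_1}{nd_2}\norm{N_2}_{\op}\bigr)$, and inserting the crude bounds $\norm{N_2}_F=O(\sqrt{d_2})$, $\norm{N_2}_{\op}=O(1)$ yields $\norm{\Phi_2(N_2)}_{\op}=O\bigl(t\sqrt{d_1/(nd_2)}\bigr)$. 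Hence $\norm{\htheta_1^{-1}-I}_{\op}$, and so $\Dop(\htheta_1\Vert I)$, is $O(t\sqrt{d_1/(nd_2)}\log d_1)$. The estimate for $\htheta_2$ is symmetric, from $\htheta_2^{-1}=\Phi_1(I)+\Phi_1(\htheta_1-I)$: here $\norm{\Phi_1(I)-I}_{\op}=O(t\sqrt{d_2/(nd_1)})$ (Wishart, now using $n\gtrsim t^2d_2/d_1$ to absorb the lower-order term), and $\Phi_1(\htheta_1-I)$ is handled as before, its trace part contributing only $O(t\sqrt{d_1/(nd_2)}\log d_1)\leq O(t\sqrt{d_2/(nd_1)}\log d_1)$ since $d_1\leq d_2$. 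Finally the scaling argument transfers the bounds to the canonically ($\det^{1/d_a}$-)normalized factors at the cost of a constant.

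The hard part will be making the bound on $\norm{\Phi_2(\htheta_2-I)}_{\op}$ (and on $\Phi_1(\htheta_1-I)$) rigorous, because $\htheta_2-I$ is statistically \emph{dependent} on the very samples $X_i$ defining $\Phi_2$, so the fixed-$N_2$ argument above does not apply as stated, while the trivial bound $\norm{\Phi_2(M)}_{\op}\leq 2\norm{M}_{\op}\norm{\Phi_2(I)}_{\op}$ only gives the weaker rate $\sqrt{d_2/(nd_1)}$, and a naive net over the admissible set of $N_2$ is far too large ($\sim d_2^2$ dimensions). I expect to resolve this by decoupling: a leave-one-out decomposition $\Phi_2(\htheta_2-I)=\tfrac1{nd_2}\sum_i X_i(\htheta_2^{(-i)}-I)X_i^T+\tfrac1{nd_2}\sum_i X_i(\htheta_2-\htheta_2^{(-i)})X_i^T$, where $\htheta_2^{(-i)}$ (the MLE on the other $n-1$ samples) is independent of $X_i$ so that the first term is amenable to the conditional Hanson--Wright bound, and where the second term is negligible because strong convexity forces $\norm{\htheta_2-\htheta_2^{(-i)}}_{\op}=O(1/n)$; alternatively one could use a chaining bound over the genuinely low-complexity set of admissible perturbations (traceless, operator norm $O(1)$). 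This is the step responsible for the extra logarithmic factors in both the error and the sample-size requirement, and is also why the $\Dop$ analysis does not immediately extend to $k\geq 3$, where the partial-trace quantities are no longer simple Wisharts.
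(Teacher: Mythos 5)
Your plan diverges from the paper's after the reduction to identity. The paper also ``renormalizes'' by applying one flip-flop step, but rather than then attacking the stationarity equations directly, it shows (i) that $\Phi_X$ is an $(\eps,\eta)$-quantum expander with exponentially small failure probability via a Cheeger-constant argument for random completely positive maps (\cref{thm:operator-cheeger}), (ii) that after the flip-flop step $\Phi_Y$ has the much smaller double-balancedness constant $t\sqrt{d_1/nd_2}$ (\cref{lem:flipflop-concentration}), and then (iii) invokes the deterministic theorem of \cite{KLR19} (\cref{thm:klr}, \cref{cor:klr}) which, given a constant spectral gap and $\eps$-double balance, bounds \emph{both} MLE factors by $O(\eps\log d_1/\gamma)$ in operator norm. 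The asymmetric bound for $\htheta_2$ then drops out of the approximate triangle inequality when undoing the renormalization. The key point is that the paper never has to estimate $\Phi_2(\htheta_2-I)$ probabilistically: the dependence between $\htheta_2$ and the samples is swallowed by the deterministic KLR19 analysis, which is driven purely by the spectral gap of $\Phi_Y$.

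Your outline contains a genuine gap, which to your credit you identify yourself: bounding $\norm{\Phi_2(\htheta_2-I)}_{\op}$ when $\htheta_2$ is defined in terms of the same samples. The leave-one-out decoupling you sketch is not obviously going to close this. Even after replacing $\htheta_2$ by $\htheta_2^{(-i)}$ in the $i$-th summand, you face a sum of the form $\tfrac1{nd_2}\sum_i X_i N_2^{(i)} X_i^T$ where each $N_2^{(i)}$ is still a function of the other $n-1$ samples; the summands are neither independent nor exchangeable in a usable way, and an operator-norm bound on such a sum with exponentially small failure probability in $d_1$ would require a genuinely new concentration argument (a net over $u\in S^{d_1-1}$ is not enough, because for each fixed $u$ you still do not have independence across $i$). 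Moreover, your own back-of-envelope computation delivers $O(t\sqrt{d_1/(nd_2)})$ without a $\log d_1$, yet the bound you state has an unexplained $\log d_1$ factor. In the paper that logarithm is an artifact of \cref{thm:klr} (it tracks the number of alternating steps needed to reach the minimizer given the spectral gap), not of a Gaussian chaining or decoupling argument; your sketch gives no mechanism that would produce it. As written, then, the proposal's crude-strong-convexity step and the leading-term Wishart estimates are sound and match known tools, but the central claim --- the bound on the cross term --- is a plan rather than a proof, and the route through which the log factor would appear is missing.
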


We again note that the parameter $t$ allows for a trade-off between error and probabilistic guarantees, so in particular we can achieve vanishing failure probability as $n \to \infty$ by choosing e.g. $t^{2} \approx \log n$.
Further, we emphasize that the above error guarantees are tight for \emph{both tensor factors}, matching the classical Gaussian lower bound in \cref{cor:relative-lower} for each individual tensor factor up to $\log d_{\min}$ factors.

Recalling \cref{f:dopvsdFR}, we see that this stronger $\dop$ guarantee recovers the optimal $\dFR$ error rate for the tensor normal model up to $\log d_{\min}$ factors.
Further, the sample threshold is also tight up to $\log d_{\min}$ factors, matching the known lower bound for Gaussian estimation.
Finally, the guarantee in the Thompson metric gives much stronger accuracy for spectral applications such as PCA (see e.g. \cite{BL08}).

In applications such as brain fMRI, one is interested only in $\Theta_1$, and $\Theta_2$ is treated as a nuisance parameter.
If the nuisance parameter $\Theta_2$ were known, we could compute $(I \ot \Theta_2^{1/2} )X$, which is distributed as $nd_2$ independent samples from a Gaussian with precision matrix $\Theta_1$.
In this case, one can estimate $\Theta_1$ in operator norm with an RMSE rate of $O(\sqrt{ d_1/ n d_2})$ no matter how large $d_2$ is.
One could hope that this rate holds for $\Theta_1$ even when $\Theta_2$ is not known.
In \cref{sec:lower} we show a new lower bound for the matrix normal model that implies this better rate cannot hold.
Thus, for $d_2 > n d_1$, it is impossible to estimate $\Theta_1$ as well as one could if $\Theta_2$ were known.
Note that in this regime there is no hope of recovering $\Theta_2$ even if $\Theta_1$ is known.
As the random variable $Y_i$ obtained by ignoring all but $d_2' \approx nd_1$ columns of each $X_i$ is distributed according to the matrix normal model with covariance matrix $\Sigma_1 \ot \Sigma_2'$ for some $\Sigma_2' \in \PD(d_2')$, the MLE for $Y$ obtains a matching upper bound.

\begin{corollary}[Estimating only $\Theta_1$] \label{cor:EstimateOnlyOne}
There is a universal constant~$C>0$ with the following property.
Let $\Theta_1 \in \PD(d_1), \Theta_2 \in \PD(d_2)$, $X$ be distributed according to $\cN(0, \Theta_1^{-1} \ot \Theta_2^{-1})$, and suppose that $1 < d_1 \leq d_2$ and $t\geq 1$.
Let $Y = (Y_1, \dots, Y_n)$ be the random variable obtained by removing all but
\begin{align*}
  d_2' = \min \left\{ d_2, \frac{nd_1}{C \max \{ \log n, t^2 \log^2 d_1 \}} \right\}
\end{align*}
columns of $X_i$ for each~$i \in [n]$.
Then the MLE $\htheta = \htheta_1 \ot \htheta_2$ for $Y$ satisfies
\begin{align*}
  \dop(\widehat{\Theta}_1, \Theta_1) = O\left(t \sqrt{\frac{d_1}{nd_2'}} \log d_1\right) ,
\end{align*}
with probability $1 - e^{ - \Omega( d_1 t^2)}$.
This rate is tight up to factors of $\log d_1$ and $t^2 \log^2 d_1$.
\end{corollary}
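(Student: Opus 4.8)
The plan is to deduce the corollary from \cref{thm:matrix-normal} via a deterministic column-subsampling reduction, and to obtain the matching lower bound from \cref{sec:lower}. Fix the set $S=\{1,\dots,d_2'\}\subseteq[d_2]$ of retained column indices, so that $Y_i$ is the $d_1\times d_2'$ submatrix of $X_i$ on the columns in $S$, and write $\Sigma_a=\Theta_a^{-1}$. First I would identify the law of $Y$. Viewing $X_i\in\R^{d_1 d_2}$ with coordinates indexed by pairs $(r,c)$, we have $X_i\sim\cN(0,\Sigma_1\ot\Sigma_2)$, and the entry of $\Sigma_1\ot\Sigma_2$ in position $\bigl((r,c),(r',c')\bigr)$ equals $(\Sigma_1)_{rr'}(\Sigma_2)_{cc'}$. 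Passing from $X_i$ to $Y_i$ deletes exactly the coordinates with $c\notin S$, so $Y_i$ is centered Gaussian with covariance the principal submatrix of $\Sigma_1\ot\Sigma_2$ on the surviving coordinates, which is exactly $\Sigma_1\ot\Sigma_2[S]$ with $\Sigma_2[S]\in\PD(d_2')$ the principal submatrix of $\Sigma_2$ indexed by $S$. Hence $Y_1,\dots,Y_n$ are i.i.d.\ samples from the matrix normal model on $d_1\times d_2'$ matrices with precision matrix $\Theta_1\ot(\Sigma_2[S])^{-1}$; its row-precision Kronecker factor is a positive scalar multiple of $\Theta_1$, and since Kronecker factors of a precision matrix (and of its MLE) are defined only up to a common positive scalar, I will take that factor to be $\Theta_1$ itself and rescale $\htheta_1$ to match in the conclusion.

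Next I would verify the hypotheses of \cref{thm:matrix-normal} for the pair $(d_1,d_2')$; the value of $d_2'$ is tailored for exactly this. Since $d_2'\le d_2$ and $d_2'\le nd_1\big/\bigl(C\max\{\log n,t^2\log^2 d_1\}\bigr)$, and using $t\ge1$, $d_1>1$, we have $\log d_2'\le\log n+\log d_1=O\bigl(\max\{\log n,t^2\log^2 d_1\}\bigr)$, so
\[
 \frac{d_2'}{d_1}\,\max\bigl\{\log d_2',t^2\log^2 d_1\bigr\}\;\le\;\frac{n}{C}\cdot O(1)\;\le\;n,
\]
once the absolute constant $C$ in the definition of $d_2'$ is chosen large enough to absorb the constant from \cref{thm:matrix-normal}. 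Assuming $d_1\le d_2'$ --- if not, one instead applies \cref{thm:matrix-normal} to the transpose $Y^T$, which swaps the two Kronecker factors and yields the same rate with $\log d_2'\le\log d_1$ in place of $\log d_1$ --- \cref{thm:matrix-normal} applied to $Y$ produces an MLE $\htheta=\htheta_1\ot\htheta_2$ for $Y$ with $\Dop(\htheta_1\Vert\Theta_1)=O\bigl(t\sqrt{d_1/(nd_2')}\,\log d_1\bigr)$ with probability $1-e^{-\Omega(d_1 t^2)}$. Because the map $X\mapsto Y$ is deterministic, this is also the success probability as a function of $X$.

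Finally I would compare the rate with the lower bound $\Omega\bigl(\sqrt{d_1/(n\min\{nd_1,d_2\})}\bigr)$ proved in \cref{sec:lower}: unwinding the $\min$ in the definition of $d_2'$ gives $d_2'\ge\min\{nd_1,d_2\}\big/\bigl(C\max\{\log n,t^2\log^2 d_1\}\bigr)$, so $\sqrt{d_1/(nd_2')}$ exceeds the lower-bound rate by at most a factor $O\bigl(\sqrt{\max\{\log n,t^2\log^2 d_1\}}\bigr)$; combined with the extra $t\log d_1$ already present in the upper bound, this yields tightness up to the claimed factors of $\log d_1$ and $t^2\log^2 d_1$. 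The argument is short given \cref{thm:matrix-normal} and the lower bound; the only points that need care are the exact identification of the distribution of $Y$ as a matrix normal model whose column covariance is the principal submatrix $\Sigma_2[S]$, and the routine bookkeeping verifying that the chosen $d_2'$ is simultaneously large enough for $\sqrt{d_1/(nd_2')}$ to be near-optimal and small enough for the sample-size hypothesis of \cref{thm:matrix-normal} to hold.
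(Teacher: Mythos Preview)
Your proposal is correct and follows the same approach the paper sketches in the paragraph preceding the corollary: observe that column subsampling yields a matrix normal model with covariance $\Sigma_1\otimes\Sigma_2[S]$, choose $d_2'$ so that the sample-size hypothesis of \cref{thm:matrix-normal} is met, apply that theorem, and compare with the lower bound of \cref{sec:lower}. The paper does not give a formal proof beyond that sentence, and your write-up supplies exactly the missing bookkeeping (the verification that $n\ge C\,(d_2'/d_1)\max\{\log d_2',t^2\log^2 d_1\}$, the normalization of Kronecker factors, and the comparison of $d_2'$ with $\min\{nd_1,d_2\}$).
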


\Cref{table: main results intro} provides a high-level comparison of the above results and previous works. For clarity we consider the simplified setting where all dimensions of the Kronecker factors are equal to $d$, all precision matrices are sparse with row sparsity $r$ (which implies total sparsity~$s \leq rd$), and all condition numbers of precision factors are upper bounded by $\kappa$.
A detailed comparison with all relevant parameters can be found in \CREFsupp{table: main results,table: guesses effect}{Tables~B.3 and~B.4}.

\begin{table}
\caption{Worst-case sample requirements and error rates of estimators.}
\label{table: main results intro}
\resizebox{1\columnwidth}{!}{
\begin{tabular}{|p{2.3cm}|p{1.4cm}|c|c|c|}
\hline
Work & Setting & Sample Threshold & Error Rate (above sample threshold)  \\
 \hline
\cite[Theorem 3]{tsiligkaridis2013convergence}
    & \makecell[t]{general, \\ $k=2$}
    & $\max\left\{1, \tfrac{\kappa^{2}}{d} \right\} \kappa^{2} \min\{ \kappa, d \} d \log d$
    & $ \frac{\|\hat{\Theta}^{(3)} - \Theta\|_{F}}{\|\Theta_{a}\|_{\op}} \lesssim \kappa^{2} \sqrt{\frac{d^{2} \log d}{n}} $
    \\
    \cite[Theorem 4]{tsiligkaridis2013convergence}
    & \makecell[t]{$s \lesssim d$, \\ $k=2$}
    & $\displaystyle \max\left\{1, \tfrac{\kappa^{2}}{d} \right\} \kappa^{2} \min\{ \kappa, d \} \log d$
    & $\frac{\|\hat{\Theta} - \Theta\|_{F}}{\|\Theta\|_{\op}} \lesssim \kappa^{2} \sqrt{\frac{d \log d}{n}} $
    \\
    \cite[Theorem 3.1]{zhou2014gemini}
    & \makecell[t]{$k=2$, \\ $s \leq d^{2}$}
    & $\displaystyle \max\left\{1, \tfrac{\kappa^{2}}{d} \right\} \kappa^{2} \min\{ \kappa, d \} \frac{(s+1) \log d}{d}$
    & $\displaystyle\frac{\|\hat{\Theta} - \Theta\|_{\op} }{\|\Theta\|_{\op}} \lesssim \kappa^{2} \sqrt{\frac{(s+1) \log d}{n}}$
    \\
    \makecell{ \\ \cite{Lyu2020Tlasso}}
    & \makecell[t]{$k \geq 2$ \\ $s \leq d^2$}
    & \makecell{$\displaystyle k^{2} \left( \min\{ \kappa, d \} \right)^{k-1} \max\left\{1, \tfrac{\kappa^{2}}{d} \right\} \kappa^{2} \frac{(s + d) \log d}{d^{k-1}}$}
    & $\frac{ \|\hat{\Theta}_{a} - \Theta_{a}\|_{F}}{\|\Theta_{a}\|_{F}} \lesssim \kappa \sqrt{\frac{d (s + d) \log d}{nd^{k}}} $
    \\
    \cref{thm:matrix-normal}
    & \makecell[t]{general, \\ $k=2$}
    & $\log^{2} d$
    & $ \dop(\hat{\Theta}_{a}, \Theta_{a}) \lesssim \sqrt{\frac{\log^{2} d}{n}} $
    \\
    \cref{thm:tensor-frobenius}
    & \makecell[t]{general,\\ $k \geq 3$}
    & $ \frac{k^{2} d^{3}}{d^{k}}  $
    & $ \dFR(\hat{\Theta}_{a}, \Theta_{a}) \lesssim \sqrt{ \frac{k d^{3}}{n d^{k}}} $
    \\
    \hline
\end{tabular}
}
\end{table}

As can be seen from the table, our sample threshold and error rates are independent of condition number factors, and our error measures $\dFR$ and $\dop$ are tighter than those used in previous works, as can be seen in \CREFsupp{remark: relative error and fisher-rao and thompson}{Remark~A.4} and \CREFsupp{prop:absRelation}{Proposition~A.8}.
While prior works are able to give improved guarantees for sparse inputs, we note that they also have polynomial dependence on condition number.
This becomes significant even for moderate values of condition number (e.g. $\kappa = d^{2}$), and so our estimator gives improved gaurantees in the most general setting.

\subsection{Results on the flip-flop algorithm for MLE estimation}\label{subsec:flip-flop}
The MLEs for the matrix and tensor normal models can be computed by a natural iterative procedure that is known as the \emph{flip-flop algorithm}.
In \cref{alg:flip-flop matrix} below, we describe it for the matrix normal model ($k=2$), where the samples $\samp_i$ can be viewed as $d_1\times d_2$ matrices~$X_i$.
The general flip-flop algorithm is described in \cref{alg:flip-flop} in \cref{sec:flip-flop}.

\begin{Algorithm}
\begin{description}
\item[\hspace{.2cm}\textbf{Input}:] Samples $X = (X_1, \ldots, X_n)$, where $X_i \in \R^{d_1 \times d_2}$, initial guess $\widetilde{\Theta} \in \SPD$.
Parameters $T\in\N$ and $\delta>0$.

\item[\hspace{.2cm}\textbf{Output}:]
An estimate $\otheta = \otheta_1 \ot \otheta_2 \in \SPD$ of the MLE.

\item[\hspace{.2cm}\textbf{Algorithm}:]
\end{description}
\begin{enumerate}
\item
Set $\otheta_1 = \widetilde{\Theta}_1$ and $\otheta_2 = \widetilde{\Theta}_2$.

\vspace{10pt}

\item
For $t=1,\dots,T$, repeat the following:

\vspace{5pt}

\begin{itemize}
\item
If $t$ is odd, set $a=1$ and $\Upsilon = \frac1{nd_2}\sum_{i=1}^n X_i \otheta_2 X_i^T$.
If $t$ is even, set $a=2$ and $\Upsilon = \frac1{nd_1}\sum_{i=1}^n X_i^T \otheta_1 X_i$.
\item If $t>1$ and $\norm{\nabla_a f_x(\otheta)}_F \leq \delta$, return $\otheta$
\vspace{3pt}
\item Update $\otheta_a \leftarrow \Upsilon^{-1}$
\end{itemize}
\end{enumerate}
\caption{Flip-flop algorithm for the matrix normal model.}\label{alg:flip-flop matrix}
\end{Algorithm}

We can motivate the flip-flop algorithm by noting that if in the first step we already have $\overline{\Theta}_2 = \Theta_2$ (the true precision factor), then $\frac{1}{n d_2} \sum_{i = 1}^n X_i \overline{\Theta}_2 X_i^T$ is simply a sum of outer products of $nd_2$ many independent random vectors with covariance $\Sigma_1 = \Theta_1^{-1}$; as such the inverse of the sample covariance would be a good estimator for $\Theta_1$.
As we don't know $\Theta_2$, the flip-flop algorithm instead uses $\overline{\Theta}_2$ as our current best guess, with the hope that each iteration will improve the next guess.

For the general tensor normal model (\cref{alg:flip-flop}), in each step the flip flop algorithm chooses one of the dimensions $a \in [k]$ and uses the $a^\text{th}$ flattening of the samples~$x_i$ (which are just $X_i$ and $X_i^T$ in the matrix case) to update $\overline{\Theta}_a$.

The advantage of flip-flop over other estimators are twofold: it directly converges to the MLE, as opposed to regularization approaches that trade-off accuracy for speed; and it has small iteration complexity.
Each iteration of flip-flop is extremely fast to compute (one matrix inversion), whereas (most) other works have expensive complexity per iteration (solving a convex program).
See details in \CREFsupp{subsec:complexity-previous-work}{Section~B.5}.

Our next results show that the flip-flop algorithm can efficiently compute the MLE when the hypotheses of \cref{thm:tensor-frobenius} or \cref{thm:matrix-normal} hold.
We state our result for the tensor normal model and then give an improved version for the matrix normal model.

\newcommand{\TensorFlop}{There are universal constants~$C,c, c_1, c_2>0$ such that the following holds.
Suppose $x=(x_1,\dots,x_n)$ are $n \geq C k^2 \dmax^3 / D$ independent samples from $\cN(0, \Theta^{-1})$, where $\Theta = \Theta_1 \ot \cdots \ot \Theta_k$.
Then, with probability at least
\begin{align*}
  1 - k \, e^{-c_1 \frac{n D}{k^2\dmax^2}} - k^2 \left( \frac{\sqrt{nD}}{k \dmax} \right)^{-c_2 \dmin},
\end{align*}
the MLE $\htheta$ exists, and for any~$0<\delta<\frac{c}{\sqrt{(k+1)\dmax}}$, the number of iterations $T$ needed for \cref{alg:flip-flop} to output $\otheta$ with $\dFR(\otheta_a, \htheta_a) \leq \sqrt{2 d_a} \cdot \delta$ for all $a\in[k]$, is:

\begin{enumerate}
    \item when the initial guess is $\wttheta$ with $\nabla_0 f_x(\wttheta) = 0$,
    \begin{align*}
    T = O\left( k^2 \dmax \cdot \dop(\wttheta, \htheta) + k \log \frac1\delta \right)
    \end{align*}
    \item when the initial guess is $\wttheta$ with $\nabla_0 f_x(\wttheta) = 0$ and $\dop(\wttheta, \htheta) = O\left( \frac{1}{k \dmax} \right)$,
    \begin{align*}
    T = O\left( k \log \left( \frac{\sqrt{k \dmax} \cdot \dop(\wttheta, \htheta)}{\delta} \right) \right)
    = O\left( k \log \frac1\delta \right)
    \end{align*}
    \item without any initial guess (and starting from $\frac{1}{f_x(I_D)} \cdot I_D$),
    \begin{align*}
    T = O\left( k^2 \dmax \bigl( 1 + \log \kappa(\Theta) \bigr) + k \log \frac1\delta \right)
    \end{align*}
\end{enumerate}
}

\begin{theorem}[Tensor normal flip-flop]\label{thm:tensor-flipflop}
\TensorFlop
\end{theorem}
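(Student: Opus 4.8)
The plan is to read the flip-flop algorithm (\cref{alg:flip-flop}) as Riemannian block-coordinate (alternating) minimization of the negative log-likelihood $f_x$ over the manifold $\P \cong \PD(d_1)\times\cdots\times\PD(d_k)$ (modulo the determinant normalization, which only fixes a harmless scalar), with respect to the affine-invariant/Fisher--Rao metric in which $f_x$ is geodesically convex and in which the geodesic distance $d$ of the theorem is measured. In this geometry, freezing all Kronecker factors but the $a$-th confines $\Theta$ to a totally geodesic slice isometric to a rescaled copy of $\PD(d_a)$; these slices are essentially orthogonal (up to the one-dimensional scalar redundancy), and the step updating factor $a$ is exactly the minimization of $f_x$ over the corresponding slice, the minimizer being $\Upsilon^{-1}$ with $\Upsilon$ as in the algorithm. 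Thus the proof reduces to: (i) a quantitative geodesic strong-convexity and smoothness statement for $f_x$ on the relevant region, holding on a high-probability event; (ii) a per-cycle multiplicative-decrease estimate for exact cyclic block minimization; and (iii) bookkeeping converting geometric decay of the suboptimality into the stated iteration count and output guarantees.

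For (i): after whitening by the true precision matrix $\Theta$, the function $f_x$ and all flip-flop averages differ from a deterministic reference only through the empirical flattening operators, whose concentration around their (isotropic) means is exactly what is established en route to \cref{thm:tensor-frobenius}. Condition on that event with the largest admissible $t$; since $n\geq Ck^2\dmax^3/D$ one may take $t\asymp\sqrt{nD/(k^2\dmax^3)}\geq 1$, which simultaneously yields existence of the MLE, places $\htheta$ within geodesic distance $O(1/\sqrt{k\dmax})$ of $\Theta$ (via \cref{lem:tensor-geodesic}), and forces the geodesic Hessian of $f_x$ restricted to the tensor tangent directions to be within a small factor of isotropic on a geodesic ball about $\Theta$: concretely, $f_x$ is $m$-strongly geodesically convex and $M$-smooth there with $M/m=O(1)$. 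This near-isotropy is the ``expansion of the random quantum channels'' from the introduction, and the radius of the ball is chosen so that it still contains $\htheta$.

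For (ii)--(iii): exact minimization over slice $a$ lowers $f_x$ by at least $\tfrac1{2M}\|\operatorname{grad}_a f_x\|^2$; summing over one full cycle of $k$ slices and using $M$-Lipschitzness of the Riemannian gradient to relate the block gradients at the successive intermediate iterates to $\operatorname{grad} f_x$ at the start of the cycle shows that one cycle multiplies the suboptimality $f_x(\cdot)-f_x(\htheta)$ by a factor $1-\Omega(1/\Phi)$ with $\Phi$ polynomial in $k$ and $M/m$ (the near-orthogonality of the slices keeps $\Phi$ from growing faster than a small power of $k$). Iterating this and invoking strong convexity to pass from suboptimality to $d(\cdot,\htheta)^2$ gives a linear rate. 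Splitting the run into a warm-up phase — from the initializer $I$ until the iterate enters the ball of part (i) — and a refinement phase inside that ball then yields the claimed count: inside the ball $M/m=O(1)$, so each cycle contracts the distance to $\htheta$ by an absolute constant and a further $O(k\log\tfrac1\delta)$ steps suffice; the warm-up is bounded by $O(k^2\dmax(1+\log\kappa(\Theta)))$ steps, using that the first flip-flop steps are exact block minimizations (so $f_x$ drops quickly) together with the global geodesic convexity of $f_x$ and the bound $d(I,\htheta)=O(\log\kappa(\Theta))$ up to lower-order terms — this is where $\kappa(\Theta)$ enters. Finally one verifies that the stopping test $\DF(\Upsilon\Vert\otheta_a^{-1})\le\sqrt{d_a}\,\delta$ of \cref{alg:flip-flop} has triggered by this time and certifies the output: $\DF(\Upsilon\Vert\otheta_a^{-1})$ is comparable to $\sqrt{d_a}$ times $\|\operatorname{grad}_a f_x\|$ evaluated at $\otheta$, so strong convexity bounds $d(\otheta,\htheta)\le 2\delta$ and, via the geodesic-to-$\DF$ comparison, $\DF(\otheta_a\Vert\htheta_a)\le 4\sqrt{d_a}\,\delta$.

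The step I expect to be the main obstacle is (i): showing that the random flattening operators are near-isotropic on a geodesic ball about $\Theta$ that is large enough to contain $\htheta$, using only $n\gtrsim k^2\dmax^3/D$ samples and with the stated failure probability — this is where the quantum-channel expansion estimates do the real work. A secondary difficulty is the warm-up bookkeeping: arguing that flip-flop reaches this ball within $O(k^2\dmax(1+\log\kappa(\Theta)))$ steps — rather than a bound carrying an extra factor of $\kappa(\Theta)$ or $\log\dmax$ — and keeping the cyclic, rather than randomized or jointly exact, block-descent analysis in the Riemannian setting from losing more than a controlled polynomial factor in $k$.
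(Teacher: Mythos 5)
Your high-level plan — read the flip-flop as exact Riemannian block-coordinate descent, establish strong geodesic convexity near $\Theta$ on a high-probability event (this is indeed \cref{thm:ball-convexity}/\cref{cor:matrix-convexity}, fed by the quantum-expansion estimates), split the run into a warm-up phase and a geometric refinement phase, and bound the warm-up via $\log\kappa(\Theta)$ — matches the paper's structure (cf.\ \cref{prop:meta,lem:flip-flop-sinkhorn,lem:descent-sublevel-set}). The gradient-smallness stopping argument and the geodesic-to-$\DF$ conversion at the end also line up with \cref{lem:gradient-strong-convexity-fm,lem:convex-ball,cor:g-convex-components}.

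There is, however, a genuine gap in your step (ii). You quantify the per-block decrease by the standard smoothness estimate $f_x(\otheta^{+}) \le f_x(\otheta) - \tfrac1{2M}\|\nabla_a f_x(\otheta)\|^2$ and then treat $M/m$ as a condition number. But $f_x$ is \emph{not} globally geodesically smooth: from \cref{lem:hessian,remark:hessian-everywhere} the Hessian blocks are $\braket{H,\nabla^2_{aa}f_x(\Theta)H}= d_a \tr\bigl(\Theta^{1/2}\rho\,\Theta^{1/2}\bigr)^{(a)}H^2$, which grows without bound as $\Theta$ grows, so there is no finite $M$ valid on the warm-up trajectory from $I_D$ (where $\kappa(\Theta)$ can be huge) down to the neighborhood of $\htheta$. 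This is precisely where the paper departs from your plan: it replaces the smoothness bound with the global inequality for unit-trace PSD matrices $\log\det Z \le -\tfrac16\min\{\|Z-I\|_F^2,1\}$ (Lemma~5.1 of \cite{GGOW19}), applied to $Z=d_a\rho_t^{(a)}$. Because the flip-flop update is exactly $\otheta_a\leftarrow \tfrac1{d_a}\otheta_a^{1/2}(\rho_t^{(a)})^{-1}\otheta_a^{1/2}$, the change in $f_x$ per step is \emph{exactly} $\tfrac1{d_a}\log\det(d_a\rho_t^{(a)})$ (see \cref{lem:tensor-descent-lemma}), and the $\log\det$ inequality immediately yields the capped decrease $f_x(\otheta^{(t+1)})\le f_x(\otheta^{(t)})-\tfrac1{6(k-1)}\min\{\tfrac{k-1}{\dmax},\|\nabla f_x(\otheta^{(t)})\|_F^2\}$, with \emph{no} smoothness hypothesis. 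The cap $\min\{\cdot,\tfrac{k-1}{\dmax}\}$ is what bounds the warm-up: each far-from-optimal step decreases $f_x$ by at least $\Omega(1/\dmax)$, and combined with $f_x(I_D)-f_x^*=O(\log\kappa(\Theta))$ this gives the stated $O(k^2\dmax(1+\log\kappa(\Theta)))$ iteration count. Your substitute bound $d(I,\htheta)=O(\log\kappa(\Theta))$ would still require converting a distance bound to a value gap, which again needs some control on the Hessian away from $\htheta$; bounding $f_x(I_D)-f_x^*$ directly, as the paper does, avoids this.

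One further minor point: the paper's per-step analysis does not rely on near-orthogonality of the block slices or on any cycle-level contraction factor $\Phi(k,M/m)$ as you propose; the flip-flop always descends along the coordinate of largest gradient norm (\cref{alg:flip-flop}), and since at least one other block gradient vanishes after each update (\cref{lemma:flip-flop-update}), the chosen block already carries an $\Omega(1/(k-1))$ fraction of the squared gradient norm — this replaces your cycle-level bookkeeping and keeps the $k$-dependence transparent.
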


\newcommand{\MatrixFlop}{There are universal constants~$C,c, c_1>0$ such that the following holds.
Let $1 < d_1, d_2$.
Suppose $x_1,\dots,x_n \in \R^{d_1d_2}$ are
\begin{align*}
  n \geq C \frac{\dmax}{\dmin} \max \left\{ \log \dmax, \log^2 \dmin \right\}
\end{align*}
independent samples from $\cN(0, (\Theta_1 \ot \Theta_2)^{-1})$.
With probability at least $1 - \exp\left(-\frac{c_1 \cdot n \dmin^2}{\dmax \log^2 \dmin}\right)$, the MLE $\htheta$ exists, and for every $0 < \delta < \frac{c}{\sqrt{\dmax}}$, the number of iterations $T$ needed for \cref{alg:flip-flop matrix,alg:flip-flop} to output $\otheta$ with $\dFR(\otheta_a, \htheta_a) = O(\sqrt{d_a} \delta)$ for $a\in\{1,2\}$, is:

\begin{enumerate}
    \item when the initial guess is $\wttheta$ with $\nabla_0 f_x(\wttheta) = 0$,
    \begin{align*}
    T = O\left( \dmax \cdot \dop(\wttheta, \htheta) + \log \frac1\delta \right)
    \end{align*}
    \item when the initial guess is $\wttheta$ with $\nabla_0 f_x(\wttheta) = 0$ and $\dop(\wttheta, \htheta) = O\left( \frac{1}{\dmax} \right)$,
    \begin{align*}
    T = O\left( \log \left( \frac{\sqrt{\dmax} \cdot \dop(\wttheta, \htheta)}{\delta} \right) \right) = O\left( \log \frac1\delta \right)
    \end{align*}
    \item without any initial guess (and starting from $\frac{1}{f_x(I_D)} \cdot I_D$),
    \begin{align*}
      T = O\left( \dmax \bigl( 1 + \log \kappa(\Theta_1 \ot \Theta_2) \bigr) + \log \frac1\delta \right)
    \end{align*}
\end{enumerate}
}

\begin{theorem}[Matrix normal flip-flop]\label{thm:matrix-flipflop}
\MatrixFlop
\end{theorem}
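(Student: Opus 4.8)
The plan is to recognize the flip-flop algorithm as alternating \emph{geodesic block minimization} of the negative log-likelihood $f_x$ on $\P$, and to combine the geodesic strong convexity that underlies \cref{thm:matrix-normal} with a convergence analysis for alternating minimization. Concretely, when $a=1$ a flip-flop step replaces $\otheta_1$ by the exact minimizer of $\Theta_1 \mapsto f_x(\Theta_1 \ot \otheta_2)$ over the totally geodesic submanifold $\{\Theta_1 \ot \otheta_2 : \Theta_1 \in \PD(d_1)\}$ of $\P$ in the affine-invariant (Fisher--Rao) metric, and symmetrically for $a=2$; this is the reformulation already used for \cref{thm:tensor-flipflop}, and the argument here follows the same template but inserts the sharper matrix-normal estimates to obtain the improved sample requirement and failure probability.

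First I would record the inputs extracted from \cref{sec:matrix-normal}. On an event of probability $1 - e^{-\Omega(nd_1^2/d_2\log^2 d_1)}$: (i) the MLE $\htheta$ exists (this alone follows from the sample threshold via \citep{derksen2020matrix}, but I also need quantitative control); (ii) $f_x$ is geodesically strongly convex with a constant $\mu$ and geodesically smooth inside the geodesic ball $B(\htheta, r)$ of radius $r = \Theta(1/\sqrt{d_2})$, with geodesic-Hessian condition number $O(1)$ there --- this is exactly the random-quantum-channel spectral-gap bound behind \cref{thm:matrix-normal}; (iii) the spectrum of $\htheta$ is controlled, $\kappa(\htheta) = O(\kappa(\Theta))$ with overall scale $O(1)$, so that $d(I \ot I, \htheta) = O(1 + \log\kappa(\Theta))$; and (iv) along the whole trajectory of iterates the block functions are $\Theta(1/d_a)$-geodesically strongly convex (immediate from the $-\tfrac{1}{d_a}\log\det$ term, which has geodesic Hessian equal to $\tfrac1{d_a}$ times the metric) and uniformly geodesically smooth with an explicit bound coming from operator-norm concentration of the flattened sample matrices. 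The constraint $\delta < c/\sqrt{d_2}$ in the statement is precisely so that the target accuracy remains inside this strongly convex basin.

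Next comes a two-phase convergence argument, mirroring \cref{sec:flipflop}. In the \textbf{global phase}, monotonicity of $f_x$ along flip-flop steps keeps every iterate in the sublevel set $\{f_x \le f_x(I \ot I)\}$, which is geodesically convex and of radius $O(1 + \log\kappa(\Theta))$ about $\htheta$; combining the sufficient decrease from exact block-$a$ minimization of an $\Lambda_a$-geodesically-smooth function ($\Lambda_a \asymp 1/d_a$, by (iv)) with geodesic convexity to lower-bound the Riemannian gradient in terms of the current gap and the diameter, one shows that after $T_1 = O(d_2(1 + \log\kappa(\Theta)))$ steps the iterate enters $B(\htheta, r)$. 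In the \textbf{local phase}, inside $B(\htheta, r)$ the $O(1)$ condition number of the geodesic Hessian makes alternating minimization contract $f_x(\otheta^{(t)}) - f_x(\htheta)$ by a constant factor every two steps, and then $d(\otheta^{(t)}, \htheta)^2 \le \tfrac{2}{\mu}\bigl(f_x(\otheta^{(t)}) - f_x(\htheta)\bigr)$ yields $d(\otheta, \htheta) = O(\delta)$ after a further $O(\log(1/\delta))$ steps, for the claimed total $T = O(d_2(1 + \log\kappa(\Theta)) + \log(1/\delta))$. Finally I would verify that the stopping test $\DF(\Upsilon \Vert \otheta_a^{-1}) \le \sqrt{d_a}\,\delta$ is equivalent up to constants to a bound on the block Riemannian gradient, hence by strong convexity implies $d(\otheta, \htheta) = O(\delta)$, and convert the product-manifold estimate into $\DF(\otheta_a \Vert \htheta_a) = O(\sqrt{d_a}\,\delta)$ exactly as \cref{thm:tensor-frobenius} is derived from \cref{lem:tensor-geodesic}.

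The main obstacle is the global phase. Pinning down the iteration count with linear (not quadratic) dependence on $\log\kappa(\Theta)$ and the correct $d_2$ factor requires both a clean quantitative handle on the initial gap $f_x(I \ot I) - f_x(\htheta)$ in terms of $\kappa(\Theta)$ --- which needs concentration of the \emph{spectrum} of the MLE, not merely its existence --- and a per-step decrease bound valid uniformly along the entire trajectory, which in turn needs operator-norm concentration of the flattened covariance matrices everywhere the algorithm might travel, all on a single good event of the stated probability. This simultaneous control is what forces the $\max\{\log d_2, \log^2 d_1\}$ in the sample bound; the local phase, by contrast, is routine once geodesic strong convexity with bounded condition number is available.
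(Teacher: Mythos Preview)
Your two-phase template (global phase to reach the strongly convex basin, then local linear convergence) matches the paper, and the local phase is essentially right. The gap is in the global phase.

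The assertion in (iv) that the block functions are $\Theta(1/d_a)$-geodesically strongly convex ``from the $-\tfrac{1}{d_a}\log\det$ term'' is false: along the affine-invariant geodesic $t\mapsto\Theta_a^{1/2}e^{tH}\Theta_a^{1/2}$ one has $\log\det(\Theta_a^{1/2}e^{tH}\Theta_a^{1/2}) = \log\det\Theta_a + t\,\tr H$, which is \emph{linear} in $t$; the geodesic Hessian of $\log\det$ vanishes identically (this is used explicitly in the proof of \cref{lem:hessian}). All curvature of $f_x$ comes from the term $\tr\rho\,\Theta$, so a lower bound on block curvature at an iterate $\otheta$ would require $\lambda_{\min}(\rho_t^{(a)})$ to be bounded below, and there is no such bound outside the basin --- indeed this is precisely the quantity the algorithm is driving toward $1/d_a$. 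Your concluding paragraph correctly flags that a uniform per-step decrease along the whole trajectory is the crux, but the mechanism you propose for it does not hold.

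The paper's global phase sidesteps curvature control along the trajectory. The descent step (\cref{lem:tensor-descent-lemma}) is \emph{deterministic}: from the inequality $\log\det Z \leq -\tfrac{1}{6}\min\{\|Z-I_d\|_F^2,1\}$ for trace-$d$ positive semidefinite $Z$ \citep{GGOW19}, every flip-flop update satisfies
\[
  f_x(\otheta^{(t+1)}) \leq f_x(\otheta^{(t)}) - \tfrac16\min\bigl\{1/d_2,\,\|\nabla f_x(\otheta^{(t)})\|_F^2\bigr\},
\]
with no appeal to concentration at $\otheta^{(t)}$. Telescoping against the initial gap, which is $O(1+\log\kappa(\Theta))$ once $|\nabla_0 f_x(\Theta)|\leq\tfrac12$ and $\dop(\htheta,\Theta)=O(1)$ (see the proof of \cref{prop:meta}), \cref{lem:flip-flop-sinkhorn} produces a point with $\|\nabla f_x\|<\gamma$ within $O\bigl((1+\log\kappa(\Theta))\max\{d_2,\gamma^{-2}\}\bigr)$ iterations; then \cref{lem:gradient-strong-convexity-fm} places it in the basin. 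The only probabilistic input is a single event establishing the three hypotheses of \cref{prop:meta} --- strong convexity in a $\dop$-ball about $\Theta$ (\cref{cor:matrix-convexity}), the trace bound (\cref{prp:xnorm}), and the MLE location (\cref{thm:matrix-normal}) --- so no concentration ``everywhere the algorithm might travel'' is needed.
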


Plugging in the error rates for the MLE from \cref{thm:tensor-frobenius,thm:matrix-normal} into \cref{thm:tensor-flipflop,thm:matrix-flipflop} (with $t = 1$) shows that the output of the flip-flop algorithm with $O\left(k^2\dmax(1 + \log \kappa(\Theta)) + k \log(n)\right)$ iterations is an efficiently computable estimator with the same statistical guarantees as we have shown for the MLE.

\Cref{table: estimator performance no assumptions intro} summarizes the iteration complexity of previous works and of the above theorems, in the most general setting where one is not given any assumptions about the initial guess.
We give a detailed comparison of performance in \CREFsupp{appendix: previous works}{Section~B}. Note that, while the number of iterations of the flip-flop algorithm is larger than in previous works, each iteration is much faster in our case (matrix inversion) than in previous works (which need to solve a convex program).
This justifies the better performance of flip-flop in practical settings.

\begin{table}
\caption{Performance of estimators without any assumptions, from initial guess $\wttheta$}
\label{table: estimator performance no assumptions intro}
\begin{tabular}{|p{3cm}|c|c|}
\hline
\makecell{Work} & Setting &  Main subroutine \\
\hline

\cite[Theorem 3]{tsiligkaridis2013convergence} & \makecell[t]{$k=2$, \\ general} & \makecell[t]{matrix inversion} \\

\hline

\cite[Theorem 4]{tsiligkaridis2013convergence} & \makecell[t]{$k=2$, \\ $s_a \lesssim d_a$} & convex program \\

\hline

\cite[Theorem 3.1]{zhou2014gemini} & \makecell{$k=2$, \\ general $s_a$} & convex program \\

\hline

\cite[Theorem 3.3]{zhou2014gemini} & \makecell{$k=2$, \\ $r_{s,a} \lesssim \sqrt{d_a}$} & linear program \\

\hline

\cite{XZG17} & \makecell{$k \geq 4$, \\ general $r_{s,a}$} & \makecell{truncated \\ gradient descent} \\

\hline

\cite{Lyu2020Tlasso} & \makecell{$k \geq 2$,\\ general $s_a$} & convex program \\

\hline

\cref{thm:matrix-flipflop,thm:tensor-flipflop} & $k \geq 2$ & matrix inversion  \\
\hline
\end{tabular}
\end{table}

A key contribution of this work is that our estimator, the MLE, is well-defined independent of any additional information.
In particular, we have decoupled our sample complexity analysis from the algorithmic analysis of our estimator.
Thus, our initial guess assumption only affects the runtime of the algorithm, and not the sample complexity.

In the above, we see that the iteration complexity of the flip-flop algorithm depends on the condition number of the precision matrix, when we do not have any assumption on the initial guess (case 3 in \cref{thm:tensor-flipflop,thm:matrix-flipflop}).
However, if we assume that we have an initial guess which is "close to the true precision matrix" (case 2) we show that \cref{alg:flip-flop,alg:flip-flop matrix} achieve much faster convergence to the MLE.
Note that we state in the above theorems that the initial guess is close enough to the MLE, but in the sample regimes of the above theorems, \cref{thm:matrix-normal,thm:tensor-frobenius} tell us that the MLE is very close to the true precision matrix.
This allows us to do a full comparison between the performance of flip-flop and other proposed estimators in several previously considered settings.
For details, see \CREFsupp{subsec:complexity-previous-work}{Section~B.5}.

\section{Geodesic convexity, sample complexity \& error bounds}\label{sec:sample-complexity}

We now explain how we use geodesic convexity, following a strategy similar to \cite{FM20}, to prove \cref{thm:tensor-frobenius}.
The detailed proofs of all results in this section can be found in \CREFsupp{app:tensor}{Section~D}.

\subsection{Geodesic convexity}\label{subsec:geom}
The negative log-likelihood for the tensor normal model, i.e. \cref{eq:mle}, is an optimization problem over the parameter space $\P$, which is a subset of the space $\PD(D)$ of positive-definite real symmetric $D\times D$ matrices.
As we have discussed in the previous section, we will consider the Riemannian metric on $\PD(D)$ that arises from the Fisher information metric on centered Gaussians parametrized by their covariance matrices \citep{skovgaard1984riemannian}.\footnote{This is the same as the metric arising from the Hessian of the log-determinant \citep[Chapter 6]{bhatia2009positive}.}
When we endow $\PD(D)$ with this metric, we see that the geodesics starting at a point $\Theta \in \PD(D)$ are of the form $t \mapsto \Theta^{1/2} e^{Ht} \Theta^{1/2}$ for~$t \in \R$ and a symmetric matrix~$H$.
Moreover, if $A$ is an invertible matrix, the transformation $\Theta \mapsto A\Theta A^T$ is an isometry with respect to this metric, i.e., it preserves the geodesic distance.
This invariance is natural and desirable, as changing a pair of precision matrices in this way does not change the statistical relationship between the corresponding Gaussians; in particular the total variation distance, Fisher-Rao distance, and Kullback-Leibler divergence are unchanged.

Another very useful property is that our domain $\P$ is a \emph{totally geodesic submanifold} of $\PD(D)$: for any two points $A,B \in \P$, the entire geodesic between $A$ and $B$ remains in our domain $\P$.
Thus, the negative log-likelihood is truly an optimization problem over the Riemannian manifold $\P$ under the Fisher information metric.

As $\P$ is a totally geodesic submanifold of $\PD(D)$, the invariance properties described above for $\PD(D)$ are directly inherited by $\P$.
The manifold~$\P$ carries a natural action by the group
\begin{align*}
  \G =  \{ A = A_1 \ot \cdots \ot A_k \;:\; A_a \in \GL(d_a) \}
\end{align*}
Namely, if $\Theta \in \P$ and $A \in \G$ then $A \Theta A^T \in \P$.
Thus, as discussed above, the map $\Theta \mapsto A\Theta A^T$ is an isometry of the Riemannian manifold~$\P$, thereby preserving statistical relationship between the corresponding Gaussians.

As observed by \cite{wiesel2012geodesic}, the negative log-likelihood function (\cref{eq:mle}) is convex when restricted to geodesics of the Fisher information metric.
In other words, the negative log-likelihood is \emph{geodesically convex} on our manifold~$\P$.
To see this fact, we will now formally describe the structure of the manifold $\P$ and define geodesic convexity.

In the manifold $\P$, the tangent space at any point $\Theta \in \P$ is given by
\begin{align*}
    \mathfrak{p} := \left\{ \sum_{i=1}^k I_{d_1} \otimes \cdots \otimes I_{d_{i-1}} \otimes \log(\Gamma_i) \otimes I_{d_{i+1}} \otimes \cdots I_{d_k} \quad \mid \quad \Theta^{1/2} \Gamma \Theta^{1/2} \in \P \right\}
\end{align*}
which can be identified with the real vector space
\begin{align*}
  \H &= \bigl\{ (H_0; H_1,\dots,H_k) \;:\; H_0 \in \R, \; H_a \text{ a symmetric traceless $d_a \times d_a$ matrix} \, \forall a \in [k] \bigr\},
\end{align*}
equipped with the following  inner product and norm:
\begin{align*}
  \braket{H,K} := H_0 K_0 + \sum_{a=1}^k \tr H_a^T K_a, \quad
  \norm{H}_F := \braket{H, H}^{1/2}.
\end{align*}

The direction $(1; 0, \dots, 0)$ changes $\Theta$ by an overall scalar, and tangent directions supported only in the $a^{th}$ component for $a \in [k]$ only change~$\Theta_a$ (subject to its determinant staying fixed).
In order to make this inner product agree with the natural Frobenius inner product on the tangent space $\mathfrak{p}$, we parametrize the exponential map as in the following definition.

\begin{definition}[Exponential map and geodesics]
The \emph{exponential map} $\exp_\Theta \colon \H \to \P$ at~$\Theta=\Theta_1\ot\cdots\ot\Theta_k\in\P$ is defined by
\begin{align*}
  \exp_\Theta(H)
&= e^{H_0} \cdot ( \Theta_1^{1/2} e^{\sqrt{d_1} H_1} \Theta_1^{1/2}) \ot \cdots \ot (\Theta_k^{1/2} e^{\sqrt{d_k} H_k} \Theta_k^{1/2}).
\end{align*}
By definition, the \emph{geodesics} through $\Theta$ are the curves $t \mapsto \exp_\Theta(t H)$ for $t\in\R$ and $H\in\H$.
Up to reparameterization, there is a unique geodesic between any two points of~$\P$.
\end{definition}

The geodesics on $\P$ defined above are simply the geodesics of the Fisher information metric on~$\PD(D)$, reparametrized in terms of the identification of the tangent space $\H$ given above.

We take the convention that the geodesics have unit speed if $\norm{H}_F^2 = 1$.
The geodesic distance $d(\Theta,\Theta')$ between two points $\Theta$ and $\Theta'=\exp_\Theta(H)$ is therefore equal to~$\norm H_F$, which can also be computed as~$D^{-1/2} \norm{\log \Theta^{-1/2} \Theta' \Theta^{-1/2}}_F$, which we will take to be our notion of geodesic distance.
To summarize:

\begin{definition}[Geodesic distance and balls]
The \emph{geodesic distance} $d(\Theta,\Theta')$ between two points $\Theta$ and $\Theta'$ of $\P$ is given by
\begin{align}\label{eq:geodesic distance}
  d(\Theta,\Theta') := \frac1{\sqrt D} \norm{\log \Theta^{-1/2} \Theta' \Theta^{-1/2}}_F = \sqrt{\frac{2}{D}} \cdot \dFR(\Theta, \Theta').
\end{align}
where $\log$ denotes the matrix logarithm and $\dFR$ is the Fisher-Rao distance defined in \cref{eq:fisher rao}.

The closed \emph{(geodesic) ball} of radius~$r>0$ about~$\Theta$ is defined as
\begin{align*}
  B_r(\Theta) = \bigl\{ \exp_\Theta(H) : H \in \H, \norm H_F \leq r \bigr\},
\end{align*}
\end{definition}

The manifold $\PD(D)$, and hence $\P$, is a \emph{Hadamard manifold}, i.e., a complete, simply connected Riemannian manifold of non-positive sectional curvature \citep{bacak2014convex}. Thus geodesic balls are \emph{geodesically convex} subsets of~$\P$, that is, if $\gamma(t)$ is a geodesic such that~$\gamma(0),\gamma(1) \in B_r(\Theta)$ then $\gamma(t) \in B_r(\Theta)$ for all $t\in[0,1]$.

The definition of geodesics yields the following notion of geodesic convexity of functions.

\begin{definition}[Geodesic convexity]
Given a geodesically convex domain~$\Gamma \subseteq \P$, a function $f$ is \emph{(strictly) geodesically convex} on~$\Gamma$ if, and only if, the function $t \mapsto f(\gamma(t))$ is (strictly) convex on~$[0,1]$ for any geodesic $\gamma(t)$ with $\gamma(0),\gamma(1)\in \Gamma$.

The function $f$ is $\lambda$-\emph{strongly} geodesically convex if $t \mapsto f(\gamma(t))$ is $\lambda$-strongly convex along every unit-speed geodesic $\gamma$ with endpoints in $\Gamma$.

For a twice differentiable function $f\colon \P \to \R$, we say that it is $\lambda$-strong geodesically convex \emph{at} $\Theta$ if $\partial^2_{t=0} f(\exp_\Theta(tH)) \geq \lambda \norm H_F^2$ for all~$H\in\H$, and we say it is $\lambda$-strong geodesically convex on $\Gamma$ if it is $\lambda$-strong geodesically convex for every $\Theta \in \Gamma$.
\end{definition}

\begin{example}\label{exa:usual-likelihood} It is instructive to consider the case $k = 1$, or $\P = \PD(D)$.
The geodesics through $\Theta$ are the curves $t \mapsto \sqrt{\Theta} e^{\sqrt{D} \cdot H t} \sqrt{\Theta}$ where $H\in\H$.
As an example of a geodesically convex function, consider the likelihood for the precision matrix of a Gaussian with data $x_1, \dots, x_n$.
Let $\rho := \frac1{nD}\sum_i x_i x_i^T$ denote the matrix of ``second sample moments'' of the data.
Then we can rewrite the objective function~\eqref{eq:neg log likelihood} as
\begin{align*}
  f_x(\Theta) = \tr \rho \, \Theta - \frac1D \log \det \Theta.
\end{align*}
We claim that $f_x(\Theta)$ is always geodesically convex, and in fact \emph{strictly} geodesically convex whenever $\rho$ is invertible. Indeed,
$$ \partial^2_{t = 0} f_x(\sqrt{\Theta} e^{\sqrt{D} \cdot t H} \sqrt{\Theta}) = D \cdot \tr \sqrt{\Theta} \rho \sqrt{\Theta} H^2 \geq 0 $$
with strict inequality whenever $\rho$ is invertible (and~$H$ nonzero).
\end{example}

The computation in the example easily generalizes to the tensor normal model, which allows us to prove geodesic convexity of the negative log-likelihood function in our setting.

We now formally define the \emph{Riemannian} gradient and Hessian.

\begin{definition}[Gradient and Hessian]\label{def:hess grad}
Let $f\colon \P \to \R$ be a differentiable function and $\Theta \in \P$.
The \emph{(Riemannian) gradient}~$\nabla f(\Theta)$ is the unique element in $\H$ such that
\begin{align*}
  \braket{\nabla f(\Theta), H}
= \partial_{t=0} f(\exp_\Theta(tH))
\qquad \forall H \in \H.
\end{align*}
If $f$ is twice-differentiable, the \emph{(Riemannian) Hessian}~$\nabla^2 f(\Theta)$ is the unique linear operator on~$\H$ such that
\begin{align*}
  \braket{H, \nabla^2 f(\Theta) K}
= \partial_{s=0} \partial_{t=0} f(\exp_{\Theta}(sH + tK))
\qquad \forall H,K \in \H.
\end{align*}
We abbreviate $\nabla f = \nabla f(I_D)$ and $\nabla^2 f = \nabla^2 f(I_D)$ for the gradient and Hessian, respectively, at the identity matrix, and we write $\nabla_a f$ and $\nabla^2_{ab}f$ for the components.
As block matrices,
\begin{align*}
  \nabla f = \left[\begin{array}{c} \nabla_0 f \\ \hline \nabla_1 f \\ \vdots \\ \nabla_k f \end{array}\right],
  \qquad
  \nabla^2 f = \left[\begin{array}{c|ccc}
  \nabla_{00}^2 f & \nabla_{01}^2 f \dots & \nabla_{0k}^2 f \\
    \hline\nabla_{10}^2 f & \nabla_{11}^2 f \dots & \nabla_{1k}^2 f \\
  \vdots  & \vdots & \ddots & \vdots \\
  \nabla_{k0}^2 f & \nabla_{k1}^2 f \dots & \nabla_{kk}^2 f \\
  \end{array}\right].
\end{align*}
Here, $\nabla_0 f \in \R$ and each $\nabla_a f(\Theta)$ is a $d_a \times d_a$ traceless symmetric matrix.
Similarly, for~$a, b \in [k]$ (i.e., for the blocks of the submatrix to the lower-right of the lines) the components $\nabla_{ab}^2f(\Theta)$ of the Hessian are linear operators from the space of traceless symmetric $d_b\times d_b$ matrices to the space of traceless symmetric $d_a \times d_a$ matrices, while $\nabla_{a0}f$ is a linear operator from $\R$ to the space of traceless symmetric $d_a\times d_a$ matrices (hence can itself be viewed as such a matrix), $\nabla_{0a}f$ is the adjoint of this linear operator, and $\nabla^2_{00} f(\Theta)$ is a real number.
\end{definition}

We note that the Hessian is symmetric with respect to the inner product~$\braket{\cdot,\cdot}$ on $\H$.
Just like in the Euclidean case, the Hessian is convenient to characterize strong convexity.
Indeed, $\braket{H, \nabla^2 f(\Theta) H} = \partial^2_{t=0} f(\exp_{\Theta}(tH))$ for all $H\in \H$.
Thus, $f$~is geodesically convex if and only if the Hessian is positive semidefinite, that is, $\nabla^2 f(\Theta) \succeq 0$.
Similarly, $f$ is $\lambda$-strongly geodesically convex if and only if~$\nabla^2 f(\Theta) \succeq \lambda I_{\H}$, i.e., the Hessian is positive definite with eigenvalues larger than or equal to~$\lambda$.

\subsection{Proof outline}\label{subsec:proof-sketch}
With the above definitions, we are able to state a proof plan for \cref{thm:tensor-frobenius}. Proofs of all claims not proved in this subsection can be found in the supplement\ifdefined\ARXIV\else~\cite{FORW25supp}\fi.
The proof is a Riemannian version of the standard approach using strong convexity, and it goes by the following steps:

\begin{enumerate}
\item\label{it:reduce}
\textbf{Reduce to identity:}
We can obtain $n$ independent samples from $\cN(0, \Theta^{-1})$ as $x'_i = \Theta^{-1/2} x_i$, where $x_1,\dots,x_n$ are distributed as $n$ independent samples from $\cN(0, I_D)$. By equivariance of the likelihood function, the MLE $\widehat{\Theta}(x')$ for the former is exactly $\Theta^{1/2} \widehat{\Theta}(x) \Theta^{1/2}$.
By invariance of the Fisher-Rao metric, $\dFR(\widehat\Theta(x'), \Theta) = \dFR(\widehat\Theta(x), I_D)$; the same is true for $\dop$.
This shows that to prove \cref{thm:tensor-frobenius} it is enough to consider the case that $\Theta = I_D$, i.e., the standard Gaussian.

\item\label{it:grad} \textbf{Bound the gradient:}
Show that the gradient $\nabla f_x(I_D)$ is small with high probability.

\item\label{it:convexity} \textbf{Strong convexity:}
with high probability, $f_x$ is $\Omega(1)$-strongly geodesically convex near $I$.
\end{enumerate}

These together imply the desired sample complexity bounds -- as in the Euclidean case, strong convexity in a suitably large ball about a point with small gradient implies the optimizer cannot be far.
Since in step~\ref{it:grad} we show that the gradient \emph{at the true covariance} is small, our approach will prove that the optimizer (i.e., the MLE) is not far from the true covariance.

We begin by formally stating the fact given in step~\ref{it:reduce}, as we will use it in later sections.

\begin{fact}\label{fact: reduce to identity}
    Let $x := (x_1, \ldots, x_n)$ be a tuple of $n$ independent samples of $\cN(0, I_D)$, and $x_i' := \Theta^{-1/2} x_i$ be the corresponding samples of $\cN(0, \Theta^{-1})$, with $x' := (x_1', \dots, x_n')$.
    If $\htheta(x), \htheta(x')$ are the MLE's for the samples $x, x'$, respectively, then $\htheta(x') = \Theta^{1/2} \htheta(x) \Theta^{1/2}$.

    Thus, $\dFR(\htheta(x'), \Theta) = \dFR(\htheta(x), I_D)$ and $\dop(\htheta(x'), \Theta) = \dop(\htheta(x), I_D)$.
\end{fact}

The following lemma shows that strong convexity in a ball about a point where the gradient is sufficiently small implies the optimizer cannot be far.
This lemma thus ensures that if we prove steps~\ref{it:grad} and~\ref{it:convexity}, then \cref{thm:tensor-frobenius} follows.

\begin{lemma}\label{lem:convex-ball}
Let $f\colon \P \to \R$ be geodesically convex and twice differentiable.
Let~$\Theta\in\P$ be such that $\norm{\nabla f(\Theta)}_F \leq \delta$, and $f$ is $\lambda$-strongly geodesically convex in a ball $B_r(\Theta)$ of radius~$r > \frac{2\delta}\lambda$.
Then the sublevel set $\{\Upsilon \in \P : f(\Upsilon) \leq f(\Theta)\}$ is contained in the ball~$B_{2\delta/\lambda}(\Theta)$, $f$ has a unique minimizer $\smash{\htheta}$, where~$\htheta \in B_{\delta/\lambda}(\Theta)$, and
$f(\htheta) \geq f(\Theta) - \frac{\delta^2}{2 \lambda}$.
\end{lemma}

Hence, we now need to carry out steps~\ref{it:grad} and~\ref{it:convexity} in the plan above.

\subsection{Bounding the gradient}
Proceeding as step~\ref{it:grad} of the plan from \cref{subsec:proof-sketch}, we now compute the gradient of the objective function and bound it using matrix concentration results.

To calculate the gradient, we need a definition from linear algebra.
Recall that our data comes as an $n$-tuple $x=(x_1,\dots,x_n)$ of $k$-tensors.
As in \cref{exa:usual-likelihood}, let $\rho := \frac1{nD}\sum_i x_i x_i^T$ denote the ``second sample moments'', and rewrite the objective function~\eqref{eq:neg log likelihood} as
\begin{align}\label{eq:obj via rho}
  f_x(\Theta) = \tr \rho \, \Theta - \frac1D \log \det \Theta.
\end{align}
We may also consider the ``second sample moments'' of a subset of the coordinates~$J \subseteq [k]$.
For this the following definition is useful.

\begin{definition}[Partial trace]\label{definition:partial-trace}
Let $\rho$ be an operator on $\R^{d_1} \ot \cdots \ot \R^{d_k}$, and~$J \subseteq [k]$ an ordered subset.
Define the \emph{partial trace} $\rho^{(J)}$ as the $d_J \times d_J$-matrix, where $d_J = \prod_{a\in J} d_a$, that satisfies the property that
\begin{align}\label{eq:partial trace duality}
  \tr \rho^{(J)} H
= \tr \rho \, H_{(J)}
\end{align}
for any $d_J\times d_J$ matrix~$H$, where $H_{(J)}$ denotes the operator on $\R^{d_1} \ot \cdots \ot \R^{d_k}$ that acts as~$H$ on the tensor factors labeled by~$J$ (in the order determined by~$J$) and as the identity on the rest.
This property uniquely determines $\rho^{(J)}$.
We write $\rho^{(a)}$ and $\rho^{(ab)}$ if~$J=\{a\}$ and~$J=\{a,b\}$, respectively.
\end{definition}

If $\rho$ is positive (semi)definite then so is $\rho^{(J)}$.
Moreover, $\tr \rho = \tr \rho^{(J)}$ and $(\rho^{(J)})^{(K)} = \rho^{(K)}$ for $K \subseteq J$.
Concretely, the partial trace $\rho^{(J)}$ can be computed analogously to the discussion in \cref{subsection: formal definitions}: ``flatten'' the data~$x$ by regarding it as a $d_J \times N_J$~matrix~$x^{(J)}$, where $N_J = \frac{nD}{d_J}$;
then $\rho^{(J)} = \frac1{nD} x^{(J)} (x^{(J)})^T$.
The gradient can be readily computed in terms of partial traces.

\begin{lemma}[Gradient]\label{lem:gradient}
Let $\rho = \frac{1}{nD} \sum_{i=1}^n \samp_i \samp_i^T $.
Then the components of the gradient~$\nabla f_x$ at the identity are given by
\begin{align}
 \nabla_a \ef_{\samp} &= \sqrt{d_a}\left( \rho^{(a)} - \frac{\tr\rho}{d_a} I_{d_a}\right)
  \qquad \text{ for } a \in [k], \label{eq:grad-a}\\
  \nabla_0 \ef_\samp &= \tr \rho - 1.\label{eq:grad-0}
\end{align}
\end{lemma}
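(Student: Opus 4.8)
The plan is to compute the directional derivative $\partial_{t=0} f_x(\exp_{I_D}(tH))$ directly from the representation $f_x(\Theta) = \tr \rho\,\Theta - \frac1D \log\det\Theta$ in \eqref{eq:obj via rho}, and then read off the components of $\nabla f_x$ by matching against $\braket{\nabla f_x, H}$ using the inner product on $\H$ from \cref{def:hess grad}. The first step is to differentiate the exponential map: at $\Theta = I_D$ we have $\exp_{I_D}(tH) = e^{tH_0}\, \bigotimes_{a=1}^k e^{t\sqrt{d_a}H_a}$, whose derivative at $t=0$ is $\widetilde H := H_0 I_D + \sum_{a=1}^k \sqrt{d_a}\,(H_a)_{(a)}$, where $(H_a)_{(a)}$ denotes $H_a$ acting on the $a$-th tensor factor and the identity on the rest (as in \cref{definition:partial-trace}).

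Next I would handle the two terms of $f_x$ separately. For the linear term, $\partial_{t=0} \tr \rho \exp_{I_D}(tH) = \tr \rho\, \widetilde H = H_0 \tr\rho + \sum_{a=1}^k \sqrt{d_a}\, \tr \rho\, (H_a)_{(a)}$, and applying the partial trace duality \eqref{eq:partial trace duality} turns each summand into $\sqrt{d_a}\,\tr \rho^{(a)} H_a$. For the log-det term, I use $\log\det = \tr\log$ together with the identity $\log(A_1\ot\cdots\ot A_k) = \sum_a (\log A_a)_{(a)}$ valid for positive definite $A_a$; this gives $\log\det \exp_{I_D}(tH) = tH_0 D + t\sum_a \sqrt{d_a}\tr H_a = tH_0 D$, since each $H_a$ is traceless, so $-\frac1D \partial_{t=0}\log\det\exp_{I_D}(tH) = -H_0$.

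Combining the two contributions yields $\partial_{t=0} f_x(\exp_{I_D}(tH)) = H_0(\tr\rho - 1) + \sum_{a=1}^k \sqrt{d_a}\, \tr \rho^{(a)} H_a$. Matching this with $\braket{\nabla f_x, H} = \nabla_0 f_x \cdot H_0 + \sum_{a=1}^k \tr (\nabla_a f_x) H_a$ for all $H \in \H$ forces $\nabla_0 f_x = \tr\rho - 1$, which is \eqref{eq:grad-0}. For the remaining components, since $H_a$ ranges over all traceless symmetric $d_a\times d_a$ matrices, $\nabla_a f_x$ must be the orthogonal projection of $\sqrt{d_a}\,\rho^{(a)}$ onto the traceless symmetric matrices, i.e.\ $\nabla_a f_x = \sqrt{d_a}\bigl(\rho^{(a)} - \frac{\tr \rho^{(a)}}{d_a} I_{d_a}\bigr)$; using $\tr\rho^{(a)} = \tr\rho$ gives \eqref{eq:grad-a}. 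There is no real obstacle here; the only points requiring care are the traceless projection (the gradient is constrained to lie in $\H$, so the trace part of $\rho^{(a)}$ is invisible to the traceless test matrices and must be subtracted explicitly) and the elementary identity $\tr\rho^{(a)} = \tr\rho$ noted after \cref{definition:partial-trace}.
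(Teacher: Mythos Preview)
Your proof is correct and follows essentially the same approach as the paper: differentiate $f_x$ along the exponential map at $I_D$, use the partial trace duality \eqref{eq:partial trace duality} to rewrite $\tr\rho\,(H_a)_{(a)}$ as $\tr\rho^{(a)}H_a$, and then project onto the traceless matrices. The only cosmetic difference is that you handle all directions $H\in\H$ simultaneously whereas the paper treats the $\nabla_a$ and $\nabla_0$ components in separate computations; one small slip in your intermediate log-det expression (the trace of $(H_a)_{(a)}$ picks up a factor $D/d_a$, not $1$) is harmless since $\tr H_a = 0$ anyway.
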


\begin{remark}[Gradient at other points from equivariance]\label{remark:gradient-everywhere}
In the previous lemma we only computed the gradient at the identity.
However, this is without loss of generality, since from the calculations above one easily obtains $\nabla f_{x}(\Theta) = \nabla f_{\Theta^{1/2} x}(I)$.
That is, the gradient~$\nabla f_{x}(\Theta)$ is given by \cref{eq:grad-a,eq:grad-0} with $\rho$ replaced by $\Theta^{1/2}\rho\; \Theta^{1/2}.$
\end{remark}

Having calculated the gradient of the objective function, we are ready to state our bounds on the norm of the gradient, as outlined in step~\ref{it:grad} of \cref{subsec:proof-sketch}.

\begin{prop}[Gradient bound]\label{prop:gradient-bound}
Let $\rv = (\rv_1,\dots,\rv_n)$ consist of independent standard Gaussian random variables in~$\R^D$.
Suppose that $0<\eps<1$ and $n \geq \frac{\dmax^2}{D \eps^2}$.
Then, the following occurs with probability at least $1 - 2(k+1)e^{-\eps^2 {nD}/{(8\dmax)}}$:
\begin{align*}
  \norm{\nabla_a f_x}_{\op}
&\leq \frac{9\eps}{\sqrt{d_a}}
\qquad\text{ for all $a\in[k]$}, \\
  \abs{\nabla_0 f_x} &\leq \eps.
\end{align*}
As a consequence, 
$\norm{\nabla f_x}_F^2
\leq (1 + 81 k) \eps^2
\leq 82 k \eps^2$.
\end{prop}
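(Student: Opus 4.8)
The plan is to reduce the proposition to two off-the-shelf Gaussian concentration facts and then bookkeep constants. By \cref{lem:gradient}, $\nabla_0 f_x = \tr\rho - 1$ and $\nabla_a f_x = \sqrt{d_a}\,(\rho^{(a)} - \tfrac{\tr\rho}{d_a} I_{d_a})$ for $a \in [k]$, where $\rho = \tfrac1{nD}\sum_i x_i x_i^T$. As noted after \cref{definition:partial-trace}, $\rho^{(a)} = \tfrac1{nD}\, x^{(a)}(x^{(a)})^T$, where $x^{(a)}$ is the $d_a \times N_a$ flattening of the data with $N_a = nD/d_a$; since the $x_i$ are i.i.d.\ standard Gaussians in $\R^D$, the entries of $x^{(a)}$ are i.i.d.\ $\cN(0,1)$, so $\rho^{(a)} = \tfrac1{d_a} S_a$ where $S_a := \tfrac1{N_a}\sum_{j=1}^{N_a} g_j g_j^T$ is the sample covariance of $N_a$ i.i.d.\ standard Gaussians $g_j \in \R^{d_a}$. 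Using $\tr S_a = d_a \tr\rho$, one checks $\rho^{(a)} - \tfrac{\tr\rho}{d_a} I_{d_a} = \tfrac1{d_a}\bigl[(S_a - I_{d_a}) - (\tr\rho - 1) I_{d_a}\bigr]$, and hence
\begin{align*}
  \norm{\nabla_a f_x}_{\op} \;\le\; \frac1{\sqrt{d_a}}\Bigl(\norm{S_a - I_{d_a}}_{\op} + \abs{\tr\rho - 1}\Bigr).
\end{align*}
So it suffices to show, with the claimed probability, that (i) $\abs{\tr\rho - 1} \le \eps$ and (ii) $\norm{S_a - I_{d_a}}_{\op} \le 8\eps$ for all $a \in [k]$: indeed (i) bounds $\nabla_0 f_x$, and (i)+(ii) give $\norm{\nabla_a f_x}_{\op} \le 9\eps/\sqrt{d_a}$.

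For (i), note $nD\,\tr\rho = \sum_{i=1}^n \norm{x_i}_2^2$ is a $\chi^2$ variable with $nD$ degrees of freedom, so the Laurent--Massart tail bounds give $\Pr[\,\abs{\tr\rho - 1} \ge \eps\,] \le 2 e^{-nD\eps^2/8}$ for $0 < \eps < 1$. For (ii), I would apply the Davidson--Szarek (Gordon) bound on the singular values of the Gaussian matrix $A := (x^{(a)})^T$ (so $S_a = \tfrac1{N_a} A^T A$): for any $s \ge 0$, with probability $\ge 1 - 2e^{-s^2/2}$ all singular values of $A$ lie in $[\sqrt{N_a} - \sqrt{d_a} - s,\ \sqrt{N_a} + \sqrt{d_a} + s]$. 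Take $s = \eps\sqrt{N_a}$; the hypothesis $n \ge \dmax^2/(D\eps^2)$ gives $\sqrt{d_a/N_a} = \sqrt{d_a^2/(nD)} \le \sqrt{\dmax^2/(nD)} \le \eps$, so $\theta := (\sqrt{d_a}+s)/\sqrt{N_a} = \sqrt{d_a/N_a} + \eps \le 2\eps < 2$. On this event $\lambda_{\max}(S_a) \le (1+\theta)^2$ and $\lambda_{\min}(S_a) \ge \max\{1-\theta,0\}^2$, so a short case split on whether $\theta < 1$ yields $\norm{S_a - I_{d_a}}_{\op} \le 2\theta + \theta^2 \le 4\eps + 4\eps^2 \le 8\eps$. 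The failure probability here is $2e^{-s^2/2} = 2 e^{-\eps^2 N_a/2} = 2 e^{-\eps^2 nD/(2d_a)} \le 2 e^{-\eps^2 nD/(8\dmax)}$.

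A union bound over event (i) (which fails with probability $\le 2e^{-nD\eps^2/8} \le 2e^{-\eps^2 nD/(8\dmax)}$ since $\dmax \ge 1$) and the $k$ events in (ii) gives total failure probability $\le 2(k+1) e^{-\eps^2 nD/(8\dmax)}$, establishing the operator-norm bounds. The Frobenius consequence is then deterministic: $\nabla_a f_x$ is a $d_a \times d_a$ matrix, so $\norm{\nabla_a f_x}_F \le \sqrt{d_a}\,\norm{\nabla_a f_x}_{\op} \le 9\eps$, whence $\norm{\nabla f_x}_F^2 = \abs{\nabla_0 f_x}^2 + \sum_{a=1}^k \norm{\nabla_a f_x}_F^2 \le \eps^2 + 81k\eps^2 = (1 + 81k)\eps^2 \le 82k\eps^2$. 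Nothing here is deep: the only non-mechanical steps are recognizing that the reduced problem is exactly covariance estimation for a tall i.i.d.\ Gaussian matrix, picking the $\chi^2$ and singular-value concentration inputs, and tuning $s$ so that all exponents collapse to $\eps^2 nD/(8\dmax)$ — that constant-chasing (and the switch from $\rho^{(a)} - \tfrac{\tr\rho}{d_a}I$ to $S_a - I_{d_a}$ while keeping the $\tfrac1{\sqrt{d_a}}$ scaling tight) is the only real friction I anticipate.
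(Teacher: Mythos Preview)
Your proof is correct and follows essentially the same approach as the paper: the paper also reduces to $\|d_a\rho^{(a)} - I_{d_a}\|_{\op} + |\tr\rho - 1|$ (your $S_a$ is exactly $d_a\rho^{(a)}$), applies the same Gaussian singular-value bound (Corollary~5.35 of Vershynin, equivalent to your Davidson--Szarek input) with the same choice $t = \eps\sqrt{N_a}$, uses the same $\chi^2$ tail for $\tr\rho$, and takes the same union bound. The only differences are cosmetic attribution and minor variations in the constant-chasing for the $8\eps$ bound.
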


\subsection{Strong convexity from expansion}\label{subsec:strong-convex}
In this section, we establish our strong convexity result, step~\ref{it:convexity} of the plan from \cref{subsec:proof-sketch}, in \cref{thm:ball-convexity}.
The proposition states that, with high probability, $f_x$ is strongly convex near the identity.
We will prove it by first establishing strong convexity \emph{at} the identity using quantum expansion techniques (\cref{thm:tensor-convexity}), and then (in the supplement) we bound how the Hessian changes away from the identity, see \CREFsupp{lem:convexRobustness}{Lemma~D.3}.
We then combine these results to prove \cref{thm:ball-convexity}.

Similar to our gradient calculations, we compute the components of the Hessian in terms of partial traces, but now we also need to consider two coordinates at a time.

\begin{lemma}[Hessian]\label{lem:hessian}
Let $\rho = \frac{1}{nD}\sum_{i=1}^n \samp_i \samp_i^T$.
Then the components of the Hessian~$\nabla^2 f_{\samp}$ at the identity are given by
\begin{align*}
  \braket{H, (\nabla^2_{aa} f_x) H} &= d_a \tr \rho^{(a)} H^2 \\
  \braket{H, (\nabla^2_{ab} f_x) K} &= \sqrt{d_a d_b} \tr \rho^{(ab)} \left( H \ot K \right)
\end{align*}
for all $a\neq b\in[k]$ and traceless symmetric $d_a\times d_a$ matrices $H$, $d_b\times d_b$ matrices~$K$, and
\begin{align*}
  \nabla^2_{0a} f_x&~\widehat= \sqrt{d_a} \left( \rho^{(a)} - \frac{\tr \rho}{d_a} I_{d_a} \right) \widehat=~ \nabla^2_{a0} f_x \quad (\forall a \in [k]), \\
  \nabla^2_{00} f_x&= \tr \rho.
\end{align*}
\end{lemma}

\noindent
Here we use the conventions from \cref{def:hess grad}.
In particular, we identify $\nabla^2_{a0} f_x$, which is a linear operator from $\R$ to the traceless symmetric matrices, with a traceless symmetric matrix, and similarly for its adjoint $\nabla^2_{0a} f_x$.
The notation $\widehat=$ reminds us of these identifications.

\begin{remark}[Hessian at other points from equivariance]\label{remark:hessian-everywhere}
Analogously to \cref{remark:gradient-everywhere}, we can compute the Hessian at other points using $\nabla^2 f_{x}(\Theta) = \nabla^2 f_{\Theta^{1/2} x}$.
That is, the Hessian~$\nabla^2 f_{x}(\Theta)$ is given by \cref{lem:hessian} with $\rho$ replaced by $\Theta^{1/2}\rho\; \Theta^{1/2}.$
\end{remark}

The most interesting part of the Hessian are the off-diagonal components for $a\neq b\in[k]$, which up to a multiplicative factor $\sqrt{d_a d_b}$ can be seen as the restrictions of the linear maps
\begin{align}\label{eq:hessian channel}
  \Phi^{(ab)} \colon \Mat(d_b) \to \Mat(d_a) \quad\text{ given by }\quad \braket{H, \Phi^{(ab)}(K)} &= \tr \rho^{(ab)} \left( H \ot K \right)
\end{align}
to the traceless symmetric matrices.
\Cref{eq:hessian channel} is a special case of a \emph{completely positive map}, which is a linear map of the form
\begin{align}\label{eq:def cp}
  \Phi_A \colon \Mat(d_b) \to \Mat(d_a), \quad \Phi_A(Z) = \sum_{i=1}^N A_i Z A_i^T
\end{align}
for $d_a\times d_b$ matrices $A_1,\dots,A_N$.
Completely positive maps are quantum analogues of nonnegative matrices.
To see that $\Phi^{(ab)}$ is completely positive, note that since $\rho^{(ab)}$ is positive semidefinite, it can be written in the form $\sum_{i=1}^N \vect(A_i) \vect(A_i)^T$; then $\Phi^{(ab)} = \Phi_A$ follows.
The matrices $A_1,\dots,A_N$ are known as \emph{Kraus operators}.
\Cref{eq:def cp} can also be written as
\begin{align}\label{eq:vec rep}
  \vect(\Phi_A(Z)) = \sum_{i=1}^N (A_i \ot A_i) \vect(Z).
\end{align}

Let $\Phi^*\colon\Mat(d_a)\to\Mat(d_b)$ be the adjoint of a completely positive map~$\Phi$ with respect to the Hilbert-Schmidt inner product; this is again a completely positive map, with Kraus operators $A_1^T,\dots,A_N^T$.
%
In our proof of strong convexity, we will show that strong convexity follows if the completely positive maps $\Phi^{(ab)}$ are good \emph{quantum expanders}. Quantum expansion is a quantum analogue of expansion of a nonnegative matrix viewed as a bipartite graph.

\begin{definition}[Quantum expansion]\label{def:expansion}
Let $\Phi\colon\Mat(d_b) \to \Mat(d_a)$ be a completely positive map.
Say $\Phi$ is \emph{$\eps$-doubly balanced} if
\begin{align}\label{eq:doubly balanced}
  \norm*{\frac{\Phi(I_{d_b})}{\tr \Phi(I_{d_b})} - \frac{I_{d_a}}{d_a}}_{\op} \leq \frac\eps{d_a}
\quad\text{and}\quad
  \norm*{\frac{\Phi^*(I_{d_a})}{\tr \Phi^*(I_{d_a})} - \frac{I_{d_b}}{d_b}}_{\op} \leq \frac\eps{d_b}.
\end{align}
The map $\Phi$ is an \emph{$(\eps, \eta)$-quantum expander} if $\Phi$ is $\eps$-doubly balanced and
\begin{align}\label{eq:expansion}
  \norm{\Phi}_0 :=
  \max_{\substack{H \in \Mat(d_{a}) \\ \text{traceless symmetric}}}
  \;
  \max_{\substack{K \in \Mat(d_{b}) \\ \text{traceless symmetric}}}
  \frac{\langle H, \Phi(K) \rangle}{\norm H_F \norm K_F}
\leq \eta \frac{\tr \Phi(I_{d_b})}{\sqrt{d_ad_b}}
\end{align}
A $(0, \eta)$-quantum expander is called a \emph{$\eta$-quantum expander}.
\end{definition}

\noindent
Quantum expanders originate in quantum information theory and quantum computation~\cite{H07}.
There one typically takes $d_a=d_b$ and $\eps=0$, so that \cref{eq:expansion} simplifies to~$\norm{\Phi}_0 \leq \eta$.
Here we follow the definitions of~\cite{KLR19,FM20}, who recognized the connection between quantum expansion and spectral gaps of the Hessian for operator scaling.\footnote{\Cref{def:expansion} is invariant under rescaling $\Phi \mapsto c\Phi$ for $c>0$. We note that some of the above can be slightly simplified if one opts for a non-scale invariant definition.}
The following lemma allows us to translate quantum expansion properties into strong convexity.

\begin{lemma}[Strong convexity from expansion]\label{lem:expansion-convexity}
If the completely positive maps $\Phi^{(ab)}$ defined in \cref{eq:hessian channel} are $(\eps,\eta)$-quantum expanders for every $a\neq b\in[k]$, then
\begin{align*}
  \norm*{\frac{\nabla^2 f_x}{\tr \rho} - I_\H}_{\op}
\leq (k-1)\eta + (\sqrt k + 1) \eps.
\end{align*}
Assuming $k\geq3$, the right-hand side is at most $k (\eta + \eps)$.
\end{lemma}

We are concerned with $\Phi^{(ab)}$ that arise from random Gaussians.
Just like random graphs give rise to good expanders, random completely positive maps (choosing Kraus operators at random from well-behaved distributions) yield good quantum expanders.
When the Kraus operators are standard Gaussians we have the following result by \cite{pisier2012grothendieck,P14}.%
\footnote{Pisier's technical result is slightly different. We state and prove our variant of Pisier's theorem in \CREFsupp{thm:Pisier-expansion}{Theorem~C.1} in the supplement.}

\begin{theorem}[Pisier]\label{thm:hess-pisier}
Let $A_1,\dots,A_N$ be independent $d_a\times d_b$ random matrices with independent standard Gaussian entries.
Then, for every $t\geq 2$, with probability at least~$1 - t^{-\Omega(d_a + d_b)}$, the completely positive map $\Phi_A$, defined as in \cref{eq:def cp}, satisfies
\begin{align*}
  \norm{\Phi_A}_0 \leq O\left(t^{2} \sqrt N \left( d_a + d_b \right)\right).
\end{align*}
\end{theorem}
\begin{proof}
Observe that
\begin{align*}
  \norm{\Phi_A}_0
= \max_{\substack{H \text{ traceless symmetric} \\ \norm H_F=1}} \norm{\Phi(H)}_F
\leq 
\max_{\substack{H \in \Mat(d_b) \\ \norm H_F = 1}} \norm{\Phi(\Pi(H))}_F
= \norm{\Phi \circ \Pi}_{\op}.
\end{align*}
Here we identify $\Mat(d_b) \cong \R^{d_b} \ot \R^{d_b}$, so $\Pi$ identifies with the orthogonal projection onto the traceless matrices, and we used that $\norm{\Pi(H)}_F \leq \norm{H}_F$, since $\Pi$ is an orthogonal projection.
Using \cref{eq:vec rep}, the result now follows from \CREFsupp{thm:Pisier-expansion}{Theorem~C.1} with $n=d_a$ and $m=d_b$.
\end{proof}

When the samples $x=(x_1,\dots,x_n)$ are independent standard Gaussians in $\R^D$, the random completely positive maps $\Phi^{(ab)}$ have the same distribution as~$\frac1{nD}\Phi_A$, where the Kraus operators~$A_1,\dots,A_N$ are $d_a \times d_b$ matrices with independent standard Gaussian entries and~$N=\frac{nD}{d_ad_b}$.
Accordingly, strong convexity at the identity follows quite easily from \cref{thm:hess-pisier} once doubly balancedness can be controlled.
For the latter, observe that
\begin{align*}
  \norm*{\frac{\Phi^{(ab)}(I_{d_b})}{\tr \Phi^{(ab)}(I_{d_b})} - \frac{I_{d_a}}{d_a}}_{\op}
= \frac1{\tr\rho} \norm*{\rho^{(a)} - \frac{\tr \rho}{d_a} I_{d_a}}_{\op}
= \frac1{1 + \nabla_0 f_x} \frac1{\sqrt{d_a}} \norm*{\nabla_a f_x}_{\op},
\end{align*}
by \cref{lem:gradient}, and similarly for the adjoint.
Therefore, the completely positive maps $\Phi^{(ab)}$ are $\eps$-doubly balanced if and only if, for all $a\in[k]$,
\begin{align}\label{eq:balanced via grad}
  \sqrt{d_a} \norm*{\nabla_a f_x}_{\op} \leq \eps \tr \rho = \left( 1 + \nabla_0 f_x \right) \eps,
\end{align}
hence double balancedness can be controlled using the gradient bounds in \cref{prop:gradient-bound}.

Using \cref{thm:hess-pisier} we can prove the following strong convexity result at the identity.

\begin{prop}[Strong convexity at identity]\label{thm:tensor-convexity}
There is a universal constant $C>0$ such that the following holds.
Let $x = (x_1,\dots,x_n)$ be independent standard Gaussian random variables in~$\R^D$, where $n \geq C k \frac{\dmax^2}D$.
Then, with probability at least~$1 - k^2 ( \frac{\sqrt{nD}}{k \dmax} )^{-\Omega(\dmin)}$,
\begin{align*}
  \norm{\nabla^2 f_x - I_\H}_{\op} \leq \frac14;
\end{align*}
in particular, $f_x$ is $\frac34$-strongly convex at the identity.
\end{prop}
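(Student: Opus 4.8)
The plan is to assemble three ingredients established above: the gradient bound (\cref{prop:gradient-bound}), the reduction of strong convexity to quantum expansion of the completely positive maps $\Phi^{(ab)}$ (\cref{lem:expansion-convexity}), and Pisier's bound on the expansion of random Gaussian completely positive maps (\cref{thm:hess-pisier}). First I would reduce the statement about $\nabla^2 f_x$ to the rescaled Hessian $\nabla^2 f_x/\tr\rho$ that appears in \cref{lem:expansion-convexity}, via
\begin{align*}
  \nabla^2 f_x - I_\H = \tr\rho \left( \frac{\nabla^2 f_x}{\tr\rho} - I_\H \right) + (\tr\rho - 1) I_\H,
\end{align*}
so that $\norm{\nabla^2 f_x - I_\H}_{\op} \leq \tr\rho \, \norm{\nabla^2 f_x/\tr\rho - I_\H}_{\op} + \abs{\tr\rho - 1}$. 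Since $\tr\rho = 1 + \nabla_0 f_x$, an application of \cref{prop:gradient-bound} with a sufficiently small parameter $\eps_0$ — taken of order $1/\sqrt k$, or simply a small absolute constant when $k$ is treated as fixed — gives, with failure probability at most $2(k+1)e^{-\Omega(\eps_0^2 nD/\dmax)}$, that $\abs{\tr\rho - 1} \leq \eps_0$, that $\tr\rho \leq 2$, and, via the criterion \cref{eq:balanced via grad}, that each $\Phi^{(ab)}$ is $\eps'$-doubly balanced with $\eps' = O(\eps_0)$.

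Next I would bound the expansion constant $\norm{\Phi^{(ab)}}_0$. Each $\Phi^{(ab)}$ is distributed as $\frac1{nD}\Phi_A$, where $A_1,\dots,A_N$ are $d_a \times d_b$ standard Gaussian matrices and $N = nD/(d_a d_b)$. Applying \cref{thm:hess-pisier} with $t$ chosen of order $\bigl(\sqrt{nD}/(k\dmax)\bigr)^{1/2}$ — the hypothesis $n \geq C k \dmax^2/D$ with $C$ large enough guarantees $t \geq 2$ — yields, with failure probability $t^{-\Omega(d_a+d_b)} = \bigl(\sqrt{nD}/(k\dmax)\bigr)^{-\Omega(\dmin)}$, the estimate
\begin{align*}
  \norm{\Phi^{(ab)}}_0 = \frac1{nD}\norm{\Phi_A}_0 \leq \frac{O\bigl(t^2 \sqrt N (d_a+d_b)\bigr)}{nD} = \frac{O\bigl(t^2(d_a+d_b)\bigr)}{\sqrt{nD}\sqrt{d_a d_b}}.
\end{align*}
Since $\tr \Phi^{(ab)}(I_{d_b}) = \tr\rho \geq 1/2$ on the event of the first step and $d_a + d_b \leq 2\dmax$, this exhibits $\Phi^{(ab)}$ as an $(\eps',\eta)$-quantum expander with $\eta = O(t^2 \dmax/\sqrt{nD})$, which is $O(1/k)$ by the choice of $t$.

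Then I would take a union bound over the $\binom k2 \leq k^2$ pairs $a < b$ (recalling from the discussion following \cref{lem:expansion-convexity} that $\Phi^{(ab)}$ is a quantum expander if and only if its adjoint $\Phi^{(ba)}$ is), together with the much smaller failure probability of \cref{prop:gradient-bound}. On the resulting good event, which has probability at least $1 - k^2 \bigl(\sqrt{nD}/(k\dmax)\bigr)^{-\Omega(\dmin)}$, all $\Phi^{(ab)}$ with $a \neq b$ are $(\eps',\eta)$-quantum expanders, so \cref{lem:expansion-convexity} gives $\norm{\nabla^2 f_x/\tr\rho - I_\H}_{\op} \leq (k-1)\eta + (\sqrt k + 1)\eps'$. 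Choosing $\eps_0$ and $t$ so that this is at most $1/8$, the first step yields $\norm{\nabla^2 f_x - I_\H}_{\op} \leq 2 \cdot \tfrac18 + \eps_0 \leq \tfrac14$, hence $\nabla^2 f_x \succeq \tfrac34 I_\H$, i.e.\ $\tfrac34$-strong convexity at the identity.

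The main obstacle is purely the bookkeeping of the parameters $\eps_0$ and $t$: they must be small enough to force $(k-1)\eta + (\sqrt k+1)\eps' \leq 1/8$, large enough (for $t$) to collapse the Pisier failure probability $t^{-\Omega(d_a+d_b)}$ into the stated form $\bigl(\sqrt{nD}/(k\dmax)\bigr)^{-\Omega(\dmin)}$, and compatible with $t \geq 2$ — which is exactly where the sample-size hypothesis $n \geq Ck\dmax^2/D$ enters. The genuinely hard analytic content, namely that random Gaussian completely positive maps are good quantum expanders, is entirely outsourced to \cref{thm:hess-pisier}.
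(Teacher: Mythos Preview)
Your proposal is correct and follows essentially the same route as the paper's proof: decompose $\nabla^2 f_x - I_\H$ via $\tr\rho$, use \cref{prop:gradient-bound} to control $\tr\rho$ and establish $\eps$-double balancedness of every $\Phi^{(ab)}$ through \cref{eq:balanced via grad}, apply \cref{thm:hess-pisier} with $t^2 \asymp \sqrt{nD}/(k\dmax)$ to get $\eta = O(1/k)$-expansion, union-bound over the $O(k^2)$ pairs, and conclude via \cref{lem:expansion-convexity}. The only slip is the final arithmetic $2\cdot\tfrac18 + \eps_0 \leq \tfrac14$, which forces $\eps_0=0$; the paper avoids this by using the sharper bound $\tr\rho \leq \tfrac98$ in place of your $\tr\rho \leq 2$ (equivalently, you can just target $\tfrac1{16}$ instead of $\tfrac18$ for the rescaled Hessian).
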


We also prove a robustness result for the Hessian (\CREFsupp{lem:convexRobustness}{Lemma~D.3}), which implies that when our function is strongly convex at the identity then it is also strongly convex in an \emph{operator norm} (Thompson metric -- the $\dop$ defined in \cref{definition: fisher-rao and thompson}) ball about the identity.
Accordingly, we obtain the following proposition.

\begin{prop}[Strong convexity near identity]\label{thm:ball-convexity}
There are constants~$C,c>0$ such that the following holds.
Let $x = (x_1,\dots,x_n)$ be independent standard Gaussian random variables in~$\R^D$, where $n \geq C k \frac{\dmax^2}D$.
Then, with probability at least~$1 - k^2 ( \frac{\sqrt{nD}}{k \dmax} )^{-\Omega(\dmin)}$,
the function~$f_x$ is $\frac12$-strongly convex at any point $\Theta\in\P$ such that~$\dop(\Theta, I_D) \leq c$.
\end{prop}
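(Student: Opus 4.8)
The plan is to combine the strong convexity at the identity established in \cref{thm:tensor-convexity} with the robustness statement of \cref{convexRobustness}. Fix $\eps_0$ to be the universal constant appearing in \cref{convexRobustness}. First I would re-run the proof of \cref{thm:tensor-convexity} with the doubly-balanced parameter $\eps$ chosen to be a sufficiently small universal constant times $k^{-1/2}$ --- small enough that both $9\eps\leq\eps_0$ and $\eps\leq\eps_0$ hold --- rather than the value $\tfrac1{40k^{1/2}}$ used there. This changes nothing of substance: the sample requirement $n\geq Ck\dmax^2/D$ is affected only through a larger universal constant $C$ (since the gradient bound \cref{prop:gradient-bound} needs $n\geq\dmax^2/(D\eps^2)$ and $\eps^{-2}=O(k)$), and likewise only the implied constants in the $\Omega(\cdot)$ exponents change. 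On the resulting event --- which occurs with probability at least $1-k^2(\sqrt{nD}/k\dmax)^{-\Omega(\dmin)}$, the competing failure terms $e^{-\Omega(nD)}$ and $k\,e^{-\Omega(nD/k\dmax)}$ being dominated exactly as argued in the proof of \cref{thm:tensor-convexity} --- one has simultaneously
\[
  \norm{\nabla^2 f_x - I_\H}_{\op}\leq\tfrac14,\qquad \norm{\nabla_a f_x}_{\op}\leq\eps_0/\sqrt{d_a}\ \ (a\in[k]),\qquad \abs{\nabla_0 f_x}\leq\eps_0,
\]
where the last two bounds come from \cref{prop:gradient-bound} (with the chosen $\eps$), which is part of the event used in the proof of \cref{thm:tensor-convexity}.

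Next I would invoke \cref{convexRobustness}. On the above event its hypotheses are met at $I_D$, so there is a universal constant $C'$ with $\norm{\nabla^2 f_x(\Theta)-\nabla^2 f_x}_{\op}\leq C'\delta$ for every $\Theta\in\P$ with $\delta:=\dop(\Theta,I_D)\leq\eps_0$. Setting $c:=\min\{\eps_0,\,1/(4C')\}$, any $\Theta$ with $\dop(\Theta,I_D)\leq c$ then satisfies, by the triangle inequality, $\norm{\nabla^2 f_x(\Theta)-I_\H}_{\op}\leq C'c+\tfrac14\leq\tfrac12$, hence $\nabla^2 f_x(\Theta)\succeq\tfrac12 I_\H$. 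Since $f_x$ is twice differentiable, this is precisely the assertion that $f_x$ is $\tfrac12$-strongly geodesically convex at $\Theta$, which is what we want, and it holds uniformly over all such $\Theta$ on the single event above.

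The only delicate point is the first step: one must ensure that a \emph{single} high-probability event simultaneously yields the spectral bound on the Hessian at $I_D$ and good enough control on the gradient components to feed into \cref{convexRobustness}. This is handled by shrinking the doubly-balanced parameter $\eps$ in (the proof of) \cref{thm:tensor-convexity} to a small universal constant times $k^{-1/2}$, which is harmless as explained above; no new probabilistic input is needed, and the failure probability is inherited verbatim from \cref{thm:tensor-convexity}.
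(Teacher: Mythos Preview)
Your proposal is correct and follows essentially the same approach as the paper: obtain the gradient bounds needed for \cref{convexRobustness} via \cref{prop:gradient-bound}, obtain $\tfrac34$-strong convexity at $I_D$ via \cref{thm:tensor-convexity}, then apply \cref{convexRobustness} and choose $c$ small enough to lose at most $\tfrac14$. The only cosmetic difference is that the paper invokes \cref{prop:gradient-bound} (with a fixed universal $\eps\leq\eps_0/9$) and \cref{thm:tensor-convexity} as two separate black boxes and takes a union bound, whereas you shrink the parameter $\eps$ inside the proof of \cref{thm:tensor-convexity} to get both conclusions from a single event; either route yields the same failure probability and the same constants.
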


While \cref{thm:ball-convexity} uses $\dop$ to quantify closeness, we can easily translate it into a statement in terms of the geodesic distance.
Namely, under the same hypotheses~$f_x$ is $\frac12$-strongly convex on the geodesic ball~$B_r(I_D)$ of radius
$r = c / \sqrt{(k+1)\dmax}$, where $c>0$ is the universal constant from \cref{thm:ball-convexity}.
This follows from the following lemma.

\begin{lemma}\label{lem:op ball vs frob ball}
For any $\Theta\in\SPD$, we have $\dop(\Theta, I_D) \leq \sqrt{(k+1)\dmax} \cdot d(\Theta, I_D)$.
\end{lemma}

\subsection{Tensor normal model: sample complexity \& error bounds}
\label{subsec:tnmSampleComplexity}
We have all ingredients to prove \cref{thm:tensor-frobenius} according to the plan in \cref{subsec:proof-sketch}.
Since the objective is strongly convex and its gradient is small with high probability, \cref{lem:convex-ball} implies the next result, which bounds the geodesic distance between the MLE and the true precision matrix.


\begin{proof}[Proof of \cref{thm:tensor-frobenius}]
By \cref{fact: reduce to identity}, we may prove the theorem assuming~$\Theta = I_D$.
Assuming this, we now show that the minimizer of $f_\rv$ is unique and is close to $\Theta = I_D$ with high probability.
Recall from \cref{eq:eps sqr assm} that $n \geq C k^2 \frac{\dmax^3}{D} t^2$.

Let~$c>0$ be the constant from \cref{thm:ball-convexity}.
Consider the two events:
\begin{enumerate}
\item\label{it:grad-bd} $\norm{\nabla f_x}_F \leq \delta := \sqrt{82 k} \dfrac{\dmax }{\sqrt{nD}} t$.
\item\label{it:sc-ball} $f_\rv$ is $\lambda$-strongly convex over $B_r(I_D)$, where $\lambda=\frac12$ and $r := \dfrac c{\sqrt{(k+1) \dmax}}$.
\end{enumerate}
By our choice of parameters, where $C$ is a large enough constant, we have
$$\frac{\delta}{\sqrt{82k}} < 1, \quad n \geq \smash{\frac{\dmax^2}{D (\frac\delta{\sqrt{82 k}})^2}}, \quad n \geq C k \frac{\dmax^2}{D}, \quad \text{ and } r > \frac{2\delta}{\lambda}.$$

Thus, \cref{prop:gradient-bound}, with $\eps=\smash{\frac\delta{\sqrt{82 k}}}$, applies and it shows that the first event holds up to a failure probability of at most
\begin{align*}
  2(k+1)e^{-(\frac\delta{\sqrt{82 k}})^2 \frac{nD}{8\dmax}}
= k e^{-\Omega(t^2 \dmax)}.
\end{align*}
Moreover, \cref{thm:ball-convexity,lem:op ball vs frob ball} also apply, showing that the second event holds up to a failure probability of at most
\begin{align*}
  k^2 \left( \frac{\sqrt{nD}}{k \dmax} \right)^{-\Omega(\dmin)}.
\end{align*}
By the above and the union bound, both events hold simultaneously with the claimed success probability.
Thus, as the above two events hold, \cref{lem:convex-ball} applies (with our choice of~$\delta$ and~$\lambda$) and shows that the MLE~$\htheta$ exists, is unique, and satisfies $d(\htheta, \Theta) \leq \frac\delta\lambda = 2 \delta$.

Since $d(\htheta_a, \Theta_a) \leq d(\htheta, \Theta)$, by the relationship between geodesic distance and Fisher-Rao distance, we get
$$\dFR(\htheta, \Theta) = \sqrt{\frac{D}{2}} \cdot d(\htheta, \Theta), \quad \text{ and } \quad \dFR(\htheta_a, \Theta_a) \leq \sqrt{\frac{d_a}{2}} \cdot d(\htheta_a, \Theta_a) \leq \sqrt{\frac{d_a}{2}} \cdot d(\htheta, \Theta)$$
which imply the desired distance bounds.
\end{proof}

\section{Matrix normal model: improved sample complexity \& error bounds}\label{sec:matrix-normal}

We can prove a stronger result for the matrix normal model ($k=2$).
\cref{thm:matrix-normal} improves over \cref{thm:tensor-frobenius} in the following aspects:
\begin{enumerate}
\item it works over a better (i.e. \emph{smaller}) sample threshold,
\item we obtain tight error bounds for the individual factors in \emph{spectral distance} $\dop$,
\item the failure probability is \emph{inverse exponential} in the number of samples.
\end{enumerate}

Recall that when $k=2$, the samples can be viewed as $d_1 \times d_2$-matrices, denoted by~$X_i$.
From the samples, we construct the completely positive map $\Phi_X : \Mat(d_2) \to \Mat(d_1)$ defined as $\Phi_X(Z) := \sum_{i=1}^n X_i Z X_i^T$.
The above improvements come from working directly with quantum expansion, via the spectral gap of the completely positive map $\Phi_X$, instead of translating it into strong convexity.

One of our main technical results is the following theorem, which shows that the expansion parameter of the map can be made \emph{constant} with \emph{exponentially small} failure probability.

\begin{theorem}[Improved expansion]\label{thm:operator-cheeger}
There are universal constants $C > 0$ and $\eta\in(0,1)$ such that the following holds.
For $d_1 \leq d_2$, $d_2>1$, let $X=(X_1,\dots,X_n)$ be random $d_1 \times d_2$ matrices with independent standard Gaussian entries, where $n \geq C \frac{d_2}{d_1} \max\{\log d_2, t^2\}$ and $t\geq1$.
Then, $\Phi_X$ is a $\left(t \sqrt{\frac{d_2}{n d_1}}, \eta\right)$-quantum expander with probability at least $1 - e^{ - \Omega( d_2 t^2)}$.
\end{theorem}

\noindent We prove \cref{thm:operator-cheeger} in \CREFsupp{app:cheeky}{Section~C.2} by the use of Cheeger's inequality.
Our techniques are similar to the ones used in \cite{FM20}.\footnote{\cref{thm:operator-cheeger} also improves our result on strong convexity (\cref{thm:tensor-convexity,thm:ball-convexity}) for $k=2$.
Indeed, for $k = 2$, using \cref{thm:operator-cheeger} in place of \cref{thm:hess-pisier} improves the failure probability to $1 - e^{ - \Omega( d_2 t^2)}$.
However, we cannot use this to improve our results for $k \geq 3$ because \cref{thm:operator-cheeger} is not capable of proving subconstant quantum expansion.
We need quantum expansion less than $1/(k-1)$ to obtain a nontrivial result from \cref{lem:expansion-convexity}.
}

To obtain our error bounds, we combine the above result on the quantum expansion with the work of \cite{KLR19}, which gives us bounds in operator norm on how far the MLE is from our true precision matrices as a function of the expansion.

The above takes care of aspect~1 (estimating in operator norm with a reduced sample threshold) and aspect~3 (inverse exponential failure probability), as well as tight error bounds on the larger Kronecker factor of the precision matrix.
Now, we need to work a bit more to get tight bounds on the smaller  factor of the precision matrix.
To get a better control on the smaller factor, the idea is to apply one step of the flip-flop algorithm to ``renormalize'' the samples such that the second (larger dimensional) partial trace is proportional to $I_{d_2}$.
This has the effect of making the second component of the gradient~$\nabla f_\samp$ equal to zero.
In \CREFsupp{lem:flipflop-concentration}{Proposition~E.6}, we show that, even after the first step of flip-flop, the first component still enjoys the same concentration exploited in \cref{prop:gradient-bound} -- thus the total gradient has become smaller, but only the second component of the MLE estimate has changed.
Thus, intuitively, the total change in the first component will be small.
Combining \CREFsupp{lem:flipflop-concentration}{Proposition~E.6} with \CREFsupp{lem:robust expansion}{Lemma~E.9}, which shows robustness of quantum expansion, we are able to control the quantum expansion of the new completely positive map.
Hence, we are again in position to employ \CREFsupp{cor:klr}{Corollary~E.4} to get tight error bounds for the smaller Kronecker factor.

The detailed proof of \cref{thm:matrix-normal} and the necessary claims are given in \CREFsupp{app:matrix}{Section~E}.

\section{Lower bounds}\label{sec:lower}

In this section we prove new lower bounds for estimating precision matrices in the matrix and tensor normal models.
Proofs of all claims not proved in this section can be found in the supplement \CREFsupp{app:lower}{Section~F}.
We begin by stating a well-known lower bound for estimating unstructured precision matrices (the case $k=1$).

\begin{prop}[Lower bound for unstructured Gaussians]\label{cor:relative-lower}
There is $c > 0$ such that the following holds.
Let $\htheta$ be any estimator for $\Theta \in \PD(d)$ from a tuple $X$ of $n$ samples from $\cN(0, \Theta^{-1})$.
Let $B\subset \PD(d)$ be the operator norm ball about $I_d$ of radius~$1/2$.
Then:
\begin{enumerate}
\item Let $\delta^2 = c \, \min \left\{1,\frac{d^2}{n}\right\}$. Then,
$\displaystyle \sup_{\Theta \in B} \Pr\left[ \dFR(\htheta,  \Theta)  \geq \delta\right] \geq \frac{1}{2}$.
\item Let $\delta^2 = c \, \min \left\{1,\frac{d}{n}\right\}$. Then,
$\displaystyle\sup_{\Theta \in B} \Pr\left[ \dop(\htheta,  \Theta) \geq \delta\right] \geq \frac{1}{2}$.
\end{enumerate}
As a consequence (see \cref{rem:Minimax}), we have
\begin{align*}
\sup_{\Theta \in B}\E[\dFR(\htheta,  \Theta)^2] =\Omega\left( \min \left\{\frac{d^2}{n},1\right\}\right)
\text{ and }
\sup_{\Theta \in B}\E[\dop(\htheta, \Theta)^2] = \Omega\left( \min \left\{\frac{d}{n},1\right\}\right).
\end{align*}
\end{prop}

Having the lower bound above in mind, we now discuss what is needed to prove a lower bound for the matrix normal model.
In this section we assume, without loss of generality, that $d_{2} \geq d_{1} \geq 1$ and we are given samples $X_{1}, ..., X_{n} \in \R^{d_{1} \times d_{2}}$ distributed according to $\text{vec}(X) \sim N(0, \Theta_{1}^{-1} \otimes \Theta_{2}^{-1})$.
If $\Theta_{1}$ was known, we could compute $Y := \Theta_{1}^{1/2} X$, which `de-correlates' the rows of $X$, and therefore we could treat the rows of $Y$ as $n d_{1}$ independent samples from $N(0,\Theta_{2}^{-1})$. If $nd_{1} \leq c d_{2}$ for some small enough $c > 0$ (i.e. $n < c d_{2}/d_{1}$), the above $k=1$ lower bound would imply that we cannot estimate $\Theta_{2}$ to constant accuracy in the operator norm even if we had complete knowledge of $\Theta_{1}$.

Since $d_{2} \geq d_{1}$, we could hope for better results for estimating $\Theta_{1}$, since we intuitively have more samples for this mode. Namely, assume we knew $\Theta_{2}$ and pre-process $Y := X \Theta_{2}^{1/2}$ to `de-correlate' the columns of $X$, which means we could treat the columns of $Y$ as $n d_{2}$ independent samples from $N(0,\Theta_{1}^{-1})$. In this case we could estimate $\Theta_{1}$ in operator norm with RMSE rate of $O(\sqrt{d_{1}/nd_{2}})$.
One could hope that this rate holds for $\Theta_1$ even when~$\Theta_2$ is not known. Here we show that, to the contrary, the rate for $\Theta_1$ cannot be better than $O(\sqrt{d_1/ n \min(n d_1, d_2)})$. Thus, for $n \ll d_{2}/d_{1}$, it is impossible to estimate $\Theta_1$ as well as one could if $\Theta_2$ were known.

\begin{theorem}[Lower bound for matrix normal models]\label{thm:matrix-lower}
There is $c > 0$ such that the following holds.
Let $d_1 \leq d_2$, $\Theta_1 \in \PD(D_1), \Theta_2 \in \PD(d_2)$ and $\htheta_1$ be any estimator for $\Theta_1$ from a tuple $X$ of $n$ samples of $\cN(0, \Theta_1^{-1} \ot \Theta_2^{-1})$. Let $B\subset \PD(d_1)$ denote the ball about $I_{d_1}$ of radius~$1/2$ in the operator norm.
Then:
\begin{enumerate}
\item\label{it:frob-lower} Let $\delta^2 = c \, \min \left\{1,\frac{d_1^2}{n \min \{n d_1, d_2\}}\right\}$.
Then,
$\displaystyle\sup_{\substack{\Theta_1 \in B \\ \Theta_2 \in \PD(d_2)}} \Pr\left[ \dFR(\htheta_1,  \Theta_1)  \geq \delta\right] \geq \frac{1}{2}$.
\item\label{it:op-lower} Let $\delta^2 = c \, \min \left\{1,\frac{d_1}{n \min \{n d_1, d_2\}}\right\}$.
Then,
$\displaystyle\sup_{\substack{\Theta_1 \in B \\ \Theta_2 \in \PD(d_2)}} \Pr\left[ \dop(\htheta_1,  \Theta_1) \geq \delta\right] \geq \frac{1}{2}$.
\end{enumerate}
As a consequence, we have
\begin{align*}\sup_{\Theta_1 \in B, \Theta_2 \in \PD(d_2)}\E[\dFR(\htheta_1, \Theta_1)^2] &=\Omega\left( \min \left\{\frac{d_1^2}{n \min \{n d_1, d_2\}},1\right\}\right)\\
\text{ and } \sup_{\Theta_1 \in B, \Theta_2 \in \PD(d_2)}\E[\dop(\htheta_1,  \Theta_1)^2] &= \Omega\left( \min \left\{\frac{d_1}{n \min \{n d_1, d_2\}},1\right\}\right).\end{align*}
\end{theorem}

Intuitively, the above theorem holds because we can choose $\Sigma_2$ to zero out all but $nd_1$ columns of each $X_i$, which allows access to at most $n \cdot n d_1$ samples from a Gaussian with precision $\Theta_1$.
However, this does not quite work because $\Sigma_2$ would not be invertible and hence the precision matrix $\Theta_2$ would not exist.
We must instead choose $\Sigma_2$ to be approximately equal to a random projection of rank $n d_1$.
This allows us to deduce the same lower bounds for estimating $\Theta_1$ as the Gaussian case with at most $n\min \{d_2, n d_1\}$ independent samples.

One might ask why the rank of the random projection cannot be taken to be even less than~$n d_1$, yielding an even stronger bound. If the rank is less than $n d_1$, then the support of~$\Sigma_{2}$ can be estimated.
This would allow one to approximately diagonalize $\Sigma_2$ so that the $n$ samples can be treated as $nd_2$ independent samples in $\R^{d_1}$, yielding the rate $\sqrt{d_1 / n d_2}$ for~$\Theta_{1}$ in the operator norm using, e.g., Tyler's M-estimator \cite{FM20}. We now state the main tool in establishing the lower bound.

\begin{lemma}\label{lem:reduce-lower}
Let $X$ denote a tuple of $n$ samples from $\cN(0, \Theta_1^{-1} \ot \Theta_2^{-1})$
and let $\widehat{\Theta}_1(X)$ be any estimator for $\Theta_1$.
Let $Y$ be a tuple of $n\min\{nd_1, d_2\}$ samples from $\cN(0, \Theta_1^{-1})$.
For every $\delta > 0$, there is a distribution on $\Theta_2$ and an estimator $\tilde{\Theta}(Y)$ such that the distribution of~$\widehat{\Theta}_1(X)$ and the distribution of~$\tilde{\Theta}(Y)$ differ by at most $\delta$ in total variation distance.
\end{lemma}

We will use this lemma to show \cref{thm:matrix-lower} in the contrapositive: if there was a good estimator for the matrix normal model, then we could use this to produce a good estimator for Gaussian estimation. Namely, given Gaussian samples $Y \sim N(0,\Theta_{1}^{-1})$, we could simulate samples $X \sim N(0, \Theta_{1}^{-1} \otimes \Theta_{2}^{-1})$ from the matrix normal model by considering $X_{i} := (Y_{i,1} \, \cdots \, Y_{i,d_{2}}) \sqrt{\Theta_{2}}$, i.e. grouping $d_{2}$ columns into a matrix and applying $\sqrt{\Theta_{2}}$ on the right.
Then by the above lemma, if $\hat{\Theta}_{1}(X)$ is a good estimator of $\Theta_{1}$, then $\Tilde{\Theta}(Y)$ is also a good estimator for $\Theta_{1}$.
We now give the formal proof of \cref{thm:matrix-lower}.

\begin{proof}[Proof of \cref{thm:matrix-lower}]
To show claim~\ref{it:frob-lower}, let $\delta^2 \leq c \, \min \left\{1,\frac{d_1^2}{n \min \{n d_1, d_2\}}\right\}$.
Let $\Theta_2$ be distributed as in \cref{lem:reduce-lower} so that, as guaranteed by \cref{lem:reduce-lower} for $n \min \{n d_1, d_2\}$ samples $Y \sim N(0, \Theta_{1}^{-1})$ there is an estimator $\tilde{\Theta}(Y)$ satisfying $\DTV (\htheta_1(X), \tilde{\Theta}(Y)) \leq \delta_0$.
Here $X$ is distributed according to the matrix normal model $X \sim N(0,\Theta_1^{-1} \otimes \Theta_2^{-1})$.
\cref{cor:relative-lower} implies \begin{align*}
\sup_{\Theta_1 \in B} \Pr_Y\left[ \dFR(\tilde{\Theta}(Y),  \Theta_1)  \geq \delta\right] \geq \frac{1}{2}.
\end{align*}
Clearly we have
$$\sup_{\substack{\Theta_1 \in B, \\ \Theta_2 \in \PD(d_2)}} \Pr_{X}
\left[ \dFR(\htheta_1(X),  \Theta_1) \geq \delta \right]
\geq \sup_{\Theta_1 \in B} \Pr_{\Theta_2, X}\left[ \dFR(\htheta_1(X), \Theta_1)\geq \delta\right].$$
On the other hand, since the distributions of $\hat{\Theta}_{1}(X)$ and $\Tilde{\Theta}(Y)$ differ by at most $\delta_{0}$ in total variation distance, this implies
\begin{align*}
\sup_{\Theta_1 \in B} \Pr_{\Theta_2, X}\left[ \dFR(\htheta_1(X),  \Theta_1)\geq \delta\right]
&\geq \sup_{\Theta_1 \in B} \Pr_{Y}\left[ \dFR(\tilde{\Theta}(Y),  \Theta_1)
 \geq \delta\right] - \delta_0 \\
&\geq \frac{1}{2}  -  \delta_0.
\end{align*}
Allowing $\delta_0 \to 0$ implies claim~\ref{it:frob-lower}.
To prove claim~\ref{it:op-lower}, replace $\dFR$ by $\dop$ in the above.
\end{proof}

We remark that the proof of \cref{thm:matrix-lower} uses no properties about $\dFR$ or $\dop$.
Therefore, the above proof implies that any lower bound for estimating a Gaussian with $n \min \{n d_1, d_2\}$ samples transfers similarly to the matrix normal model.
The above strategy can clearly be lifted to the tensor normal model by considering more components:

\begin{theorem}[Lower bound for tensor normal models]\label{thm:tensor-lower}
There is $c > 0$ such that the following holds. Let $\Theta_1 \in \PD(d_1), \Theta_a \in \PD(d_a)$ for $a \in [k]$ and $\htheta_1$ be any estimator for $\Theta_1$ from a tuple $X$ of $n$ samples of $\cN(0, \otimes_{a \in [k]} \Theta_a^{-1})$. Let $B\subset \PD(d_1)$ denote the ball about $I_{d_1}$ of radius~$1/2$ in the operator norm.
Then:
\begin{enumerate}
\item \label{it:tensor-frob-lower} Let $\delta^2 = c \, \min \left\{1,\frac{d_1^2}{n \min \{n d_1, D/d_{1}\}}\right\}$.
Then,
$\displaystyle\sup_{\substack{\Theta_1 \in B \\ \Theta_a \in \PD(d_a), 1 \neq a \in [k]}} \Pr\left[ \dFR(\htheta_1,  \Theta_1)  \geq \delta\right] \geq \frac{1}{2}$.
\item\label{it:tensor-op-lower} Let $\delta^2 = c \, \min \left\{1,\frac{d_1}{n \min \{n d_1, D/d_{1}\}}\right\}$.
Then,
$\displaystyle\sup_{\substack{\Theta_1 \in B \\ \Theta_a \in \PD(d_a), 1 \neq a \in [k]}} \Pr\left[ \dop(\htheta_1,  \Theta_1) \geq \delta\right] \geq \frac{1}{2}$.
\end{enumerate}
As a consequence, we have
\begin{align*}\sup_{\Theta_1 \in B, \Theta_a \in \PD(d_a), 1 \neq a \in [k]}\E[\dFR(\htheta_1, \Theta_1)^2] &=\Omega\left( \min \left\{\frac{d_1^2}{n \min \{n d_1, D/d_{1}\}},1\right\}\right)\\
\text{ and } \sup_{\Theta_1 \in B, \Theta_a \in \PD(d_a), 1 \neq a \in [k]}\E[\dop(\htheta_1,  \Theta_1)^2] &= \Omega\left( \min \left\{\frac{d_1}{n \min \{n d_1, D/d_{1}\}},1\right\}\right).\end{align*}
\end{theorem}

\section{Iteration complexity of the flip-flop algorithm}\label{sec:flip-flop}

We now prove \cref{thm:tensor-flipflop,thm:matrix-flipflop}, which state fast convergence of the flip-flop algorithm to the MLE with high probability.
Detailed proofs of our main technical result, \cref{prop:meta}, along with the claims needed to prove it, can be found in \CREFsupp{app:flip-flop}{Section~G} in the supplement.

We state the flip-flop algorithm for the general tensor normal model in \cref{alg:flip-flop}.
It generalizes \cref{alg:flip-flop matrix} presented earlier in \cref{subsec:flip-flop} for the matrix normal.

\begin{Algorithm}
\begin{description}
\item[\hspace{.2cm}\textbf{Input}:] Samples $\samp = (\samp_1, \ldots, \samp_n)$, where each $\samp_i \in \R^D = \R^{d_1} \ot \cdots \ot \R^{d_k}$.
Parameters $T\in\N$ and $\delta>0$.
Initial guess $\wttheta \in \SPD$ satisfying $\tr[\rho \wttheta] = 1$.

\item[\hspace{.2cm}\textbf{Output}:]
An estimate $\otheta = \otheta_1 \ot \cdots \ot \otheta_k \in \SPD$ of the MLE.

\item[\hspace{.2cm}\textbf{Algorithm}:]
\end{description}
\begin{enumerate}
\item\label{it:flip-flop step 1}
Set $\otheta_a = \wttheta_a$ for each $a \in [k]$.

\vspace{10pt}

\item\label{it:flip-flop step 2}
For $t=1,\dots,T$, repeat the following:

\vspace{5pt}

\begin{itemize}
\item Compute $\rho_t = \dfrac{1}{nD} \cdot  \otheta^{1/2} \left( \sum_{i=1}^n x_ix_i^T \right) \otheta^{1/2}$, where $\otheta = \otheta_1 \ot \cdots \ot \otheta_k$.

\item Compute each component of the gradient using the formula
$\nabla_a f_x(\otheta) = \sqrt{d_a} \left( \rho_t^{(a)} - \tr(\rho_t) \smash{\frac{I_{d_a}}{d_a}} \right)$,
where $\smash{\rho_t^{(a)}}$ denotes the partial trace (\cref{definition:partial-trace}),
and find the index $a \in [k]$ for which $\norm{\nabla_a f_x(\otheta)}_F$ is largest.

\vspace{5pt}

\item
If $\norm{\nabla_a f_x(\otheta)}_F \leq \delta$, return $\otheta$

\vspace{3pt}

\item Update $\otheta_a \leftarrow \frac{1}{d_a} \otheta_a^{1/2} \left(\rho_t^{(a)}\right)^{-1} \otheta_a^{1/2}$.
\end{itemize}
\end{enumerate}
\caption{Flip-flop algorithm for the tensor normal model ($k\geq2$).%
}\label{alg:flip-flop}
\end{Algorithm}

\begin{remark}[Matrix flip-flop from tensor flip-flop]\label{rem:flip-flop relation}
To see how \cref{alg:flip-flop matrix} arises from \cref{alg:flip-flop}, note that if we update $\otheta_a$ in the $t$-th iteration, then the corresponding gradient component vanishes in the subsequent iteration.
Since for the matrix normal model there are only two gradient components to consider, this means that the algorithm will necessarily alternate between updating $\otheta_1$ and $\otheta_2$.
In other words, for the matrix normal model the algorithm truly ``flip-flops'' between the two coordinates.
Moreover, \CREFsupp{lemma:flip-flop-update}{Lemma~G.1} shows that $\tr\rho_t=1$ from the second iteration of \cref{alg:flip-flop} onwards.
Therefore, \cref{alg:flip-flop matrix} agrees with \cref{alg:flip-flop} except that in the first iteration we skip the stopping condition and always update $\otheta_1$.
This will not impact the analysis, as one can see in \CREFsupp{lem:flip-flop-sinkhorn}{Lemma~G.5}.
\end{remark}

The key insight is that given appropriate initial conditions on the samples (which we later show to hold under the same sample requirements as for our results on the MLE), the flip-flop algorithm will converge quickly to the MLE.
Namely, we show that the MLE is in a constant size operator norm ball around the true precision matrix, where the negative log-likelihood function $f_x$ is strongly geodesically convex.
This implies that $f_x$ is strongly geodesically convex in a small geodesic ball around the MLE.
Hence, any point with sufficiently small gradient of $f_x$ is contained in a sublevel set on which $f_x$ is strongly geodesically convex (\CREFsupp{lem:gradient-strong-convexity-fm}{Lemma~G.2}).
Such a point is found in polynomially many iterations of the flip-flop algorithm (\CREFsupp{lem:flip-flop-sinkhorn}{Lemma~G.5}).
Then, strong convexity implies that a $\delta$-minimizer is found in $O(\log(1/\delta))$ further iterations (\CREFsupp{lem:descent-sublevel-set}{Lemma~G.3}).
Thus we obtain the main technical result of this section:

\begin{theorem}[Convergence from initial conditions]\label{prop:meta}
Let $\Theta \in \SPD$ be our true precision matrix, $x_1,\dots,x_n \in \R^D$ our samples, $\lambda > 0$ and $0 < \zeta \leq \min \{ 1, 16 \sqrt{(k+1)(k-1)} / \lambda \}$ s.t.
\begin{enumerate}
\item $f_x$ is $\lambda$-strongly geodesically convex at any $\Theta'\in\SPD$ such that $\dop(\Theta', \Theta) \leq \zeta$.
\item $\abs{\nabla_0 f_x(\Theta)} \leq 1/2$.
\item The MLE $\htheta$ exists and satisfies $\dop(\htheta,\Theta) \leq \zeta/2$.
\end{enumerate}
Then, for every $0<\delta<\lambda\zeta/16\sqrt{(k+1)\dmax}$, the number of iterations $T$ needed for \cref{alg:flip-flop} to output $\otheta$ with $\dFR(\otheta_a, \htheta_a) \leq \sqrt{\frac{d_a}{2}} \cdot \frac{\delta}{\lambda}$ for all $a\in[k]$ is:

\begin{enumerate}
    \item when the initial guess is $\wttheta$ with $\nabla_0 f_x(\wttheta) = 0$,
    \begin{align*}
  T = O\left(\frac{k^2\dmax}{\zeta^2 \lambda^2} \cdot \dop(\wttheta, \htheta)
  + \frac{k}{\lambda} \log \left(\frac{\lambda\zeta}{\delta \cdot \sqrt{k\dmax}}\right) \right)
\end{align*}
    \item if the initial guess $\widetilde{\Theta}$ satisfies $\nabla_0 f_x(\wttheta) = 0$ and $\dop(\widetilde{\Theta}, \htheta) \leq \frac{\lambda \zeta}{100 \dmax \sqrt{k(k+1)}}$, then
\begin{align*}
  T = O\left( \frac{k}{\lambda} \log \left( \frac{\sqrt{k \dmax} \cdot \dop(\wttheta, \htheta)}{\delta} \right) \right) = O\left(k \log \frac{1}{\delta}\right)
\end{align*}
    \item with initial guess $\frac{1}{f_x(I_D)} \cdot I_D$,
    \begin{align*}
  T = O\left(\frac{k^2\dmax}{\zeta^2 \lambda^2} \cdot \log \kappa(\Theta)
  + \frac{k}{\lambda} \log \left(\frac{\lambda\zeta}{\delta \cdot \sqrt{k\dmax}}\right) \right)
\end{align*}
\end{enumerate}
\end{theorem}

With \cref{prop:meta} at hand, fast convergence of the flip-flop algorithm for both the matrix and tensor normal models follow simply by proving that the initial conditions above will be satisfied with high probability, given a high enough number of samples.
More precisely, we show that the sample complexity results of \cref{sec:sample-complexity} already imply the conditions of \cref{prop:meta}, thereby proving \cref{thm:tensor-flipflop,thm:matrix-flipflop}.

\begin{proof}[Proof of \cref{thm:tensor-flipflop}]
For $\lambda=\frac{1}{2}$, $0 < \zeta < 1$ a sufficiently small universal constant, and $r=\frac{\zeta}{\sqrt{(k+1)\dmax}}$, consider the following events (i.e., the conditions of \cref{prop:meta}):
\begin{enumerate}
\item $f_x$ is $\lambda$-strongly geodesically convex at any $\Theta'\in\SPD$ such that $\dop(\Theta', \Theta) \leq \zeta$.
In particular, $f_x$ is $\lambda$-strongly geodesically convex on the geodesic ball~$B_r(\Theta)$.
\item $\norm{\nabla f_x(\Theta)}_F < \frac{r\lambda}2$.
In particular, $\abs{\nabla_0 f_x(\Theta)} < \frac12$.
\item The MLE $\htheta$ exists and satisfies $d(\htheta,\Theta) \leq r/2$.
In particular, $\dop(\htheta,\Theta) \leq \zeta/2$.
\end{enumerate}
We first bound the success probability of these events similarly to the proof of \cref{thm:tensor-frobenius}.
For this, we may assume without loss of generality that $\Theta=I_D$ by \cref{remark:gradient-everywhere,remark:hessian-everywhere}.
Then the first event holds with probability at least $1 - k^2 \smash{( \frac{\sqrt{nD}}{k \dmax} )^{-\Omega(\dmin)}}$ by \cref{thm:ball-convexity,lem:op ball vs frob ball}, provided we choose~$C$ large enough and $\zeta$ small enough universal constants.
For the second event, we apply \cref{prop:gradient-bound} with
\begin{align*}
  \eps
= \frac1{10\sqrt{k}} \frac{r\lambda}2
= \frac \zeta{40\sqrt{k(k+1)\dmax}},
\end{align*}
which satisfies $\eps<1$ and $n \geq \frac{\dmax^2}{D \eps^2}$ provided we choose~$\zeta$ sufficiently small and $C$ sufficiently large universal constants.
With these choices, the second event holds with probability at least
\begin{align*}
  1 - 2(k+1)e^{-\eps^2 \frac{nD}{8\dmax}}
= 1 - k \, e^{-\Omega(\frac{n D}{k^2\dmax^2})}.
\end{align*}
Thus, the two events hold simultaneously with the desired success probability by the union bound.
Moreover, by \cref{lem:convex-ball}, the events 1 and 2 together also imply event 3.
The above shows that the conditions of \cref{prop:meta} are satisfied.
Thus, the iteration complexity of \cref{alg:flip-flop} follows from \cref{prop:meta}.
\end{proof}


\begin{proof}[Proof of \cref{thm:matrix-flipflop}]
Consider the events below for constants~$\lambda,\zeta \in (0,1)$:
\begin{enumerate}
\item $f_x$ is $\lambda$-strongly geodesically convex at any $\Theta'\in\SPD$ such that $\dop(\Theta', \Theta) \leq \zeta$.
\item $\abs{\nabla_0 f_x(\Theta)} \leq 1/2$.
\item The MLE $\htheta$ exists and satisfies $\dop(\htheta,\Theta) \leq \zeta/2$.
\end{enumerate}
To bound the success probability of these events,
we may assume without loss of generality that $\Theta=I_D$ by \cref{remark:gradient-everywhere,remark:hessian-everywhere}.
We will also assume that $d_1 \leq d_2.$

If $\lambda\in(0,1)$ is a suitable universal constant, $C$ is a large enough universal constant, and $\zeta$ is a small enough universal constant, by \CREFsupp{cor:matrix-convexity}{Corollary~E.5} with $t^2 = n d_1/  d_2$, the first event holds with probability at least~$1 - e^{-\Omega(n d_1)} \geq 1 - e^{-\Omega( n d_1^2/ d_2 \log^2 d_1)}$ in view of our assumption on~$n$.

The second event holds with probability at least~$1-e^{-\Omega(nD)}$ by \CREFsupp{prp:xnorm}{Proposition~D.2}.
Finally, by \cref{thm:matrix-normal} with $t^2 = n d_1/  d_2 \log^2 d_1$ (which can be made larger than $1$ by our assumption on $n$ assuming $C$ is large enough), the third event holds with probability at least~$1 - e^{-\Omega(n d_1^2/ d_2 \log^2 d_1)}$.

Event 3 follows from \cref{thm:matrix-normal} via
the fact that $\dop(\smash{\htheta}_1 \ot \smash{\htheta}_2, \Theta_1 \ot \Theta_2) \leq \dop(\smash{\htheta}_1, \Theta_1) + \dop(\smash{\htheta}_2, \Theta_2)$.
Thus, with probability at least~$1 - e^{-\Omega(n d_1^2/ d_2 \log^2 d_1)}$ all three events hold simultaneously, by the union bound, meaning the conditions of \cref{prop:meta} are satisfied.
Thus, the iteration complexity of \cref{alg:flip-flop,alg:flip-flop matrix} follows from \cref{prop:meta}.
\end{proof}

\section{Conclusion and open problems}

In this work, we almost optimally address the fundamental question of parameter estimation for the matrix and tensor normal models, as well as the question of efficient computation of this estimator.
Contrary to the state of the art for unstructured covariance estimation (i.e., $k = 1$), all previous existing results (in their sample complexity bounds as well as the error rates and guarantees of their estimators) depended on the \emph{condition number} of the true covariance matrices and on a \emph{sufficiently accurate starting guess}, and therefore had suboptimal guarantees in the general case.
By proving strong convexity in the geometry induced by the Fisher information metric, we remedy these issues and obtain nearly optimal estimates (\emph{without} dependence on condition number) in the strongest possible metrics, namely the Fisher-Rao and Thompson distances.
As a consequence, we also control other equivariant statistical distances such as relative entropy and total variation distance.

In particular, we showed that the maximum likelihood estimator (MLE) for the covariance matrix in the matrix normal model has optimal sample complexity up to logarithmic factors in the dimensions.
We showed that the MLE for tensor normal models with a constant number of tensor factors has optimal sample complexity in the regime where it is information-theoretically possible to recover the covariance matrix to within a constant Frobenius error.
Whenever the number of samples is large enough for either of the aforementioned statistical results to hold, we show that the flip-flop algorithm converges to the MLE exponentially quickly.
Hence, the output of the flip-flop algorithm with $O\left(\dmax(1 + \log \kappa(\Theta)) +  \log n \right)$ iterations (see the discussion after \cref{thm:matrix-flipflop}) is an efficiently computable estimator with statistical guarantees comparable to those we show for the MLE.

Our main open question is whether the sample threshold requirement $n = \Omega(k^2 \dmax^3/ D)$ for \cref{thm:tensor-frobenius} can be weakened to $n = \Omega(k^2 \dmax^2/ D)$ for $k \geq 3$.
Equivalently, do the guarantees of \cref{thm:tensor-frobenius} hold even when one cannot hope to estimate the Kronecker factors to constant Frobenius error, but only to constant \emph{operator norm} error?
In the case~$k = 1$ (i.e., unstructured covariance estimation) the weaker assumption is well-known to suffice, and for~$k = 2$ the same follows (up to logarithmic factors) by our \cref{thm:matrix-normal}.
Filling in this gap will place the tensor normal model on the same sound theoretical footing as unstructured covariance estimation.

\begin{acks}[Acknowledgments]
CF acknowledges Ankur Moitra for interesting discussions and Shuheng Zhao for sharing code for her Gemini estimator.
All authors would like to thank the anonymous reviewers for their reviews and suggestions.

MW is also affiliated with the Faculty of Computer Science of Ruhr-Universit\"at Bochum and the Korteweg-de Vries Institute for Mathematics and QuSoft at the University of Amsterdam.
\end{acks}

\begin{funding}
AR and MW acknowledge support by the Dutch Research Council (NWO grant OCENW.KLEIN.267).
MW furthermore acknowledges supported by the European Union (ERC Grant Agreement No.~101040907) and the German Federal Ministry of Research, Technology and Space (QuBRA, 13N16135; QuSol, 13N17173).
\end{funding}

\ifdefined\ARXIV
\ifdefined\ARXIV
\newcommand{\CREFmain}[2]{\cref{#1}}
\else
\newcommand{\CREFmain}[2]{\cite[#2]{FORW25}}
\fi

\begin{appendix}
\ifdefined\ARXIV
\medskip
\begin{center}
\Large\textsc{Supplemental Material}
\end{center}
\fi
\numberwithin{table}{section}

\section{Error metrics and statistical distances}\label{app:rel-error}

As we discussed in \CREFmain{section: intro}{Section~1}, the choice of error metric depends on the downstream goal of the statistical problem, and so each goal may necessitate a different error measure.

We will now discuss some of the most common measures for our Gaussian estimation setting.
We also discuss the relation between these error measures, showing that the geodesic metric (\CREFmain{subsec:geom}{Section~2.1}) that we use in our results is locally equivalent to many natural notions of statistical and matrix error, and therefore is most well-suited to our problem setting.
On the other hand, the `absolute' metric used by previous works is well-suited to exploit combinatorial and structural properties, but can be quite different from the relevant statistical and geometric error metrics in the general case.

Throughout this section we will use $\alpha \lesssim \beta$ to indicate that there is a constant $C > 0$ such that $\alpha \leq C \beta$, and $\alpha \asymp \beta$ to denote that both $\alpha \lesssim \beta$ and $\beta \lesssim \alpha$ hold.

\subsection{Fisher-Rao and Thompson metrics}

When the parameter space is given by a smooth manifold (as in the case of matrix and tensor normal models), Chentsov's Theorem~\cite[Theorem 3]{cencov1978algebraic} states that the Fisher information metric is the \emph{unique Riemannian metric} which is \emph{invariant} under any information-theoretically relevant transformation on the input data.\footnote{This result is the purview of the field of information geometry, and we point the reader to the following text for more details \cite{AmariInfoGeo}.}
Due to the \emph{geometric} and \emph{statistical} properties of the Fisher information metric, in this work, we mainly focus on the Fisher-Rao and Thompson distances, which arise from the Riemannian structure induced by the Fisher information metric.
These are statistically motivated metrics that are also `linearly-invariant' in a technical sense that we will explain below.
This invariance, along with the geometry of the parameter space, are key to our analysis and allows to prove bounds independent of the condition number of the true parameter value.
We are able to give stronger bounds in the dense case in these tighter metrics that do not depend on the condition numbers of the true parameters.

The Fisher-Rao metric is a Riemannian structure that can be placed on any abstract sufficiently smooth parameter manifold.
Formally, the Fisher-Rao information matrix is the Hessian of the population log-likelihood function:
\[ H_{\theta} := \nabla_{\theta}^{2} \int_{x \in X} \log p_{\theta}(x) d p_{\theta}(x)
= \nabla_{\theta}^{2} \E_{x \sim p_{\theta}}[\log p_{\theta}(x)] ,    \]
where $p_{\theta} \in \mathcal{P}$ is an element of the statistical model over parameter space $\Theta$.
The metric induced by this Riemannian structure is known as the \emph{Fisher-Rao metric}.
As we will show in the rest of this section, the Fisher-Rao metric is intimately connected with a variety of statistical error measures.
More precisely, in \cref{lem:rel op distances}, we will see that the distances arising from the Fisher information metric are locally equivalent to the standard distributional error measures (total variation, relative entropy), which we define in \cref{definition:KLTV}.
Thus, by providing good distance bounds in terms of the Fisher information metric, we are also giving good bounds on \emph{all} such relative distances!

For readability, we now recall the Fisher-Rao and Thompson metrics in the Gaussian covariance estimation setting.\footnote{The Thompson metric is the `operator norm' version of the Fisher-Rao metric, and is not induced by an inner product.
The Thompson metric can be used for spectral applications such as PCA.
For more background, see \cite{Snyder16}.}

\begin{definition}[Fisher-Rao and Thomson distances]
The Fisher-Rao distance for centered Gaussians parameterized by their precision matrices is given by
\begin{align*}
  \dFR(\htheta, \Theta) = \frac{1}{\sqrt{2}} \norm{\log \Theta^{-1/2} \htheta \Theta^{-1/2}}_F.
\end{align*}
The Thompson distance is given by
\begin{align*}
  \dop(A, B) := \norm{\log B^{-1/2} A B^{-1/2}}_{\op}.
\end{align*}
\end{definition}

Before moving on to the other error metrics, we state some simple properties that are useful for our analysis.

\begin{fact} \label{f:dFRdopinvarianceAndTensor}
    For $A,B \in \PD(d)$, the Fisher-Rao and Thompson metrics satisfy
    \begin{enumerate}
        \item Inverse:
        \[ \dFR(A^{-1}, B^{-1}) = \dFR(A,B); \quad \text{and} \quad
        \dop(A,B) = \dop(A^{-1}, B^{-1}) ; \]
        \item Symmetry:
        \[ \dFR(A,B) = \dFR(B,A) ; \quad \text{and} \quad \dop(A,B) = \dop(B,A) ; \]
        \item Invariance: For invertible $X \in \R^{d \times d}$
        \[ \dFR(X A X^{T}, X B X^{T}) = \dFR(A,B); \quad \text{and} \quad \dop(X A X^{T}, X B X^{T}) = \dop(A,B) ; \]
        \item Tensor: For $A = \otimes_{a \in [k]} A_{a}, B = \otimes_{a \in [k]} B_{a}$ with $A_{a}, B_{a} \in \PD(d_{a})$,
    \[ \dop(A,B) \leq \sum_{a \in [k]} \dop(A_{a}, B_{a})
     , \]
    \[ \det(A) = \det(B) \implies \dFR(A,B)^{2} = \sum_{a \in [k]} \frac{D}{d_{a}} \dFR(A_{a},B_{a})^{2} .  \]
    \end{enumerate}
\end{fact}
\begin{proof}
    We first note that both $\dop, \dFR$ metrics between $A,B$ depend only on the spectrum of $B^{-1/2} A B^{-1/2}$, which is equivalent to the spectrum of $B^{-1} A$.
    \begin{enumerate}
        \item Inverse: note $(A^{-1} B)^{-1} = B^{-1} A$, so the spectrum of $\log A^{-1/2} B A^{-1/2}$ is the negative of the spectrum of $\log B^{-1/2} A B^{-1/2}$. The statement follows by definition of $\dop, \dFR$ depend only on the spectrum.
        \item Symmetry holds by the same argument.
        \item Invariance: we again calculate
        \[ (X B X^{T})^{-1} (X A X^{T}) = X^{-T} B^{-1} X^{-1} X A X^{T} = X^{-T} B^{-1} A X^{T} ,     \]
        which has the same spectrum as $B^{-1} A$.
        \item Tensor: Recall $\log (X \otimes Y) = \log X \otimes I + I \otimes \log Y$. The statement follows by definition of the operator norm:
        \begin{align*}
        \dop(A,B) & = \| \log B^{-1/2} A B^{-1/2} \|_{\op}
         = \bigg\| \sum_{a \in [k]} I_{\overline{a}} \otimes \log B_{a}^{-1/2} A_{a} B_{a}^{-1/2}  \bigg\|_{\op}
       \\ & \leq \sum_{a \in [k]} \| I_{\overline{a}} \otimes \log B_{a}^{-1/2} A_{a} B_{a}^{-1/2} \|_{\op} = \sum_{a \in [k]} \dop(A_{a}, B_{a}) ,
        \end{align*}
        where $I_{\overline{a}}$ is the identity on the remaining factors.
        Note that the inequality is not always tight as it depends on the sign of the largest eigenvalue of each tensor factor.

        For the Frobenius norm, the assumption implies that we can scale the factors so that $\det(B_{a}^{-1/2} A_{a} B_{a}^{-1/2}) = 1$ for all $a \in [k]$.
        A similar calculation gives
        \begin{align*}
        \dFR(A,B)^{2} & = \| \log B^{-1/2} A B^{-1/2} \|_{F}^{2}
         = \bigg\| \sum_{a \in [k]} I_{\overline{a}} \otimes \log B_{a}^{-1/2} A_{a} B_{a}^{-1/2}  \bigg\|_{F}^{2}
       \\ &  = \sum_{a \in [k]} \| I_{\overline{a}} \|_{F}^{2} \| \log B_{a}^{-1/2} A_{a} B_{a}^{-1/2} \|_{F}^{2}
        = \sum_{a \in [k]} \frac{D}{d_{a}} \dFR(A_{a}, B_{a}) ,
        \end{align*}
        where in the third step the $\det$ assumption implies $\langle I_{a},  \log B_{a}^{-1/2} A_{a} B_{a}^{-1/2} \rangle = 0$ so all terms in the sum are orthogonal. \qedhere
    \end{enumerate}
\end{proof}

\subsection{Relative error}

We now define the notion of relative error, which will be useful to locally approximate the Fisher-Rao (and Thompson) distance to distributional error measures.

\begin{definition}[Relative error]\label{dfn:relative-error}
For positive definite matrices $A, B$, define their \emph{relative Frobenius error} as
\begin{align}\label{eq:def D_F}
\DF(A \Vert B) = \norm{I - B^{-1/2} A B^{-1/2}}_F.
\end{align}
Similarly, define the \emph{relative spectral error} as
\begin{align}\label{eq:def D_op}
\Dop(A \Vert B) = \norm{I - B^{-1/2} A B^{-1/2}}_{\op}.
\end{align}
\end{definition}

\begin{remark}\label{remark: relative error and fisher-rao and thompson}
Note that by Taylor approximation ($|e^{x} - 1| \simeq x$ for $|x| \leq 1$), we can see that $\DF \approx \dFR$ and $\Dop \approx \dop$ when $\dop \leq 1$.
\end{remark}


An important property of the relative error notions from \cref{dfn:relative-error} are a \emph{local} triangle inequality, stated in \cite{yang1999information}, and approximate symmetry.

\begin{lemma}\label{lem:triangle-ineq}
Let $A, B, C \in \PD(d)$.
Let $D \in \{\Dop, \DF\}$. Provided $D(A\Vert B), D(B\Vert C)$ are at most an absolute constant $c>0$, we have
\begin{align}
D(A\Vert C) &= O\bigl( D(A\Vert B) + D(B\Vert C) \bigr),\label{eq:tri}\\
D(B\Vert A) &= O\bigl( D(A\Vert B) \bigr),\label{eq:sym} \text{ and }\\
D(A^{-1}\Vert B^{-1}) &= O\bigl( D(A\Vert B) \bigr).
\end{align}
\end{lemma}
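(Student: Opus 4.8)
The plan is to reduce to the case $B = I_d$ by invariance, then expand the quantities $B^{-1/2} A B^{-1/2}$ etc.\ in terms of small perturbations. Write $M := B^{-1/2} A B^{-1/2} - I_d$ and $N := C^{-1/2} B C^{-1/2} - I_d$, so that $D(A\Vert B) = \norm{M}$ and $D(B\Vert C) = \norm{N}$ where $\norm{\cdot}$ is the Frobenius or operator norm as appropriate. By hypothesis both are at most $c$; choosing $c \leq 1/2$ ensures all matrices in sight are positive definite with spectrum bounded in, say, $[1/2, 3/2]$, so that square roots, inverses, and their perturbation expansions are well-controlled.

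First I would prove the symmetry estimate \cref{eq:sym}. By invariance of $D$ under the congruence $X \mapsto B^{-1/2} X B^{-1/2}$ (stated in \cref{sec:rel-error} and used throughout, e.g.\ in step~\ref{it:reduce} of \cref{subsec:proof-sketch}), we have $D(B\Vert A) = D(I_d \Vert A')$ and $D(A \Vert B) = D(A' \Vert I_d) = \norm{A' - I_d}$ where $A' := B^{-1/2} A B^{-1/2} = I_d + M$. Then $D(I_d \Vert A') = \norm{I_d - (A')^{-1/2} (A') (A')^{-1/2}}$... wait, that is trivially zero; the right statement is $D(I_d \Vert A') = \norm{I_d - (A')^{-1}}$ after applying the congruence by $(A')^{-1/2}$, i.e.\ $D(B \Vert A) = \norm{I_d - (I_d+M)^{-1}}$. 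Since $\norm{M} \leq c \leq 1/2$, the Neumann series gives $(I_d+M)^{-1} = I_d - M + M^2 - \cdots$, so $\norm{I_d - (I_d+M)^{-1}} = \norm{M - M^2 + \cdots} \leq \norm{M}(1 + \norm{M}_{\op} + \cdots) \leq 2\norm{M}$ (using submultiplicativity $\norm{M^j} \leq \norm{M}\,\norm{M}_{\op}^{j-1}$ valid for both norms). This gives \cref{eq:sym}, and the inversion bound $D(A^{-1}\Vert B^{-1}) = O(D(A\Vert B))$ follows the same way since $D(A^{-1}\Vert B^{-1}) = \norm{I_d - B^{1/2} A^{-1} B^{1/2}} = \norm{I_d - (I_d+M)^{-1}} \leq 2\norm{M}$ by the identical computation.

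Next, for the triangle inequality \cref{eq:tri}, I would again use invariance to assume $B = I_d$, so $D(A\Vert B) = \norm{A - I_d} = \norm{M}$, $D(B\Vert C) = \norm{I_d - C^{-1/2} C^{-1/2}}$... more carefully: with $B = I_d$, $D(B\Vert C) = \norm{I_d - C^{-1/2} C^{-1/2}}$ is wrong; rather $D(B\Vert C) = \norm{I_d - C^{-1/2} I_d\, C^{-1/2}} = \norm{I_d - C^{-1}}$, so by the symmetry bound just proven $\norm{I_d - C} \leq 2\norm{I_d - C^{-1}} = 2 D(B\Vert C) =: 2\beta$ and similarly $\norm{I_d - C^{-1}} \leq 2\norm{I_d - C}$. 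Now $D(A\Vert C) = \norm{I_d - C^{-1/2} A C^{-1/2}}$. Write $A = I_d + M$ and $C = I_d + E$ with $\norm{E} = O(\beta)$ and $\norm{M} \leq c$; then $C^{-1/2} = I_d + F$ where $\norm{F} = O(\norm{E}) = O(\beta)$ by the binomial/Neumann expansion for $(I_d+E)^{-1/2}$ (valid since $\norm{E}_{\op}$ is small). Therefore $C^{-1/2} A C^{-1/2} = (I_d+F)(I_d+M)(I_d+F) = I_d + M + 2F + (\text{terms each containing at least two of } M, F)$, so $\norm{I_d - C^{-1/2} A C^{-1/2}} \leq \norm{M} + 2\norm{F} + O((\norm{M} + \norm{F})^2) = O(\norm{M} + \beta)$, using that $\norm{M}, \norm{F} = O(1)$ to absorb the quadratic terms. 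This is exactly $O(D(A\Vert B) + D(B\Vert C))$.

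The main obstacle — really the only place requiring care — is making the perturbation expansions of $X \mapsto X^{-1/2}$ and $X \mapsto X^{-1}$ rigorous simultaneously in the Frobenius \emph{and} operator norms, and tracking that every higher-order remainder term carries at least one factor bounded in \emph{operator} norm (not Frobenius) so that submultiplicativity $\norm{XY}_F \leq \norm{X}_F \norm{Y}_{\op}$ keeps the Frobenius estimates from degrading. Concretely one invokes the integral (or Neumann-series) representation $(I_d + E)^{-1/2} = I_d - \tfrac12 E + O(\norm{E}_{\op}^2)$-type bounds, noting these hold in any submultiplicative norm once $\norm{E}_{\op} < 1$; the constant $c$ must be chosen small enough (e.g.\ $c \leq 1/4$) that all Neumann series converge and all spectra stay in $[1/2, 3/2]$. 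Once that bookkeeping is set up, \cref{eq:tri}, \cref{eq:sym}, and the inversion bound all fall out of the three short computations above.
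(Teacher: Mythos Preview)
Your argument is correct and self-contained. The paper takes a rather different route: for $\Dop$ it simply cites \cite[Lemma~C.1]{FM20}, and for $\DF$ it invokes the local equivalence $\DF(A\Vert B) \asymp \dFR(A,B)$ (proved in \cref{prop:dissimilarities}) and then uses that the Fisher--Rao distance $\dFR$ is a genuine metric, satisfies $\dFR(A,B)=\dFR(A^{-1},B^{-1})$ exactly, and is symmetric --- so all three conclusions are immediate once the equivalence is in hand. Your approach instead treats both norms uniformly by reducing to $B=I_d$ via congruence invariance and expanding $(I+M)^{-1}$ and $(I+E)^{-1/2}$ in Neumann/binomial series, carefully tracking that each higher-order term carries an operator-norm factor so that the mixed estimate $\norm{XY}_F \leq \norm{X}_{\op}\norm{Y}_F$ controls the Frobenius case. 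The paper's proof is slicker once the $\DF\asymp\dFR$ lemma is available (and gives explicit constants via the metric axioms), but yours is more elementary, avoids the external citation for $\Dop$, and handles both cases with a single computation; it also makes transparent exactly why the constant $c$ enters (convergence of the series and spectra staying in $[1/2,3/2]$).
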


\begin{proof} The approximate triangle inequality follows as
\begin{align*}
    D(A \Vert C) & = \|A^{-1/2}(A - C) A^{-1/2} \|
    \leq \|A^{-1/2}(A - B) A^{-1/2} \| + \|A^{-1/2} (B - C) A^{-1/2} \|
    \\ & \leq \|I - A^{-1/2} B A^{-1/2} \| + \|A^{-1/2} B^{1/2}\|_{\op}^{2} \| B^{-1/2} (B - C) B^{-1/2} \|
    \\ & \leq D(A \Vert B) + (1 + D_{\op}(A \Vert B)) D(B \Vert C) .
\end{align*}
For the symmetry and inverse properties, we can apply the same properties for $\dFR, \dop$ as shown in \cref{f:dFRdopinvarianceAndTensor}, and then combine with Taylor approximation for $D(A,B) \leq c$.
\end{proof}

\subsection{Distributional error}

We now consider error measures that directly compare the underlying distributions corresponding to given parameters.
In particular, the relative entropy and total variation distances are the most important and well-studied for statistical purposes.

\begin{definition}[Relative Entropy]\label{definition:KLTV}
    Given distributions $p,q$ on measure space $\mathcal{M}$, the Kullback-Leibler divergence and total variation distance are
    \[ \DKL(p \, \Vert \, q) := \int_{x \in \mathcal{M}} \log \frac{p(x)}{q(x)} d p(x) , \]
    \[ \DTV(p,q) := \frac{1}{2} \int_{x \in \mathcal{M}} |p(x) - q(x)| dx .  \]
\end{definition}

It turns out that the KL divergence is intimately related to the Fisher-Rao metric defined above.
In fact, we can re-derive the Fisher-Rao information matrix as the Hessian of the KL divergence.\footnote{Indeed this is the perspective taken as the starting point of information geometry \cite{AmariInfoGeo}.}
This implies that for sufficiently close distributions, the Fisher-Rao and relative entropy metrics are equivalent. As we show, in the Gaussian setting, this remains true for an explicitly bounded distance.


In the Gaussian setting, both of these statistical measures measures are linearly invariant, which can be proven by a simple change of variable.
In fact, the relative entropy between $N(0,\Theta_{1}^{-1})$ and $N(0,\Theta_{2}^{-1})$ can be given as an explicit expression depending only on the eigenvalues of the `relative' matrix $\lambda := \text{spec}(\Theta_{1}^{-1} \Theta_{2})$:
\begin{align} \label{eq:DKLCalculation}
\DKL\bigl(\mathcal{N}(0, \Theta_1^{-1}) \Vert \mathcal{N}(0, \Theta_2^{-1})\bigr)
&= \frac{1}{2} \tr \Theta_1^{-1} \Theta_2 - \frac{1}{2}\log\det(\Theta_1^{-1} \Theta_2) - \frac{d}{2}\\
& = \frac{1}{2} \sum_{i =1}^d (\lambda_i -1 -  \log \lambda_i) . \nonumber
\end{align}
A simple Taylor approximation relates this explicitly to the Fisher-Rao metric in \cref{lem:rel op distances}.

Unlike relative entropy, there is no explicit expression for the total variation between two Gaussian distributions just in terms of covariance matrices. But we can still use the linear invariance property to effectively relate it to the previous measures discussed.
Indeed, Pinsker's inequality gives the following bound for general distributions:
    \[ 2 \DTV(p,q)^{2} \leq \DKL(p \, \Vert \, q) .  \]
Therefore, any bound on relative entropy automatically implies a bound on total-variation. Of course, $\DTV$ is always between $0$ and $1$, so this bound is vacuous $\DKL$ is larger than $2$.
In the Gaussian setting, it turns out that when it is not vacuous, Pinsker's inequality gives a two-sided equivalence between relative entropy and total variation.

We now show that all of the relative error measures so far discussed are locally equivalent, i.e. if one of them is small then all of them are the same up to absolute constant factors.

\begin{lemma}[Relationships between dissimilarity measures]\label{lem:rel op distances}
There exists a constant $c > 0$ such that the following holds.
If $\dop(\Theta_{1} \Vert \Theta_{2}) \leq c$, then
\begin{align*}
    \dFR(\Theta_1\Vert  \Theta_2)^{2} \asymp \DKL\bigl(\mathcal{N}(0, \Theta_1^{-1}) \Vert  \mathcal{N}(0, \Theta_2^{-1})\bigr).
\end{align*}
Further, if any of
$\dFR(\Theta_1\Vert  \Theta_2)$,
$\DTV(\mathcal{N}(0, \Theta_1^{-1}), \mathcal{N}(0, \Theta_2^{-1}))$,
$\DKL(\mathcal{N}(0, \Theta_1^{-1}) \Vert  \mathcal{N}(0, \Theta_2^{-1}))$,
is at most~$c$, then
\begin{align*}
\dFR(\Theta_1\Vert  \Theta_2)
\asymp \DTV\bigl(\mathcal{N}(0, \Theta_1^{-1}), \mathcal{N}(0, \Theta_2^{-1})\bigr)
&\asymp \sqrt{\DKL\bigl(\mathcal{N}(0, \Theta_1^{-1}) \Vert  \mathcal{N}(0, \Theta_2^{-1})\bigr)}.
\end{align*}
\end{lemma}
\begin{proof}
By the Taylor approximation in \cref{remark: relative error and fisher-rao and thompson}, it is enough to relate the relative distance $\DF$ to $\DTV$ and $\DKL$.

To relate $\DF$ to relative entropy, we follow the calculation above in \cref{eq:DKLCalculation} and note that $\lambda - 1 -\log\lambda \asymp \frac12 |\log \lambda|^2$ on $[1/2, 3/2]$.
To complete the argument, choose $c$ small enough that $\frac{1}{2}(\lambda - 1 -\log\lambda) \leq c$ implies $\lambda \in [1/2, 3/2]$.

The relationship between $\DF$ and $\DTV$ comes from bounds in
\cite[Theorem 1.8]{arbas2023polynomial}:
\begin{equation*}
\frac{1}{200}  \leq \frac{\DTV\bigl(\mathcal{N}(0, \Theta_1^{-1}), \mathcal{N}(0, \Theta_2^{-1})\bigr)}{\DF(\Theta_1 \Vert  \Theta_2)} \leq \frac{1}{\sqrt{2}}.
\end{equation*}
\end{proof}

Note that the equivalence is `local' in the sense that we require one of the error measures to be small in order for them to be equivalent.
But this is the relevant case as $\DTV$ is always $\leq 1$ and the goal of statistical estimation is to compute an estimator with (vanishingly) small error given sufficiently many samples.

\subsection{Absolute error}

In this last subsection, we discuss `absolute' measures of error $\|\Theta - \Theta'\|_{F}$ and $\|\Theta - \Theta'\|_{\op}$. We can also consider normalized versions, where this is divided by $\|\Theta\|_{\op}$.

Prior works (\cite{RBLZ08,Cai2016,tsiligkaridis2013convergence,zhou2014gemini,Lyu2020Tlasso}) were motivated by graphical model estimation, i.e. understanding the support structure of the covariance and inverse covariance matrices.
While ostensibly the most natural, absolute error measures do not enjoy many of the geometric and statistical properties discussed above.
The following gives a simple relation to our relative notions of error:

\begin{prop} \label{prop:absRelation}
For $A, B \in \PD(d)$ with condition number $\kappa(B) := \frac{\lambda_{\max}(B)}{\lambda_{\min}(B)}$,
\[ \kappa(B)^{-1} \DF(A \Vert B) \leq \frac{\|A - B\|_{F}}{\|B\|_{\op}} \leq \DF(A \Vert B) \]
\[ \kappa(B)^{-1} \Dop(A \Vert B) \leq \frac{\|A - B\|_{\op}}{\|B\|_{\op}} \leq \Dop(A \Vert B)   . \]
\end{prop}

By \cref{remark: relative error and fisher-rao and thompson}, this also gives a similar relation to Fisher-Rao and Thompson metrics when the quantities are small enough.

Prior works gave improved results for sparse inputs in the Frobenius and operator norm, but their bounds also depend on the condition number of the true parameter.
This can lead to improved statistical guarantees, but only when the condition number is small.
Indeed, by the above proposition combined with \cref{lem:rel op distances},
absolute error and distributional error only match when the condition number of the parameter is small.
Thus such analyses come with an inherent trade-off between exploiting structural properties and allowing the most general parameter space (where the input could have arbitrarily large condition number). These previous results are discussed in more detail in \cref{subsec:matrix-tensor-previous-work}.

Note that it is difficult to prove bounds that depend on sparsity for any linearly-invariant measure (such as $\dFR, \dop$ as shown in \cref{f:dFRdopinvarianceAndTensor}), as sparsity is not preserved under linear transformations.
It is an intriguing open question to see if there is some estimator that achieves optimal error rates in the statistical or relative sense, that improves with sparsity, but does not depend on the condition number.

\section{Previous works}\label{appendix: previous works}

We begin with a summary on the contributions of previous works, and then provide a more detailed comparison for the interested reader.

\subsection{Summary of previous works}\label{subsection: previous works}

A great deal of research has been devoted to estimating the covariance matrix for the matrix and tensor normal models, but gaps in rigorous understanding remain.
Empirical works on the matrix and tensor normal models (\cite{mardia1993spatial,dutilleul1999mle,brown2001bayesian}) have proposed an alternating minimization algorithm, known as the \emph{flip-flop algorithm}, to compute the maximum likelihood estimator (MLE).
This can be justified by noting that, while the negative log-likelihood function for the tensor normal model is not convex as a function of the candidate precision matrices $\otheta_{1}, \dots, \otheta_{k}$, it becomes convex if we fix all but one of these matrices and optimize over the remaining matrix.
Therefore, the flip-flop algorithm can be seen as iteratively minimizing one parameter matrix at a time in order to approach the MLE.
\cite{werner2008estimation} was the first work to provide a rigorous guarantee on the MLE, with respect to the true covariance, along with a guarantee on the performance of the flip-flop algorithm.
In particular, they consider the \emph{asymptotic regime}, where the number of samples tends to infinity.
They show that both the MLE and the third iteration of the flip-flop algorithm are asymptotically consistent, meaning that as $n \to \infty$ both these estimators converge to the true covariances.
They further show both of these estimators are asymptotically normal and give explicit expressions for the expected deviation of these estimators from the true values as~$n \to \infty$.

The work~\cite{tsiligkaridis2013convergence} gives a quantitative analysis for the finite sample setting: for the matrix normal model, the three-step Flip-Flop estimator $\hat{\Theta}$ has sample threshold $n \geq \Tilde{O}(\max\{ d_{1}, d_{2}\} )$ and error rate $\Tilde{O} ( \sqrt{ \frac{d_{1}^{2} + d_{2}^{2} }{n}} )$.
Indeed, they claim this rate holds for all iterations of the Flip-Flop algorithm when the true precision has constant condition number.
Unfortunately, as we will see in \cref{subsec:matrix-tensor-previous-work}, the hidden constants (both in the sample complexity and in the error rate) depend polynomially on the condition number.

Apart from the above works on the MLE and on the analysis of the flip-flop algorithm, other works have proposed different estimators for the matrix and tensor normal models.
The main idea in this other line of works is to reduce the estimation problem of the matrix and tensor normal models (i.e. $k \geq 2$) to $k$ instances of the Gaussian estimation problem (i.e. $k = 1$).
To understand the approach of these works, we now give a high level overview of the Gaussian setting, and then discuss on a high level how the Gaussian approach is generalized to the matrix and tensor normal models.

In unstructured covariance matrix estimation, i.e. $k =1$, with covariance of dimension $d \times d$, it is well-known that the MLE exists almost surely iff $n \geq d$ and achieves minimax optimal error rates of $\sqrt{d^2/n}$ in relative Frobenius norm and $\sqrt{d/n}$ in relative operator norm, respectively.
On the other hand, there are many situations of interest where the dimension of the data $d$ is comparable to or larger than the number of samples $n$.
This fact is the starting point for a vibrant area of research attempting to estimate the covariance or precision matrix with fewer samples under structural assumptions.
Particularly important is the study of graphical models, which seeks to infer the support of the precision matrix under the assumptions that it is \emph{sparse} (has few nonzero entries) and that is has \emph{small condition number}.
In this setting, \cite{RBLZ08,CLIME, Cai2016} have obtained both sample complexity upper and lower bounds, respectively.
However, unlike the unstructured Gaussian estimation case, these works obtain estimates in absolute error, instead of relative error.
For more details on these works, see \cref{subsec:gaussian-previous-work}.

In the settings of matrix and tensor normal models, i.e. $k \geq 2$, it is much less clear what the estimator should be.
Indeed, the sample covariance and precision matrices, with high probability, will not be of the desired tensor form.
Thus, estimating the tensor factors from the samples is a much more difficult task.
Previous works have generalized the techniques used in the structured (i.e., sparse, constant condition number) Gaussian estimation to the settings of structured matrix and tensor normal models (see \cite{tsiligkaridis2013convergence,zhou2014gemini,Lyu2020Tlasso}).
However, the need to simultaneously estimate all the Kronecker factors imposes several new challenges, which were addressed in these works upon extra (strong) assumptions.
In addition to the sparse and \emph{constant condition number} assumptions, the aforementioned works have also assumed knowledge of an \emph{initial guess} which is \emph{sufficiently close to the true Kronecker factors}.


Under these assumptions, the above works have proposed to iteratively apply Gaussian estimators to each Kronecker factor, analyzing the convergence of their estimators to the true Kronecker factors in terms of \emph{absolute error}.
When the condition number is constant, these estimators achieve minimax optimal rates in terms of absolute error.
However, it is important to note that their sample complexity bounds all have multiplicative factors that \emph{depend polynomially on the condition number} of the \emph{true precision matrices}, as well as on the \emph{distance between their initial guess and the true precision matrices}.
As we explore in more detail in \cref{subsec:matrix-tensor-previous-work}, this dependence on the condition number and on the quality of the initial guess negates all benefits of exploiting sparsity as soon as the condition number or the distance of the initial guess to the true precision matrix is in the order of the square root of the dimension of the largest Kronecker factor.
Moreover, in the setting where condition number is large, absolute error no longer approximates statistical distance between two Gaussian distributions (see \cref{app:rel-error}).
For more details on the results on sample complexity and error bounds from the above works, see \cref{subsec:matrix-tensor-previous-work}.
For details on the complexity of their estimators, see \cref{subsec:complexity-previous-work}.

In a different direction, \cite{derksen2020matrix,derksen2022maximum} determined the precise number of samples for the MLE to (almost surely) exist and be unique, but their algebraic techniques do not give any guarantees on the goodness of this estimator.

\subsection{Detailed comparison with previous works}
We now summarize the main results of our article \ifdefined\ARXIV\else\cite{FORW25} \fi and then we proceed to have a more in-depth discussion of the results from previous works along with a more detailed comparison.

\begin{enumerate}
    \item \textit{Result: Nearly optimal sample complexity bounds for the matrix and tensor normal models.}
    \begin{enumerate}
        \item Our estimator works with \emph{provably minimal assumptions}, and our bounds are \emph{independent} of any properties of the distribution, such as condition number;
        \item We prove that the MLE, the most natural estimator, achieves the above bounds.
    \end{enumerate}

    \vspace{5pt}

    \noindent The MLE has been previously studied for the matrix and tensor normal model.
Namely, \cite{werner2008estimation} show asymptotic consistency and efficiency of the MLE, i.e. that the error goes to $0$ and the variance is optimal in the limit $n \to \infty$.
Also \cite{tsiligkaridis2013convergence} give error guarantees for a finite number samples.
However, their sample threshold and error becomes unbounded in the general probabilistic model where the condition number could be unbounded.
In this work, we give the first finite guarantees for the MLE in the most general model without any assumption on the condition number.

     Other works (\cite{tsiligkaridis2013convergence}, \cite{zhou2014gemini}, \cite{Lyu2020Tlasso}) have proposed different estimators with various guarantees; but these results crucially require the following assumptions: (1) the precision matrices are known to be sparse; (2) the \emph{condition number} of the factors are bounded by some fixed constant; and (3) there is a sufficiently accurate \emph{initial guesses} available for each of the factors $\bar{\Theta}_{a} \approx \Theta_{a}$.
        The quantitative guarantees of these estimators depend quite heavily on these assumptions, so while they can be in principle be relaxed, the bounds will degrade substantially. Indeed, for the most general model where the precision matrix is arbitrary, all previous works give \emph{no finite bounds} for the sample threshold or error.
    Stated another way, the estimator proposed by these previous works is not just a function of the data, but is also a function of these initial guesses, and furthermore the results as stated do not hold unless these guesses are sufficiently accurate.

Our work instead analyzes the MLE, which is solely a function of the data and does not require any initial guesses in its definition.
Unlike the aforementioned previous works, our error rates are independent of both \emph{condition number} and any \emph{inital guess}.
However, our error guarantees for the MLE do not improve with sparsity, unlike the estimators from prior works.
A detailed comparison can be found in \cref{subsection: previous works} and \cref{appendix: previous works}.

    \item \textit{Result: New lower bounds for tensors beyond the Gaussian setting.}

    The sample complexity bounds for the classical Gaussian estimation setting ($k=1$) are well-known:
    the sample threshold is $n \gtrsim d$, and the error rate is $\sqrt{d^{2}/n}$ and $\sqrt{d/n}$ with respect to $\dFR$ and $\dop$ respectively. Further, these bounds are known to be tight up to constant factors as these rates are achieved by the MLE.
    This immediately implies lower bounds of $\dFR \gtrsim \sqrt{d_{a}^{3}/nD}$ and $\dop \gtrsim \sqrt{d_{a}^{2}/nD}$ for each factor of the matrix and tensor normal by considering the special case where $\Theta_{2} = I_{d_{2}}$.
    Indeed, given $n$ samples from the matrix normal model $X_{1}, ..., X_{n} \sim \cN(0, \Theta_{1}^{-1} \otimes I_{d_{2}})$, the columns correspond exactly to $N := n d_{2}$ independent samples from $\cN(0,\Theta_{1}^{-1})$.

    Our results show that the MLE matches this error rate for the largest tensor factor. Similarly, previous works on the matrix and tensor normal model (\cite{tsiligkaridis2013convergence}, \cite{zhou2014gemini}, \cite{Lyu2020Tlasso}) analyze estimators for the sparse setting, showing that they can estimate each tensor factor with error rate matching the known lower bounds for sparse Gaussian estimation \cite{Cai2016} (albeit with the additional assumptions discussed above).

    Both of these previous lower bounds come from the simpler Gaussian estimation problem.
    Our new lower bound in \CREFmain{sec:lower}{Section 4} shows that estimating each precision factor of the matrix and tensor normal model is strictly harder than separate instances of Gaussian estimation.

    \item \emph{Result: We prove that in the above sample regimes, the \emph{flip-flop} algorithm quickly converges to the MLE, and thereby to the true covariance matrices.
    Our analysis also works for any geodesic descent method to compute the MLE.}

    \vspace{5pt}
    \noindent Prior estimators (\cite{tsiligkaridis2013convergence}, \cite{zhou2014gemini}, \cite{Lyu2020Tlasso}) use techniques from sparse precision estimation for the Gaussian setting.
    Concretely, each iteration requires a solution to a linear program \cite{zhou2014gemini} or a convex program \cite{tsiligkaridis2013convergence, Lyu2020Tlasso}.
    While these are somewhat structured programs, they are still quite computationally intensive to solve, either requiring high polynomial overhead for large inputs or very slow convergence.
    In fact, the output of these programs is used iteratively to compute subsequent tensor factors. This can be quite computationally intensive as they must be solved to high accuracy.
    The estimator of \cite{XZG17} uses a truncated gradient descent method, but due to the use of sample splitting it cannot run for many iterations, which hurts its convergence properties.

Our solution is to analyze the natural flip-flop algorithm and to prove that it efficiently approximates the MLE \emph{from any given starting guess}.
This is a significantly faster procedure, as
each iteration requires a single matrix inversion.
Convergence of the Flip-Flop procedure was studied in \cite{werner2008estimation} in the asymptotic setting, and in \cite{tsiligkaridis2013convergence} in the restricted condition number setting.
Our work is the first to give convergence guarantees with finite samples in the most general probabilistic model.
More precisely, we show that flip-flop has linear convergence when it is sufficiently close to the MLE, and has polynomial convergence outside of this region. This also explains the experimental results given in previous works (e.g. \cite{tsiligkaridis2013convergence, zhou2014gemini, Lyu2020Tlasso}) showing fast convergence of the flip-flop method for many datasets in practice.
For a detailed runtime analysis, see \CREFmain{thm:tensor-flipflop}{Theorem 1.13}.
For a detailed comparison with previous works, see \cref{subsec:complexity-previous-work}.
\end{enumerate}
For a detailed summary on the qualitative and quantitative improvements of our work over previous works, we refer the reader to \cref{table: main results} and \cref{table: guesses effect} for the sample complexity comparisons.
For comparisons on the computational complexity of the proposed estimators, we refer the reader to \cref{table: estimator performance}.
A simplified version of the above tables (for a natural setting of the parameters) are shown in \CREFmain{table: main results intro}{Table 1} and \CREFmain{table: estimator performance no assumptions intro}{Table 2}, after we formally state our main results.

\vspace{5pt}

We now give a more detailed description of previous works on the matrix and tensor normal models, both on sample complexity and error bounds, as well as on the complexity of previously proposed estimators.
To give some perspective on the settings studied and the assumptions made by previous works, we first describe the classical Gaussian estimation setting, i.e. $k = 1$.

As in the previous section, throughout this section we will use $\alpha \lesssim \beta$ to indicate that there is a constant $C > 0$ such that $\alpha \leq C \beta$, and $\alpha \asymp \beta$ to denote that both $\alpha \lesssim \beta$ and $\beta \lesssim \alpha$ hold.

\subsection{Gaussian estimation}\label{subsec:gaussian-previous-work}

In the Gaussian setting, given samples $X_{1}, ..., X_{n} \sim N(0,\Sigma)$, we would like to estimate the covariance $\Sigma$ or the precision matrix $\Theta := \Sigma^{-1}$.
This is a fundamental problem throughout science and engineering that has been extensively studied in statistics.
The sample covariance and the MLE are described, respectively, as:
\[ \hat{\Sigma} := \frac{1}{n} \sum_{i=1}^{n} X_{i} X_{i}^{T} \qquad \text{and} \quad \hat{\Theta} := \arg\min_{\otheta \succ 0} \langle \otheta, \hat{\Sigma} \rangle - \log\det(\otheta) .    \]

The above is a convex program whose solution is the inverse sample covariance $\hat{\Theta} = \hat{\Sigma}^{-1}$.
This estimation strategy is the gold standard for statistical purposes: it requires only $n \gtrsim d$ samples and gives $\dop(\Theta, \htheta) \lesssim \sqrt{d/n}$ with (very high) probability $\geq 1 - \exp(- \Omega(d))$ (see e.g. \cite[Corollary 5.50]{vershynin2010introduction}).
Intuitively, one has the requirement $n \geq d$, as otherwise we will not even see the whole vector space.
This can be made formal via the information-theoretic lower bound described in \cref{app:lower}.
Also note that the Thompson metric $\dop$ is the tightest error metric of all those considered, in particular $\dop \lesssim \sqrt{d/n}$ implies $\dFR \lesssim \sqrt{d^{2}/n}$.
Finally, from an algorithmic perspective, in this setting the estimator is just the inverse sample covariance, which is very simple to compute.

So what more could we hope for?
It turns out that a strong $\dop$ bound does not imply strong statistical guarantees in general.
In order for statistical measures of error such as $\DKL$ and $\DTV$ to be small, we require $\dFR$ to be a small constant, which requires $n \gtrsim d^{2}$ by the above analysis.
For more details on the relations between these measures, see \cref{app:rel-error}.

In settings such as neighborhood selection in graphical models, it may be the case that the underlying dimension $d$ is much larger than the number of available samples $n$.
In this case, the sample covariance $\hat{\Sigma}$ is not even invertible, so we need to find another way to analyze the estimator for the precision matrix.
What concentration bounds do we have in this setting?

When $d \gg n$, we no longer have the strong concentration of relative error between $\hat{\Sigma}, \Sigma$.
However, as soon as $n \gtrsim \log d$, we still have the following bounds on the entry-wise difference (with high probability):
\[ \|\hat{\Sigma} - \Sigma\|_{\max} \lesssim \|\Sigma\|_{\max} \sqrt{\frac{\log d}{n} } \leq \|\Sigma\|_{\op} \sqrt{\frac{\log d}{n} }. \]
In fact, for the diagonal entries we even have multiplicative error $\hat{\Sigma}_{ii} \in \Sigma_{ii} (1 \pm O(\sqrt{\log d/n}) )$.

A line of works, culminating with \cite{RBLZ08,Cai2016}, leverages this entry-wise bound to devise estimators with strong error guarantees in the low sample regime.
These works assume the the following structural assumptions: \emph{constant condition number} and \emph{sparsity of the precision matrix}.

The results from these works are described precisely in \cref{table: sample complexity gaussian}, where we denote by $s$ the sparsity of the precision matrix (setting of \cite{RBLZ08}), and by $r_s$ the row-sparsity of the precision matrix (setting of \cite{CLIME} and \cite{Cai2016}).
We denote by $\kappa := \kappa(\Theta)$ the condition number of the precision matrix, by $\Delta := \text{diag}(\Sigma)$ the diagonal matrix of variances, and $\Gamma := D^{-1/2} \Sigma D^{-1/2}$ is the `correlation matrix' which satisfies $\text{diag}(\Gamma) = 1_{d}$.

\begin{table}
\caption{Sample complexity for Gaussian Setting}
\label{table: sample complexity gaussian}
\resizebox{1\columnwidth}{!}{
\begin{tabular}{|c|c|c|c|}
\hline
Work & Sample complexity & Error rate & Algorithm \\
\hline
Standard (Folklore, \cite{vershynin2010introduction})    & $n \gtrsim d$ & $\displaystyle \dop(\hat{\Theta}, \Theta) \lesssim \sqrt{\frac{d}{n}}$ & MLE (matrix inversion) \\
\cite[Theorem 1]{RBLZ08}    & $n \gtrsim \kappa^{2} (s+d) \log d$ & $\frac{\|\hat{\Theta} - \Theta\|_{F}}{\|\Theta\|_{op}} \lesssim \kappa \sqrt{ \frac{(s+d) \log d}{n}}$ &  Convex Program
\\
\cite[Theorem 2]{RBLZ08}   & $n \gtrsim \|\Gamma^{-1}\|_{\op}^{2} (s+1) \log d$ & $\displaystyle \frac{\|\hat{\Theta} - \Theta\|_{\op}}{\|\Theta\|_{\op}} \lesssim \kappa(\Delta) \|\Gamma^{-1}\|_{\op} \sqrt{ \frac{(s+1) \log d}{n}}$ & Convex Program \\
\cite[Theorem 1]{CLIME}   & $n \gtrsim \log d$ & $\displaystyle \frac{\|\hat{\Theta} - \Theta\|_{1 \to 1}}{\|\Theta\|_{1 \to 1}} \lesssim \|\Sigma\|_{\max} \|\Omega\|_{1 \to 1} \sqrt{\frac{ r_s^{2} \log d}{n}}$ & Linear program \\
\cite[Theorem 4]{CLIME}   & $n \gtrsim \log d$ & $\displaystyle \frac{\|\hat{\Theta} - \Theta\|_{F}}{\|\Theta\|_{1 \to 1}} \lesssim \|\Sigma\|_{\max} \|\Omega\|_{1 \to 1} \sqrt{\frac{ r_s d \log d}{n}}$ & Linear program \\
\cite[Theorem 3.1]{Cai2016}   & $n \gtrsim \kappa^{2} r_s^{2} \log d$ & $\displaystyle \frac{\|\hat{\Theta} - \Theta\|_{1 \to 1}}{\|\Theta\|_{1 \to 1}} \lesssim \sqrt{ \|\Sigma\|_{\max} \|\Omega\|_{\max} \frac{ r_s^{2} \log d}{n}}$ & Linear program \\
\cite[Theorem 6.1]{Cai2016}   & $n \gtrsim \kappa^{2} r_s^{2} \log d$ & $\displaystyle \frac{\|\hat{\Theta} - \Theta\|_{F}}{\|\Theta\|_{1 \to 1}} \lesssim \sqrt{ \|\Sigma\|_{\max} \|\Omega\|_{\max} \frac{ r_s d \log d}{n}}$ & Linear program \\
\cite[Theorem 4.1]{Cai2016}   & $n \lesssim o(d), r_s \lesssim o(\sqrt{n})$ & $\displaystyle \frac{\|\hat{\Theta} - \Theta\|_{1 \to 1}}{\|\Theta\|_{1 \to 1}} \gtrsim \sqrt{ \frac{r_s^{2} \log d}{n}}$ & N/A (lower bound) \\
\cite[Theorem 6.1]{Cai2016}   & $n \lesssim o(d), r_s \lesssim o(\sqrt{n})$ & $\displaystyle \frac{\|\hat{\Theta} - \Theta\|_{F}}{\|\Theta\|_{1 \to 1}} \gtrsim \sqrt{ \frac{r_s d \log d}{n}}$ & N/A (lower bound) \\

\hline
\end{tabular}
}
\end{table}

By \cref{table: sample complexity gaussian} and \cref{app:rel-error}, when $\kappa = O(1)$, we have equivalence between relative error $\dFR$ and absolute error $\|\cdot\|_{F}$.
Hence, the above results provide an advantage over the MLE whenever $s \ll d^{2}$ or $r_{s} \ll d$.
However, as the condition number increases, both the error rate and the sample complexity deteriorate quite rapidly, as well as the difference between relative error and absolute error.

We now give a high-level overview of the approaches taken for these improved estimators.
Recall that the MLE is the solution of the following convex program (when $n \gtrsim d$):
\[ \hat{\Theta} := \arg\min_{\bar{\Theta} \succ 0} \langle \bar{\Theta}, \hat{\Sigma} \rangle - \log \det(\bar{\Theta}) ,    \]
where $\Sigma := \frac{1}{n} \sum_{i=1}^{n} X_{i} X_{i}^{T}$ is the sample covariance.

Note that the crucial relation here is $\hat{\Sigma} = \hat{\Theta}^{-1}$, so the closer $\hat{\Sigma} \approx \Sigma$, the closer this estimator will be to the true precision matrix $\Theta$.
But when $\hat{\Sigma}$ is not invertible, it is more difficult to analyze the above program.
Therefore we would like to exploit structural conditions of the true precision matrix and modify the above program to bias the optimum towards $\Theta$.
For example, \cite[Theorem 1]{RBLZ08} uses the following penalized likelihood program:
\[ \hat{\Theta} := \arg\min_{\bar{\Theta} \succ 0} \langle \bar{\Theta}, \hat{\Sigma} \rangle - \log\det(\bar{\Theta}) + \lambda \|\bar{\Theta}\|_{1,\text{off}} .      \]
Here $\|\bar{\Theta}\|_{1,\text{off}}$ measures the $\ell_{1}$ norm of the off-diagonal elements.
This is a ``lasso'' style penalty function which biases the optimum towards sparse solutions, and the parameter $\lambda$ is a tuning parameter which is chosen to balance sparsity and error so that the optimum solution is close to $\Theta$.
Then, \cite[Theorem 2]{RBLZ08} uses the observation that the diagonal entries of $\hat{\Sigma}_{ii}$ have much stronger concentration to the true values $\Sigma_{ii}$, so they use $\Gamma := \hat{\Delta}^{-1/2} \hat{\Sigma} \hat{\Delta}^{-1/2}$ with the above program to estimate the (inverse) correlation matrix $\Gamma^{-1}$, and then replace the estimated diagonals.

The CLIME estimator of \cite{CLIME} uses the following linear program:
\[ \hat{\Theta} := \arg\min_{\bar{\Theta} \succ 0} \|\bar{\Theta}\|_{1} \quad \text{s.t. } \quad \|\hat{\Sigma} \bar{\Theta} - I_{d} \|_{\max} \leq \lambda .      \]
Where $\lambda$ is a tuning parameter that makes sure the true precision matrix is a feasible solution, and depends on the entry-wise concentration of the sample covariance.
The intuition for this program is discussed in the introduction of \cite{CLIME} as a way to directly find an approximate solution to the optimality conditions of the lasso-type program used in \cite{RBLZ08}.
This is further refined in \cite{Cai2016} by exploiting stronger concentration bounds for the equation $\|\hat{\Sigma} \Theta - I_{d}\|_{\max}$.

We remark that the above two estimators \cite{CLIME} and \cite{Cai2016} have very low sample complexity, as can be seen in \cref{table: sample complexity gaussian}. But the estimators are not guaranteed to be positive semi-definite, and for this a larger number of samples is required. This is discussed more precisely in the supplement.

All of the above programs can be analyzed by replacing the sample covariance $\hat{\Sigma}$ with an arbitrary input $\bar{\Sigma}$ (which one should think of as a ``good initial guess'').
The important observation is that the error rate depends only on the entry-wise error of the initial guess to the true covariance.
Therefore in the following table, we precisely state these arguments in terms of the entry-wise accuracy of the initial guess, denoted by $\nu := \|\bar{\Sigma} - \Sigma\|_{\max}$.
This will be helpful in the following subsection in order to understand previous works on the matrix and tensor normal model.
For \cite[Theorem 2]{RBLZ08}, we are also given guess $\overline{\Delta} \approx \Delta$, and we use it to estimate the correlation matrix $\Gamma := \Delta^{-1/2} \Sigma \Delta^{-1/2}$. For this we use notation $\nu_{\Delta} := \|\Delta^{-1} \overline{\Delta} - I_{d} \|_{\max}$ and $\nu_{\Gamma} := \|\Delta^{-1/2} (\overline{\Sigma} - \Sigma ) \Delta^{-1/2} \|_{\max}$.

\begin{table}
\caption{Sample complexity for Gaussian Setting}
\label{table: entry wise gaussian}
\resizebox{1\columnwidth}{!}{
\begin{tabular}{|c|c|c|c|}
\hline
Work & \makecell{Accuracy of \\ initial guess} & Error rate & Tuning Parameter \\
\hline
\cite[Theorem 1]{RBLZ08}    & \makecell{$\displaystyle \nu \lesssim \dfrac{1}{\|\Theta\|_{\op}\sqrt{s + d}}$} & $\dfrac{\|\hat{\Theta} - \Theta\|_{F}}{\|\Theta\|_{\op}} \lesssim \|\Theta\|_{\op} \nu \sqrt{s + d}$ &  $\lambda \geq \nu$ \\
\cite[Theorem 2]{RBLZ08}
& $\nu_{\Delta} \lesssim 1, \nu_{\Gamma} \lesssim \frac{1}{\|\Gamma^{-1}\|_{\op} \sqrt{s}}$ & $\displaystyle \frac{\|\hat{\Theta} - \Theta\|_{\op}}{\|\Theta\|_{\op}} \lesssim \kappa(\Delta) \bigg( \|\Gamma^{-1}\|_{\op} \nu_{\Gamma} \sqrt{s} + \nu_{\Delta} \bigg)$ & $\lambda \geq \nu_{\Gamma}$ \\
\cite[Theorem 6]{CLIME}   & None & $\displaystyle \frac{\|\hat{\Theta} - \Theta\|_{\max}}{\|\Theta\|_{1 \to 1}} \lesssim \|\Theta\|_{1 \to 1} \nu $ & $\lambda \geq  \|\Theta\|_{1 \to 1} \nu$ \\
\hline
\end{tabular}
}
\end{table}

\medskip

\noindent\textbf{Computational complexity of estimators.} All of the above results that apply to structured inputs compute estimators that are solutions to convex programs or linear programs.
Moreover, they require some side information about the true solution in order to produce good tuning parameters.
In general, these programs can be solved to high accuracy (i.e. with $\log(1/\delta)$ convergence) using interior point methods or the ellipsoid method.
However, these methods incur a very high polynomial cost per iteration, and the convergence rate will depend on the condition number of the true solution.
Similarly, first-order methods have lower cost per iteration but will generally only provide $\poly(1/\delta)$ convergence to the optimum.
If we want to efficiently compute an estimator that matches the promised error rates, we need to solve the given program to high accuracy, so these rates will become prohibitive.
One very important advantage of the MLE is that it can be exactly computed by a single matrix inversion.

\vspace{10pt}

\subsection{Sample complexity and error rate of matrix and tensor normal models}\label{subsec:matrix-tensor-previous-work}

Previous works on non-asymptotic bounds for the matrix and tensor normal models essentially reduce these problems to separate Gaussian estimation problems for each Kronecker factor.
Hence, they rely on \emph{entry-wise accuracy bounds} and exploit structural assumptions such as sparsity, while focusing on the setting of \emph{constant condition number}.
Additionally, to overcome the difficulties arising from the multiple Kronecker factors, these works
required the following extra assumptions for their estimators:
\begin{equation}\label{equation:assumptions-previous work}
    \text{Initial guess } \otheta_a \text{ satsifying } \|\Sigma_{a}\|_{\op} \|\overline{\Theta}_a - \Theta_a\|_{\op} \lesssim \frac{1}{k}; \text{Knowledge of } \|\Sigma_a\|_{\op}, \|\Theta_{a}\|_{F}.
\end{equation}
\noindent As we will soon discuss (\cref{table: guesses effect}), the premises of a \emph{good initial guess} and \emph{constant condition number} of the true covariance matrices are strong assumptions, without which the quality of their estimators deteriorates quite rapidly.

We begin by presenting their results with the above assumptions in \cref{table: main results}, highlighting the dependence on the condition number.
In \cref{table: main results}, we denote by $s_a, r_{s,a}$ the sparsity and row-sparsity of the precision matrix $\Theta_a$, by $k$ the number of Kronecker factors, $\kappa_a$ denotes the condition number of $\Sigma_a$ and $\kappa_{max} = \max_a \kappa_a$.
We also recall that $\Delta_a := \text{diag}(\Sigma_a)$, $\Gamma_a := \Delta_a^{-1/2}\Sigma_a \Delta_a^{-1/2}$, $n$ is the number of samples and $\displaystyle D := \prod_{a=1}^k d_a$.

\begin{table}
\caption{Error rates and performance of estimators}
\label{table: main results}
\resizebox{1\columnwidth}{!}{
\begin{tabular}{|p{2.3cm}|p{1.4cm}|c|c|c|}
\hline
Work & Setting & Sample Threshold & Error Rate (above sample threshold) & Assumptions \\
 \hline
\cite[Theorem 3]{tsiligkaridis2013convergence}
    & \makecell[t]{general, \\ $k=2$}
    & $\sum_{a \in \{1,2\}} \max\{ 1, \frac{\kappa_{a}^{2}}{d_{a}} \} \kappa_{a}^{2} \frac{d_{a}^{3} \log D}{D}$
    & $ \frac{\|\hat{\Theta}^{(3)} - \Theta\|_{F}}{\|\Theta_{a}\|_{\op}} \lesssim \sum_{a \in \{1,2\}} \kappa_{a}^{2} \frac{d_{a}^{2} \log D}{n} $
    & \eqref{equation:assumptions-previous work} \\
    \cite[Theorem 4]{tsiligkaridis2013convergence}
    & \makecell[t]{$s_{a} \lesssim d_{a}$, \\ $k=2$}
    & $\sum_{a \in \{1,2\}} \max\{ 1, \frac{\kappa_{a}^{2}}{d_{a}} \} \kappa_{a}^{2} \frac{d_{a}^{2} \log D}{D}$
    & $\frac{\|\hat{\Theta} - \Theta\|_{F}}{\|\Theta\|_{\op}} \lesssim \sum_{a \in \{1,2\}} \kappa_{a}^{2} \frac{d_{a} \log D}{n} $
    & \eqref{equation:assumptions-previous work} \\
    \cite[Theorem 3.1]{zhou2014gemini}
    & \makecell[t]{$k=2$,\\ general $s_{a}$}
    & $\displaystyle  \sum_{a \in \{1,2\}} \max\{ 1, \frac{\kappa_{a}^{2}}{d_{a}} \} ( \kappa_{a} \kappa(\Delta_{a}) \|\Gamma_{a}^{-1}\|_{\op})^{2} \frac{d_{a}(s_{a} + 1)\log D}{D}$
    & $\displaystyle\frac{\|\hat{\Theta} - \Theta\|_{\op} }{\|\Theta\|_{\op}} \lesssim \sum_{a \in \{1,2\}} \kappa_{a} \kappa(\Delta_{a}) \|\Gamma_{a}^{-1}\|_{\op} \frac{(s_{a} + 1)\log D}{n}$
    & \eqref{equation:assumptions-previous work} \\
    \cite[Theorem 3.3]{zhou2014gemini}
    & $k=2$, general $r_{s,a}$
    & $\sum_{a \in \{1,2\}} (\|\Sigma_{a}\|_{\op} \|\Sigma_{a}\|_{\max} \|\Omega_{a}\|_{1 \to 1}^{2} )^{2} \frac{r_{s,a}^{2} d_{a} \log D}{D} $
    & $\displaystyle\frac{\|\hat{\Theta} - \Theta\|_{\op} }{\|\Theta\|_{\op}} \lesssim \sum_{a \in \{1,2\}} \frac{\|\Sigma_{a}\|_{\max} \|\Omega_{a}\|_{1 \to 1}^{2}}{\|\Omega_{a}\|_{\op}} \frac{r_{a}^{2} d_{a} \log D}{nD} $
    & \eqref{equation:assumptions-previous work}  \\
    \makecell{ \\ \cite{Lyu2020Tlasso}}
    & general $s_{a}$,  general $k$
    & \makecell{$\displaystyle k^{2} \sum_{a \in [k]} \max\{ 1, \frac{\kappa_{a}^{2}}{d_{a}} \} \kappa_{a}^{2} \frac{d_{a}(s_{a} + d_{a})\log D}{D} $}
    & $\frac{ \|\hat{\Theta}_{a} - \Theta_{a}\|_{F}}{\|\Theta_{a}\|_{\op}} \lesssim \kappa_{a} \sqrt{\frac{d_{a} (s_{a} + d_{a}) \log D}{nD}} $
    & \eqref{equation:assumptions-previous work}
    \\
    \CREFmain{thm:matrix-normal}{Theorem~1.11}
    & \makecell[t]{general, \\ $k=2$}
    & $\frac{d_{\max}^{2} \log D}{D}$
    & $ \dop(\hat{\Theta}_{a}, \Theta_{a}) \lesssim \sqrt{\frac{d_{a}^{2} \log^{2} d_{\min}}{n D}} $
    & None \\
    \CREFmain{thm:tensor-frobenius}{Theorem~1.10}
    & \makecell[t]{general,\\ $k \geq 2$}
    & $ \frac{k^{2}d_{\max}^{3}}{D}  $
    & $ \dFR(\hat{\Theta}_{a}, \Theta_{a}) \lesssim \sqrt{ \frac{k d_{\max}^{2} d_{a} }{n D}} $
    & None \\
    \hline
\end{tabular}
}
\end{table}

Intuitively, the reduction in previous works from the matrix normal model to Gaussian estimation works as follows: assume we knew exactly the value of $\Theta_2$.
Then we could `normalize' our matrix samples $X \in \R^{d_1 \times d_2}$, which we denote by $Y :=  X \Theta_2^{1/2}$,
and note that the columns of $Y$ are independent and distributed as $Ye_{j} \sim N(0,\Sigma_1)$.
In other words, we have decorrelated the columns of $Y$ and transformed them into samples from $\cN(0, \Sigma_1)$.
Thus, given samples $X_{1}, ..., X_{n}$, we estimate $\Theta_1$ by applying any technique for Gaussian estimation to the $n d_2 = n D/ d_1$ columns $\{Y_{i} e_{1}, \dots, Y_{i} e_{d_2} \}$ and get the error rates for the $k=1$ setting.

In that vein, the estimators of \cite[Theorem 4]{tsiligkaridis2013convergence} and \cite{Lyu2020Tlasso} reduce to the estimator of \cite[Theorem 1]{RBLZ08};
\cite[Theorem 3.1]{zhou2014gemini} reduces to that of \cite[Theorem 2]{RBLZ08};
and \cite[Theorem 3.3]{zhou2014gemini} reduces to that of \cite[Theorem 6]{CLIME}.

Of course, previous works do not know the true precision matrix $\Theta_2$.
The main contribution of these results on the matrix and tensor normal models is to show that the above analysis applies with essentially the same guarantees as long as we start with \emph{good enough guesses} for the precision matrices.
For simplicity, we focus on the implementation of the above strategy for the matrix normal model, and state the full results for the tensor normal model in \cref{table: guesses effect}.

Suppose one is given guesses $\overline{\Theta}_1, \overline{\Theta}_2$ for the precision matrices.
We would like to separate the matrix normal model problem into two Gaussian estimation problems, one for each of the Kronecker factors.
To achieve this, we need to produce an estimate for the true covariance $\Sigma_{1}$ (and analogously an estimate for $\Sigma_2$).

If we knew $\Theta_{2}$ exactly, then the above strategy implies that the sample covariance of the vectors $Y_{i}e_j$, given by $\widetilde{\Sigma}_{1} := \frac{1}{nd_2} \sum_{i=1}^{n} X_{i} \Theta_{2} X_{i}^{T}$ gives a reasonable guess for the true covariance $\Sigma_1$, and will have the same accuracy as in the $k=1$ setting, i.e.
\[ \|\widetilde{\Sigma}_{1} - \Sigma_{1} \|_{\max} \lesssim \|\Sigma_{1}\|_{\op} \sqrt{ \frac{ \log D}{n d_{2}}} = \|\Sigma_{1}\|_{\op} \cdot  \sqrt{\frac{d_{1} \log D}{n D}} .    \]
Because we only have a guess $\overline{\Theta}_2$, we can apply the above strategy with this guess, obtaining
\[ \overline{\Sigma}_{1} := \frac{1}{nd_2} \sum_{i=1}^{n}  X_{i} \overline{\Theta}_{2} X_{i}^{T},   \]
and we can hope that it will `approximately decorrelate' the columns of the matrices $X_i$ and produce a good approximation of the true covariance $\Sigma_1$.

To see where this approach fails, consider the example where $\Theta_{2} = I_{d_{2}}$ but our guess is the projector only the first column. In this extreme case, each matrix data is essentially reduced to just a single sample for each column.

Quantitatively, the accuracy of a given guess is bounded by
\[ \|\overline{\Sigma}_{1} - \Sigma_{1}\|_{\max} \lesssim \gamma_2 \cdot \|\widetilde{\Sigma}_{1} - \Sigma_{1} \|_{\max} \lesssim \gamma_2 \cdot \|\Sigma_{1}\|_{op} \sqrt{ \frac{ \log D}{n d_{2}}} = \gamma_2 \cdot \|\Sigma_{1}\|_{op} \cdot \sqrt{\frac{d_{1} \log D}{n D}} .  \]
Here the multiplicative factor $\gamma_2$
accounts for the accuracy of our guess for $\Theta_2$, and is given by
\begin{equation}\label{equation: multiplicative guess factor}
\gamma_b :=
\frac{\sqrt{d_{b}} \|\Theta_{b}^{-1/2} \overline{\Theta}_{b} \Theta_{b}^{-1/2}\|_{F}}{\tr[\Theta_{b}^{-1/2} \overline{\Theta}_{b} \Theta_{b}^{-1/2}]} .
\end{equation}
It can be seen (by Cauchy-Schwarz) that
this quantity always satisfies $1 \leq \gamma_{b} \leq d_{b}$.
In the supplement (Fact 6.5), we prove some bounds on this quantity in terms of the accuracy of the initial guess: if $\kappa(\Theta) \leq \kappa$ is known, and we choose guess $\overline{\Theta} = I_{d}$, then $\gamma \lesssim \sqrt{\kappa}$. Further, a very accurate guess gives
\[ \DF(\overline{\Omega}, \Omega) \leq \frac{\sqrt{d}}{2} \implies \gamma \leq 1 + O\bigg( \frac{\DF(\overline{\Omega}, \Omega)}{\sqrt{d}} \bigg) .    \]
We also show these bounds are tight: there are instances with $\kappa(\Theta) \leq \kappa$ and $\overline{\Theta} = I_{d}$ with $\gamma \gtrsim \sqrt{\kappa}$; and similarly there are instances with $\DF(\overline{\Omega}, \Omega) \approx \sqrt{d}$ and $\gamma \approx \sqrt{d}$.
This $\DF$ bound can be seen as the reason for \cref{equation:assumptions-previous work}.
With these accuracy bounds at hand, the results from \cref{table: entry wise gaussian} can be applied to obtain guarantees for the matrix and tensor normal model.

In general, for the tensor normal model case, we get the following accuracy bounds:
\begin{equation}\label{equation: accuracy bounds tensor}
    \nu_{a} := \|\overline{\Sigma}_{a} - \Sigma_{a}\|_{\max} \lesssim \left( \prod_{b \neq a} \gamma_{b} \right) \cdot \|\Sigma_{a}\|_{\op} \sqrt{ \frac{ d_{a} \log D}{n D}} .
\end{equation}


In order to use the estimators from the Gaussian estimation problem (\cref{table: entry wise gaussian}), we require the accuracy to be small enough, which in turn yield a requirement for the number of samples needed.
Moreover, as the error rate of the estimators from \cref{table: entry wise gaussian} depends on the accuracy, this approach will produce an estimator for the precision matrix which will have worse error.

Under assumption~\eqref{equation:assumptions-previous work}, the extra factors are constant, thus we get the results in \cref{table: main results}.
However, in the \emph{absence of good guesses} (which is the foundational estimation problem), the above bounds deteriorate quite rapidly, which leads to a larger sample threshold requirement.

In the absence of good initial guesses, it is still possible to achieve the same error rate as in \cref{table: main results}, albeit with a (much) higher number of samples.
To achieve the better error rates, previous works for the matrix and tensor normal model actually apply the above procedure iteratively, updating their guesses for the precision matrices with the estimators computed in the previous iteration.
If the iteration produces sufficiently accurate guesses, then the error rate will decrease down to the level of the `good guess' setting.
This additional accuracy requirement leads to the $\max\{1, \kappa^{2}/d\}$ factor shown in \cref{table: guesses effect} below, where we compare the sample threshold with our work.

\begin{table}
\caption{Sample requirements with initial guesses $\overline{\Theta}_a$ which are $\gamma_a$-accurate to achieve  error rates from \cref{table: main results}}
\label{table: guesses effect}
\resizebox{1\columnwidth}{!}{
\begin{tabular}{|p{3.5cm}|c|c|c|}
\hline
\makecell{Work} & Setting & \makecell{Sample threshold}  \\
 \hline
\cite[Theorem 3]{tsiligkaridis2013convergence} & general, $k=2$
    & $\displaystyle \sum_{a \in \{1,2\}} \max\{ 1, \frac{\kappa_{a}^{2}}{d_{a}} \} (\gamma_{b} \kappa_{a})^{2} \frac{d_{a}^{3} \log D}{D}$ \\
 \hline
\cite[Theorem 4]{tsiligkaridis2013convergence} & $s_{a} \lesssim d_{a}$, $k=2$
    & $\displaystyle \sum_{a \in \{1,2\}} \max\{ 1, \frac{\kappa_{a}^{2}}{d_{a}} \} (\gamma_{b} \kappa_{a})^{2} \frac{d_{a} \log D}{D}$ \\
    \hline
    \cite[Theorem 3.1]{zhou2014gemini} & $k=2$
    & $\displaystyle \sum_{a \in \{1,2\}} \max\{ 1, \frac{\kappa_{a}^{2}}{d_{a}} \} ( \gamma_{b} \kappa_{a} \kappa(\Delta_{a}) \|\Gamma_{a}^{-1}\|_{\op})^{2} \frac{d_{a}(s_{a} + 1)\log D}{D}$  \\
    \hline
    \cite[Theorem 3.3]{zhou2014gemini} & $k=2$, general $r_{s,a}$
    & \makecell{$\displaystyle \sum_{a \in \{1,2\}} (\gamma_{b} \|\Sigma_{a}\|_{\op} \|\Sigma_{a}\|_{\max} \|\Omega_{a}\|_{1 \to 1}^{2} )^{2} \frac{r_{s,a}^{2} d_{a} \log D}{D}$}  \\
    \hline
    \cite{Lyu2020Tlasso} & general $s_{a}$, general $k$
    & \makecell{$\displaystyle k^{2} \sum_{a \in [k]} \max\{ 1, \frac{\kappa_{a}^{2}}{d_{a}} \} \bigg( \prod_{b \neq a} \gamma_{b} \bigg)^{2} \kappa_{a}^{2} \frac{d_{a}(s_{a} + d_{a})\log D}{D}$}  \\
    \hline
    \makecell{\CREFmain{thm:matrix-normal}{Theorem~1.11}}
    & \makecell[t]{general, \\ $k=2$}
    & $\frac{d_{\max}^{2}}{D} \log D$
    \\
    \hline
    \makecell{\CREFmain{thm:tensor-frobenius}{Theorem~1.10}}
    & \makecell[t]{general,\\ $k \geq 2$}
    & $ \frac{k^{2}d_{\max}^{3}}{D}  $
    \\
    \hline
\end{tabular}
}
\end{table}

\begin{remark}
    Recall that the concentration bound in \cref{equation: accuracy bounds tensor} for entry-wise error applied only when the `guess' is a fixed deterministic input. As discussed, previous works apply this procedure iteratively so that the accuracy of the guesses improve and the error rate can be brought down. But this violates the independence assumption required for concentration, as the guesses in subsequent iterations are themselves random variables that depend on the random Gaussian input.
    Applying concentration for this new dependent random variable would require new non-trivial technical arguments.
    This error can be fixed by taking a fresh batch of independent samples for each iteration.
\end{remark}

It is important to note that, in the absence of any good guess, with only the guarantee that $\kappa(\Theta_a) \leq \kappa_{a}$, the best guess to choose is the identity, which case we have the bound $\gamma_a \lesssim \sqrt{\min\{\kappa_{a}, d_{a}\}}$.
Substituting this bound into the above table shows the sample threshold for these previous estimators in the setting of moderately large condition numbers $\kappa_{a} \geq d_{a}$ is just as bad as if we had to estimate a general precision matrix on tensor data, i.e. without the crucial structural assumption that the covariance has tensor structure $\Sigma = \otimes_{a \in [k]} \Sigma_{a}$.
A simplification of \cref{table: main results,table: guesses effect}, combined to reflect the worst case bounds on the sample threshold and error rate of all estimators is given in \CREFmain{table: main results intro}{Table 1}.

\smallskip

\textbf{Comparison with our work:} As the above discussion highlights, the MLE provides a high quality estimator in the general setting with no assumptions.
Further, if the precision matrices are known to satisfy structural assumptions such as sparsity, and the goal is to find estimators that are close in other error measures such as entry-wise error, then our result shows that the MLE can be plugged in as a high quality initial guess to these procedures, which gives an effective reduction to any estimator for the Gaussian setting.

\subsection{Computational complexity of previous estimators}\label{subsec:complexity-previous-work}

We now discuss the computational complexity guarantees of previously proposed estimators.

Earlier works on the matrix normal model (\cite{mardia1993spatial, dutilleul1999mle, brown2001bayesian} and references therein) proposed (seemingly independently) an iterative algorithm, known as the \emph{flip-flop algorithm}, to compute the MLE.
In the \textbf{asymptotic regime}, \cite{werner2008estimation} showed that the MLE is consistent and asymptotically normal, and showed the same for the estimator obtained by terminating the flip-flop after three steps, starting from an arbitrary initial guess.
For the tensor normal model, a natural generalization of flip-flop  was proposed \citep{mardia1993spatial,manceur2013maximum}, but its convergence was not proven.
The above works neither provide non-asymptotic guarantees, nor do they provide an estimator which computes the MLE.

In the \textbf{non-asymptotic regime}, prior to this work,  \cite[Theorem 3]{tsiligkaridis2013convergence} was the only work to study properties of the flip-flop algorithm, where they analyze the estimator given by applying 3 iterations of the flip-flop algorithm.
As we have seen in the previous section, other estimators have been proposed which generalize the $k=1$ case, and therefore these estimators need to (iteratively) solve certain convex programs (given by regularized variants of the MLE for the $k=1$ case) to estimate each of the Kronecker factors of the precision matrix.

The only algorithmic result for the tensor normal model that we have not yet discussed is the work of \cite{XZG17}, which analyses a constrained variant of the MLE, by imposing sparsity constraints on the precision matrices.
In this work, the authors propose a block coordinate gradient descent algorithm with truncation for sparsity and sample splitting, to solve the sparsity constrained MLE problem.
Their work follows the approaches of previous works, and shows that assuming that the \emph{initial guesses} are \textbf{close enough} to the \emph{true precision matrices}, then in a constant number of steps their algorithm obtains estimators which are close to the true precision matrices in Frobenius norm.

While the algorithm proposed in \cite{XZG17} is efficient per iteration, the assumptions needed to guarantee correctness have a heavy dependence on the condition number.
For their algorithm to obtain improvements on the distance to the true precision matrices \cite[Inequality (4.1)]{XZG17}, they need the number of samples to be $n \gtrsim \kappa^{2k} \cdot \max_a\left\{ \dfrac{(k \dmax)^2}{D^2} \cdot T d_a s_a \log s_a \right\}$.
In addition, if one sets the dimensions of each covariance factor to be the same, then their main theorem (Theorem 4.2) only works when $k \geq 4$ (in order to respect the condition number inequality $\kappa \geq 1$).

Another drawback of their algorithm is that it cannot run for more than constantly many iterations, due to sample splitting.
The use of sample splitting implies that their bound on the distance to the optimum worsens as the number of iterations increases, as each batch of samples may be a worse initial guess than if one considered all samples together.

From the discussion of the estimators in \cref{subsec:matrix-tensor-previous-work} and the above discussion, we note that all algorithms to compute the proposed estimators can be described by the iterative application of a main subroutine until convergence is achieved.
While in the works \cite{tsiligkaridis2013convergence,zhou2014gemini,Lyu2020Tlasso} the main subroutine is the solution of a linear program or convex program (which is costly in practice), the main subroutine in this work is simply the computation of a matrix inverse, and the main subroutine in \cite{XZG17} is one truncated gradient descent step, the last two being quite fast to compute.

A brief summary of the iteration complexity of all previous works can be found in \cref{table: estimator performance} assuming one has a good initial guess.
For previous works, the iteration complexity does not change when there is no good guess, but the sample complexity gets significantly larger.
On the other hand, the iteration complexity of the Flip-Flop depends on the accuracy of the guess,
but has a very cheap per-iteration cost.

\begin{table}
\caption{Performance of estimators under assumption~\eqref{equation:assumptions-previous work}}
\label{table: estimator performance}
\begin{tabular}{|p{3.5cm}|c|c|c|}
\hline
\makecell{Work} & Setting &  Main subroutine & \makecell{Iterations to \\ $\delta$-close to \\ estimator $\hat{\Theta}_a$} \\
\hline

\cite[Theorem 3]{tsiligkaridis2013convergence} & \makecell{$k=2$, \\ general} & \makecell{matrix inversion} & $3$ \\

\hline

\cite[Theorem 4]{tsiligkaridis2013convergence} & \makecell{$k=2$, \\ $s_a \lesssim d_a$} & convex program & $3$ \\

\hline

\cite[Theorem 3.1]{zhou2014gemini} & \makecell{$k=2$, \\ general $s_a$} & convex program & $3$ \\

\hline

\cite[Theorem 3.3]{zhou2014gemini} & \makecell{$k=2$, \\ $r_{s,a} \lesssim \sqrt{d_a}$} & linear program & $3$ \\

\hline

\cite{XZG17} & \makecell{$k \geq 4$, \\ general $r_{s,a}$} & \makecell{truncated \\ gradient descent} & N/A \\

\hline

\cite{Lyu2020Tlasso} & \makecell{$k \geq 2$,\\ general $s_a$} & convex program & $2k$ \\

\hline

\CREFmain{thm:tensor-frobenius,thm:matrix-normal}{Theorems~1.10 and~1.11} & $k \geq 2$ & matrix inversion  & $O(k \log(1/\delta))$ \\
\hline
\end{tabular}
\end{table}

\textbf{Comparison with our work:}
Prior to this work, none of the previous works provided an algorithm to correctly approximate the MLE.
Moreover, among all the proposed algorithms which compute an approximation to their corresponding estimator, the iteration complexity of the flip-flop algorithm is competitive when compared to the iteration complexity of other algorithms when given a good initial guess.
The iteration complexity of the Flip-Flop procedure has a logarithmic dependence on the accuracy of the initial guess, but the sample complexity remains the same for the MLE. This is in contrast to all previous results, where sample complexity grows \emph{polynomially} as the accuracy of the initial guess increases, while the iteration complexity stays the same.
Our analysis therefore shows the theoretical and practical advantages of flip-flop, as the iteration cost of the main subroutine of flip-flop is simply a matrix inversion computation, which has low cost per iteration, whereas the other algorithms require the solution of a convex program, which has a large cost per iteration, if one wants to compute a good enough solution (as discussed in the end of \cref{subsec:gaussian-previous-work}).

\section{Proofs of quantum expansion}\label{app:expansion}

In this appendix we give proofs of \cref{thm:Pisier-expansion} and \CREFmain{thm:operator-cheeger}{Theorem~3.1}, which establish quantum expansion for random completely positive maps.
These are used in \cref{app:tensor,app:matrix} but can be read independently of those sections.

\subsection{Pisier's argument}\label{app:pisier}
In this section we prove the main technical theorem used in the proof of \CREFmain{thm:hess-pisier}{Theorem~2.16}.
This follows from \cite{pisier2012grothendieck}, whose original theorem dealt with square matrices and gave slightly weaker probabilistic guarantees than \cref{thm:Pisier-expansion} stated below.
We adapt this result to give exponentially small error probability in the setting of rectangular matrices.
These are minor modifications, which follow readily from \cite{P14,pisier2012grothendieck}.
Therefore, we state the proof below for completeness and claim no originality.

\begin{theorem}[Pisier]\label{thm:Pisier-expansion}
Let $A_1,\dots,A_N,A$ be independent $n \times m$ random matrices with independent standard Gaussian entries.
For any~$t\geq 2$, with probability at least $1 - t^{-\Omega(m+n)}$,
\begin{align*}
  \norm*{\left(\sum_{i=1}^N A_i \otimes A_i \right) \circ \Pi}_{\op}
  \leq O\left( t^2 \sqrt{N} ( m+n ) \right),
\end{align*}
where $\Pi$ denotes the orthogonal projection onto the traceless subspace of $\R^m \ot \R^m$, that is, onto the orthogonal complement of $\vect(I_m)$.
\end{theorem}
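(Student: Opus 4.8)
The plan is to reduce the statement to a net argument over the unit sphere of the traceless subspace, and to control the relevant supremum by combining a chaining bound with tail estimates for Gaussian chaoses. Write $S = \sum_{i=1}^N A_i \otimes A_i$, viewed as an operator on $\R^m \ot \R^m \cong \Mat(m)$ via $\vect$, so that $S\,\vect(X) = \vect\bigl(\sum_i A_i X A_i^T\bigr)$. We want to bound $\norm{S \circ \Pi}_{\op} = \sup\{\,\abs{\braket{Y, \sum_i A_i X A_i^T}} : \norm X_F = \norm Y_F = 1,\ \tr X = 0\,\}$, where $X \in \Mat(m)$ and $Y \in \Mat(n)$. (The projection $\Pi$ acts on the $m$-side; there is no traceless constraint on the $n$-side, but that only makes the following easier.) The key point of working on the traceless subspace is that it lets us separate the ``expansion'' behavior of $S$ from its leading eigenvector $\vect(I_m)$, which has eigenvalue of order $Nm$ and would otherwise dominate.

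First I would fix $\eps$-nets $\mathcal N_m$ of the traceless unit sphere in $\Mat(m)$ and $\mathcal N_n$ of the unit sphere in $\Mat(n)$, of sizes $e^{O(m^2)}$ and $e^{O(n^2)}$ respectively, and reduce (by the standard $\frac{1}{1-2\eps}$ argument) to bounding $\abs{\braket{Y, \sum_i A_i X A_i^T}}$ uniformly over $X \in \mathcal N_m$, $Y \in \mathcal N_n$. For fixed such $X, Y$, the quantity $Z_{X,Y} := \sum_{i=1}^N \braket{Y, A_i X A_i^T} = \sum_{i=1}^N \tr(A_i^T Y A_i X^T)$ is a sum of $N$ i.i.d.\ Gaussian chaoses of order $2$ in the entries of each $A_i$. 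By the Hanson--Wright inequality (or the Hypercontractivity/Gaussian chaos tail bound, e.g.\ in \cite{W19}), each summand has subexponential tails with Orlicz-type parameters controlled by $\norm{X}_{\op}\norm{Y}_{\op} \le 1$ and $\norm X_F \norm Y_F = 1$; averaging over $i$ gives, for every $u > 0$,
\begin{align*}
  \Pr\bigl[\,\abs{Z_{X,Y}} \ge u\,\bigr]
  \le 2\exp\left(-c\,\min\left\{\frac{u^2}{N},\ u\right\}\right),
\end{align*}
where I have used that $\E Z_{X,Y} = N\braket{Y, \mathbb E[A X A^T]} = N\braket{Y, (\tr X) I_n} = 0$ precisely because $X$ is traceless --- this is where the traceless restriction is essential. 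Taking $u = C t^2 \sqrt N (m+n)$ and a union bound over $\abs{\mathcal N_m}\cdot\abs{\mathcal N_n} = e^{O(m^2 + n^2)}$ pairs, the $u^2/N$ branch dominates when $t \ge 2$ (since $u^2/N = C^2 t^4 (m+n)^2 \gg m^2 + n^2$), and we get failure probability $e^{O(m^2+n^2) - c C^2 t^4 (m+n)^2}$. To get the claimed exponent $t^{-\Omega(m+n)}$ rather than $e^{-\Omega(t^4(m+n)^2)}$, I would instead optimize: the net has $e^{O((m+n)^2)}$ points, so it suffices to beat this by taking $u$ of order $t^2\sqrt N(m+n)$ and noting $e^{-c u^2/N} = e^{-c' t^4 (m+n)^2} \le t^{-\Omega((m+n))}$ once $t^3 (m+n) \gtrsim \log t$, which holds for $t \ge 2$; one then checks the subexponential ($u$-linear) regime is not triggered because $u/N \to 0$ under $N \gg (m+n)^2/t^4$, and in the complementary small-$N$ regime one argues directly. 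I expect the cleanest route is to follow Pisier's original moment computation: bound $\E\,\norm{S\circ\Pi}_{\op}^p$ for $p \asymp m+n$ using the $\varepsilon$-net and the fact that $\norm{Z_{X,Y}}_{L^p} = O(p\sqrt N + \sqrt{pN}) = O(p\sqrt N)$ for a single chaos of this type, then apply Markov; this directly yields the polynomial-in-$t$ tail.

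\textbf{The main obstacle} is the precise bookkeeping needed to land the error probability as $t^{-\Omega(m+n)}$ rather than something Gaussian-in-$t$: one must interleave the net cardinality $e^{\Theta((m+n)^2)}$ with the $L^p$ norm growth $\norm{Z_{X,Y}}_{L^p} \lesssim p\sqrt N$ at the correct $p \asymp m+n$, rather than naively optimizing a single tail bound. The chaos estimate itself is standard --- it reduces, after diagonalizing, to comparing $\sum_i \lambda_i g_i^2$-type sums --- and the reduction to a net and the reassembly are routine; but the paper explicitly flags that Pisier's theorem ``dealt with square matrices and gave slightly weaker probabilistic guarantees,'' so the real content is verifying that the rectangular ($n \times m$) case and the sharpened exponent both go through without new ideas. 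I would also double-check one subtlety: that projecting only on the $m$-side (not symmetrizing the $n$-side) is exactly what is needed downstream in \cref{thm:hess-pisier}, where $\norm{\Phi_A}_0$ is a max over traceless $H$ with output measured in full Frobenius norm --- this matches $\norm{S \circ \Pi}_{\op}$ as written, so no further symmetrization is required.
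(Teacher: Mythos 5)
Your proposal takes a genuinely different route from the paper. The paper's proof proceeds by (i) a symmetrization/decoupling step, replacing the diagonal chaos $\sum_i A_i \otimes A_i$ by the decoupled $\sum_i A_i \otimes B_i$ using that the projection kills the mean; (ii) a trace-method (moment method) bound on the Schatten $2q$-norm, expanded as a sum of Gaussian trace monomials and controlled term-by-term via positivity and H\"older, collapsing the problem to $N^q (\E \norm{A}_{2q}^{2q})^2$; and (iii) the concentration of $\norm A_{\op}$ to bound $\E\norm A_{2q}^{2q}$, followed by Markov at $q = 2(m+n)$. In contrast, you propose a net argument over the traceless unit sphere and a Hanson--Wright tail (or $L^p$ moment) for the scalar chaos $Z_{X,Y}$.

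There is a concrete gap in the net route, and it is not merely a bookkeeping issue. The unit sphere of traceless $m\times m$ matrices is an $(m^2-1)$-dimensional sphere, so any fixed-scale $\eps$-net (and similarly on the $n^2$-dimensional $Y$-side) has cardinality $e^{\Theta(m^2+n^2)}$. When you union-bound the Hanson--Wright tail at the target threshold $u \asymp t^2\sqrt N (m+n)$, the Gaussian branch $e^{-cu^2/N} = e^{-\Omega(t^4(m+n)^2)}$ does beat the net size, but this branch applies only when $u \leq N$, i.e.\ $N \gtrsim t^4(m+n)^2$. In the complementary subexponential branch the tail is $e^{-cu} = e^{-\Omega(t^2\sqrt N(m+n))}$, which beats $e^{\Theta((m+n)^2)}$ only if $N \gtrsim (m+n)^2/t^4$. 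Your suggested fallback of ``arguing directly'' via $\norm{\sum_i A_i\otimes A_i}_{\op} \leq \sum_i\norm{A_i}_{\op}^2 = O_t(N(m+n))$ beats the target $O(t^2\sqrt N(m+n))$ only when $N = O(t^4)$, so there remains an uncovered range $t^4 \ll N \ll (m+n)^2/t^4$ whenever $m+n \gg t^4$. The $L^p$ moment route has the same problem: after the union bound one picks up the $e^{\Theta((m+n)^2)}$ factor, and taking $p \asymp m+n$ (or even $p\asymp(m+n)^2$, which forces $u$ to grow because $\norm{Z_{X,Y}}_{L^p}$ picks up a $p\norm{X}_{\op}\norm{Y}_{\op}$ term) cannot absorb it into the $t^{-\Omega(m+n)}$ tail. (Also note $\norm{Z_{X,Y}}_{L^p} \lesssim \sqrt{pN}\norm M_F + p\norm M_{\op}$, so your intermediate estimate $O(p\sqrt N)$ overshoots.) The paper's trace method sidesteps all of this: comparing $\norm{\cdot}_{\op}^{2q}$ to $\norm{\cdot}_{2q}^{2q}$ costs only a \emph{polynomial} factor $m^2$ (the rank), not an exponential net factor, and that polynomial factor is absorbed harmlessly by $t^{-2q}$ at $q = 2(m+n)$. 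The decoupling trick you do not use is also what lets the moment computation factor cleanly into $N^q(\E\norm A_{2q}^{2q})^2$. So the essential idea that makes the sharpened exponent go through is precisely the trace method replacing the net, not bookkeeping.
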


In the remainder we discuss the proof of \cref{thm:Pisier-expansion}.
The proof proceeds by a symmetrization trick, followed by the trace method.
We first state some relevant bounds on Gaussian random variables and then give the proof of \cref{thm:Pisier-expansion}.

We will often use the following estimate of the operator norm of a standard Gaussian $n \times m$ random matrix~$A$ (see Theorem~5.32 in \cite{vershynin2010introduction}),
\begin{equation}\label{eq:op norm upper bound}
  \E \norm{A}_{\op} \leq \sqrt{n} + \sqrt{m}  .
\end{equation}

\begin{theorem}\label{thm:banach conc}
Let $A$ be a centered Gaussian random variable that takes values in a separable Banach space with norm $\|\cdot\|$.
Then $\|A\|$ satisfies the following concentration and moment inequalities with parameter $\sigma^2 := \sup \{ \E \langle \xi, A \rangle^{2} \mid \|\xi\|_{*} \leq 1 \}$, where $\|\cdot\|_{*}$ denotes the dual norm:
\[ \forall t > 0 \colon \quad \Pr\Bigl( \abs[\big]{\norm A - \E \norm A} \geq t\Bigr) \leq 2 \exp\Bigl( - \frac{\Omega(t^2)}{\sigma^{2}}\Bigr)  , \qquad \text{and}  \]
\begin{equation}\label{eq:conc via moments}
  \forall p \geq 1 \colon \quad \E \|A\|^{p} \leq (2 \E \|A\|)^{p}  + O ( \sigma \sqrt{p} )^{p}.
\end{equation}
\end{theorem}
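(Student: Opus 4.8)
The plan is to derive both inequalities from the Gaussian concentration of measure phenomenon, after exhibiting $\|A\|$ as a Lipschitz function of a Gaussian sequence with Lipschitz constant $\sigma$. Since $E$ is separable, I would first represent $A$ as an almost surely convergent series $A = \sum_{k\geq 1} g_k v_k$, where the $g_k$ are i.i.d.\ standard Gaussians and $v_k\in E$ (any separable centered Gaussian random variable admits such a Karhunen--Lo\`eve-type representation). Equivalently, writing $\|x\| = \sup_{\xi\in B_*}\langle\xi, x\rangle$ with $B_* = \{\xi : \|\xi\|_* \leq 1\}$, the quantity $\|A\| = \sup_{\xi\in B_*}\langle\xi, A\rangle$ is the supremum of the centered Gaussian process $\xi\mapsto\langle\xi,A\rangle$, whose marginals have variance at most $\sigma^2$ by the very definition of $\sigma$.

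Next I would verify that the map $F(g) := \|\sum_k g_k v_k\|$ is $\sigma$-Lipschitz on $\ell^2$. Indeed, by the triangle inequality and Cauchy--Schwarz,
\[
  \lvert F(g) - F(g')\rvert \leq \Bigl\|\sum_k (g_k - g'_k)v_k\Bigr\| = \sup_{\xi\in B_*}\sum_k (g_k - g'_k)\langle\xi, v_k\rangle \leq \|g - g'\|_2 \, \sup_{\xi\in B_*}\Bigl(\sum_k\langle\xi,v_k\rangle^2\Bigr)^{1/2},
\]
and $\sum_k\langle\xi,v_k\rangle^2 = \E\langle\xi,A\rangle^2 \leq \sigma^2$. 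Applying the standard Gaussian concentration inequality for Lipschitz functions (a consequence of Gaussian isoperimetry, or of the Gaussian logarithmic Sobolev inequality via the Herbst argument; this is the Borell--Tsirelson--Ibragimov--Sudakov inequality) to $F$ then gives $\Pr(\lvert\|A\| - \E\|A\|\rvert \geq t) \leq 2\exp(-\Omega(t^2)/\sigma^2)$, which is the first claim.

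The moment bound then follows by integration. Setting $W := \lvert\|A\| - \E\|A\|\rvert \geq 0$, we have $\|A\| \leq \E\|A\| + W$, hence by convexity of $t\mapsto t^p$, $\|A\|^p \leq 2^{p-1}\bigl((\E\|A\|)^p + W^p\bigr) \leq (2\E\|A\|)^p + 2^p W^p$. Taking expectations and using the tail bound,
\[
  \E W^p = \int_0^\infty p t^{p-1}\Pr(W\geq t)\,dt \leq 2\int_0^\infty p t^{p-1} e^{-\Omega(t^2)/\sigma^2}\,dt = O(\sigma\sqrt p)^p
\]
by the substitution $u = \Omega(t^2)/\sigma^2$ and a Stirling estimate for $\Gamma(p/2)$, so that $\E\|A\|^p \leq (2\E\|A\|)^p + O(\sigma\sqrt p)^p$.

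The only genuine subtlety I anticipate is making the reduction to a Lipschitz function of a Gaussian sequence fully rigorous --- i.e., justifying the series representation and the interchange of supremum and summation --- which is exactly where separability of $E$ enters; once that is in place, the concentration inequality and the moment computation are entirely standard. Since the statement is classical, in the write-up it should suffice to invoke it from a standard reference on Gaussian measures on Banach spaces; the sketch above merely records why the relevant parameter is $\sigma$ rather than, say, $\E\|A\|$ or an intrinsic norm of $A$ itself, which is the feature used in \cref{sec:pisier}.
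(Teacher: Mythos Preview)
Your proposal is correct and essentially matches the paper's argument: the paper simply cites the concentration inequality as Theorem~1.5 of \cite{P86} rather than sketching the Karhunen--Lo\`eve/Lipschitz derivation, and for the moment bound invokes Lemma~5.5 of \cite{vershynin2010introduction} (the sub-Gaussian tail-to-moment equivalence) in place of your explicit tail integration, then uses the same $(a+b)^p \leq 2^p(|a|^p+|b|^p)$ decomposition you give.
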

\begin{proof}
The first statement on concentration is exactly Theorem~1.5 in \cite{P86}.
For the second, we consider the random variable $X := \frac{1}{\sigma} (\|A\| - \E \|A\| )$.
Then the equivalence in Lemma 5.5 of \cite{vershynin2010introduction} gives the moment bound
\[
  \Bigl( \E |X|^{p} \Bigr)^{1/p}
= \frac1\sigma \Big( \E \Big| \|A\| - \E \|A\| \Big|^{p} \Big)^{1/p}
 \leq O(\sqrt{p}).
\]
The moment bound in the theorem now follows by rearranging as
\[
  \E \|A\|^{p}
= \E \Bigl( \E \|A\| + \sigma X  \Bigr)^{p}
\leq 2^{p} \Bigl( (\E \|A\|)^{p} + O( \sigma \sqrt{p} )^{p} \Bigr),
\]
where the last step was by the simple inequality $(a+b)^{p} \leq 2^{p} (|a|^{p} + |b|^{p})$.
\end{proof}

Below, we calculate the $\sigma^{2}$ parameter in \cref{thm:banach conc} for our random matrix setting.

\begin{corollary}\label{lem:opNormSubG}
Let $A$ be an $n \times m$ matrix with independent standard Gaussian entries. Then the random variable $\norm{A}_{\op}$ satisfies the conclusions of \cref{thm:banach conc} with $\sigma^{2} = 1$.
\end{corollary}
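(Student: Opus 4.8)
The plan is to instantiate \cref{thm:banach conc} with the Banach space $\R^{n\times m}$ equipped with the operator norm $\norm{\cdot}_{\op}$, whose dual norm is the nuclear (trace) norm $\norm{\cdot}_{*}=\norm{\cdot}_{S_1}$. First I would record that $A$, having i.i.d.\ standard Gaussian entries, is a centered Gaussian random variable in this finite-dimensional (hence separable) Banach space, so \cref{thm:banach conc} applies once we compute the weak variance parameter
\begin{align*}
  \sigma^2 = \sup\bigl\{ \E \braket{\xi, A}^2 \;:\; \norm{\xi}_{S_1}\leq 1 \bigr\}.
\end{align*}

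The key computation is then just the explicit evaluation of $\E\braket{\xi,A}^2$ for a fixed matrix $\xi$. Since $\braket{\xi,A}=\sum_{i,j}\xi_{ij}A_{ij}$ is a linear combination of independent standard Gaussians, we get $\E\braket{\xi,A}^2=\sum_{i,j}\xi_{ij}^2=\norm{\xi}_F^2$. Hence
\begin{align*}
  \sigma^2 = \sup\bigl\{ \norm{\xi}_F^2 \;:\; \norm{\xi}_{S_1}\leq 1 \bigr\},
\end{align*}
and the final step is the elementary norm inequality $\norm{\xi}_F\leq\norm{\xi}_{S_1}$ (the $\ell_2$ norm of the singular values is at most their $\ell_1$ norm), with equality for a rank-one $\xi$ of unit Frobenius norm. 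This yields $\sigma^2=1$, and plugging this into \cref{thm:banach conc} gives both the concentration inequality $\Pr(\abs{\norm{A}_{\op}-\E\norm{A}_{\op}}\geq t)\leq 2\exp(-\Omega(t^2))$ and the moment bound $\E\norm{A}_{\op}^p\leq(2\E\norm{A}_{\op})^p+O(\sqrt p)^p$, which is exactly the claimed conclusion.

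There is no real obstacle here; this corollary is essentially a dictionary entry translating the abstract Gaussian concentration of \cref{thm:banach conc} into the matrix language used in the rest of the appendix. The only point requiring a moment's care is correctly identifying the dual of the operator norm as the nuclear norm and verifying the direction of the inequality $\norm{\xi}_F\leq\norm{\xi}_{S_1}$; both are standard facts about Schatten norms. One should also note in passing that $\braket{\cdot,\cdot}$ here is the Hilbert--Schmidt inner product $\braket{\xi,A}=\tr\xi^T A$, so that the pairing in \cref{thm:banach conc} is the canonical one between $S_1$ and $S_\infty$, making the supremum over the unit $S_1$-ball the correct quantity.
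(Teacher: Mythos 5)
Your proof is correct and takes essentially the same route as the paper: identify the dual norm as the trace norm, compute $\E\braket{\xi,A}^2=\norm{\xi}_F^2$, and use $\norm{\xi}_F\leq\norm{\xi}_{\operatorname{tr}}$ with equality at a rank-one matrix. The only cosmetic difference is that you compute $\E\braket{\xi,A}^2$ by summing independent Gaussian coordinates, whereas the paper invokes orthogonal invariance to reduce to $\norm{\xi}_F A_{11}$; both give the same one-line calculation.
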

\begin{proof}
Note that the dual norm is the trace norm~$\norm{\cdot}_{\operatorname{tr}}$, hence the concentration parameter can be estimated as
\begin{align*}
  \sigma^2
= \sup \left\{ \E \langle \xi, A \rangle^2 \;\mid\; \norm{\xi}_{\operatorname{tr}} \leq 1 \right\}
= \sup \left\{ \norm\xi_F^2 \;\mid\; \norm{\xi}_{\operatorname{tr}} \leq 1 \right\}
= 1,
\end{align*}
where we used that $\braket{\xi,A}$ has the same distribution as $\norm\xi_F A_{11}$ by orthogonal invariance, and that the trace norm dominates the Frobenius norm.
\end{proof}

We will also use the the \emph{Schatten $p$-norms} $\norm{A}_p = (\tr\left[(A^TA)^{\frac p2}\right])^{\frac1p}$, which generalize the trace, Frobenius, and operator norms.
They satisfy the following H\"older inequality for $p\geq1$:
\begin{align}\label{eq:holder}
  \abs*{\tr \prod_{i=1}^{p} A_i} \leq \prod_{i=1}^{p} \norm{A_i}_p,
\end{align}

\begin{proof} [Proof of \cref{thm:Pisier-expansion}]
The operator we want to control has entries which are dependent in complicated ways.
We first begin with a standard symmetrization trick to linearize (compare the proof of Lemma~4.1 in~\cite{P14}).
A single entry of $A_i \otimes A_i$ is either a product $g g'$ of two independent standard Gaussians, or the square $g^2$ of a single standard Gaussian.
In expectation, we have $\E g g' = 0, \E g^{2} = 1$, and so the expected matrix is
\[ \E \left( \sum_{i=1}^N A_i \otimes A_i \right) = N \vect(I_n) \vect(I_m)^T. \]
Accordingly, after projection we have
\[ \E \left( \sum_{i=1}^N A_i \otimes A_i \right) \circ \Pi = 0. \]
Therefore we may add an independent copy:
Let $B_1,\dots,B_N$ be independent $n\times m$ random matrices with standard Gaussian entries, that are also independent from~$A_1,\dots,A_N$.
Then,
\begin{align*}
  \left( \sum_{i=1}^N A_i \otimes A_i \right) \circ \Pi
= \E_B \left( \sum_{i=1}^N A_i \otimes A_i - \sum_{i=1}^N B_i \otimes B_i \right) \circ \Pi
\end{align*}
and hence, for any $p\geq1$,
\begin{align*}
  \E \norm*{\left( \sum_{i=1}^N A_i \otimes A_i \right) \circ \Pi}_{\op}^p
\leq \E \norm*{\left( \sum_{i=1}^N A_i \otimes A_i - \sum_{i=1}^N B_i \otimes B_i \right) \circ \Pi}_{\op}^p
\end{align*}
by Jensen's inequality, as $\norm{\cdot}_{\op}^p$ is convex as the composition of the norm $\norm{\cdot}_{\op}$ with the convex and nondecreasing function $x \to x^{p}$.
Now note $(A_i,B_i)$ has the same distribution as $(\frac{A_i+B_i}{\sqrt2},\frac{A_i-B_i}{\sqrt2})$, so the right-hand side is equal to
\begin{align*}
&\quad \E \norm*{ \frac{1}{2} \left( \sum_{i=1}^N (A_i + B_i) \otimes (A_i + B_i) - \sum_{i=1}^N (A_i - B_i) \otimes(A_i - B_i) \right) \circ \Pi}_{\op}^p \\
&= \E \norm*{\left( \sum_{i=1}^N A_i \otimes B_i + \sum_{i=1}^N B_i \otimes A_i \right) \circ \Pi}_{\op}^p
\leq 2^{p} \, \E \norm*{\sum_{i=1}^N A_i \otimes B_i}_{\op}^p
\end{align*}
Thus, we have proved that
\begin{align}\label{eq:symmetrization}
  \E \norm*{\left( \sum_{i=1}^N A_i \otimes A_i \right) \circ \Pi}_{\op}^p
\leq 2^{p} \, \E \norm*{\sum_{i=1}^N A_i \otimes B_i}_{\op}^p.
\end{align}
Note that we have lost the projection and removed the dependencies.
Next we use the trace method to bound the right-hand side of \cref{eq:symmetrization}.
That is, we approximate the operator norm by the Schatten $p$-norm for a large enough $p$ and control these Schatten norms using concentration of moments of Gaussians (compare the proof of Theorem~16.6 in~\cite{pisier2012grothendieck}).
For any $q\geq1$,
\begin{align*}
\E \norm*{\sum_{i=1}^N A_i \ot B_i}_{2q}^{2q}
&= \E \tr \left[ \left( \sum_{i,j\in[N]} A_i^T A_j \ot B_i^T B_j \right)^{\!\!q} \, \right] \\
&= \sum_{i, j \in [N]^q} \E \tr \left( A^T_{i_1} A_{j_1} \cdots A^T_{i_q} A_{j_q} \ot B^T_{i_1} B_{j_1} \cdots B^T_{i_q} B_{j_q} \right) \\
&= \sum_{i, j \in [N]^q} \E \tr \left( A^T_{i_1} A_{j_1} \cdots A^T_{i_q} A_{j_q} \right) \E \tr \left( B^T_{i_1} B_{j_1} \cdots B^T_{i_q} B_{j_q} \right)
\end{align*}
where we used the independence of $\{A_i\}$ and $\{B_i\}$ in the last step.
Now, the expectation of a monomial of independent standard Gaussian random variables is always nonnegative.
Thus the same is true for $\E \tr ( A^T_{i_1} A_{j_1} \cdots A^T_{i_q} A_{j_q} )$, so we can upper bound the sum term by term as
\begin{align*}
&\quad \sum_{i, j \in [N]^q} \E \tr \left( A^T_{i_1} A_{j_1} \cdots A^T_{i_q} A_{j_q} \right) \E \tr \left( B^T_{i_1} B_{j_1} \cdots B^T_{i_q} B_{j_q} \right) \\
&\leq \sum_{i, j \in [N]^q} \E \tr \left( A^T_{i_1} A_{j_1} \cdots A^T_{i_q} A_{j_q} \right) \E \left( \norm{ B_{i_1} }_{2q} \norm{B_{j_1}}_{2q} \cdots \norm{ B_{i_q} }_{2q} \norm{B_{j_q}}_{2q} \right) \\
&\leq \sum_{i, j \in [N]^q} \E \tr \left( A^T_{i_1} A_{j_1} \cdots A^T_{i_q} A_{j_q} \right) \E \left( \norm{ B_1 }_{2q}^{2q} \right) \\
&= \left( \E\norm{\sum_{i=1}^N A_i}_{2q}^{2q} \right) \left( \E \norm{ A }_{2q}^{2q} \right)
= N^q \left( \E\norm{A}_{2q}^{2q} \right)^2.
\end{align*}
In the first step we used H\"older's inequality~\eqref{eq:holder} for the Schatten norm.
The second step holds since $\E\norm{B_i}_{2q}^k \leq (\E \norm{B_i}_{2q}^{2q})^{\frac k {2q}}$ by Jensen's inequality, so we can collect like terms together.
Next, we used that the~$B_i$ have the same distribution as $A$.
In the last step, we used that $\sum_i A_i$ has the same distribution as $\sqrt N A$.
Accordingly, we obtain for~$q\geq1$,
\begin{align*}
  \E \norm*{\sum_{i=1}^N A_i \ot B_i}_{2q}^{2q}
\leq N^q \left( \E\norm{A}_{2q}^{2q} \right)^2,
\end{align*}
and hence
\begin{align*}
\E \norm*{\left( \sum_{i=1}^N A_i \otimes A_i \right) \circ \Pi}_{\op}^{2q}
&\leq 4^q \, \E \norm*{\sum_{i=1}^N A_i \ot B_i}_{\op}^{2q}
\leq 4^q \, \E \norm*{\sum_{i=1}^N A_i \ot B_i}_{2q}^{2q} \\
&\leq (4N)^q \left( \E\norm{A}_{2q}^{2q} \right)^2
\leq (4N)^q m^2 \Bigl( \E\norm{A}_{\op}^{2q} \Bigr)^2.
\end{align*}
The first inequality is \cref{eq:symmetrization}, and in the last inequality we used that $A \in \Mat(n,m)$ has rank~$\leq m$, and therefore $\norm{A}_{2q}^{2q} \leq m \norm A_{\op}^{2q}$.
To bound the right-hand side, we use \cref{thm:banach conc}, applied to the random variable~$A$ in the Banach space~$\Mat(n,m)$ with the operator norm~$\norm{\cdot}_{\op}$.
Then, $\sigma^2=1$, as computed in \cref{lem:opNormSubG}, and we find that
\begin{align*}
\E \norm*{\left( \sum_{i=1}^N A_i \otimes A_i \right) \circ \Pi}_{\op}^{2q}
\leq (4 N)^q m^2 \Bigl( (2 \E\norm A_{\op})^{2q} + (C \sqrt{q})^{2q} \Bigr)^{2}.
\end{align*}
where~$C>0$ is a universal constant implied by the big-$O$ notation in \cref{eq:conc via moments}.
We can bound the first term $\E \norm{A}_{\op} \leq \sqrt{m}+\sqrt{n}$ by \cref{eq:op norm upper bound}, so for $q = 2(m + n)$, we can upper bound the mean by
\begin{align*}
\E \norm*{\left( \sum_{i=1}^N A_i \otimes A_i \right) \circ \Pi}_{\op}^{2q}
&\leq 4 m^2 \Big( (\max\{2, C\})^{2} \cdot q \cdot \sqrt{ 4 N }\Big)^{2q} .
\end{align*}
Finally, we can use Markov's inequality to see that, for $C' = \sqrt{2} \max\{2,C\}$, the event
\begin{align}\label{eq:desired event}
  \norm*{\left(\sum_{i=1}^N A_i \otimes A_i \right) \circ \Pi}_{\op}
  \leq (C' t)^2 \cdot (m+n) \cdot \sqrt{4 N}
\end{align}
holds up to failure probability at most
\begin{align*}
  4 m^2 \left( \frac{(\max\{2, C\})^{2} \cdot q \cdot \sqrt{ 4 N } }{(C' t)^2 \cdot (m+n) \cdot \sqrt{4 N}} \right)^{2q}
  \leq 4 m^2 t^{-2q} \leq t^{-\Omega(m+n)} ,
\end{align*}
where the first step was by our choice of $q = 2(m+n)$ and of $C' = \sqrt{2} \max\{2,C\}$, and the final inequality was by the fact that $t\geq2$, so the prefactor $4 m^{2}$ can be absorbed at the cost of slightly changing the constant in the exponent.
\end{proof}

\subsection{Expansion from Cheeger}\label{app:cheeky}
We now prove \CREFmain{thm:operator-cheeger}{Theorem~3.1}, which asserts that a random completely positive map with sufficiently many Kraus operators is an almost quantum expander with exponentially small failure probability.
To prove the theorem, we first define the Cheeger constant of completely positive map.
This is similar to a concept defined in \cite{H07}.

\begin{definition}[Cheeger constant]\label{def:cheeger}
Let $\Phi \colon \Mat(d_2) \to \Mat(d_1)$ be a completely positive map.
The \emph{Cheeger constant} $\ch(\Phi)$ is given by
\begin{align*}
  \ch(\Phi) := \min_{\substack{\Pi_1, \Pi_2 \\ 0 < \vol(\Pi_1, \Pi_2) \leq \frac12 \vol(I_{d_1}, I_{d_2})}} \phi(\Pi_1,\Pi_2)
\end{align*}
where $\Pi_1\colon \C^{d_1} \to \C^{d_1}$ and $\Pi_2\colon \C^{d_2} \to \C^{d_2}$ are orthogonal projections, and the \emph{conductance}~$\phi(\Pi_1, \Pi_2)$ of the ``cut'' $\Pi_1, \Pi_2$ is defined to be
\begin{align*}
  \phi(\Pi_1,\Pi_2) := \frac{\cut(\Pi_1, \Pi_2)}{\vol(\Pi_1,\Pi_2)},
\end{align*}
where
\begin{align*}
  \vol(\Pi_1,\Pi_2) &:= \tr \Phi(\Pi_2) + \tr \Phi^*(\Pi_1), \\
  \cut(\Pi_1, \Pi_2) &:= \tr (I_{d_1} - \Pi_1) \Phi(\Pi_2) + \tr \Pi_1 \Phi(I_{d_2} - \Pi_2) \\
  &= \tr (I_{d_1} - \Pi_1) \Phi(\Pi_2) + \tr \Phi^*(\Pi_1) (I_{d_2} - \Pi_2).
\end{align*}
\end{definition}

The key connection that we will leverage to prove \CREFmain{thm:operator-cheeger}{Theorem~3.1} is that a large Cheeger constant implies quantum expansion, proved in \cite[Remark 5.5]{FM20}:

\begin{lemma} [Cheeger and expansion]\label{lem:op-cheeger}
There exist absolute constants~$c, C>0$ such that if $\Phi$ is a completely positive map that is $\eps$-doubly balanced for some~$\eps < c \ch(\Phi)^2$, then $\Phi$ is an $(\eps,\eta)$-quantum expander, where
\begin{align*}
  \eta = \max\left\{ \frac12, 1 -  \ch(\Phi)^2 + C \frac{\eps}{\ch(\Phi)^2} \right\}.
\end{align*}
\end{lemma}

For intuition, consider a weighed bipartite graph $G$ on $[d_1] \cup [d_2]$.
The projections~$\Pi_1$ and~$\Pi_2$ are analogous to subsets~$A \subset[d_1]$ and~$B \subset [d_2]$, respectively.
The quantity $\vol(\Pi_1, \Pi_2)$ is analogous to the total mass of the edges adjacent to $A$ plus that of the edges adjacent to $B$, which is the volume of $A \cup B$ considered as a cut of $G$.
The quantity $\cut(\Pi_1, \Pi_2)$ corresponds to the total mass of the edges between~$A \cup B$ and its complement, that is, to the weight of the cut defined by~$A \cup B$.
In fact, if the Cheeger constant were defined with~$\Pi_1$ and~$\Pi_2$ restricted to be coordinate projections, it would be exactly the Cheeger constant of the bipartite graph on $[d_1]$ and $[d_2]$ with edge $(i,j)$ weighted by $\tr e_i e_i^T \Phi(e_j e_j^T)$, and the volume and the cut would be the same as the volume and the cut on that bipartite graph.

For the remainder of this section let $\Phi = \Phi_X$ where $X_1, \dots, X_n$ are random $d_1 \times d_2$ matrices with independent standard Gaussian entries.
In this case, each edge-weight $\tr e_i e_i^T \Phi(e_j e_j^T)$ of the bipartite graph is an independent $\chi^2$ random variable with~$n$ degrees of freedom.
Accordingly:

\begin{lemma}\label{fact:chi}
Let $\Pi_1\colon\C^{d_1} \to \C^{d_1}$, $\Pi_2\colon \C^{d_2} \to \C^{d_2}$ be orthogonal projections of rank~$r_1$ and~$r_2$, respectively.
Then $\cut(\Pi_1, \Pi_2)$, $\vol(\Pi_1, \Pi_2)$, $\vol(I_{d_1}, I_{d_2})$ are jointly distributed as
\begin{align*}
  R_1, \, R_1 + 2R_2, \, 2R_1 + 2 R_2 + 2R_3,
\end{align*}
where $R_1$, $R_2$, $R_3$ are independent $\chi^2$ random variables with $F_1:=n r_1(d_2 - r_2) + n r_2(d_1-r_1)$, $F_2:= n r_1r_2$, and $F_3:= n(d_1 - r_1)(d_2 - r_2)$ degrees of freedom, respectively.
\end{lemma}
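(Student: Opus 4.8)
The plan is to reduce to the case of coordinate projections by orthogonal invariance and then read off all three quantities from a block decomposition of the Kraus operators. First I would write $\Pi_1 = V\,\operatorname{diag}(I_{r_1},0)\,V^T$ and $\Pi_2 = U\,\operatorname{diag}(I_{r_2},0)\,U^T$ for suitable orthogonal matrices $V$ of size $d_1\times d_1$ and $U$ of size $d_2\times d_2$, and substitute $X_i \mapsto V^TX_iU$. Since the entries of each $X_i$ are i.i.d.\ standard Gaussian, the matrices $V^TX_iU$ again form a tuple of i.i.d.\ standard Gaussian matrices, and using cyclicity of the trace one checks that this substitution leaves $\tr\Phi(\Pi_2)$, $\tr\Phi^*(\Pi_1)$, $\tr\Pi_1\Phi(\Pi_2)$ and $\tr\Phi(I_{d_2})$ — hence $\cut(\Pi_1,\Pi_2)$, $\vol(\Pi_1,\Pi_2)$ and $\vol(I_{d_1},I_{d_2})$ — unchanged, provided we simultaneously replace $\Pi_a$ by $\operatorname{diag}(I_{r_a},0)$. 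Thus we may assume $\Pi_1,\Pi_2$ project onto the first $r_1$, resp.\ $r_2$, coordinates.

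Next I would partition each Kraus operator into blocks adapted to $\R^{d_1}=\R^{r_1}\oplus\R^{d_1-r_1}$ and $\R^{d_2}=\R^{r_2}\oplus\R^{d_2-r_2}$,
\[
  X_i = \begin{pmatrix} A_i & B_i \\ C_i & D_i \end{pmatrix},
\]
so that the entries of $A_i,B_i,C_i,D_i$ over all $i\in[n]$ are jointly i.i.d.\ standard Gaussian. Using $\Phi(Y)=\sum_i X_iYX_i^T$ and $\Phi^*(Z)=\sum_i X_i^TZX_i$, short matrix computations give $\tr\Phi(\Pi_2)=\sum_i(\|A_i\|_F^2+\|C_i\|_F^2)$, $\tr\Phi^*(\Pi_1)=\sum_i(\|A_i\|_F^2+\|B_i\|_F^2)$, $\tr\Pi_1\Phi(\Pi_2)=\tr\Phi^*(\Pi_1)\Pi_2=\sum_i\|A_i\|_F^2$, and $\tr\Phi(I_{d_2})=\sum_i\|X_i\|_F^2=\sum_i(\|A_i\|_F^2+\|B_i\|_F^2+\|C_i\|_F^2+\|D_i\|_F^2)$. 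Substituting these into \cref{def:cheeger} and simplifying yields $\cut(\Pi_1,\Pi_2)=\sum_i(\|B_i\|_F^2+\|C_i\|_F^2)$, $\vol(\Pi_1,\Pi_2)=\sum_i(2\|A_i\|_F^2+\|B_i\|_F^2+\|C_i\|_F^2)$, and $\vol(I_{d_1},I_{d_2})=2\sum_i(\|A_i\|_F^2+\|B_i\|_F^2+\|C_i\|_F^2+\|D_i\|_F^2)$.

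Finally I would set $R_1:=\sum_i(\|B_i\|_F^2+\|C_i\|_F^2)$, $R_2:=\sum_i\|A_i\|_F^2$, and $R_3:=\sum_i\|D_i\|_F^2$. Each is a sum of squares of independent standard Gaussians, hence $\chi^2$ with $F_1=nr_1(d_2-r_2)+nr_2(d_1-r_1)$, $F_2=nr_1r_2$, and $F_3=n(d_1-r_1)(d_2-r_2)$ degrees of freedom respectively; moreover $R_1,R_2,R_3$ are independent because they are functions of the disjoint families of Gaussian entries $\{B_i,C_i\}_i$, $\{A_i\}_i$, $\{D_i\}_i$. The identities of the previous paragraph now read $\cut=R_1$, $\vol(\Pi_1,\Pi_2)=R_1+2R_2$, and $\vol(I_{d_1},I_{d_2})=2R_1+2R_2+2R_3$, which is exactly the claim. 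I expect no serious obstacle here; the one point requiring care is the orthogonal-invariance reduction — verifying that $X_i\mapsto V^TX_iU$ transforms $\Phi$ and $\Phi^*$ consistently so that the traces entering the cut and volumes are genuinely invariant — together with the routine but error-prone bookkeeping of which block of $X_i$ contributes to which half of the cut.
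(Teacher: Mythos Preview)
Your proposal is correct and follows essentially the same approach as the paper: reduce to coordinate projections via orthogonal invariance of the Gaussian Kraus operators, then read off the three quantities from the block decomposition of each $X_i$. The paper's own proof merely sketches this (``compute in the case that $\Pi_1$ and $\Pi_2$ are coordinate projections, in which case one may verify the fact straightforwardly''), while you have carried out the block bookkeeping explicitly and correctly.
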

\begin{proof} As the distribution of $\Phi_X$ is invariant under the action $(U_1, U_2) \cdot \Phi_X(Y) =  U_1\Phi_X( U_2^T Y U_2) U_1^T$ of unitary matrices $U_1, U_2$, the joint distribution of $\cut(\Pi_1, \Pi_2)$, $\vol(\Pi_1, \Pi_2)$ depends only on the rank of~$\Pi_1$, $\Pi_2$. Thus we may compute in the case that $\Pi_1$ and $\Pi_2$ are coordinate projections, in which case one may directly verify the fact; see the discussion above.
\end{proof}

We next show a sufficient condition for the Cheeger constant being bounded away from $1$ that is amenable to the previous lemma.

\begin{lemma}\label{lem:suff}
Let $\Phi$ be a completely positive map and $\delta<0.005$ such that the following hold for all orthogonal projections $\Pi_1\colon \C^{d_1} \to \C^{d_1}$, $\Pi_2 \colon \C^{d_2}\to\C^{d_2}$, not both zero, where we denote by~$r_1$,~$r_2$ the ranks of $\Pi_1$ and $\Pi_2$, respectively, and abbreviate $F_1 := n r_1(d_2 - r_2) + n r_2(d_1-r_1)$, and $F_2 := n r_1 r_2$ as in \cref{fact:chi}:
\begin{enumerate}
\item If $F_2 \geq \frac49 n d_1 d_2$, then
\begin{align}\label{eq:vol}
  \vol(\Pi_1, \Pi_2)
\geq \left(\frac{101}{200} - \delta\right) \vol(I_{d_1}, I_{d_2})
= \left(1.01 - 2\delta\right) \tr \Phi(I_{d_2}).
\end{align}
\item If $F_2 < \frac49 n d_1 d_2$ and $\vol(\Pi_1, \Pi_2)>0$, then
\begin{align}\label{eq:cut}
  \vol(\Pi_1, \Pi_2) \leq \left(\frac43 + \delta\right)\left(F_1 + 2 F_2\right) \quad\text{and}\quad
  \cut(\Pi_1, \Pi_2) \geq \left(\frac23 - \delta\right) F_1.
\end{align}
\end{enumerate}
Then $\ch(\Phi) \geq \frac16 - O(\delta)$.
\end{lemma}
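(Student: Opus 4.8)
The plan is to show that $\ch(\Phi) \geq \frac16 - O(\delta)$ by considering an arbitrary cut $(\Pi_1, \Pi_2)$ with $0 < \vol(\Pi_1, \Pi_2) \leq \frac12 \vol(I_{d_1}, I_{d_2})$ and bounding its conductance $\phi(\Pi_1, \Pi_2) = \cut(\Pi_1,\Pi_2)/\vol(\Pi_1,\Pi_2)$ from below. We split into the two cases dictated by the hypotheses, according to whether $F_2 = n r_1 r_2 \geq \frac49 n d_1 d_2$ or not.

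In the first case ($F_2 \geq \frac49 n d_1 d_2$), hypothesis~\eqref{eq:vol} says $\vol(\Pi_1, \Pi_2) \geq (\frac{101}{200} - \delta)\vol(I_{d_1},I_{d_2})$. But by assumption $\vol(\Pi_1,\Pi_2) \leq \frac12 \vol(I_{d_1}, I_{d_2})$, and for $\delta < 0.005$ we have $\frac{101}{200} - \delta > \frac12$, a contradiction. Hence this case is vacuous and we need only treat the second case. (I would double-check the arithmetic here: $\frac{101}{200} = 0.505$, and $0.505 - 0.005 = 0.5$, so strictly we need $\delta < 0.005$ to make the inequality strict — which is exactly the standing hypothesis $\delta < 0.005$.)

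In the second case ($F_2 < \frac49 n d_1 d_2$), hypothesis~\eqref{eq:cut} gives directly
\begin{align*}
  \phi(\Pi_1, \Pi_2)
= \frac{\cut(\Pi_1, \Pi_2)}{\vol(\Pi_1, \Pi_2)}
\geq \frac{(\frac23 - \delta) F_1}{(\frac43 + \delta)(F_1 + 2F_2)}.
\end{align*}
The remaining obstacle is to bound $F_1 + 2F_2$ in terms of $F_1$, i.e. to control the ratio $F_2/F_1$ uniformly over all admissible $(r_1, r_2)$ satisfying $F_2 = n r_1 r_2 < \frac49 n d_1 d_2$. The key elementary inequality is that $F_1 = n r_1(d_2 - r_2) + n r_2(d_1 - r_1) \geq $ some constant multiple of $F_2 = n r_1 r_2$ whenever $r_1 r_2 < \frac49 d_1 d_2$: indeed, writing $s_1 = r_1/d_1, s_2 = r_2/d_2 \in [0,1]$ with $s_1 s_2 < 4/9$, we have $F_1/(n d_1 d_2) = s_1(1 - s_2) + s_2(1 - s_1) = s_1 + s_2 - 2 s_1 s_2$ and $F_2/(n d_1 d_2) = s_1 s_2$; one checks that $s_1 + s_2 - 2s_1 s_2 \geq s_1 s_2$ (equivalently $s_1 + s_2 \geq 3 s_1 s_2$) holds on the region $s_1 s_2 \leq 4/9$ — for instance by AM-GM, $s_1 + s_2 \geq 2\sqrt{s_1 s_2} \geq 3 s_1 s_2$ when $\sqrt{s_1 s_2} \leq 2/3$. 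Hence $F_1 \geq F_2$, so $F_1 + 2F_2 \leq 3 F_1$, giving
\begin{align*}
  \phi(\Pi_1, \Pi_2)
\geq \frac{\frac23 - \delta}{3(\frac43 + \delta)}
= \frac16 \cdot \frac{1 - \frac32\delta}{1 + \frac34\delta}
\geq \frac16 - O(\delta).
\end{align*}
Taking the minimum over all admissible cuts yields $\ch(\Phi) \geq \frac16 - O(\delta)$, as desired.

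The main obstacle I anticipate is making sure the edge case where one of $\Pi_1, \Pi_2$ is zero (but not both) is handled: then either $r_1 = 0$ or $r_2 = 0$, so $F_2 = 0 < \frac49 n d_1 d_2$, and we are in the second case with $F_2 = 0$, so $F_1 + 2F_2 = F_1$ and the bound $\phi \geq (\frac23 - \delta)/(\frac43 + \delta) \geq \frac12 - O(\delta) \geq \frac16 - O(\delta)$ is even easier; we just need $\vol(\Pi_1, \Pi_2) > 0$ for the conductance to be well-defined, which is part of the hypothesis of~\eqref{eq:cut}. I would also note that nothing probabilistic enters here — this lemma is purely deterministic, taking \eqref{eq:vol} and \eqref{eq:cut} as given — so the only real content is the elementary inequality $s_1 + s_2 \geq 3 s_1 s_2$ on $\{s_1 s_2 \leq 4/9\}$ and the bookkeeping of constants.
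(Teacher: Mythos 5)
Your proof is correct and follows essentially the same route as the paper: case 1 is vacuous because $\left(\tfrac{101}{200}-\delta\right)\vol(I_{d_1},I_{d_2}) > \tfrac12\vol(I_{d_1},I_{d_2})$ contradicts the constraint in the definition of $\ch(\Phi)$, and in case 2 the hypotheses reduce the bound to showing $F_1/(F_1+2F_2) \geq \tfrac13$, which both you and the paper obtain from the constraint $r_1 r_2 < \tfrac49 d_1 d_2$ (your AM--GM step $s_1+s_2 \geq 2\sqrt{s_1 s_2} \geq 3s_1 s_2$ and the paper's manipulation $1 - \sqrt{r_1 r_2 / d_1 d_2} \cdot 2/(\sqrt{a}+\sqrt{a^{-1}}) \geq \tfrac13$ are the same elementary inequality written two ways). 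The edge case $r_1 = 0$ or $r_2 = 0$ is handled identically in both.
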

\begin{proof}
The first assumption implies we only need to reason about the case that $F_2 < \frac49 n d_1 d_2$.
This is because the minimization in the definition of the Cheeger constant is over $\Pi_1$, $\Pi_2$ such that $\vol(\Pi_1, \Pi_2) \leq \tr \Phi(I_{d_2})$.
Therefore, the second assumption implies that
\begin{align*}
  \ch(\Phi)
\geq \frac{\frac43 + \delta}{\frac23 - \delta} \min_{F_2 < \frac49 n d_1 d_2} \frac{F_1}{F_1 + 2 F_2}
= \left( \frac12 - O(\delta) \right) \min_{r_1 r_2 < \frac49 d_1 d_2} \frac{F_1}{F_1 + 2 F_2}.
\end{align*}
It suffices to show that $F_1/(F_1 + 2 F_2) \geq 1/3$ provided $r_1 r_2 < \frac49 d_1 d_2$.
Indeed, if either $r_1 = 0$ or $r_2 = 0$, then $F_2 = 0$ and $F_1>0$ and the claim holds.
Otherwise, if $r_1>0$ and $r_2>0$, then
\begin{align*}
  \frac{F_1}{F_1 + 2 F_2}
&= \frac{r_1 d_2 + r_2 d_1 - 2 r_1 r_2}{r_1 d_2 + r_2 d_1}
= 1 - \frac{2 r_1 r_2}{r_1 d_2 + r_2 d_1} \\
&= 1 - \sqrt{\frac{r_1r_2}{d_1d_2}} \frac2{\sqrt{\frac{r_1 d_2}{r_2 d_1}} + \sqrt{\frac{r_2 d_1}{r_1 d_2}}}
\geq 1 - \sqrt{\frac49}
= \frac13.
\end{align*}
In the last inequality we used that $a + a^{-1} \geq 2$ for all $a>0$ and that $r_1 r_2 < \frac49 d_1 d_2$.
\end{proof}

Next we use \cref{fact:chi} to show that for a random completely positive map, the events in \cref{lem:suff} hold with high probability for any \emph{fixed} $\Pi_1$ and $\Pi_2$. We also need a third bound which we will use to transfer properties of a $\delta$-net to the whole space of projections.

\begin{lemma}\label{lem:probabilities}
Suppose $d_1 \leq d_2$. Let $\Pi_1\colon \C^{d_1} \to \C^{d_1}$, $\Pi_2\colon \C^{d_2} \to \C^{d_2}$ be orthogonal projections of rank~$r_1$ and~$r_2$, respectively such that $r_1 + r_2 > 0$.
Let $F_1:= n r_1(d_2 - r_2) + n r_2(d_1-r_1)$ and $F_2 = n r_1 r_2$.
Then, the following holds for the random completely positive map $\Phi=\Phi_X$:
\begin{enumerate}
\item If $F_2 \geq \frac49 n d_1 d_2$, then \cref{eq:vol} holds with $\delta = 0$ with probability at least $1 - e^{-\Omega( n d_1 d_2)}$.
\item If $F_2 < \frac49 n d_1 d_2$, then \cref{eq:cut} holds with $\delta = 0$ with probability at least $1 - e^{-\Omega( F_1)}$.
\item Finally, $\vol(\Pi_1, \Pi_2) \geq \frac{1}{4d_2}  \vol(I_{d_1}, I_{d_2})$ with probability at least $1 - e^{- \Omega(F_1 + 2F_2)}$.
\end{enumerate}
\end{lemma}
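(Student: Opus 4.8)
The plan is to reduce all three statements to standard concentration inequalities for (weighted) $\chi^2$ random variables, using the explicit joint law from \cref{fact:chi}. Recall from there that $\cut(\Pi_1,\Pi_2)=R_1$, $\vol(\Pi_1,\Pi_2)=R_1+2R_2$, and $\vol(I_{d_1},I_{d_2})=2R_1+2R_2+2R_3$, where $R_1,R_2,R_3$ are independent $\chi^2$ variables with $F_1,F_2,F_3$ degrees of freedom, and crucially $F_1+F_2+F_3=nd_1d_2$ and $F_1+2F_2=nr_1d_2+nr_2d_1$. Since each of the three target events concerns only an affine combination of $R_1,R_2,R_3$ with $O(1)$ coefficients, the strategy in each case is: (i) rewrite the event as a one-sided deviation of such a combination from its mean; (ii) check, using the two displayed deterministic identities and a short case analysis on $r_1,r_2$, that the deviation is a constant fraction of the mean; (iii) conclude with the Laurent--Massart bound for weighted sums of $\chi^2_1$ variables (or Bernstein's inequality for sub-exponential variables), whose failure probability is $e^{-\Omega(\text{number of terms})}$ when the weights are $\Theta(1)$.

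For the first item, the hypothesis $F_2=nr_1r_2\geq\frac49 nd_1d_2$ (together with $r_1\leq d_1$, $r_2\leq d_2$) forces $r_1\geq\frac49 d_1$ and $r_2\geq\frac49 d_2$, hence $F_3=n(d_1-r_1)(d_2-r_2)\leq\frac{25}{81}nd_1d_2$ and $F_1=nd_1d_2-F_2-F_3\leq\frac59 nd_1d_2$. Rearranging $\vol(\Pi_1,\Pi_2)\geq\frac{101}{200}\vol(I_{d_1},I_{d_2})$ gives the equivalent inequality $99R_2\geq R_1+101R_3$, and the above bounds give $99F_2-F_1-101F_3\geq\bigl(44-\tfrac{45}{81}-\tfrac{2525}{81}\bigr)nd_1d_2\geq 12\,nd_1d_2$, a positive constant fraction of $nd_1d_2$. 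Since all coefficients are $O(1)$ and $F_1+F_2+F_3=nd_1d_2$, Bernstein's inequality for the sub-exponential variable $99R_2-R_1-101R_3$ then yields the claim with failure probability $e^{-\Omega(nd_1d_2)}$. For the second item, when $F_2<\frac49 nd_1d_2$ and $r_1+r_2>0$ one cannot have both $\Pi_1=I_{d_1}$ and $\Pi_2=I_{d_2}$, so $F_1\geq n\geq 1$; the bound $\vol(\Pi_1,\Pi_2)=R_1+2R_2\leq\frac43(F_1+2F_2)$ is an upper deviation of $\frac13$ of the mean of the weighted $\chi^2$ variable $R_1+2R_2$ (weights $1$ and $2$, variance proxy $F_1+4F_2\leq 2(F_1+2F_2)$), failing with probability $e^{-\Omega(F_1+2F_2)}$, while $\cut(\Pi_1,\Pi_2)=R_1\geq\frac23 F_1$ is a lower deviation of $\frac13 F_1$ for $R_1\sim\chi^2_{F_1}$, failing with probability $e^{-\Omega(F_1)}$; since $F_1+2F_2\geq F_1$, both are $\leq e^{-\Omega(F_1)}$.

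For the third item, use that $\vol(I_{d_1},I_{d_2})=2\norm{X}_F^2$ with $\norm{X}_F^2\sim\chi^2_{nd_1d_2}$, so $\vol(I_{d_1},I_{d_2})\leq\frac52 nd_1d_2$ fails with probability $e^{-\Omega(nd_1d_2)}\leq e^{-\Omega(F_1+2F_2)}$ (as $F_1+2F_2\leq 2nd_1d_2$), while $\vol(\Pi_1,\Pi_2)=R_1+2R_2\geq\frac34(F_1+2F_2)$ fails with probability $e^{-\Omega(F_1+2F_2)}$ by Laurent--Massart. On the intersection of these two events, using $F_1+2F_2=nr_1d_2+nr_2d_1\geq nd_1$ (valid whenever $r_1+r_2>0$, since $d_1\leq d_2$) and $d_2\geq 2$, we get $\vol(\Pi_1,\Pi_2)\geq\frac34 nd_1\geq\frac58 nd_1=\frac1{4d_2}\cdot\frac52 nd_1d_2\geq\frac1{4d_2}\vol(I_{d_1},I_{d_2})$, as desired.

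The main obstacle here is bookkeeping rather than any conceptual difficulty: one must track the explicit constants so that the inequalities between the means genuinely have constant-factor slack, and in particular notice (a) that $F_2$ being large forces \emph{both} ranks $r_1,r_2$ to be large, which is exactly what produces the multiplicative gap needed in the first item, and (b) that the weak target factor $\frac1{4d_2}$ in the third item lets the crude bound $\vol(I_{d_1},I_{d_2})\leq\frac52 nd_1d_2$ (whose failure probability $e^{-\Omega(nd_1d_2)}$ is automatically at most $e^{-\Omega(F_1+2F_2)}$) be combined with a tight lower bound on $\vol(\Pi_1,\Pi_2)$ at the correct failure scale $F_1+2F_2$, which can be as small as $nd_1\ll nd_1d_2$.
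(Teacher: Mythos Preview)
Your proposal is correct and follows essentially the same strategy as the paper: reduce via \cref{fact:chi} to statements about $R_1,R_2,R_3$, verify deterministically that the relevant means have constant-factor slack, and apply standard $\chi^2$ concentration. For the first item your single application of Bernstein to the linear combination $99R_2-R_1-101R_3$ is a bit more direct than the paper's two-stage argument (which bounds $R_2/R_3$ and $R_2/R_1$ separately and then combines), but this is a cosmetic difference rather than a different route. One small cleanup: in the third item your parenthetical ``and $d_2\geq 2$'' is neither assumed in the lemma nor actually used---your displayed chain $\tfrac34 nd_1\geq\tfrac58 nd_1=\tfrac1{4d_2}\cdot\tfrac52 nd_1d_2$ holds for every $d_2\geq 1$.
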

\begin{proof}
Recall from \cref{fact:chi} that $\cut(\Pi_1, \Pi_2)$, $\vol(\Pi_1, \Pi_2)$, $\vol(I_{d_1}, I_{d_2})$ are jointly distributed as $R_1$, $R_1 + 2R_2$, $2R_1 + 2R_2 + 2R_3$ for $R_1, R_2, R_3$ independent $\chi^2$ random variables with $F_1$, $F_2$, and $F_3 = n(d_1-r_1)(d_2-r_2)$ degrees of freedom, respectively.
In view of \cref{eq:vol,eq:cut} with $\delta=0$, it is thus enough to show that
\begin{enumerate}
\item If $F_2 \geq \frac49 n d_1 d_2$, then $R_2 \geq \frac1{99} R_1 + \frac{101}{99} R_3$ with probability $1 - e^{-\Omega( n d_1 d_2)}$.
\item If $F_2 < \frac49 n d_1 d_2$, then $R_1 + 2R_2 \leq \frac43 (F_1 + 2F_2)$ and $R_1 \geq \frac23 F_1$ hold with probability $1 - e^{-\Omega(F_1)}$.
\item $R_1 + 2 R_2 \geq \frac23 (F_1 + 2F_2)$ and $R_1 + R_2 + R_3 \leq \frac43 (F_1 + F_2 + F_3)$ with probability $1 - e^{- \Omega(F_1 + 2F_2)}$.
\end{enumerate}
Indeed, the first (resp.\ second) claim above implies the first (resp.\ second) claim in the lemma by substituting the expressions for $\vol$ and $\cut $ of $(\pi_1, \pi_2)$ and $(I_{d_1}, I_{d_2})$ in terms of $R_1, R_2, R_3$. The last claim follows from the same reasoning combined with the inequality $F_1 + 2 F_2 \geq \frac{1}{d_2} (F_1 + F_2 + F_3)$ for $r_1, r_2$ not both zero and the fact that $d_1 \leq d_2$.

All three follow from standard results for concentration of $\chi^2$ random variables; see e.g.~\cite{W19}.
We only prove the first claim; the second and third claims are straightforward.
To prove the first claim, we first reason about the case when one of $r_1  = 0$.

note that $F_1 + 2 F_2 \geq \frac43(F_1 + F_2 + F_3)$, because
\begin{align*}
  \frac{F_1 + 2 F_2}{F_1 + F_2 + F_3}
= \frac{r_1}{d_1} + \frac{r_2}{d_2}
= \sqrt{\frac{r_1r_2}{d_1d_2}} \left( \sqrt{\frac{r_1 d_2}{d_1 r_2}} + \sqrt{\frac{d_1 r_2}{r_1 d_2}} \right)
\geq \sqrt{\frac49} \cdot 2 = \frac43.
\end{align*}
In particular, $F_2 \geq \frac23(F_2 + F_3)$ and $F_2 \geq \frac16(F_1 + F_2)$. 

We first reason about the ratio between $R_2$ and $R_3$ using the first inequality.
With probability $1 - e^{- c F_2} \geq 1 - e^{- \Omega(n d_1 d_2)}$, it holds that $R_2 \geq \frac89 F_2$ and $R_2 + R_3 \leq \frac{10}9 (F _2 + F_3)$. The latter holds because $R_2 + R_3$ is a $\chi^2$ random variable with $F_2 + F_3 \geq F_2$ degrees of freedom. so $R_2 \geq \frac89 \frac23 \frac9{10} (R_2 + R_3) = \frac8{15} (R_2 + R_3)$, or $R_2 \geq \frac87 R_3$.
We next apply the same reasoning with the inequality $F_2 \geq (F_1 + F_2)/6$ to estimate the ratio between $R_1$ and $R_2$.
With probability $1 - e^{- c n d_1 d_2}$, we have $R_2 \geq \frac89 F_2$ and $R_1 + R_2 \leq \frac{10}9 (F_1 + F_2)$.
Thus $R_1 \geq \frac89 \frac16 \frac9{10} (R_1 + R_2) = \frac4{30} (R_1 + R_2)$, or $R_2 \geq \frac2{13} R_1$.
Together, we obtain that $R_2 \geq \frac1{99} R_1 + \frac{101}99 R_3$ with probability~$1 - e^{-\Omega( n d_1 d_2)}$.
\end{proof}

Finally, a net argument shows that the Cheeger constant is large for \emph{all} projections.

\begin{lemma}[\cite{FM20}, Lemma~5.18]\label{lem:net}
For any $\eps>0$, there is an operator norm $\eps$-net of the rank-$r$ orthogonal projections $\Pi\colon \C^d \to \C^d$ with cardinality $e^{O(d r \abs{\log\eps})}$.
\end{lemma}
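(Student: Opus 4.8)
The plan is to parametrize the rank-$r$ orthogonal projections through the (complex) Stiefel manifold and then reduce to the classical volumetric bound on covering numbers of a convex body. Every rank-$r$ orthogonal projection on $\C^d$ can be written as $\Pi = VV^*$ with $V$ ranging over $\mathcal V := \{V \in \C^{d\times r} : V^*V = I_r\}$, and since $V^*V = I_r$ forces all singular values of $V$ to equal $1$ we have $\norm{V}_{\op} = 1$, so $\mathcal V$ is contained in the operator-norm unit ball $B := \{M \in \C^{d\times r} : \norm{M}_{\op}\le 1\}$, viewed as a symmetric convex body in $\R^{2dr}$. Two elementary facts drive the argument. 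First, the map $V \mapsto VV^*$ is $2$-Lipschitz from $(\mathcal V,\norm{\cdot}_{\op})$ to the projections in operator norm: for $V,V'\in\mathcal V$,
\[
  \norm{VV^* - V'V'^*}_{\op}
  \le \norm{(V-V')V^*}_{\op} + \norm{V'(V-V')^*}_{\op}
  \le \bigl(\norm{V}_{\op} + \norm{V'}_{\op}\bigr)\,\norm{V-V'}_{\op}
  = 2\,\norm{V - V'}_{\op}.
\]
Second, for any $0<\delta\le1$ the body $B$ admits a $\delta$-net (in operator norm) of cardinality at most $(3/\delta)^{2dr}$: take a maximal $\delta$-separated subset $\mathcal N\subseteq B$ (automatically a $\delta$-net), observe the disjoint balls $x+\tfrac\delta2 B$, $x\in\mathcal N$, all lie in $(1+\tfrac\delta2)B$, and compare volumes to get $\abs{\mathcal N}\le(1+2/\delta)^{2dr}\le(3/\delta)^{2dr}$.

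Next I would pass from a net of the ambient ball $B$ to a net of $\mathcal V$. Fix $\delta=\eps/4$ and let $\mathcal N\subseteq B$ be a $\delta$-net as above. For each $N\in\mathcal N$ whose operator-norm distance to $\mathcal V$ is at most $\delta$, select one point $\phi(N)\in\mathcal V$ with $\norm{N-\phi(N)}_{\op}\le\delta$, and set $\mathcal M:=\{\phi(N)\}$, so that $\abs{\mathcal M}\le\abs{\mathcal N}$. Given $V\in\mathcal V$, pick $N\in\mathcal N$ with $\norm{V-N}_{\op}\le\delta$; then $N$ lies within $\delta$ of $\mathcal V$, so $\phi(N)$ is defined and $\norm{V-\phi(N)}_{\op}\le 2\delta$. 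Hence $\mathcal M$ is a $2\delta$-net of $\mathcal V$. Now put $\mathcal P:=\{VV^* : V\in\mathcal M\}$, a collection of rank-$r$ projections with $\abs{\mathcal P}\le\abs{\mathcal M}\le(3/\delta)^{2dr}=(12/\eps)^{2dr}$. For any rank-$r$ projection $\Pi=VV^*$, choosing $V'\in\mathcal M$ with $\norm{V-V'}_{\op}\le2\delta$ and invoking the Lipschitz bound gives $\norm{\Pi - V'V'^*}_{\op}\le 2\cdot2\delta=\eps$. Thus $\mathcal P$ is an operator-norm $\eps$-net with $\abs{\mathcal P}\le(12/\eps)^{2dr}=e^{2dr\log(12/\eps)}=e^{O(dr\abs{\log\eps})}$ in the regime of interest ($\eps$ bounded away from $1$).

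The one genuine subtlety is the middle step: a net of the unit ball $B$ need not consist of Stiefel matrices, so one cannot simply intersect it with $\mathcal V$. This is handled by the "snap to the nearest Stiefel point" device $\phi$, which preserves the net property up to a harmless factor $2$ in the radius and never increases the cardinality; compactness of $\mathcal V$ ensures the required minima and maximal separated sets exist. (Working in the operator norm rather than the Frobenius norm is convenient here, as it avoids an extra $\sqrt r$ in the radius of the ambient ball and keeps the $\abs{\log\eps}$ bookkeeping clean. If one wanted the sharper exponent $2r(d-r)$ one would first use $\Pi\leftrightarrow I_d-\Pi$ to reduce to $r\le d/2$, but that refinement is not needed for the stated bound.)
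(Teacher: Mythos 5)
Your proof is correct. The paper does not actually prove this lemma; it cites it as Lemma~5.18 of \cite{FM20}, so there is no in-paper argument to compare against. Your Stiefel-parametrization argument is the standard route: embed $\mathcal V = \{V \in \C^{d\times r} : V^*V = I_r\}$ into the operator-norm unit ball $B \subset \C^{d\times r}\cong\R^{2dr}$, cover $B$ by the volumetric bound, snap the net to $\mathcal V$ (doubling the radius), and push forward through the $2$-Lipschitz map $V\mapsto VV^*$, losing another factor of $2$. All four steps are verified correctly, including the Lipschitz estimate $\norm{VV^*-V'V'^*}_{\op}\le 2\norm{V-V'}_{\op}$ and the volume comparison $(3/\delta)^{2dr}$. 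The only caveat is the one you already flag: the statement $e^{O(dr\abs{\log\eps})}$ is, taken literally, slightly too strong as $\eps\to 1^-$ (where $\abs{\log\eps}\to 0$ but a $2$-point net may still be needed); the intended reading is $\eps$ small, which is also the regime used downstream in \cref{lem:union} where $\eps = c/d_2$. Your parenthetical on sharpening the exponent to $2r(d-r)$ via the symmetry $\Pi\leftrightarrow I_d-\Pi$ is also correct, though unneeded for the stated bound.
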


As a corollary, the set of pairs of projections $\Pi_1$, $\Pi_2$ of rank $r_1$ and $r_2$, respectively, has an (elementwise) operator norm $\eps$-net of cardinality $e^{O((r_1 d_1 + r_2 d_2) \abs{\log \eps})}$.

\begin{lemma}[Continuity of cut and volume]\label{lem:net-suffices}
Let $\Pi_1,\Pi'_1 \colon \C^{d_1}\to\C^{d_1}$ and $\Pi_2,\Pi'_2\colon\C^{d_2}\to\C^{d_2}$ be orthogonal projections such that $\norm{\Pi_1 - \Pi'_1}_{\op}\leq\eps$ and $\norm{\Pi_2 - \Pi'_2}_{\op}\leq\eps$.
Then:
\begin{align*}
  \abs{\cut(\Pi'_1,\Pi'_2) - \cut(\Pi_1,\Pi_2)}
\leq 2 \eps \vol(I_{d_1}, I_{d_2}) \\
  \quad\text{and}\quad
  \abs{\vol(\Pi'_1,\Pi'_2) - \vol(\Pi_1,\Pi_2)}
\leq 2 \eps \vol(I_{d_1}, I_{d_2}).
\end{align*}
\end{lemma}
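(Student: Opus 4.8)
The plan is to exploit that $\Phi$ (being completely positive) and its adjoint $\Phi^*$ are positive maps, hence \emph{monotone} with respect to the Loewner order: $A \preceq B$ implies $\Phi(A) \preceq \Phi(B)$, and in particular $\Phi$ sends positive semidefinite matrices to positive semidefinite matrices. Throughout, set $H := \Pi_2' - \Pi_2$ and $G := \Pi_1' - \Pi_1$, which are Hermitian with $\norm{H}_{\op}\leq\eps$ and $\norm{G}_{\op}\leq\eps$. The elementary fact I would use repeatedly is: if $M$ is Hermitian with $-\mu I \preceq M \preceq \mu I$ for some $\mu\geq0$, then writing $M = M_+ - M_-$ with $0 \preceq M_\pm \preceq \mu I$ one gets $\Phi(M_\pm) \succeq 0$ and $\tr\Phi(M_\pm) \leq \mu\,\tr\Phi(I)$ by monotonicity, so that
\begin{align*}
  \norm{\Phi(M)}_1 \leq \tr\Phi(M_+) + \tr\Phi(M_-) \leq 2\mu\,\tr\Phi(I_{d_2}).
\end{align*}
I will also use H\"older's inequality $\abs{\tr(MN)} \leq \norm{M}_{\op}\norm{N}_1$ and the identity $\tr\Phi(I_{d_2}) = \tr\Phi^*(I_{d_1}) = \tfrac12\vol(I_{d_1},I_{d_2})$ (the two traces agree because each equals the sum of squared Frobenius norms of the Kraus operators of $\Phi$).

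For the volume bound this is immediate: directly from \cref{def:cheeger},
\begin{align*}
  \vol(\Pi_1',\Pi_2') - \vol(\Pi_1,\Pi_2) = \tr\Phi(H) + \tr\Phi^*(G),
\end{align*}
and applying monotonicity to $-\eps I \preceq H \preceq \eps I$ (resp.\ $G$) bounds the two summands by $\eps\,\tr\Phi(I_{d_2})$ and $\eps\,\tr\Phi^*(I_{d_1})$ in absolute value, for a total of $\eps\,\vol(I_{d_1},I_{d_2}) \leq 2\eps\,\vol(I_{d_1},I_{d_2})$.

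For the cut I would change one projection at a time. Replacing $\Pi_2$ by $\Pi_2'$ while keeping $\Pi_1$ changes $\cut$ by $\tr\bigl((I_{d_1}-2\Pi_1)\Phi(H)\bigr)$; since $\norm{I_{d_1}-2\Pi_1}_{\op}\leq1$, H\"older together with the displayed trace-norm estimate bounds this by $2\eps\,\tr\Phi(I_{d_2})$. Then replacing $\Pi_1$ by $\Pi_1'$ while keeping $\Pi_2'$ changes $\cut$ by $\tr\bigl(G\,\Phi(I_{d_2}-2\Pi_2')\bigr)$; since $\norm{G}_{\op}\leq\eps$ and $\norm{I_{d_2}-2\Pi_2'}_{\op}\leq1$, the same decomposition gives $\norm{\Phi(I_{d_2}-2\Pi_2')}_1 \leq 2\tr\Phi(I_{d_2})$, so this change is also at most $2\eps\,\tr\Phi(I_{d_2})$. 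Adding the two contributions and using $\vol(I_{d_1},I_{d_2}) = 2\tr\Phi(I_{d_2})$ yields $\abs{\cut(\Pi_1',\Pi_2') - \cut(\Pi_1,\Pi_2)} \leq 4\eps\,\tr\Phi(I_{d_2}) = 2\eps\,\vol(I_{d_1},I_{d_2})$, as required.

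The computation is mostly bookkeeping, and I do not expect a genuine obstacle. The only mild subtlety — and the one place where positivity of $\Phi$ alone does not suffice — is that $\Phi$ applied to a \emph{signed} perturbation such as $H$ or $I_{d_2}-2\Pi_2'$ need not be positive semidefinite, so one cannot control its trace against $\tr\Phi(I_{d_2})$ directly; instead one must pass through the trace norm via the positive/negative decomposition and H\"older's inequality, which is exactly what the preliminary estimate above packages. Everything else is keeping track of the (generous) constant and invoking $\tr\Phi(I_{d_2}) = \tr\Phi^*(I_{d_1})$ to convert the bound into one involving $\vol(I_{d_1},I_{d_2})$.
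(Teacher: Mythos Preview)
Your proof is correct and reaches the same constant $2\eps\,\vol(I_{d_1},I_{d_2})$ as the paper, via the same basic ingredients (H\"older's inequality and monotonicity of the completely positive map~$\Phi$). The organizational choice differs slightly: you telescope in the variables $\Pi_2 \to \Pi_2'$ then $\Pi_1 \to \Pi_1'$, which leaves $\Phi$ applied to signed matrices such as $H$ or $I_{d_2}-2\Pi_2'$ and forces the positive/negative decomposition to control $\norm{\Phi(\cdot)}_1$. The paper instead splits $\cut$ into its two defining summands and, within each, arranges the difference so that $\Phi$ (or $\Phi^*$) is always applied to a \emph{positive semidefinite} matrix (e.g.\ $I_{d_2}-\Pi_2'$ or $\Pi_1$), whence the trace norm equals the trace and monotonicity applies directly without any decomposition. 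Your route is just as valid and no longer; the paper's arrangement merely avoids the one subtlety you flagged.
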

\begin{proof}
We begin with the first inequality:
\begin{align*}
  \abs{\cut(\Pi'_1,\Pi'_2) - \cut(\Pi_1,\Pi_2)}
&\leq \abs{\tr \Pi'_1 \Phi(I_{d_2} - \Pi'_2) - \tr \Pi_1 \Phi(I_{d_2} - \Pi_2)} \\
&+ \abs{\tr (I_{d_1} - \Pi'_1) \Phi(\Pi'_2) - \tr (I_{d_1} - \Pi_1) \Phi(\Pi_2)}.
\end{align*}
Consider the first term:
\begin{align*}
&\quad \abs{\tr \Pi'_1 \Phi(I_{d_2} - \Pi'_2) - \tr \Pi_1 \Phi(I_{d_2} - \Pi_2)} \\
&= \abs{\tr (\Pi'_1 - \Pi_1) \Phi(I_{d_2} - \Pi'_2) + \tr \Pi_1 \Phi(\Pi_2 - \Pi'_2)} \\
&= \abs{\tr (\Pi'_1 - \Pi_1) \Phi(I_{d_2} - \Pi'_2) + \tr \Phi^*(\Pi_1) (\Pi_2 - \Pi'_2)} \\
&\leq \norm{\Pi'_1 - \Pi_1}_{\op} \norm{\Phi(I_{d_2} - \Pi'_2)}_{\operatorname{tr}} + \norm{\Pi_2 - \Pi'_2}_{\op} \norm{\Phi^*(\Pi_1)}_{\operatorname{tr}} \\
&\leq \eps \tr \Phi(I_{d_2}) + \eps \tr \Phi^*(I_{d_1})
= \eps \vol(I_{d_1}, I_{d_2}).
\end{align*}
The same inequality for the second term follows by symmetry.
The proof of the second inequality is similar.
\end{proof}

\begin{prop}[Cheeger constant lower bound]\label{lem:union}
There is a universal constant $C>0$ such that the following holds:
If $d_1 \leq d_2$, $d_2>1$, and $n \geq C \frac{d_2}{d_1} \log (d_2)$, then $\ch(\Phi) = \Omega(1)$ with probability $1 - e^{- \Omega(n d_1)}$.
\end{prop}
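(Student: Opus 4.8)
The plan is to combine the deterministic sufficient condition for a large Cheeger constant (\cref{lem:suff}) with the pointwise probabilistic estimates (\cref{lem:probabilities}) via a net argument over pairs of projections (\cref{lem:net,lem:net-suffices}). First I would set $\delta = \delta_0$ for a sufficiently small universal constant so that $\frac16 - O(\delta)$ in \cref{lem:suff} is bounded below by a positive constant. For each pair of ranks $(r_1, r_2)$ with $r_1 + r_2 > 0$, I would pick an operator-norm $\eps$-net $\mathcal N_{r_1,r_2}$ of the pairs of rank-$r_1$, rank-$r_2$ orthogonal projections $(\Pi_1, \Pi_2)$; by the remark after \cref{lem:net} this net has cardinality $e^{O((r_1 d_1 + r_2 d_2)|\log\eps|)}$, and I would take $\eps = c/d_2$ for a small constant $c$. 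I would then take a union bound of the three events in \cref{lem:probabilities} over this net.

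The next step is to control the failure probability of the union bound and check that it beats the net cardinality. For a pair with $F_2 < \frac49 n d_1 d_2$, the relevant failure probability from \cref{lem:probabilities} is $e^{-\Omega(F_1)}$, and one needs $F_1 = n(r_1(d_2-r_2) + r_2(d_1-r_1)) \gtrsim (r_1 d_1 + r_2 d_2)\log d_2$. When $r_1, r_2$ are not too close to their maxima this holds because $n \geq C\frac{d_2}{d_1}\log d_2$ and $d_2 - r_2, d_1 - r_1$ are comparable to $d_2, d_1$; when $r_1, r_2$ are large (so $F_2 \geq \frac49 n d_1 d_2$), one instead uses the first bullet of \cref{lem:probabilities}, whose failure probability $e^{-\Omega(n d_1 d_2)}$ easily dominates the net size. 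The third bullet of \cref{lem:probabilities}, giving $\vol(\Pi_1,\Pi_2) \geq \frac{1}{4d_2}\vol(I_{d_1},I_{d_2})$ with failure probability $e^{-\Omega(F_1 + 2F_2)}$, is needed to ensure the volumes appearing in \cref{lem:net-suffices} are not too small relative to the perturbation $2\eps\vol(I_{d_1},I_{d_2})$; with $\eps \asymp 1/d_2$ this perturbation is a small fraction of $\vol(\Pi_1,\Pi_2)$, so transferring the inequalities in \cref{eq:vol,eq:cut} from the net to all projections costs only an extra $O(\delta)$, which we absorb into $\delta_0$. Summing over all $O(d_1 d_2)$ rank pairs costs only a polynomial factor, harmless against the exponential bounds.

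Having established via the union bound and \cref{lem:net-suffices} that the hypotheses of \cref{lem:suff} hold (with $\delta$ a small constant) for \emph{all} pairs of orthogonal projections simultaneously, with probability $1 - e^{-\Omega(nd_1)}$, I conclude $\ch(\Phi) = \Omega(1)$. The overall failure probability is dominated by the worst case of $F_1$, which is of order $n d_1$ (attained, e.g., for $r_1 = 1$, $r_2 = 0$, giving $F_1 = n d_2 \geq n d_1$, or similarly for small ranks), so the claimed $1 - e^{-\Omega(nd_1)}$ follows. I would then note that \cref{thm:operator-cheeger} follows by combining \cref{lem:union} with \cref{lem:op-cheeger}: the doubly-balanced hypothesis $\eps = t\sqrt{d_2/nd_1}$ can be made smaller than $c\,\ch(\Phi)^2 = \Omega(1)$ by taking $n \geq C\frac{d_2}{d_1}t^2$, and then $\Phi_X$ is an $(\eps, \eta)$-quantum expander with $\eta$ a universal constant bounded away from $1$; controlling the doubly-balanced property itself uses \cref{prop:gradient-bound} together with \cref{eq:balanced via grad}, giving the failure probability $e^{-\Omega(d_2 t^2)}$.

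The main obstacle I anticipate is the bookkeeping in the net argument near the \emph{extreme} ranks — that is, when $r_1$ is close to $d_1$ or $r_2$ close to $d_2$, so that $F_1$ (the exponent controlling the failure probability in the $F_2 < \frac49 nd_1d_2$ regime) can degenerate. One must check carefully that whenever $F_1$ is small, one is actually in the complementary regime $F_2 \geq \frac49 n d_1 d_2$ where the much stronger bound $e^{-\Omega(nd_1d_2)}$ applies; equivalently, that the two cases of \cref{lem:suff} together always leave us with an exponent at least $\Omega(nd_1)$ against a net of size at most $e^{O(nd_1 \cdot \text{polylog})}$ — wait, more precisely the net exponent is $O((r_1 d_1 + r_2 d_2)\log d_2)$, which must always be beaten, and this requires the case analysis on $(r_1, r_2)$ to be done with some care. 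Getting the constants to line up so that $n \geq C\frac{d_2}{d_1}\log d_2$ (rather than a larger power of $\log d_2$) suffices is the delicate quantitative point.
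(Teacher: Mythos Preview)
Your proposal is correct and matches the paper's approach essentially line by line: the same choice $\eps = c/d_2$, the same three-item checklist from \cref{lem:probabilities} verified on the net, transfer to arbitrary projections via \cref{lem:net-suffices} (using the third item to guarantee $\vol(\Pi_1,\Pi_2)$ is large enough that a perturbation of size $2\eps\,\vol(I_{d_1},I_{d_2})$ is negligible), and finally \cref{lem:suff}. Your anticipated obstacle at extreme ranks is resolved cleanly by an observation already implicit in the proof of \cref{lem:suff}: whenever $F_2 < \tfrac49 n d_1 d_2$ one has $F_1 \geq \tfrac13 (F_1 + 2F_2) = \tfrac13 n(r_1 d_2 + r_2 d_1)$, and this exponent uniformly dominates the net exponent $(r_1 d_1 + r_2 d_2)\log d_2 \leq \tfrac{d_2}{d_1}(r_1 d_2 + r_2 d_1)\log d_2$ once $n \geq C\tfrac{d_2}{d_1}\log d_2$, so no separate case analysis for ``$r_1$ close to $d_1$'' is needed.
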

\begin{proof}
Let $\eps = \frac c {d_2}$ for some sufficiently small constant~$c>0$.
For $r_1 \leq d_1$ and $r_2 \leq d_2$ not both zero, let~$\cN(r_1,r_2)$ denote an (elementwise) operator norm $\eps$-net for the set of pairs of projections of rank~$r_1$ and~$r_2$, respectively.
As discussed below \cref{lem:net}, we may assume that $\abs{\cN(r_1,r_2)} \leq e^{O((d_1r_1 + d_2r_2) \abs{\log\eps})}$.
Let $\mathcal N = \smash{\bigcup_{r_1,r_2}} \mathcal N(r_1,r_2)$.
We claim that to establish the lemma it suffices to show that with probability $1 - e^{- \Omega(n d_1)}$, the following is simultaneously true for all $r_1,r_2$ and for all $(\Pi_1,\Pi_2) \in \mathcal N(r_1,r_2)$:
\begin{enumerate}
\item If $F_2 := n r_1 r_2 \geq \frac49 n d_1 d_2$, then \cref{eq:vol} holds with $\delta=0$.
\item If $F_2 < \frac49 n d_1 d_2$, then \cref{eq:cut} holds with $\delta=0$.
\item $\vol(\Pi_1,\Pi_2) \geq \frac1{4d_2} \vol(I_{d_1}, I_{d_2})$.
\end{enumerate}
To see that this suffices, we only need to show that it implies the hypotheses of \cref{lem:suff} for~$\delta = 32 c$.
Let $(\Pi_1',\Pi_2')$ be an arbitrary pair of projections, not both zero.
Let $r_1$ and $r_2$ denote their ranks.
Then there exists a pair $(\Pi_1,\Pi_2) \in \cN(r_1,r_2)$ such that $\norm{\Pi'_1 - \Pi_1}_{\op} \leq \eps$ and~$\norm{\Pi'_2 - \Pi_2}_{\op} \leq \eps$.
If $F_2 \geq \frac49 n d_1 d_2$,
\begin{align*}
  \vol(\Pi'_1,\Pi'_2)
&\geq \vol(\Pi_1,\Pi_2) - 2 \eps \vol(I_{d_1}, I_{d_2}) \\
&\geq \left( \frac{101}{200} - 2 \eps \right) \vol(I_{d_1}, I_{d_2})
\geq \left( \frac{101}{200} - 2 c \right) \vol(I_{d_1}, I_{d_2}),
\end{align*}
where the first inequality is \cref{lem:net-suffices}, the second uses the first claim above, and finally we estimate $\eps \leq c$.
Thus we have verified that \cref{eq:vol} holds for $(\Pi'_1,\Pi'_2)$, that is, the first hypothesis of \cref{lem:suff}.
If $F_2 < \frac49 n d_1 d_2$, then
\begin{align*}
  \vol(\Pi'_1,\Pi'_2)
&\leq \vol(\Pi_1,\Pi_2) + 2 \eps \vol(I_{d_1}, I_{d_2})
\leq (1 + 8 \eps d_2) \vol(\Pi_1,\Pi_2) \\
&\leq (1 + 8 \eps d_2) \frac43 \left( F_1 + 2 F_2 \right)
= \left( \frac43 + \frac{32}3 c \right) \left( F_1 + 2 F_2 \right),
\end{align*}
where $F_1 := n r_1(d_2 - r_2) + n r_2(d_1-r_1)$, the first inequality is \cref{lem:net-suffices}, the second inequality uses the third claim above, and the third inequality uses the second claim above.
On the other hand,
\begin{align*}
  \cut(\Pi'_1,\Pi'_2)
&\geq \cut(\Pi'_1,\Pi'_2) - 2 \eps \vol(I_{d_1}, I_{d_2})
\geq \cut(\Pi'_1,\Pi'_2) - 8 \eps d_2 \vol(\Pi_1,\Pi_2) \\
&\geq \frac23 F_1 - 8 \eps d_2 \frac43 (F_1 + 2 F_2)
= \frac23 F_1 - 8 c \frac43 (F_1 + 2 F_2) \\
&\geq \frac23 F_1 - 32 c F_1
= \left( \frac23 - 32 c \right) F_1,
\end{align*}
again using \cref{lem:net-suffices}, the second and third claim above.
In the last step we used the fact that $F_1 \geq \frac{1}{3} (F_1 + 2 F_2)$ provided $F_2 < \frac49 n d_1 d_2$, which we established in the proof of \cref{lem:suff}.
Thus we have verified that \cref{eq:cut} holds for $(\Pi'_1,\Pi'_2)$, which is the remaining hypotheses of \cref{lem:suff}.

To prove the lemma we still need to show that the three conditions above hold with the desired probability.
We show that for fixed $r_1$ and $r_2$, each condition holds with probability at least $1 - e^{\Omega(n (r_1 d_2 + d_1 r_2))}$.
By the union bound, this implies that the conditions hold simultaneously for all $r_1 \leq d_1$ and $r_2 \leq d_2$, not both zero, with the desired probability, because the sum of $e^{-\Omega(n (r_1 d_2 + d_1 r_2))}$ over all such~$r_1$ and~$r_2$ is $e^{- \Omega( n d_1)}$, using that $d_1 \leq d_2$.
Thus fix~$r_1$ and~$r_2$ as above.
We first bound the probability for the first claim.
By \cref{lem:probabilities} and the union bound, if $F_2 \geq \frac49 n d_1 d_2$ then \cref{eq:vol} holds for every $(\Pi_1,\Pi_2) \in \cN(r_1, r_2)$ with probability
\begin{align*}
1 - \abs{\cN(r_1, r_2)} e^{- \Omega( n d_1 d_2) } &\leq 1 - e^{(d_1r_1 + d_2r_2) \abs{\log\eps}} e^{ - \Omega(n (r_1 d_2 + d_1 r_2))}
\leq 1 - e^{ - \Omega(n (r_1 d_2 + d_1 r_2))}.
\end{align*}
The last step follows by our assumption on~$n$ (for a large enough universal constant $C>0$), since
\begin{align*}
  (d_1r_1 + d_2r_2) \abs{\log\eps}
\leq \frac{d_2}{d_1}(r_1 d_2 + d_1 r_2) \left( \log(d_2) + \abs{\log c} \right)
= O\left(\frac{d_2}{d_1} \log(d_2) \right) \cdot (r_1 d_2 + d_1 r_2).
\end{align*}
Next we bound the probability for the second claim.
By \cref{lem:probabilities} and the union bound, if $F_2 < \frac49 n d_1 d_2$, \cref{eq:cut} holds for every $(\Pi_1,\Pi_2) \in \cN(r_1, r_2)$ with probability
\begin{align*}
  1 - \abs{\cN(r_1, r_2)} e^{-\Omega(F_1)}
\leq 1 - \abs{\cN(r_1, r_2)} e^{-\Omega(n (r_1 d_2 + r_2 d_1))}
\leq 1 - e^{ - \Omega(n (r_1 d_2 + d_1 r_2))},
\end{align*}
where the first step holds since $F_1 \geq \frac{1}{3} (F_1 + 2 F_2) = \frac13 n (r_1 d_2 + r_2 d_1)$ whenever $F_2 < \frac49 n d_1 d_2$, as already used earlier in the proof, and the second step follows as above by our assumption on~$n$ (for large enough $C>0$).
The probability for the third claim can be bounded completely analogously.
\end{proof}

\begin{proof}[Proof of \CREFmain{thm:operator-cheeger}{Theorem~3.1}]
Let $\Phi:=\Phi_X$.
Since~$n \geq C \frac{d_2}{d_1} \log d_2$, \cref{lem:union} shows that $\ch(\Phi) = \Omega(1)$ with failure probability $e^{- \Omega(n d_1)}$.
The latter is $e^{- \Omega(d_2 t^2)}$ using our assumption that $n \geq C \frac{d_2}{d_1} t^2$.

Now let $\eps := \smash{t \sqrt{\frac{d_2}{n d_1}}}$, which by the same assumption satisfies $\eps \leq \smash{\frac1{\sqrt C}}$.
Moreover, $n \geq \frac {\dmax^2} {D \eps^2}$, since this is equivalent to our assumption that $t\geq1$.
Therefore, if we choose~$C$ sufficiently large then, similarly to the proof of \CREFmain{thm:tensor-convexity}{Proposition~2.17}, we find using \CREFmain{prop:gradient-bound}{Proposition~2.11} and \cref{prp:xnorm} that $\Phi$ is $\eps$-doubly balanced with failure probability $e^{-\Omega(nD)} + O(e^{-\Omega(n d_1 \eps^2)}) \leq e^{-\Omega(d_2 t^2)}$.

By making~$C$ larger, we can ensure that~$\eps$ is less than any absolute constant.
Then \cref{lem:op-cheeger} applies (with balancedness~$\eps$) and shows that $\Phi$ is an $(\eps,\eta)$-quantum expander for some absolute constant~$\eta<1$.
\end{proof}

\ifdefined\ARXIV
\section{Proofs of results in Section~\ref{sec:sample-complexity} and Theorem~\ref{thm:tensor-frobenius}}\label{app:tensor}
\else
\section{Proofs of results in \texorpdfstring{\CREFmain{sec:sample-complexity}{Section~2} and \CREFmain{thm:tensor-frobenius}{Theorem~1.10}}{Section 2 and Theorem 1.10}}\label{app:tensor}
\fi

We first prove \CREFmain{lem:convex-ball}{Lemma~2.7}, which states strong convexity in a ball about a point where the gradient is sufficiently small implies the optimizer cannot be far.

\begin{proof}[Proof of \CREFmain{lem:convex-ball}{Lemma~2.7}]
We first show that the sublevel set of~$f(\Theta)$ is contained in the ball of radius~$\frac{2\delta}\lambda$.
Consider $g(t) := f(\exp_\Theta(tH))$, where~$H\in\H$ is an arbitrary vector of unit norm~$\norm H_F = 1$.
Then, using the assumption on the gradient,
\begin{align}\label{eq:grad bound}
  g'(0)
= \partial_{t=0} f(\exp_\Theta(tH))
= \braket{\nabla f(\Theta), H}
\geq -\norm{\nabla f(\Theta)}_F \norm H_F
\geq -\delta.
\end{align}
Since $f$ is $\lambda$-strongly geodesically convex on $B_r(\Theta)$, we have $g''(t) \geq \lambda$ for all $\abs t\leq r$.
It follows that for all $0 \leq t \leq  r$ we have
\begin{align}\label{eq:g convex lower}
  g(t) \geq g(0) - \delta t + \frac12 \lambda t^2.
\end{align}
Plugging in $t = r$ yields
$g(r) \geq  
g(0) + \left( \frac{\lambda r}2 - \delta \right)  r
> g(0)$.
Since $g$ is convex due to the geodesic convexity of $f$, it follows that, for any~$t\geq  r$,
\begin{align*}
  g(0) < g( r) \leq \left( 1-\frac{ r}t \right) g(0) + \frac{ r}t g(t),
\end{align*}
hence
\begin{align*}
  f(\Theta) = g(0) < g(t) = f(\exp_\Theta(tH)).
\end{align*}
Thus, since $H$ was an arbitrary unit norm tangent vector, the sublevel set of~$f(\Theta)$ is contained in the ball of radius~$r$ about~$\Theta$.
By replacing~$r$ with any smaller~$r'>\frac{2\delta}\lambda$, we see that the sublevel set is in fact contained in the closed ball of radius~$\frac{2\delta}\lambda$.
In particular, the minimum of $f$ is attained and any minimizer~$\smash{\htheta}$ is contained in this ball.
Moreover, as the right-hand side of \cref{eq:g convex lower} takes a minimum at $t=\frac\delta\lambda$, we have $g(t) \geq g(0) - \frac{\delta^2}{2\lambda}$ for all~$0\leq t\leq r$.
By definition of $g$, this implies that $f(\htheta) \geq f(\Theta) - \frac{\delta^2}{2 \lambda}$.

Next, we prove that any minimizer of~$f$ is necessarily contained in the ball of radius~$\frac\delta\lambda$.
To see this, take an arbitrary minimizer~$\htheta$ and write it in the form $\htheta = \exp_\Theta(TH)$, where~$H\in \H$ is a unit vector and~$T>0$.

As before, we consider the function $g(t) = f(\exp_\Theta(tH))$.
Then, using \cref{eq:grad bound}, the convexity of~$g(t)$ for all $t\in\R$ and the $\lambda$-strong convexity of~$g(t)$ for~$\abs t \leq  r$, we have
\begin{align*}
  0 = g'(T) = g'(0) + \int_0^T g''(t) \, dt \geq \lambda \min(T,  r) - \delta.
\end{align*}
If $T> r$ then we have a contradiction as $\lambda r - \delta > \lambda r/2 - \delta > 0$.
Therefore we must have~$T\leq r$ and hence $\lambda T - \delta \leq 0$, so $T \leq \frac\delta\lambda$.
Thus we have proved that any minimizer of $f$ is contained in the ball of radius~$\frac\delta\lambda$.

We still need to show that the minimizer is unique; that this follows from strong convexity is convex optimization ``folklore,'' but we include a proof nonetheless.
Indeed, suppose that $\htheta$ is a minimizer and let $H\in \H$ be arbitrary.
Consider $h(t) := f(\exp_{\htheta}(tH))$.
Then the function $h(t)$ is convex, has a minimum at $t=0$, and satisfies $h''(0) > 0$, since $f$ is $\lambda$-strongly geodesically convex near~$\htheta$, as $\htheta \in B_r(\Theta)$ by what we showed above.
It follows that $h(t) > h(0)$ for any~$t\neq0$.
Since $H$ was arbitrary, this shows that $f(\Upsilon) > f(\htheta)$ for any $\Upsilon\neq \htheta$.
\end{proof}

Next we prove \CREFmain{lem:gradient}{Lemma 2.9}, which computes the gradient in terms of partial traces.

\begin{proof}[Proof of \CREFmain{lem:gradient}{Lemma 2.9}]
For all $a\in[k]$ and any traceless symmetric $d_a\times d_a$ matrix~$H$,
\begin{align*}
\braket{\nabla_a f_x(I_D), H}
&= \partial_{t=0} f_x(e^{t\sqrt{d_a} H_{(a)}})
= \partial_{t=0} \left( \tr \rho \, e^{t\sqrt{d_a} H_{(a)}} - \frac1D\log\det(e^{t\sqrt{d_a} H_{(a)}}) \right) \\
&= \sqrt{d_a} \tr \rho \, H_{(a)}
= \sqrt{d_a} \tr \rho^{(a)} \, H
\end{align*}
using \CREFmain{eq:obj via rho}{Eq.~(2.2)} and \CREFmain{eq:partial trace duality}{Eq.~(2.3)} and that $\tr H_{(a)} = 0$ since $\tr H = 0$.
Since $\nabla_a f_{\samp}$ is traceless and symmetric by definition, while $\rho^{(a)}$ is symmetric, this implies that $\nabla_f f_{\samp}$ is the orthogonal projection of $\rho^{(a)}$ onto the traceless matrices, i.e.,
\begin{align*}
  \nabla_a f_{\samp}
= \sqrt{d_a} \left( \rho^{(a)} - \frac{\tr \rho^{(a)}}{d_a} I_{d_a} \right)
= \sqrt{d_a} \left( \rho^{(a)} - \frac{\tr \rho}{d_a} I_{d_a} \right).
\end{align*}
Finally,
\[
  \nabla_0 f_x
= \partial_{t=0} \left( \tr \rho e^t - \frac1D \log \det(e^t I_D) \right)
= \partial_{t=0} \left( \tr \rho e^t - t \right)
= \tr \rho - 1.
\]
\end{proof}

To prove \CREFmain{prop:gradient-bound}{Proposition~2.11} we will need a standard result in matrix concentration.
By the discussion below \CREFmain{definition:partial-trace}{Definition~2.8}, when the samples $x=(x_1,\dots,x_n)$ are independent standard Gaussians in $\R^D$, then $\rho^{(a)}$ is distributed as $\frac1{nD} Y Y^T$, where~$Y$ is a random $d_a \times N_a$ matrix with independent standard Gaussian entries, where~$N_a = \frac{nD}{d_a}$.
The following result bounds the singular values of such random matrices.

\begin{theorem}[Corollary 5.35 of \cite{vershynin2010introduction}]\label{cor:vershynin}
Let $Y \in \R^{d \times N}$ have independent standard Gaussian entries where $N\geq d$.
Then, for every $t > 0$, the following occurs with probability at least $1 - 2 e^{-t^2/2}$:
\begin{align*}
  \sqrt{N} - \sqrt{d} - t \leq \sigma_d(Y) \leq \sigma_1(Y) \leq \sqrt{N} + \sqrt{d} + t,
\end{align*}
where $\sigma_j$ denotes the $j$-th largest singular value.
\end{theorem}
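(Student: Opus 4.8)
\emph{Remark (proof strategy).} This is a classical result of Gordon; for completeness we indicate the standard argument, which combines Gaussian concentration of measure with comparison inequalities for Gaussian processes and is what produces the sharp leading constants $\sqrt{N}\pm\sqrt{d}$. View $Y$ as a standard Gaussian vector in $\R^{dN}$ with the Frobenius inner product. The plan has two parts: (i) observe that $Y\mapsto\sigma_1(Y)$ and $Y\mapsto\sigma_d(Y)$ are $1$-Lipschitz, so by Gaussian concentration it suffices to bound their expectations; (ii) show $\E\,\sigma_1(Y)\le\sqrt{N}+\sqrt{d}$ via the Sudakov--Fernique inequality and $\E\,\sigma_d(Y)\ge\sqrt{N}-\sqrt{d}$ via Gordon's min--max comparison inequality. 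For step (i): by Weyl's perturbation bound for singular values, $\abs{\sigma_j(Y)-\sigma_j(Y')}\le\norm{Y-Y'}_{\op}\le\norm{Y-Y'}_F$ for every $j$, so $\sigma_1,\sigma_d$ are $1$-Lipschitz; the Gaussian concentration inequality for $1$-Lipschitz functions of a standard Gaussian vector then gives, for all $t>0$,
\begin{align*}
  \P\bigl[\sigma_1(Y)\ge\E\,\sigma_1(Y)+t\bigr]\le e^{-t^2/2},\qquad
  \P\bigl[\sigma_d(Y)\le\E\,\sigma_d(Y)-t\bigr]\le e^{-t^2/2},
\end{align*}
and a union bound reduces the theorem to the two moment estimates.

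For the moment estimates, use the variational formulas $\sigma_1(Y)=\max_{u\in\S^{d-1}}\max_{v\in\S^{N-1}}u^TYv$ and, since $d\le N$, $\sigma_d(Y)=\min_{u\in\S^{d-1}}\max_{v\in\S^{N-1}}u^TYv$. Set $X_{u,v}=u^TYv$, a centered Gaussian process on $\S^{d-1}\times\S^{N-1}$ with $\E X_{u,v}X_{u',v'}=\langle u,u'\rangle\langle v,v'\rangle$, and introduce the auxiliary process $Z_{u,v}=\langle g,u\rangle+\langle h,v\rangle$ with $g\in\R^d$, $h\in\R^N$ independent standard Gaussians, so that $\E Z_{u,v}Z_{u',v'}=\langle u,u'\rangle+\langle v,v'\rangle$, $\max_u\max_v Z_{u,v}=\norm{g}_2+\norm{h}_2$, and $\min_u\max_v Z_{u,v}=\norm{h}_2-\norm{g}_2$. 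A short computation gives
\begin{align*}
  \E(Z_{u,v}-Z_{u',v'})^2-\E(X_{u,v}-X_{u',v'})^2=2\bigl(1-\langle u,u'\rangle\bigr)\bigl(1-\langle v,v'\rangle\bigr)\ge 0 ,
\end{align*}
so Sudakov--Fernique applies and yields $\E\,\sigma_1(Y)=\E\max_{u,v}X_{u,v}\le\E\max_{u,v}Z_{u,v}=\E\norm{g}_2+\E\norm{h}_2\le\sqrt{d}+\sqrt{N}$, using $\E\norm{g}_2\le(\E\norm{g}_2^2)^{1/2}=\sqrt{d}$ and likewise for $h$.

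For the lower bound on $\E\,\sigma_d(Y)$, apply Gordon's inequality for min--max functionals. Its hypotheses ask, besides index-dependent covariance comparisons, that the two processes have equal pointwise variances; since $\E X_{u,v}^2=1\ne 2=\E Z_{u,v}^2$, this is arranged by the standard device of replacing $X_{u,v}$ with $\widetilde X_{u,v}:=X_{u,v}+\gamma$ for an independent standard Gaussian $\gamma$, which leaves all increments of $X$ unchanged, makes $\E\widetilde X_{u,v}^2=2$, and only shifts $\min_u\max_v\widetilde X_{u,v}=\sigma_d(Y)+\gamma$. One then checks that for fixed $u$ the covariances of $\widetilde X$ and $Z$ coincide, whereas for $u\ne u'$
\begin{align*}
  \E\widetilde X_{u,v}\widetilde X_{u',v'}-\E Z_{u,v}Z_{u',v'}=\bigl(1-\langle u,u'\rangle\bigr)\bigl(1-\langle v,v'\rangle\bigr)\ge 0 ,
\end{align*}
which are precisely Gordon's conditions. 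Hence $\E\,\sigma_d(Y)=\E[\min_u\max_v\widetilde X_{u,v}]\ge\E[\min_u\max_v Z_{u,v}]=\E\norm{h}_2-\E\norm{g}_2\ge\sqrt{N}-\sqrt{d}$, where the last step uses $\E\norm{g}_2\le\sqrt{d}$ together with the elementary fact that $k\mapsto\sqrt{k}-\E\norm{g_k}_2$ (for $g_k$ standard Gaussian in $\R^k$) is nonincreasing in $k$, so that $\E\norm{h}_2-\E\norm{g}_2\ge\sqrt{N}-\sqrt{d}$. Combining with the concentration bounds from the first paragraph completes the argument.

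\emph{Main obstacle.} Everything is routine except the comparison step: one must set up the auxiliary Gaussian process correctly and verify the one-sided, index-dependent hypotheses of Sudakov--Fernique and (especially) Gordon, including the variance matching via the extra independent Gaussian $\gamma$ in the min--max case. If one is content with a suboptimal constant, this step can be replaced by an $\eps$-net argument: cover $\S^{d-1}$ and $\S^{N-1}$ by nets of sizes $e^{O(d)}$ and $e^{O(N)}$, bound $u^TYv$ on each net pair by a scalar Gaussian tail estimate, and union bound to get $\sigma_1(Y)\le C(\sqrt{d}+\sqrt{N})+t$ for an absolute constant $C>1$ (with a comparably elementary but more delicate argument for $\sigma_d$); the Gaussian comparison route is what brings the leading constant down to exactly $1$.
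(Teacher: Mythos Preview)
The paper does not prove this statement; it is quoted as a known result from Vershynin's lecture notes (Corollary~5.35) and used as a black box. Your sketch reproduces exactly the classical argument behind that result (Gaussian concentration for the $1$-Lipschitz maps $\sigma_1,\sigma_d$, Sudakov--Fernique for the upper expectation bound, Gordon's min--max inequality with the auxiliary $\gamma$ for the lower one), and the covariance computations and verification of Gordon's hypotheses are correct.

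One minor caveat: the final step $\E\norm{h}_2-\E\norm{g}_2\ge\sqrt{N}-\sqrt{d}$ via the monotonicity of $k\mapsto\sqrt{k}-\E\norm{g_k}_2$ is true but not quite ``elementary''; it amounts to a Gautschi-type inequality for ratios of Gamma functions (equivalently, one can use the recurrence $c_k c_{k+1}=k$ for $c_k=\E\norm{g_k}_2$ together with a short convexity argument). Some presentations (e.g., Davidson--Szarek) sidestep this by invoking the probabilistic form of Gordon's inequality directly rather than passing through expectations. Either route closes the argument.
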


We will also need to bound $\tr\rho = \frac1{nD} \norm x_2^2$.
Because $\norm x_2^2$ is simply a sum of $nD$ many $\chi^2$ random variables, the next proposition follows from standard concentration bounds.

\begin{prop}[Example~2.11 of \cite{W19}]\label{prp:xnorm}
Let $\rv = (\rv_1,\dots,\rv_n)$ consist of independent standard Gaussian random variables in~$\R^D$.
Then, for $0 < t < 1$, the following occurs with probability at least $1 - 2e^{-t^2 nD/8}$:
\begin{align*}
  (1 - t) nD \leq \norm{x}_2^2 \leq (1 + t) nD.
\end{align*}
\end{prop}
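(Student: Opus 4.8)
The plan is to recognize $\norm{x}_2^2 = \sum_{i=1}^n \norm{x_i}_2^2$ as a chi-squared random variable with $nD$ degrees of freedom and to apply the standard two-sided Chernoff (moment-generating-function) argument. Concretely, write $\norm{x}_2^2 = \sum_{j=1}^{nD} g_j^2$ where the $g_j$ are i.i.d.\ standard Gaussians, so that $\E \norm{x}_2^2 = nD$ and, for $\abs{\lambda} < 1/2$, $\E[e^{\lambda g_j^2}] = (1-2\lambda)^{-1/2}$; by independence, $\E[e^{\lambda \norm{x}_2^2}] = (1-2\lambda)^{-nD/2}$.

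For the upper tail, Markov's inequality applied to $e^{\lambda \norm{x}_2^2}$ with $0 < \lambda < 1/2$ gives $\Pr[\norm{x}_2^2 \geq (1+t)nD] \leq e^{-\lambda(1+t)nD}(1-2\lambda)^{-nD/2}$, and optimizing over $\lambda$ (the minimizer is $\lambda = t/(2(1+t))$) yields the bound $\exp(-\tfrac{nD}{2}(t - \log(1+t)))$. For the lower tail, applying Markov to $e^{-\lambda \norm{x}_2^2}$ with $\lambda > 0$ and optimizing ($\lambda = t/(2(1-t))$) gives $\Pr[\norm{x}_2^2 \leq (1-t)nD] \leq \exp(-\tfrac{nD}{2}(-t - \log(1-t)))$.

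It then remains to show both exponents are at least $nD t^2/8$ on the range $0<t<1$. For the lower-tail exponent, the series expansion gives $-t - \log(1-t) = \sum_{m\geq 2} t^m/m \geq t^2/2$. For the upper-tail exponent, one checks $t - \log(1+t) \geq \tfrac{t^2}{2(1+t)} \geq \tfrac{t^2}{4}$, e.g.\ by observing that the difference of the two sides vanishes at $t=0$ and has non-negative derivative $t^2/(2(1+t)^2)$. Hence each of the two tail probabilities is at most $e^{-nD t^2/8}$, and a union bound over them yields the claimed failure probability $2e^{-nD t^2/8}$.

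There is essentially no obstacle here: the statement is a textbook chi-squared concentration bound (indeed it is quoted from \cite{W19}), and the only mild care needed is the two elementary scalar inequalities that convert the sharp Chernoff exponents $t-\log(1+t)$ and $-t-\log(1-t)$ into the uniform quadratic lower bound $t^2/8$ valid for all $0<t<1$. One could equally well invoke Example~2.11 of \cite{W19} verbatim, or the Laurent--Massart deviation inequality for $\chi^2$, but the self-contained two-sided Chernoff computation sketched above is short enough to include directly.
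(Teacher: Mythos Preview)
Your proof is correct; this is the standard two-sided Chernoff bound for a $\chi^2_{nD}$ random variable, and your verification of the scalar inequalities $t-\log(1+t)\geq t^2/4$ and $-t-\log(1-t)\geq t^2/2$ on $(0,1)$ is clean. The paper itself does not give a proof of this proposition at all---it simply quotes the result as Example~2.11 of \cite{W19}---so there is nothing to compare against beyond noting that your self-contained argument is exactly the derivation underlying that reference.
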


Equipped with the above we now prove our gradient bounds in \CREFmain{prop:gradient-bound}{Proposition~2.11}.

\begin{proof}[Proof of \CREFmain{prop:gradient-bound}{Proposition~2.11}]
For any fixed $a\in[k]$, recall that $\rho^{(a)}$ has the same distribution as~$\frac1{nD} YY^T$, where $Y$ is a $d_a\times N_a$-matrix with standard Gaussian entries where $N_{a} = \frac{nD}{d_{a}}$.
By \cref{cor:vershynin}, we have the following bound with failure probability at most~$2 e^{-t^2/2}$:
\begin{align*}
  \sqrt{N_a} - \sqrt{d_a} - t \leq \sigma_d(Y) \leq \sigma_1(Y) \leq \sqrt{N_a} + \sqrt{d_a} + t.
\end{align*}
This event tells us that the eigenvalues of $d_a \rho^{(a)}$ are in the range $( (1 - \frac{\sqrt{d_a} + t}{\sqrt{N_a}})^2, (1 + \frac{\sqrt{d_a} + t}{\sqrt{N_a}})^2)$.
Let $t = \eps \sqrt{n D / d_a} = \eps \sqrt{N_a}$.
Because $n \geq \dmax^{2}/D\eps^{2}$ and $0 < \eps \leq 1$, we have $\sqrt{d_{a}} \leq t \leq \sqrt{N_a}$.
Hence, the eigenvalues of $d_a \rho^{(a)}$ are contained in $( 1 - 4\frac{t}{\sqrt{N_a}}, 1 + 8 \frac{ t}{\sqrt{N_a}})$, and so the eigenvalues of $d_{a} \rho_{a} - I_{d_{a}}$ are bounded in absolute value by~$8\eps$ with failure probability at most~$2e^{-\eps^{2} n D / 2 d_{a}}$.
Moreover, by \cref{prp:xnorm}, we have that $\abs{\tr \rho - 1} \leq \eps$ with failure probability at most $2e^{-\eps^{2} n D / 8}$.
The formulae in \CREFmain{lem:gradient}{Lemma 2.9} and the union bound imply
\begin{align*}
  \norm{\nabla_a f_x}_{\op}
&\leq\frac{1}{\sqrt{d_a}} \norm*{d_{a} \rho^{(a)} - I_{d_a}}_{\op} + \frac{\abs{\tr\rho - 1}}{\sqrt{d_a}}
\leq \frac{8\eps}{\sqrt{d_a}} + \frac{\eps}{\sqrt{d_a}}
\leq \frac{9\eps}{\sqrt{d_a}}
\end{align*}
for all $a\in[k]$, as well as
\begin{align*}
  \abs{\nabla_0 f_x}
= \abs{\tr \rho - 1}
\leq \eps,
\end{align*}
with failure probability at most $2 (k+1) e^{-\eps^{2} n D / 8 \dmax}$.
\end{proof}

Next we prove \CREFmain{lem:hessian}{Lemma 2.12}, which computes the Hessian in terms of partial traces.

\begin{proof}[Proof of \CREFmain{lem:hessian}{Lemma 2.12}]
  Note that the Hessian of~$f_x$ coincides with the one of $\tr\rho\,\Theta$.
  This follows from \CREFmain{eq:obj via rho}{Eq.~(2.2)}, since the Hessian of $\log\det\Theta$ vanishes identically.
  Accordingly, we will compute the Hessian of~$\tr\rho\,\Theta$.
  For $a\in[k]$ and any traceless symmetric $d_a\times d_a$ matrix $H$, we have
  \begin{align*}
    \braket{H, (\nabla^2_{aa} f_x) H}
  = \partial_{s=0} \partial_{t=0} \tr \rho \, e^{(s+t) \sqrt{d_a} H_{(a)}}
  = d_a \tr \rho H_{(a)}^2
  = d_a \tr \rho^{(a)} H^2
  \end{align*}
  using \CREFmain{eq:partial trace duality}{Eq.~(2.3)}.
  Similarly, for $a\neq b\in[k]$, any traceless symmetric $d_a\times d_a$ matrix $H$, and any traceless symmetric $d_b\times d_b$ matrix $K$, we find that
  \begin{align*}
    \braket{H, (\nabla^2_{ab} f_x) K}
  &= \partial_{s=0} \partial_{t=0} \tr \rho \, e^{s \sqrt{d_a} H_{(a)} + t \sqrt{d_b} K_{(b)}} \\
  &= \sqrt{d_a d_b} \tr \rho \, H_{(a)} K_{(b)}
  = \sqrt{d_a d_b} \tr \rho^{(ab)} \left( H \ot K \right)
  \end{align*}
  using \CREFmain{eq:partial trace duality}{Eq.~(2.3)}.
  Next, for $a\in[k]$ and any traceless symmetric $d_a\times d_a$ matrix $H$, we have
  \begin{align*}
    \braket{H, \nabla^2_{a0} f_x}
  = \partial_{s=0} \partial_{t=0} \tr \rho \, e^{s\sqrt{d_a} H_{(a)} + t}
  = \sqrt{d_a} \tr \rho \, H_{(a)}
  = \sqrt{d_a} \tr \rho^{(a)} H.
  \end{align*}
  As we identify $\nabla^2_{a0} f_x$ with a traceless symmetric $d_a\times d_a$ matrix;
  this shows that
  \begin{align*}
    \nabla^2_{a0} f_x = \sqrt{d_a} \left( \rho^{(a)} - \frac{\tr \rho}{d_a} I_{d_a} \right),
  \end{align*}
  and similarly for the transpose.
  Finally,
  \[
    \nabla^2_{00} f_x
  = \partial_{s=0} \partial_{t=0} \tr \rho \, e^{s+t}
  = \tr \rho.
  \]
\end{proof}

We now prove \CREFmain{lem:expansion-convexity}{Lemma~2.15}, which translates quantum expansion into strong convexity.

\begin{proof}[Proof of \CREFmain{lem:expansion-convexity}{Lemma~2.15}]
\noindent
It suffices to verify the hypothesis for $a<b$.
Indeed, since $\tr \Phi^*(I_{d_a}) = \tr \Phi(I_{d_b})$, any $\Phi$ is an $(\eps,\eta)$-quantum expander if and only if this is the case for the adjoint $\Phi^*$, but note that the adjoint of~$\Phi^{(ab)}$ is~$\Phi^{(ba)}$.
To prepare the proof, we also note that
\begin{align}\label{eq:channel to single marginals}
   \Phi^{(ab)}(I_{d_b}) = \rho^{(a)}
\quad\text{and}\quad
   (\Phi^{(ab)})^*(I_{d_a}) = \Phi^{(ba)}(I_{d_a}) = \rho^{(b)},
\end{align}
hence in particular $\tr \Phi^{(ab)}(I_{d_b}) = \tr \rho$.

We wish to bound the operator norm of $M = \frac{\nabla^2 f_\samp}{\tr \rho} - I_\H$, which we consider as a block matrix as in \CREFmain{def:hess grad}{Definition~2.5}.
For this, we use the following basic estimate of the norm of a block matrix in terms of the norm of the matrix of block norms:
\begin{align}\label{eq:baby norm bounds}
  \norm{M}_{\op} \leq \norm{m}_{\op},
\quad \text{ where } m=(\norm{M_{ab}}_{\op})_{a,b\in\{0,1,\dots,k\}}.
\end{align}
We first bound the individual block norms, using that the blocks can be computed using \CREFmain{lem:hessian}{Lemma 2.12}.
Recall that the off-diagonal blocks of the Hessian, $\nabla^2_{ab} f_x$ for $a \neq b\in[k]$, are given by the restriction of $\sqrt{d_a d_b} \Phi^{(ab)}$ to the traceless symmetric matrices.
Since $\Phi^{(ab)}$ is an $(\eps,\eta)$-quantum expander, we have
\begin{align*}
  \norm{M_{ab}}_{\op}
= \frac{\norm{\nabla^2_{ab} f_x}_{\op}}{\tr\rho}
= \frac{\sqrt{d_a d_b}}{\tr \Phi^{(ab)}(I_{d_b})} \norm{\Phi^{(ab)}}_0
\leq \eta,
\end{align*}
using that $\tr \Phi^{(ab)}(I_{d_b}) = \tr \rho$.
The remaining off-diagonal blocks can be bounded as
\begin{align*}
\norm{M_{a0}}
= \frac{\norm{\nabla^2_{a0} f_x}_{\op}}{\tr \rho}
&= \norm*{\sqrt{d_a} \left( \frac{\rho^{(a)}}{\tr \rho} - \frac{I_{d_a}}{d_a} \right)}_F
= \sqrt{d_a} \norm*{\frac{\Phi^{(ab)}(I_{d_b})}{\tr \Phi^{(ab)}(I_{d_b})} - \frac{I_{d_a}}{d_a}}_F \\
&\leq d_a \norm*{\frac{\Phi^{(ab)}(I_{d_b})}{\tr \Phi^{(ab)}(I_{d_b})} - \frac{I_{d_a}}{d_a}}_{\op}
\leq \eps,
\end{align*}
using the fact that the operator norm of a linear functional $\braket{K, -}$ is the same as the Frobenius norm of~$K$, and \cref{eq:channel to single marginals}.
On the other hand, the diagonal blocks for $a\in[k]$ can be bounded by observing that, for any traceless Hermitian $H$,
\begin{align*}
  \abs{\braket{H, M_{aa} H}}
&= \abs*{\braket{H, \left( \frac{\nabla^2_{aa} f_x}{\tr \rho} - I \right) H}}
= d_a \abs*{\tr \left( \frac{\rho^{(a)}}{\tr \rho} - \frac{I_{d_a}}{d_a} \right) H^2} \\
&\leq d_a \norm*{\frac{\rho^{(a)}}{\tr \rho} - \frac{I_{d_a}}{d_a}}_{\op} \norm{H}_F^2
\leq \eps \norm H_F^2,
\end{align*}
hence $\norm{M_{aa}}_{\op} \leq \eps$, while $\abs{M_{00}} = \abs{\frac{\nabla^2_{00} f_x}{\tr \rho} - 1} = 0$.
To conclude the proof, decompose
\begin{align*}
  m
= \left[\begin{array}{c|cccc}
  0 & 0 & 0 & \cdots & 0 \\
  \hline
  0 & 0 & m_{12} & \cdots & m_{1k} \\
  0 & m_{21} & 0 & & m_{2k} \\
  \vdots & \vdots & & \ddots & \vdots \\
  0 & m_{k1} & m_{k2} & \cdots & 0
  \end{array}\right]
+ \left[\begin{array}{c|cccc}
  0 & 0 & 0 & \cdots & 0 \\
  \hline
  0 & m_{11} & 0 & \cdots & 0 \\
  0 & 0 & m_{22} & & 0 \\
  \vdots & \vdots & & \ddots & \vdots \\
  0 & 0 & 0 & \cdots & m_{kk}
  \end{array}\right]
+ \left[\begin{array}{c|cccc}
  0 & m_{01} & m_{02} & \cdots & m_{0k} \\
  \hline
  m_{10} & 0 & 0 & \cdots & 0 \\
  m_{20} & 0 & 0 & & 0 \\
  \vdots & \vdots & & \ddots & \vdots \\
  m_{k0} & 0 & 0 & \cdots & 0
  \end{array}\right].
\end{align*}
The nonzero entries of the first matrix are bounded by $\eta$, hence its operator norm is at most $(k-1)\eta$.
The second matrix is diagonal with diagonal entries bounded by $\eps$, hence its operator norm is at most~$\eps$.
The third matrix has nonzero entries bounded by $\eps$, hence its operator norm is bounded by~$\sqrt k \eps$.
Using \cref{eq:baby norm bounds} we obtain the desired bound.
\end{proof}

We will now use \CREFmain{thm:hess-pisier}{Theorem~2.16}, which shows that random completely positive maps are good expanders, to establish strong convexity at the identity.

\begin{proof}[Proof of \CREFmain{thm:tensor-convexity}{Proposition~2.17}]
By \CREFmain{lem:expansion-convexity}{Lemma~2.15}, it is enough to prove that with the desired probability all $\Phi^{(ab)}$ are $(\eps,\eta):=(\frac1{40 k^{1/2}},\frac1{20k})$-quantum expanders for $a\neq b\in[k]$ and $\tr \rho \in (\frac78,\frac98)$.
If that is the case, then
\begin{align*}
  \norm*{\nabla^2 f_x - I_\H}_{\op}
&\leq \tr \rho \cdot  \norm*{\frac{\nabla^2 f_x}{\tr \rho} - I_\H}_{\op}  + \abs{1 - \tr\rho} \\
&\leq \left( (k-1)\eta + (\sqrt k + 1) \eps \right) \tr\rho + \abs{1 - \tr\rho}
\leq \frac14.
\end{align*}
Firstly, $\tr \rho = \frac{1}{nD} \|X\|^2$ is in $(\frac78, \frac98)$ with failure probability $e^{-\Omega(nD)}$ by \cref{prp:xnorm}.

Next, we describe an event that implies the $\Phi^{(ab)}$ are all $\eps$-doubly balanced for $\eps=\frac1{40k^{1/2}}$.
By \CREFmain{eq:balanced via grad}{Eq.~(2.11)}, this is equivalent to the condition $\sqrt{d_a} \norm{\nabla_a f_{\rv}}_{\op} \leq \eps \tr \rho$ for all $a \in [k]$.
By \CREFmain{prop:gradient-bound}{Proposition~2.11}, and assuming the bound $\tr \rho \geq \frac78$ from above, the latter occurs with failure probability~$k \smash{e^{-\Omega(\frac{nD}{k \dmax})}}$ provided $n \geq C k \smash{\frac{\dmax^2}D}$ for a universal constant~$C>0$.

Finally, we describe an event that ensures that $\norm{\Phi^{(ab)}}_0 \leq \eta \smash{\frac{\tr\rho}{\sqrt{d_a d_b}}}$ for $\eta=\frac1{20k}$ for any fixed~$a \neq b$, which is the other condition needed for quantum expansion.
Recall that each~$\Phi^{(ab)}$ is distributed as $\frac1{nD} \Phi_A$, where $A$ is a tuple of $\frac{nD}{d_ad_b}$ many $d_a \times d_b$ matrices with independent standard Gaussian entries.
Thus, taking $t^2 = O(\smash{\frac{\eta \sqrt{nD}}{d_a + d_b}})$ and again assuming that $\tr\rho \geq \frac78$, we have $\norm{\Phi^{(ab)}}_0 \leq \eta \frac{\tr\rho}{\sqrt{d_a d_b}}$ by \CREFmain{thm:hess-pisier}{Theorem~2.16},
with failure probability at most~$\smash{( \frac{\sqrt{nD}}{k \dmax} )^{-\Omega(\dmin)}}$.

By the union bound, we conclude that all $\Phi^{(ab)}$ for $a\neq b$ are $(\eps,\eta)$-quantum expanders and that $\tr\rho \in (\frac78,\frac98)$, up to a failure probability of at most
\begin{align*}
  e^{-\Omega(nD)}
+ k \smash{e^{-\Omega\bigl(\frac{nD}{k \dmax}\bigr)}}
+ k^2 \left( \frac{\sqrt{nD}}{k \dmax} \right)^{-\Omega(\dmin)}.
\end{align*}
The final term dominates, which implies the desired failure probability.
To see that the final term dominates compare exponents: it suffices to show that $nD/ k \dmax \geq \dmin \log (\frac{\sqrt{nD}}{k \dmax})$ by our assumption on $n$, which states that $\alpha:= nD/k \dmax^2 \geq C$.
Writing the desired inequality in terms of $\alpha$, we need $\dmax \alpha \geq \dmin \log(\sqrt{\alpha/k})$.
This holds for $C$ large enough.
\end{proof}

Next we wish to show that strong convexity at the identity implies strong convexity nearby, as formulated in the following lemma:

\begin{lemma}[Robustness of strong convexity]\label{lem:convexRobustness}
There is a universal constant $0 < \eps_0 < 1$ such that if $\norm{\nabla_a f_x(I_D)}_{\op} \leq \eps_0/\sqrt{d_a}$ for all $a\in[k]$ and $\abs{\nabla_{0} f_{\samp}(I_{D})} \leq \eps_0$, then
$$\|\nabla^{2} f_{\samp}(\Theta) - \nabla^{2} f_{\samp}(I_D)\|_{\op} = O(\delta)$$
for any $\Theta\in\P$ such that $\delta := \dop(\Theta, I_D) \leq \eps_0$.
In particular, for any $\lambda > 0$, if~$f_x$ is $\lambda$-strongly convex at $I_D$ then $f_x$ is $(\lambda-O(\delta))$-strongly convex at $\Theta$.
\end{lemma}

The proof of this result requires some preparation.
First note that by \CREFmain{remark:hessian-everywhere}{Remark~2.13}, we have $\nabla^2 f_\samp(\Theta) = \nabla^2 f_{\samp'}$ where $\samp' = \Theta^{1/2} \samp$.
Thus we need only bound the difference between $f_\samp$ and $f_{\samp'}$ for $\|\log \Theta\|_{\op}$ small, $\Theta \in \P$.
For a matrix $\delta_{a}$ in $\Mat(d_a)$, we use
$e^{(\delta_{a})_{(a)}}$ to denote
$$ e^{(\delta_{a})_{(a)}} = I_{d_1} \otimes \cdots \otimes I_{d_{a-1}} \otimes e^{\delta} \otimes I_{d_{a+1}} \otimes \cdots \otimes I_{d_k},$$
as in \CREFmain{definition:partial-trace}{Definition~2.8}.
We will write $\Theta^{1/2}$ as $e^{\delta}$, where $\delta = \sum_{a=1}^k (\delta_{a})_{(a)}$.
We now have
$\Theta^{1/2} = e^{\delta} = \otimes_{a = 1}^k e^{\delta_a}$, and $\frac{1}{2}\|\log \Theta\|_{\op} = \|\delta\|_{\op} = \sum_{a =1}^k \|\delta_{a}\|_{\op}$.
To bound the difference between $\nabla^2 f_{x'}$ and $\nabla^2 f_x$, we will show each component of the Hessian $\nabla^2 f_{x'}$ (as presented in \CREFmain{lem:hessian}{Lemma 2.12}) only changes (from $\nabla^2 f_x$) by a small amount under the perturbation $x \to x' := e^\delta x$.
In particular we will give bounds on each block under each component-wise perturbation $x \to e^{(\delta_{a})_{(a)}} x$, and write the overall perturbation as a sequence of such component-wise perturbations. For convenience, we adopt the short-hand
$$ \rho_x:= \frac{1}{nD} x x^T.$$

We begin with an easy fact relating the exponential map and matrix norms.

\begin{fact} \label{f:expTaylor} For all symmetric $d\times d$ matrices $A$ such that $ \|A\|_{\op} \leq 1$, we have
$$ \|e^{A} - I\|_{\op} \leq 2 \|A\|_{\op}
\ \ \ \text{ and } \ \ \
\|e^{A} - I\|_{F} \leq 2 \|A\|_{F}. $$
\end{fact}

\noindent The $00$ component of the Hessian is a scalar $\nabla^{2}_{00} f = \tr[\rho]$, and for $a \geq 1$ we think of each $0a$ component as a vector:
\[ \sum_{a} \langle z_{0}, (\nabla^{2}_{0a} f) Z_{a} \rangle = z_{0} \langle \rho, \sum_{a} \sqrt{d_{a}} Z_{(a)} \rangle       \]
The diagonal components involve only one-body marginals of $\rho$:
\[ \langle Z_{a}, (\nabla^{2}_{aa} f) Z_{a} \rangle = \langle d_{a} \rho^{(a)}, Z_{a}^{2} \rangle       \]
And the off-diagonal components involve two-body marginals:
\[ \langle Z_{a}, (\nabla^{2}_{ba} f) Z_{b} \rangle =  \langle \sqrt{d_{a} d_{b}} \rho^{(ab)}, Z_{a} \otimes Z_{b} \rangle.   \]

In \cref{atoaaRobustness} and \cref{btoaaRobustness}, we will prove perturbation bounds on one-body marginals, and in \cref{btoabRobustness} we will prove perturbation bounds on two-body marginals.
This will allow us to bound the change in the $0a$ components, diagonal components, and the off-diagonal components, respectively.
Following the structure of the proof of \CREFmain{thm:tensor-convexity}{Proposition~2.17}, we will collect all the term-wise bounds to prove an overall bound at the end of the section.

\begin{lemma} \label{atoaaRobustness}
For $\samp \in \R^{D \times n}$ and a symmetric matrix $\delta \in \Mat(d_{a})$ such that $\|\delta\|_{\op} \leq 1$, if we denote $\samp' := e^{\delta_{(a)}} \samp$ then
\[ \|\rho_{\samp'}^{(a)} - \rho_{\samp}^{(a)}\|_{\op} \leq 8 \|\delta\|_{\op} \|\rho_{\samp}^{(a)}\|_{\op}   . \]
\end{lemma}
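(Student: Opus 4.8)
The key observation is that the partial trace $\rho_{\samp}^{(a)}$ can be written in terms of the $a$-th flattening of the data: as noted after \cref{definition:partial-trace}, if we flatten $\samp$ to a $d_a \times N_a$ matrix $\samp^{(a)}$ (with $N_a = nD/d_a$), then $\rho_{\samp}^{(a)} = \frac1{nD}\,\samp^{(a)} (\samp^{(a)})^T$. Applying the component-wise perturbation $\samp \mapsto \samp' := e^{\delta_{(a)}}\samp$ acts on this flattening simply as left multiplication: $(\samp')^{(a)} = e^{\delta}\,\samp^{(a)}$, since $\delta_{(a)}$ acts as $e^\delta$ on the $a$-th tensor leg and as the identity on the rest. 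Therefore
\begin{align*}
  \rho_{\samp'}^{(a)} = e^{\delta}\, \rho_{\samp}^{(a)}\, e^{\delta}.
\end{align*}
So the lemma reduces to a purely matricial estimate: bound $\norm{e^{\delta} M e^{\delta} - M}_{\op}$ for a symmetric matrix $M = \rho_{\samp}^{(a)} \succeq 0$ and a symmetric $\delta$ with $\norm{\delta}_{\op}\leq 1$.

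For this I would write $e^{\delta} M e^{\delta} - M = (e^{\delta}-I) M e^{\delta} + M (e^{\delta} - I)$, take operator norms, and use submultiplicativity together with \cref{f:expTaylor}, which gives $\norm{e^{\delta}-I}_{\op} \leq 2\norm{\delta}_{\op}$. We also need to control $\norm{e^{\delta}}_{\op} = e^{\norm{\delta}_{\op}} \leq e \leq 3$ (using $\norm{\delta}_{\op}\leq 1$). Putting these together,
\begin{align*}
  \norm{e^{\delta} M e^{\delta} - M}_{\op}
  \leq \norm{e^{\delta}-I}_{\op}\norm{M}_{\op}\norm{e^{\delta}}_{\op} + \norm{M}_{\op}\norm{e^{\delta}-I}_{\op}
  \leq (2\cdot 3 + 2)\norm{\delta}_{\op}\norm{M}_{\op}
  = 8\norm{\delta}_{\op}\norm{M}_{\op},
\end{align*}
which is exactly the claimed bound with $M = \rho_{\samp}^{(a)}$.

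There is essentially no obstacle here: the only things to be careful about are (i) verifying that the component-wise perturbation $e^{\delta_{(a)}}$ really does translate to conjugation of $\rho_{\samp}^{(a)}$ by $e^{\delta}$ and not some more complicated transformation — this follows directly from the defining property \cref{eq:partial trace duality} of the partial trace, since $(\delta_{(a)})$ commutes with any $H_{(a)}$ and acts trivially on the complementary legs — and (ii) bookkeeping the constant, where the bound $e^{\norm{\delta}_{\op}}\leq 3$ is the only place the hypothesis $\norm{\delta}_{\op}\leq 1$ is used (beyond invoking \cref{f:expTaylor}). A slightly sharper constant is available but the stated constant $8$ is more than enough for the downstream application in \cref{convexRobustness}.
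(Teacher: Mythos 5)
Your proof is correct and takes essentially the same approach as the paper: both reduce to the identity $\rho_{\samp'}^{(a)} = e^{\delta}\,\rho_{\samp}^{(a)}\,e^{\delta}$ and then bound the perturbation using \cref{f:expTaylor}. The only cosmetic difference is the decomposition — you write $e^{\delta}Me^{\delta}-M = (e^{\delta}-I)Me^{\delta}+M(e^{\delta}-I)$ and use $\norm{e^{\delta}}_{\op}\leq e$, whereas the paper dualizes via $\sup_{\norm{Z}_1\leq1}\langle Z,\cdot\rangle$, expands $e^{\delta}=I+\delta'$, and bounds $2\norm{\delta'}_{\op}+\norm{\delta'}_{\op}^2$ using $\norm{\delta'}_{\op}\leq 2$ — both routes use $\norm{\delta}_{\op}\leq1$ in the same place and land on the constant $8$.
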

\begin{proof}By definition, $\|\rho_{\samp'}^{(a)} - \rho_{\samp}^{(a)}\|_{\op} = \sup_{\|Z\|_{1} \leq 1} \langle Z_{(a)}, \rho_{\samp'} - \rho_{\samp} \rangle $.
Let $\delta' := e^{\delta} - I_{a}$. Note that $\|\delta'\|_{\op} \leq 2 \|\delta\|_{\op}$ by \cref{f:expTaylor} and our assumption $\|\delta\|_{\op} \leq 1$. Now
\begin{align*} \langle Z_{(a)}, \rho_{\samp'} - \rho_{\samp} \rangle &= \langle Z_{(a)}, (I+\delta')_a \rho_{\samp} (I+\delta')_a - \rho_{\samp} \rangle \\
& = \langle Z, \delta' \rho_\samp^{(a)} \rangle + \langle Z, \rho_\samp^{(a)} \delta' \rangle + \langle Z, \delta' \rho_\samp^{(a)} \delta' \rangle \\
& \leq (2\|\delta'\|_{\op} + \|\delta'\|_{\op}^{2}) \|\rho^{(a)}\|_{\op}\|Z\|_1  \leq 8 \|\delta\|_{\op} \|\rho^{(a)}\|_{\op}.
\end{align*}
\end{proof}

\begin{lemma} \label{btoaaRobustness}
For $\samp \in \R^{D \times n}$ and symmetric matrix $\delta \in \Mat(d_{b})$ such that $\|\delta\|_{\op} \leq 1$, if we denote $\samp' := e^{\delta_{(b)}} \samp$ then for $b \neq a$:
\[ \|\rho_{\samp'}^{(a)} - \rho_{\samp}^{(a)}\|_{\op} \leq 2 \|\delta\|_{\op} \|\rho_{\samp}^{(a)}\|_{\op} .    \]
\end{lemma}
\begin{proof}
By definition, $\displaystyle \|\rho_{\samp'}^{(a)} - \rho_{\samp}^{(a)}\|_{\op} = \sup_{\|Z\|_{1} \leq 1, Z \succeq 0} |\langle Z_{(a)}, \rho_{\samp'} - \rho_{\samp} \rangle|$.
Let $\delta' := e^{\delta} - I_b$.
Note that $\|\delta'\|_{\op} \leq 2 \|\delta\|_{\op}$ by \cref{f:expTaylor} and our assumption $\|\delta\|_{\op} \leq 1$.
Now
\begin{align*}
|\langle Z_{(a)}, \rho_{\samp'} - \rho_{\samp} \rangle|
& = | \langle Z_{(a)}, e^{\delta_{(b)}}\rho_{\samp} e^{\delta_{(b)}} - \rho_{\samp} \rangle|\\
& = | \langle Z_{(a)} \delta'_{(b)}, \rho_{\samp} \rangle   |
= | \langle Z \otimes \delta', \rho_{\samp}^{(ab)} \rangle   | \\
&\leq \langle \|\delta'\|_{\op} Z \otimes  I_b , \rho_{\samp}^{(ab)} \rangle\\
&= \|\delta'\|_{\op} \langle Z, \rho_{\samp}^{(a)} \rangle \leq 2\|\delta\|_{\op} \|Z\|_1 \|\rho_{\samp}^{(a)}\|_{\op}.
\end{align*}
\end{proof}

This is already enough to prove a bound on $0a$ and $aa$ terms:

\begin{corollary} \label{diagRobustness}
Let $\samp \in \R^{D \times n}$ be such that $\|d_{a} \rho_{\samp}^{(a)}\|_{\op} \leq 1 + \frac{1}{20}$, and for $b \in [k]$ let $\delta_b \in \Mat(d_b)$ be a  symmetric matrix such that $\sum_{b} \|\delta_{b}\|_{\op} \leq \frac{1}{8}$. Denoting $\delta_{(b)} := (\delta_b)_{(b)}$,  $\delta = \sum_b \delta_{(b)}$ and $x' = e^{\delta} \samp$, for $a \geq 1$ we have
\[ \|\nabla^{2}_{aa} f(e^{2\delta}) - \nabla^{2}_{aa} f(I)\|_{\op} \leq 25 \|\delta\|_{\op} .  \]
\end{corollary}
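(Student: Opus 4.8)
The plan is to pass from the precision matrix to the samples via equivariance, reduce the claim to a perturbation bound for a single one-body marginal $\rho^{(a)}$, and then telescope the perturbation one tensor factor at a time using the bounds \cref{atoaaRobustness,btoaaRobustness} just proved.

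First I would use \cref{remark:hessian-everywhere} to write $\nabla^{2}f(e^{2\delta}) = \nabla^{2}f_{\samp'}$ with $\samp' = e^{\delta}\samp$, so that by \cref{lem:hessian} the difference of the $aa$-blocks acts on a traceless symmetric matrix $Z$ through the quadratic form $\langle Z,(\nabla^{2}_{aa}f_{\samp'} - \nabla^{2}_{aa}f_{\samp})Z\rangle = d_{a}\langle \rho_{\samp'}^{(a)} - \rho_{\samp}^{(a)},Z^{2}\rangle$. Since a self-adjoint operator has operator norm equal to the supremum of its quadratic form over unit vectors, and $|\langle M,Z^{2}\rangle| \le \norm{M}_{\op}\tr Z^{2} = \norm{M}_{\op}\norm{Z}_{F}^{2}$, it will suffice to show $d_{a}\norm{\rho_{\samp'}^{(a)} - \rho_{\samp}^{(a)}}_{\op} \le 25\norm{\delta}_{\op}$.

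To do this I would telescope. The operators $e^{\delta_{(1)}},\dots,e^{\delta_{(k)}}$ act on disjoint tensor legs, so they commute and their product is $e^{\delta}$; setting $\samp^{(0)} = \samp$ and $\samp^{(j)} = e^{\delta_{(j)}}\samp^{(j-1)}$, we have $\samp^{(k)} = \samp'$ and $\rho_{\samp'}^{(a)} - \rho_{\samp}^{(a)} = \sum_{j=1}^{k}\bigl(\rho_{\samp^{(j)}}^{(a)} - \rho_{\samp^{(j-1)}}^{(a)}\bigr)$. I would bound the $j=a$ term with \cref{atoaaRobustness} (constant $8$) and each $j\ne a$ term with \cref{btoaaRobustness} (constant $2$); the same two lemmas also yield the growth estimate $\norm{\rho_{\samp^{(j)}}^{(a)}}_{\op} \le (1+8\norm{\delta_{j}}_{\op})\norm{\rho_{\samp^{(j-1)}}^{(a)}}_{\op}$. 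Iterating this and using $\sum_{b}\norm{\delta_{b}}_{\op}\le 1/8$ gives $\norm{\rho_{\samp^{(j-1)}}^{(a)}}_{\op}\le \norm{\rho_{\samp}^{(a)}}_{\op}\prod_{i<j}(1+8\norm{\delta_{i}}_{\op}) \le e\,\norm{\rho_{\samp}^{(a)}}_{\op}$, which by the hypothesis $\norm{d_{a}\rho_{\samp}^{(a)}}_{\op}\le 1+\tfrac1{20}$ is at most $3/d_{a}$. Multiplying through by $d_{a}$ and summing the telescoped terms then gives $d_{a}\norm{\rho_{\samp'}^{(a)} - \rho_{\samp}^{(a)}}_{\op} \le 3\bigl(8\norm{\delta_{a}}_{\op} + 2\sum_{j\ne a}\norm{\delta_{j}}_{\op}\bigr) \le 24\norm{\delta}_{\op} \le 25\norm{\delta}_{\op}$, as desired.

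I do not expect a genuine obstacle here; the one place requiring care is the running bound on $\norm{\rho_{\samp^{(j-1)}}^{(a)}}_{\op}$ through the $k$ single-factor perturbations — making sure the multiplicative accumulation stays below $3/d_{a}$ — together with tracking that the distinguished factor $a$ enters through \cref{atoaaRobustness} with its larger constant while all the other factors enter through \cref{btoaaRobustness}. Both are routine once the telescoping is in place.
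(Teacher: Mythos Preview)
Your proposal is correct and follows essentially the same approach as the paper: reduce via equivariance and the Hessian formula to a bound on $d_a\norm{\rho_{\samp'}^{(a)}-\rho_{\samp}^{(a)}}_{\op}$, telescope the perturbation one tensor factor at a time, control the running size of $\norm{\rho_{\samp^{(j-1)}}^{(a)}}_{\op}$ multiplicatively via \cref{atoaaRobustness,btoaaRobustness}, and sum. The only cosmetic difference is that the paper uses the uniform constant~$8$ from \cref{atoaaRobustness} at every step of the telescope (yielding $8e\cdot(1+\tfrac1{20})\approx 22.8$), whereas you track the sharper constant~$2$ from \cref{btoaaRobustness} for the $j\ne a$ terms (yielding $24$); both land below~$25$.
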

\begin{proof}
Recall from \CREFmain{lem:hessian}{Lemma 2.12} that $\langle Y, (\nabla^{2}_{aa} f_{\samp}) Y \rangle = \langle d_{a} \rho_{\samp}^{(a)}, Y^{2} \rangle$; thus it is enough to show that $\|\rho_{\samp'}^{(a)} - \rho_{\samp}^{(a)}\|_{\op} \leq 25 \|\delta\|_{\op} /d_a$. We treat the perturbation $e^\delta$ as the composition of $k$ perturbations:
\[ \samp_{(0)}:=\samp \to \samp_{(1)}:= e^{\delta_{(1)}} \samp_{(0)} \to \dots \to \samp_{(k)}:=e^{\delta_{(k)}} \samp_{(k-1)} = \samp'. \]
We can use \cref{atoaaRobustness} to handle $e^{\delta_{(a)}}$ and \cref{btoaaRobustness} for the rest. Let $Z$ be a symmetric matrix.
\begin{align*}
 |\langle \rho_{\samp'}^{(a)} - \rho_{\samp}^{(a)}, Z \rangle|
 &\leq \sum_{j=1}^{k} |\langle \rho_{\samp_{(j)}}^{(a)} - \rho_{\samp_{(j-1)}}^{(a)}, Z \rangle| \\
 &\leq \sum_{j=1}^{k}  8 \|\delta_{j}\|_{\op} \|\rho_{\samp_{(j-1)}}^{(a)}\|_{\op} \|Z\|_{1}.
\end{align*}
Where the last inequality is due to \cref{atoaaRobustness,btoaaRobustness}.
To bound each term in the right-hand side, note that by \cref{atoaaRobustness,btoaaRobustness} we have
$$\|\rho_{\samp_{(j)}}^{(a)}\|_{\op} \leq \|\rho_{\samp_{(j)}}^{(a)} - \rho_{\samp_{(j-1)}}^{(a)}\|_{\op} + \|\rho_{\samp_{(j-1)}}^{(a)}\|_{\op} \leq   (1+8 \|\delta_{j}\|_{\op})\|\rho_{\samp_{(j-1)}}^{(a)}\|_{\op}$$
and hence by induction the $j^{th}$ term in the sum is at most $$8 \|\delta_j\|_{\op} \left( \prod_{l=1}^k (1+8 \|\delta_{l}\|_{\op}) \right) \|\rho_{\samp}^{(a)}\|_{\op} \|Z\|_{1}.$$ By our assumption that $\sum_l \|\delta_l\|_{\op} \leq 1/8$, this is at most $8 \|\delta_j\|_{\op} e^{8 \sum \|\delta_l\|_{\op}} \|\rho_{\samp}^{(a)}\|_{\op} \|Z\|_{1} \leq 8e \|\delta_j\|_{\op} \|\rho_{\samp}^{(a)}\|_{\op} \|Z\|_{1}. $ Adding up the terms and using that $\|\delta\|_{\op} = \sum \|\delta_{(c)}\|_{\op}$, the overall sum is then at most $8 e \|\delta\|_{\op} \|\rho_{\samp}^{(a)}\|_{\op} \|Z\|_{1}$. Using our assumption on $\|d_{a} \rho_{\samp}^{(a)}\|_{\op}$ completes the proof.
\end{proof}

\begin{corollary} \label{constantRobustness}
Let $\samp \in \R^{D \times n}$ be such that $\|d_{a} \rho_{\samp}^{(a)}\|_{\op} \leq 1 + \frac{1}{20}$, and for $b \in [k]$ let $\delta_b$ be symmetric matrices such that $\|\sum_{b} \delta_{(b)}\|_{\op} = \sum_{b} \|\delta_{b}\|_{\op} \leq \frac{1}{8}$, where once again we denote $\delta_{(b)} := (\delta_b)_{(b)}$ and $\delta := \sum_b \delta_{(b)}$.
Denoting $\samp' := e^{\delta} \samp$, for $a \geq 1$ we have
\begin{align*} |\nabla^{2}_{00} f_{\samp'} - \nabla^{2}_{00} f_{\samp}| &\leq 5 \|\delta\|_{\op}  \\
\text{ and } \|\nabla^{2}_{0a} f_{\samp'} - \nabla^{2}_{0a} f_{\samp}\|_{\op} &\leq 25 \|\delta\|_{\op} .  \end{align*}
\end{corollary}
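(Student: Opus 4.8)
The plan is to mimic the structure of the proof of \cref{diagRobustness}, now tracking the scalar component $\nabla^2_{00} f = \tr\rho$ and the vector components $\nabla^2_{0a} f \mathrel{\widehat=} \sqrt{d_a}(\rho^{(a)} - \frac{\tr\rho}{d_a} I_{d_a})$ instead of the diagonal blocks. As in that proof, I would decompose the total perturbation $e^{\delta}$ into the composition of the $k$ single-factor perturbations $\samp_{(0)} = \samp \to \samp_{(1)} = e^{\delta_{(1)}} \samp_{(0)} \to \dots \to \samp_{(k)} = e^{\delta_{(k)}} \samp_{(k-1)} = \samp'$, bound the change induced by each single-factor perturbation, and sum the telescoping differences.

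First I would handle $\nabla^2_{00}$. Here $\nabla^2_{00} f_\samp = \tr \rho_\samp = \tr \rho_\samp^{(a)}$ for \emph{any} $a$, so it suffices to control $|\tr\rho_{\samp'}^{(a)} - \tr\rho_\samp^{(a)}|$ using the one-body marginal bounds \cref{atoaaRobustness,btoaaRobustness} exactly as in the proof of \cref{diagRobustness}: for each intermediate perturbation $\samp_{(j-1)}\to\samp_{(j)}$, taking $Z = I_{d_a}$ (so $\|Z\|_1 = d_a$) gives $|\tr\rho_{\samp_{(j)}}^{(a)} - \tr\rho_{\samp_{(j-1)}}^{(a)}| \leq 8\|\delta_j\|_{\op} \|\rho_{\samp_{(j-1)}}^{(a)}\|_{\op} d_a$, and the same induction on $\|\rho_{\samp_{(j)}}^{(a)}\|_{\op}$ using $\sum_l\|\delta_l\|_{\op}\leq 1/8$ and the hypothesis $\|d_a\rho_\samp^{(a)}\|_{\op}\leq 1+\tfrac1{20}$ bounds the whole sum by $8e\|\delta\|_{\op}\cdot\tfrac{1+1/20}{d_a}\cdot d_a \leq 5\|\delta\|_{\op}$ after absorbing constants (note $8e\cdot(1+1/20) < 23$; one should double-check whether the intended constant is $5$ or some slightly larger absolute constant, but any absolute constant suffices for the application). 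For $\nabla^2_{0a}$, write $\nabla^2_{0a} f_{\samp'} - \nabla^2_{0a} f_\samp = \sqrt{d_a}\bigl((\rho_{\samp'}^{(a)} - \rho_\samp^{(a)}) - \tfrac1{d_a}(\tr\rho_{\samp'} - \tr\rho_\samp)I_{d_a}\bigr)$, so $\|\nabla^2_{0a} f_{\samp'} - \nabla^2_{0a} f_\samp\|_{\op} \leq \sqrt{d_a}\|\rho_{\samp'}^{(a)}-\rho_\samp^{(a)}\|_{\op} + \tfrac1{\sqrt{d_a}}|\tr\rho_{\samp'}-\tr\rho_\samp|$. The first term is bounded by applying the telescoping argument of \cref{diagRobustness} verbatim — giving $\|\rho_{\samp'}^{(a)}-\rho_\samp^{(a)}\|_{\op}\leq 8e\|\delta\|_{\op}\|\rho_\samp^{(a)}\|_{\op}\leq 8e\|\delta\|_{\op}\tfrac{1+1/20}{d_a}$, hence $\sqrt{d_a}$ times it is $O(\|\delta\|_{\op}/\sqrt{d_a})\leq O(\|\delta\|_{\op})$ — and the second term is bounded by the $\nabla^2_{00}$ estimate just established. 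Combining and absorbing absolute constants yields the claimed $\|\nabla^2_{0a} f_{\samp'} - \nabla^2_{0a} f_\samp\|_{\op}\leq 25\|\delta\|_{\op}$.

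This corollary is essentially routine given \cref{atoaaRobustness,btoaaRobustness} and the template of \cref{diagRobustness}; there is no real obstacle. The only mild care needed is (i) bookkeeping of the dimension factors $d_a$ and $\sqrt{d_a}$ — the operator-norm hypothesis $\|d_a\rho_\samp^{(a)}\|_{\op}\leq 1+\tfrac1{20}$ means $\|\rho_\samp^{(a)}\|_{\op}$ carries a $1/d_a$, and this is what cancels the explicit $\sqrt{d_a}$ in $\nabla^2_{0a}$, so the final bound is dimension-free — and (ii) tracking that the inductive factor $\prod_l(1+8\|\delta_l\|_{\op})\leq e^{8\sum_l\|\delta_l\|_{\op}}\leq e$ under $\sum_l\|\delta_l\|_{\op}\leq1/8$, exactly as before. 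One should also note that since $\delta$ here is block-diagonal across the tensor factors, $\|\delta\|_{\op} = \sum_b\|\delta_b\|_{\op} = \|\sum_b\delta_{(b)}\|_{\op}$, which is why all the per-step bounds aggregate cleanly into a bound in $\|\delta\|_{\op}$.
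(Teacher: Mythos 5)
Your plan reaches the right conclusion but misses the paper's shorter route for $\nabla^2_{00}$ and misreads the norm on $\nabla^2_{0a}$.

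On $\nabla^2_{00}$: the paper simply observes $\tr(\rho_{\samp'} - \rho_\samp) = \langle\rho_\samp, e^{2\delta} - I_D\rangle$ and bounds this by $\tr(\rho_\samp)\,\|e^{2\delta} - I_D\|_{\op} \leq (1 + \tfrac1{20})\cdot 4\|\delta\|_{\op} < 5\|\delta\|_{\op}$ via \cref{f:expTaylor}. This answers the aside in your write-up: the constant is indeed $5$, and it comes from this one-line estimate, not from telescoping through the one-body marginals (which, as you correctly noted, gives the weaker constant $\approx 8e\cdot 1.05 \approx 23$). Since only an absolute constant is needed in the proof of \cref{convexRobustness}, your route is valid but indirect.

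On $\nabla^2_{0a}$ the issue is more substantive. You treat $\|\nabla^2_{0a}f_{\samp'} - \nabla^2_{0a}f_\samp\|_{\op}$ as the \emph{spectral} norm of the representing $d_a\times d_a$ matrix, arriving at an intermediate claim of $O(\|\delta\|_{\op}/\sqrt{d_a})$. But as the discussion after \cref{def:hess grad} and the proof of \cref{lem:expansion-convexity} make clear, the $0a$-block is a linear functional on traceless symmetric matrices, and its operator norm (against the Frobenius norm on the argument) is the \emph{Frobenius} norm of its representing matrix. The correct estimate is
\[
\|\nabla^2_{0a}f_{\samp'} - \nabla^2_{0a}f_\samp\|_{\op}
= \sup_{\substack{Z\text{ traceless symm.}\\ \|Z\|_F = 1}}\bigl|\langle\sqrt{d_a}(\rho_{\samp'}^{(a)} - \rho_\samp^{(a)}), Z\rangle\bigr|
\leq \|\rho_{\samp'}^{(a)} - \rho_\samp^{(a)}\|_{\op}\,\sqrt{d_a}\,\|Z\|_1
\leq d_a\,\|\rho_{\samp'}^{(a)} - \rho_\samp^{(a)}\|_{\op},
\]
with $d_a$ rather than $\sqrt{d_a}$, because $\|Z\|_1 \leq \sqrt{d_a}\,\|Z\|_F$. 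Plugging in $\|\rho_{\samp'}^{(a)} - \rho_\samp^{(a)}\|_{\op}\leq 25\|\delta\|_{\op}/d_a$ from the proof of \cref{diagRobustness}, the $d_a$ cancels exactly to give $25\|\delta\|_{\op}$ — the claimed bound, with no spare factor $1/\sqrt{d_a}$. Your conclusion is numerically right only because the spectral norm is dominated by the Frobenius norm; you are bounding a smaller quantity than the one the corollary asserts, and your accounting of dimension factors is off by $\sqrt{d_a}$. Incidentally, the term $\tfrac{\tr\rho_{\samp'} - \tr\rho_\samp}{d_a}I_{d_a}$ in your decomposition is orthogonal to every traceless $Z$ and so contributes nothing; the paper does not split it off.
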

\begin{proof}
Recall from \CREFmain{lem:hessian}{Lemma 2.12} that the $00$ component of the Hessian is just the scalar $\tr \rho$. The assumption that $\|d_{a} \rho_{\samp}^{(a)}\|_{\op} \leq 1 + \frac{1}{20}$ implies $\tr[\rho_{\samp}] = \tr \rho_{\samp}^{(a)} \leq 1 + 1/20$. Now we can use the approximation for $e^{\delta}$ in \cref{f:expTaylor}:
\[ |\tr[\rho_{\samp'} - \rho_{\samp}]| = |\langle \rho_{\samp}, e^{2\delta} - I \rangle| \leq \tr[\rho_{\samp}] \|e^{2 \delta} - I\|_{\op} \leq 5 \|\delta\|_{\op}     \]
In the last step we used our bound on $\tr[\rho_{\samp}].$
The $0a$ component is a vector, so it is enough to bound the inner product with any traceless matrix $Z$ of unit Frobenius norm:
\[ |\langle \rho_{\samp'}^{(a)} - \rho_{\samp}^{(a)}, \sqrt{d_{a}} Z \rangle| \leq \|\rho_{\samp'}^{(a)} - \rho_{\samp}^{(a)}\|_{\op} \sqrt{d_{a}} \|Z\|_{1}. \]
In the proof of \cref{diagRobustness} we showed under the same assumptions we have $\|\rho_{\samp'}^{(a)} - \rho_{\samp}^{(a)}\|_{\op} \leq 25 \|\delta\|_{\op}/d_a$, from which it follows that the above is at most $25 \|\delta\|_{\op} \|Z\|_{F}.$ \end{proof}

The off-diagonal components require the following two lemmata on bipartite marginals:

\begin{lemma} \label{btoabRobustness}
For $\samp \in \R^{D \times n}$ and a symmetric matrix $\delta \in \Mat(d_{c})$ such that $\|\delta\|_{\op} \leq \frac{1}{8}$; if we denote $\samp' := e^{\delta_{(c)}} \samp$, then for $c \in \{a,b\}$ we have
\[ \sup_{Y \in \smallSym_{d_{a}}^{0}, Z \in \smallSym_{d_{b}}^{0}} \frac{|\langle \rho_{\samp'}^{(ab)} - \rho_{\samp}^{(ab)}, Y \otimes Z \rangle|}{\|Y\|_{F} \|Z\|_{F}} \leq 3 \|\delta\|_{\op} \sup_{Y \in \smallSym_{d_{a}}, Z \in \smallSym_{d_{b}}} \frac{\langle \rho_{\samp}^{(ab)}, Y \otimes Z \rangle}{\|Y\|_{F} \|Z\|_{F}}.        \]
Note that $\smallSym_{d}^{0}$ are traceless symmetric matrices, whereas $\smallSym_{d}$ are symmetric matrices.
\end{lemma}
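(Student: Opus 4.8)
The plan is to express the perturbation $\samp \mapsto \samp' = e^{\delta_{(c)}}\samp$ as a conjugation of the bipartite marginal $\rho_\samp^{(ab)}$ by an operator supported on a single tensor factor, and then estimate the resulting change in the bilinear form $\langle\rho^{(ab)}, Y\ot Z\rangle$ using the same kind of manipulations as in \cref{atoaaRobustness,btoaaRobustness}. By symmetry of the roles of $a$ and $b$, it suffices to treat the case $c=a$. Write $\delta' := e^\delta - I_{d_a}$, so that $\|\delta'\|_{\op}\leq 2\|\delta\|_{\op}$ by \cref{f:expTaylor} and the hypothesis $\|\delta\|_{\op}\leq\frac18$. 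Then $\rho_{\samp'}^{(ab)}$ is obtained from $\rho_\samp^{(ab)}$ by conjugating the $a$-leg by $(I_{d_a}+\delta')$, and expanding the conjugation
\begin{align*}
  \langle \rho_{\samp'}^{(ab)} - \rho_\samp^{(ab)}, Y\ot Z\rangle
  = \langle \rho_\samp^{(ab)}, (\delta' Y + Y\delta' + \delta' Y\delta')\ot Z\rangle.
\end{align*}

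First I would bound each of the three terms on the right by $\sup_{Y',Z'}\frac{\langle\rho_\samp^{(ab)}, Y'\ot Z'\rangle}{\|Y'\|_F\|Z'\|_F}$ times the appropriate Frobenius norms: for instance $\|\delta' Y\|_F\leq\|\delta'\|_{\op}\|Y\|_F$, and similarly for $Y\delta'$ and $\delta' Y\delta'$ (the last giving a factor $\|\delta'\|_{\op}^2$). The one subtlety is that the supremum on the right-hand side of the claimed inequality ranges over \emph{all} symmetric matrices $Y',Z'$, not just traceless ones, so $\delta' Y$, $Y\delta'$, $\delta' Y\delta'$ — which need not be traceless even when $Y$ is — are legitimate test matrices. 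This is exactly why the statement is phrased with $\smallSym_{d_a}^0$ on the left and $\smallSym_{d_a}$ on the right. Collecting the three bounds yields a factor $2\|\delta'\|_{\op} + \|\delta'\|_{\op}^2 \leq 4\|\delta\|_{\op} + 4\|\delta\|_{\op}^2 \leq 4\|\delta\|_{\op}(1+\tfrac18)$, which I would then loosen to $3\|\delta\|_{\op}$ — wait, this gives $\tfrac92\|\delta\|_{\op}$, not $3\|\delta\|_{\op}$; so I would instead be a bit more careful and use that the right-hand supremum, call it $S$, controls each term as $|\langle\rho_\samp^{(ab)}, \delta' Y\ot Z\rangle|\leq S\,\|\delta'\|_{\op}\|Y\|_F\|Z\|_F$, giving total $\leq S(2\|\delta'\|_{\op}+\|\delta'\|_{\op}^2)\|Y\|_F\|Z\|_F$, and then observe that $2\|\delta'\|_{\op}+\|\delta'\|_{\op}^2\leq 3\|\delta\|_{\op}$ requires a tighter bound on $\|\delta'\|_{\op}$ than $2\|\delta\|_{\op}$; using the sharper Taylor estimate $\|e^\delta-I\|_{\op}\leq\|\delta\|_{\op}e^{\|\delta\|_{\op}}\leq\tfrac{9}{8}e^{1/8}\|\delta\|_{\op}\cdot\tfrac89<1.28\|\delta\|_{\op}$ gives $2(1.28)\|\delta\|_{\op}+(1.28)^2\|\delta\|_{\op}^2\leq (2.56+1.64\cdot\tfrac18)\|\delta\|_{\op}<2.77\|\delta\|_{\op}<3\|\delta\|_{\op}$, as required.

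I do not expect a genuine obstacle here — the computation is entirely parallel to \cref{atoaaRobustness}, the only care needed being (i) keeping track of which index the perturbation acts on so that the partial trace identity $\langle\rho_\samp, (Y_{(a)} K_{(a)})\ot Z_{(b)}\rangle = \langle\rho_\samp^{(ab)}, (YK)\ot Z\rangle$ is applied correctly, and (ii) making sure the test matrices produced ($\delta' Y$ etc.) are allowed on the right-hand side, which they are precisely because that supremum does not impose tracelessness. The mildly annoying point is the numerical constant $3$: one must use a Taylor estimate for $\|e^\delta - I_{d_a}\|_{\op}$ slightly sharper than \cref{f:expTaylor}'s factor of $2$, valid because $\|\delta\|_{\op}\leq\frac18$ is well inside the radius of convergence, so $\|e^\delta-I_{d_a}\|_{\op}\leq\frac{e^{1/8}}{1}\|\delta\|_{\op}$ suffices after a short calculation. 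With that in hand the three-term bound closes, and the lemma follows after dividing through by $\|Y\|_F\|Z\|_F$ and taking the supremum over traceless $Y\in\smallSym_{d_a}^0$, $Z\in\smallSym_{d_b}^0$.
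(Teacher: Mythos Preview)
Your approach is essentially the paper's --- reduce by symmetry to one of $c\in\{a,b\}$, expand the conjugation by $I+\delta'$, and bound via the right-hand supremum --- but there is a gap in your justification at point~(ii). The right-hand supremum ranges over \emph{symmetric} matrices $Y'\in\smallSym_{d_a}$, and $\delta' Y$, $Y\delta'$ are individually \emph{not} symmetric (even though $\delta'$ and $Y$ are), so they are not ``legitimate test matrices'' in the sense you claim; tracelessness is not the only constraint you dropped. Your termwise inequality $|\langle\rho_\samp^{(ab)},\,\delta'Y\otimes Z\rangle|\leq S\,\|\delta'Y\|_F\|Z\|_F$ is nonetheless true, but for a reason you did not supply: since $\rho_\samp^{(ab)}$ and $Z$ are symmetric, $\langle\rho_\samp^{(ab)},\,M\otimes Z\rangle=\langle\rho_\samp^{(ab)},\,\tfrac12(M+M^T)\otimes Z\rangle$, and the symmetric part has Frobenius norm at most $\|M\|_F$.

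The paper sidesteps this entirely by \emph{not} splitting. Taking $c=b$, it sets $R(Z):=e^\delta Z e^\delta$, observes that $R(Z)-Z$ is itself symmetric, bounds $\|R(Z)-Z\|_F\leq(2\|\delta'\|_{\op}+\|\delta'\|_{\op}^2)\|Z\|_F$ in one stroke, and then substitutes $Z'=R(Z)-Z$ directly into the right-hand supremum. That is the cleaner route and the one you should take. Your side observation about the constant is correct: the crude bound $\|\delta'\|_{\op}\leq 2\|\delta\|_{\op}$ from \cref{f:expTaylor} yields $4.5\|\delta\|_{\op}$ rather than $3\|\delta\|_{\op}$, and the paper's stated constant~$3$ tacitly needs the same sharper Taylor estimate you invoked; this is immaterial downstream.
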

\begin{proof}
By taking adjoints, we can assume w.l.o.g. that $c = b$. Let $R : \Mat(d_{b}) \to \Mat(d_{b})$ be defined as $R(Z) :=  e^{\delta} Z e^{\delta}$. Then
\[ |\langle \rho_{\samp'}^{(ab)} - \rho_{\samp}^{(ab)}, Y \otimes Z \rangle| = |\langle \rho_{\samp}^{(ab)}, Y \otimes (R(Z) - Z) \rangle|  \]
The subspace $\smallSym_{d_{b}}^{0}$ is not invariant under $R$, but we show $R \approx I$. Let $\delta' :=  e^{\delta} - I$; by \cref{f:expTaylor}, $\|\delta'\|_{\op} \leq \frac{1}{4}$. Now
\[ \|R(Z) - Z\|_{F} \leq 2 \|\delta' Z\|_{F} + \|\delta' Z \delta'\|_{F} \leq (2 \|\delta'\|_{\op} + \|\delta'\|_{\op}^{2}) \|Z\|_{F} \leq 3\|\delta\|_{\op}\|Z\|_{F}.    \]
We combine these inequalities and apply a change of variables $R(Z) - Z \leftarrow Z'$ to finish the proof.
\begin{align*} \sup_{Y \in \smallSym_{d_{a}}^{0}, Z \in \smallSym_{d_{b}}^{0}} \frac{|\langle \rho_{\samp'}^{(ab)} - \rho_{\samp}^{(ab)}, Y \otimes Z \rangle|}{\|Y\|_{F} \|Z\|_{F}}
& = \sup_{Y \in \smallSym_{d_{a}}^{0}, Z \in \smallSym_{d_{b}}^{0}}\frac{|\langle \rho_{\samp}^{(ab)}, Y \otimes (R(Z) - Z) \rangle|}{\|Y\|_F\|Z\|_F} \\
&\leq \sup_{Y \in \smallSym_{d_{a}}^{0}, Z' \in \smallSym_{d_{b}}} \frac{|\langle \rho_{\samp}^{(ab)}, Y \otimes Z' \rangle| \cdot 3 \|\delta\|_{\op}}{\|Y\|_F\|Z'\|_F}.
\end{align*}
\end{proof}

\begin{lemma} \label{ctoabRobustness}
For $\samp \in \R^{D \times n}$ and a symmetric matrix $\delta \in \Mat(d_{c})$ such that $\|\delta\|_{\op} \leq \frac{1}{8}$; if we denote $\samp' := e^{\delta_{(c)}}  \samp$, then for $c \not\in \{a,b\}$ we have
\begin{align*}
\sup_{Y \in \smallSym_{d_{a}}^{0}, Z \in \smallSym_{d_{b}}^{0}} \frac{|\langle \rho_{\samp'}^{(ab)} - \rho_{\samp}^{(ab)}, Y \otimes Z \rangle|}{\|Y\|_{F} \|Z\|_{F}} \leq 4 \|\delta\|_{\op} \sup_{Y \in \smallSym_{d_{a}}, Z \in \smallSym_{d_{b}}} \frac{\langle \rho_{\samp}^{(ab)}, Y \otimes Z \rangle}{\|Y\|_{F} \|Z\|_{F}}.
\end{align*}
\end{lemma}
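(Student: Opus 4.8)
The plan is to follow the template of \cref{btoaaRobustness,btoabRobustness}: express the change of the $(ab)$-marginal of $\rho_\samp$ under the perturbation as a pairing against the three-body marginal $\rho_\samp^{(abc)}$, and then exploit positivity of $\rho_\samp$ together with monotonicity of the Loewner order under tensor products to reduce to a two-body quantity.

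First I would record the effect of the perturbation on the moment matrix. Since $\delta$ is symmetric, $e^{\delta_{(c)}}$ is symmetric, so $\rho_{\samp'} = e^{\delta_{(c)}} \rho_\samp e^{\delta_{(c)}}$. Fix symmetric $Y \in \smallSym_{d_a}$, $Z \in \smallSym_{d_b}$ and put $M = (Y \otimes Z)_{(ab)}$. Because $c \notin \{a,b\}$, the operators $M$ and $e^{\delta_{(c)}}$ act on disjoint tensor factors and therefore commute, so $e^{\delta_{(c)}} M e^{\delta_{(c)}} = M e^{2\delta_{(c)}}$. Combining this with the partial-trace duality \eqref{eq:partial trace duality} gives
\begin{align*}
  \langle \rho_{\samp'}^{(ab)} - \rho_\samp^{(ab)}, Y \otimes Z \rangle
  = \langle \rho_\samp, M(e^{2\delta_{(c)}} - I) \rangle
  = \langle \rho_\samp^{(abc)}, Y \otimes Z \otimes \delta'' \rangle,
\end{align*}
where $\delta'' := e^{2\delta} - I \in \Mat(d_c)$. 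By \cref{f:expTaylor}, which applies since $\|2\delta\|_{\op} \leq \tfrac14 \leq 1$, we get $\|\delta''\|_{\op} \leq 4\|\delta\|_{\op}$.

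The main step is to bound $|\langle \rho_\samp^{(abc)}, Y \otimes Z \otimes \delta'' \rangle|$. I would decompose $Y$ and $Z$ into positive and negative parts, $Y = Y_+ - Y_-$ with $Y_\pm \succeq 0$, $Y_+ Y_- = 0$, $\||Y|\|_F = \|Y\|_F$ (and likewise for $Z$), keeping $\delta''$ intact; this produces four signed terms $\pm\langle \rho_\samp^{(abc)}, A \otimes B \otimes \delta''\rangle$ with $A = Y_{\pm}$, $B = Z_{\pm} \succeq 0$. For each of these, $-\|\delta''\|_{\op} I_{d_c} \preceq \delta'' \preceq \|\delta''\|_{\op} I_{d_c}$, and tensoring with the positive operator $A \otimes B$ preserves the Loewner order, so pairing against the positive operator $\rho_\samp^{(abc)}$ and using $(\rho_\samp^{(abc)})^{(ab)} = \rho_\samp^{(ab)}$ yields $|\langle \rho_\samp^{(abc)}, A \otimes B \otimes \delta''\rangle| \leq \|\delta''\|_{\op}\langle \rho_\samp^{(ab)}, A \otimes B\rangle$. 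Summing the four contributions collapses to
\begin{align*}
  |\langle \rho_\samp^{(abc)}, Y \otimes Z \otimes \delta''\rangle|
  \leq \|\delta''\|_{\op} \langle \rho_\samp^{(ab)}, |Y| \otimes |Z| \rangle
  \leq \|\delta''\|_{\op} \Bigl( \sup_{Y' \in \smallSym_{d_a},\, Z' \in \smallSym_{d_b}} \frac{\langle \rho_\samp^{(ab)}, Y' \otimes Z'\rangle}{\|Y'\|_F \|Z'\|_F} \Bigr) \|Y\|_F \|Z\|_F,
\end{align*}
where the last step uses $Y' = |Y| \succeq 0$, $Z' = |Z| \succeq 0$ and $\||Y|\|_F = \|Y\|_F$. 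Dividing by $\|Y\|_F\|Z\|_F$, restricting the supremum on the left-hand side to traceless $Y$, $Z$, and substituting $\|\delta''\|_{\op} \leq 4\|\delta\|_{\op}$ gives the claimed inequality.

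I do not anticipate a genuine obstacle; the one point requiring care is the sign indefiniteness of $Y$, $Z$, and $\delta''$, which is handled above by the positive/negative-part decomposition and monotonicity of the Loewner order under tensoring, in direct analogy with \cref{btoabRobustness}. It is also worth noting for bookkeeping that the right-hand supremum is over all symmetric (not merely traceless) matrices and carries no absolute value, which is consistent because $\rho_\samp^{(ab)} \succeq 0$ forces $\langle \rho_\samp^{(ab)}, |Y| \otimes |Z|\rangle \geq 0$.
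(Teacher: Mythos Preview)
Your proof is correct and follows essentially the same route as the paper: reduce the difference of two-body marginals to a pairing with the three-body marginal $\rho_\samp^{(abc)}$ against $Y\otimes Z\otimes(e^{2\delta}-I)$, then split $Y$ and $Z$ into positive and negative parts and use Loewner monotonicity together with positivity of $\rho_\samp^{(abc)}$ to strip off the $c$-factor.

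The only notable difference is in the final bookkeeping. The paper bounds each of the four terms $\langle\rho_\samp^{(ab)},Y_s\otimes Z_t\rangle$ separately by $\bigl(\sup\frac{\langle\rho_\samp^{(ab)},Y'\otimes Z'\rangle}{\|Y'\|_F\|Z'\|_F}\bigr)\|Y_s\|_F\|Z_t\|_F$ and then invokes Cauchy--Schwarz to sum $\sum_{s,t}\|Y_s\|_F\|Z_t\|_F\le 2\|Y\|_F\|Z\|_F$. You instead observe directly that $\sum_{s,t}Y_s\otimes Z_t=|Y|\otimes|Z|$ and $\||Y|\|_F=\|Y\|_F$, which is slightly slicker and avoids the extra factor of~$2$ from Cauchy--Schwarz (you recover it instead from the sharper use of $\|e^{2\delta}-I\|_{\op}\le 4\|\delta\|_{\op}$). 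Both routes arrive at the same constant~$4$.
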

\begin{proof}
Let $\delta' := e^{2 \delta} - I_{c}$ so that $|\langle \rho_{\samp'}^{(ab)} - \rho_{\samp}^{(ab)}, Y \otimes Z \rangle| =  |\langle \rho_{\samp}^{(abc)}, Y \otimes Z \otimes \delta' \rangle|$.
We first assume $Y,Z \succeq 0, $ and without loss of generality we assume that $\|Y\|_{F} = \|Z\|_{F} = 1$. Because $\rho_{\samp}^{(abc)}, Y, Z \succeq 0,$ and $\delta' \preceq \norm{\delta'}_{\op} \cdot I_c$, we have
\begin{align*}
|\langle \rho_{\samp}^{(abc)}, Y \otimes Z \otimes \delta' \rangle|
& \leq \langle \rho_{\samp}^{(abc)}, Y \otimes Z \otimes \norm{\delta'}_{\op} \cdot I_c \rangle\\
& \leq \|\delta'\|_{\op} \langle \rho_{\samp}^{(ab)}, Y \otimes Z \rangle \leq 2 \|\delta\|_{\op} \langle \rho_{\samp}^{(ab)}, Y \otimes Z \rangle, \end{align*}
where the last inequality is by \cref{f:expTaylor}.
To finish the proof we decompose $Y = Y_{+} - Y_{-}, Z = Z_{+} - Z_{-}$, where $Y_+, Y_-, Z_+, Z_-$ are all positive semidefinite, and bound
\begin{align*} |\langle \rho_{\samp'}^{(ab)} - \rho_{\samp}^{(ab)}, Y \otimes Z \rangle|
& \leq \sum_{s,t \in \{+,-\}} |\langle \rho_{\samp'}^{(ab)} - \rho_{\samp}^{(ab)}, Y \otimes Z \rangle| \\
& \leq \sum_{s,t \in \{+,-\}} 2\|\delta\|_{\op} \langle \rho_{\samp}^{(ab)}, Y_s \otimes Z_t \rangle\\
&\leq 2\left( \sup_{Y \in \smallSym_{d_{a}}, Z \in \smallSym_{d_{b}}} \frac{\langle \rho_{\samp}^{(ab)}, Y \otimes Z \rangle}{\|Y\|_{F} \|Z\|_{F}} \right) \|\delta\|_{\op} \sum_{s,t \in \{+,-\}} \|Y_{s}\|_{F} \|Z_{t}\|_{F}
\end{align*}
The Cauchy Schwarz inequality allows us to bound the summation:
\[\sum_{s,t \in \{+,-\}} \|Y_{s}\|_{F} \|Z_{t}\|_{F}  \leq (2\|Y_{+}\|_{F}^{2} + 2\|Y_{-}\|_{F}^{2})^{1/2} (2\|Z_{+}\|_{F}^{2} + 2\|Z_{-}\|_{F}^{2})^{1/2} = 2 \|Y\|_{F} \|Z\|_{F} .   \]
Plugging this bound in to the supremum on the left-hand side in the statement of the lemma completes the proof.
\end{proof}

The following lemma, from \cite{KLR19}, will be helpful.

\begin{lemma} \label{inftyto2} For $x \in \R^{D \times n}$,
$$\|\nabla^{2}_{ab} f_{\samp}\|_{F \to F}^{2} \leq \|d_{a} \rho_{\samp}^{(a)}\|_{\op} \|d_{b} \rho_{\samp}^{(b)}\|_{\op}.$$
\end{lemma}

Analogously to the proof of \cref{diagRobustness}, we can now combine \cref{btoabRobustness} and \cref{ctoabRobustness} to bound the effect of a perturbation with more than one nontrivial tensor factor.
To state the result, we recall the definition of the seminorm $\norm{\cdot}_0$ of a linear map $M \colon \Mat(d_b) \to \Mat(d_a)$ from \CREFmain{eq:expansion}{Eq.~(2.10)} in \CREFmain{def:expansion}{Definition~2.14},
\begin{align*}
  \norm M_0 :=
  \max_{\substack{K \in \Mat(d_{a}) \\ \text{traceless symmetric}}}
  \;
  \max_{\substack{H \in \Mat(d_{b}) \\ \text{traceless symmetric}}}
  \frac{\langle K, M(H) \rangle}{\norm K_F \norm H_F},
\end{align*}
which will be helpful for translating the above lemmas into statements about the Hessian.

\begin{corollary} \label{offdiagRobustness}
Let $\samp \in \R^{D \times n}$ be such that $\|d_{a} \rho_{\samp}^{(a)}\|_{\op}, \|d_{b} \rho_{\samp}^{(b)}\|_{\op} \leq 1+\frac{1}{20}$, and for $c \in [k]$ let $\delta_c$ be a symmetric matrix such that $\|\sum_{c} \delta_{(c)}\|_{\op} = \sum_{c} \|\delta_{c}\|_{\op} \leq \frac{1}{8}$. Denoting $\samp' := e^{\delta} \samp$, we have
\[ \|\nabla^{2}_{ab} f_{\samp'} - \nabla^{2}_{ab} f_{\samp}\|_{0} \leq 21 \|\delta\|_{\op}  \]
\end{corollary}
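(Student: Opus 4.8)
The plan is to follow the proof of \cref{diagRobustness} almost verbatim, now for the two-body marginal $\rho^{(ab)}$, with \cref{atoaaRobustness,btoaaRobustness} replaced by \cref{btoabRobustness,ctoabRobustness}. First, by \cref{lem:hessian}, for traceless $Y\in\smallSym_{d_a}^0$, $Z\in\smallSym_{d_b}^0$ we have $\langle Y,(\nabla^2_{ab}f_{x'}-\nabla^2_{ab}f_x)Z\rangle=\sqrt{d_ad_b}\,\langle\rho_{x'}^{(ab)}-\rho_x^{(ab)},\,Y\ot Z\rangle$, and since $\nabla^2_{ab}$ already maps into traceless matrices this gives
\[
  \|\nabla^2_{ab}f_{x'}-\nabla^2_{ab}f_x\|_0
  = \sqrt{d_ad_b}\sup_{Y\in\smallSym_{d_a}^0,\,Z\in\smallSym_{d_b}^0}\frac{|\langle\rho_{x'}^{(ab)}-\rho_x^{(ab)},\,Y\ot Z\rangle|}{\|Y\|_F\|Z\|_F}.
\]
I would then decompose $e^\delta=\prod_c e^{\delta_{(c)}}$ (the factors commute) into single-coordinate steps $x_{(0)}:=x$, $x_{(j)}:=e^{\delta_{(j)}}x_{(j-1)}$ with $x_{(k)}=x'$, write the difference as the telescoping sum $\sum_{j=1}^k(\rho_{x_{(j)}}^{(ab)}-\rho_{x_{(j-1)}}^{(ab)})$, and bound the $j$-th increment using \cref{btoabRobustness} if $j\in\{a,b\}$ and \cref{ctoabRobustness} otherwise (their hypothesis $\|\delta_j\|_{\op}\le\sum_c\|\delta_c\|_{\op}\le\frac18$ holds throughout). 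This yields, for all traceless $Y,Z$,
\[
  \frac{|\langle\rho_{x_{(j)}}^{(ab)}-\rho_{x_{(j-1)}}^{(ab)},\,Y\ot Z\rangle|}{\|Y\|_F\|Z\|_F}
  \le 4\|\delta_j\|_{\op}\, M_{j-1},\qquad
  M_{j-1}:=\sup_{Y\in\smallSym_{d_a},\,Z\in\smallSym_{d_b}}\frac{|\langle\rho_{x_{(j-1)}}^{(ab)},\,Y\ot Z\rangle|}{\|Y\|_F\|Z\|_F}.
\]

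It remains to control $M_{j-1}$ at every intermediate point of the perturbation path. For this I would use the bound $M_j\le\sqrt{\|\rho_{x_{(j)}}^{(a)}\|_{\op}\,\|\rho_{x_{(j)}}^{(b)}\|_{\op}}$, which is exactly the Cauchy--Schwarz estimate behind \cref{inftyto2} (that argument in fact bounds $\langle\rho^{(ab)},Y\ot Z\rangle$ for arbitrary symmetric, not merely traceless, $Y,Z$), together with the one-body growth estimate already established inside the proof of \cref{diagRobustness}, namely $\|\rho_{x_{(j)}}^{(a)}\|_{\op}\le\prod_{l\le j}(1+8\|\delta_l\|_{\op})\|\rho_x^{(a)}\|_{\op}\le e^{8\sum_l\|\delta_l\|_{\op}}\|\rho_x^{(a)}\|_{\op}\le e\|\rho_x^{(a)}\|_{\op}$, and likewise for $b$. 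With the hypotheses $\|d_a\rho_x^{(a)}\|_{\op},\|d_b\rho_x^{(b)}\|_{\op}\le 1+\frac1{20}$ this gives $M_{j-1}\le\frac{21e}{20\sqrt{d_ad_b}}$ for every $j$. (Even more directly in the spirit of \cref{diagRobustness}: \cref{btoabRobustness,ctoabRobustness} in fact hold verbatim with $\smallSym^0$ replaced by $\smallSym$ on the left-hand side, since tracelessness of the test matrices is never used in their proofs, so one may instead iterate $M_j\le(1+4\|\delta_j\|_{\op})M_{j-1}$ to obtain $M_{j-1}\le e^{1/2}M_0$, which shaves the constant.) Combining everything,
\begin{align*}
  \|\nabla^2_{ab}f_{x'}-\nabla^2_{ab}f_x\|_0
  &\le\sqrt{d_ad_b}\sum_{j=1}^k 4\|\delta_j\|_{\op}\,M_{j-1}
  \le 4\cdot\frac{21e}{20}\sum_{j=1}^k\|\delta_j\|_{\op}\\
  &= \frac{84e}{20}\,\|\delta\|_{\op}
  \le 21\|\delta\|_{\op},
\end{align*}
using $\sum_j\|\delta_j\|_{\op}=\|\delta\|_{\op}$ from the hypothesis.

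Given \cref{diagRobustness} and the bipartite-marginal lemmas, the content is entirely routine. The one point that needs a moment's care — and hence the only "obstacle," such as it is — is that \cref{btoabRobustness,ctoabRobustness} bound the increment of $\rho^{(ab)}$ in terms of $\rho^{(ab)}$ at the \emph{previous} step rather than at $x$, so the estimate on $M_{j-1}$ must be propagated along the perturbation path; this is handled either by the Cauchy--Schwarz bound underlying \cref{inftyto2} together with the one-body growth estimate from \cref{diagRobustness}, or by iterating the bipartite lemmas directly. There is no genuine difficulty, and the final constant has ample room to spare.
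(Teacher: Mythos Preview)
Your proposal is correct and follows essentially the same approach as the paper: telescope into single-coordinate perturbations, bound each increment with \cref{btoabRobustness,ctoabRobustness}, and propagate control of the intermediate quantities $M_{j-1}$ along the path. The paper does precisely your option (b)---it iterates the bipartite lemmas directly (implicitly using that they hold on $\smallSym$ rather than just $\smallSym^0$) to get a bound of the form $20\|\delta\|_{\op}\|\nabla^2_{ab}f_x\|_{F\to F}$, then invokes \cref{inftyto2} only at the base point to finish with $20\cdot\tfrac{21}{20}=21$; your primary computation via option (a) is a harmless variant that reaches the same conclusion with a smaller constant.
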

\begin{proof}
First, using \CREFmain{lem:hessian}{Lemma 2.12}, we write the left-hand and right-hand sides of the inequalities in \cref{btoabRobustness} and \cref{ctoabRobustness} in terms of the Hessian:
\begin{align*}
\sup_{Y \in \smallSym_{d_{a}}^{0}, Z \in \smallSym_{d_{b}}^{0}} \frac{\langle \rho_{\samp'}^{(ab)} - \rho_{\samp}^{(ab)}, Y \otimes Z \rangle}{\|Y\|_{F} \|Z\|_{F}} &= \frac{\|\nabla^{2}_{ab} f_{\samp'} - \nabla^{2}_{ab} f_{\samp}\|_{0}}{\sqrt{d_{a} d_{b}} }, \\
\text{ and }\sup_{Y \in \smallSym_{d_{a}}, Z \in \smallSym_{d_{b}}} \frac{\langle \rho_{\samp}^{(ab)}, Y \otimes Z \rangle}{\|Y\|_{F} \|Z\|_{F}} &= \frac{\|\nabla^{2}_{ab} f_{\samp}\|_{F \to F}}{\sqrt{d_{a} d_{b}} }   .
\end{align*}
Using the same iterative strategy as in the proof of \cref{diagRobustness} for the left-hand sides of the above identities, we have
\[ |\langle Y, (\nabla^{2}_{ab} f_{\samp'} - \nabla^{2}_{ab} f_{\samp}) Z \rangle| \leq 20 \|\delta\|_{\op} \|\nabla^{2}_{ab} f_{\samp}\|_{F \to F} \|Y\|_{F} \|Z\|_{F} ,   \]
using \cref{btoabRobustness} for $a$ and $b$ and \cref{ctoabRobustness} for the rest. Finally, we may rewrite \cref{inftyto2} using \CREFmain{lem:hessian}{Lemma 2.12} to find $\|\nabla^{2}_{ab} f_{\samp}\|_{F \to F}^{2} \leq \|d_{a} \rho_{\samp}^{(a)}\|_{\op} \|d_{a} \rho_{\samp}^{(a)}\|_{\op}$. Using our assumption that $\|d_{a} \rho_{a}\|_{\op}, \|d_{b} \rho_{b}\|_{\op} \leq 1 + \frac{1}{20}$ completes the proof.
\end{proof}

We can finally prove \cref{lem:convexRobustness} by combining the above term-by-term bounds.

\begin{proof}[Proof of \cref{lem:convexRobustness}]
The above \cref{constantRobustness,diagRobustness,offdiagRobustness} require $\|d_{a} \rho^{(a)}\|_{\op} \leq 1 + \frac{1}{20}$ , which are implied by our assumption on the gradient:
\[ \|d_{a} \rho^{(a)}\|_{\op} \leq 1 + |\tr \rho - 1| + \|d_{a} \rho^{(a)} - (\tr \rho) I_{d_{a}} \|_{\op}    \]
\[ = 1 + |\nabla_{0} f| + \|\sqrt{d_{a}} \nabla_{a} f\|_{\op} \leq 1 + 2 \eps_0 ,    \]
 so choosing $\eps_0 \leq \frac{1}{40}$ suffices.
Recall the expression of the Hessian as a quadratic form evaluated on $Z = (z_0, Z_1, \dots, Z_k):$
\begin{align*} \langle Z, (\nabla^{2} f) Z \rangle&=\\
  z_{0} &(\nabla^{2}_{00} f) z_{0} + 2 \sum_{a} \langle z_{0}, (\nabla^{2}_{0a} f) Z_{a} \rangle + \sum_{a} \langle Z_{a}, (\nabla^{2}_{aa} f) Z_{a} \rangle + \sum_{a \neq b} \langle Z_{a}, (\nabla^{2}_{ab} f) Z_{b} \rangle .  \end{align*}
Let $\samp' := e^{\delta} \samp$. Then by \cref{constantRobustness} we have a bound on the $0a$ terms:
\[ | z_{0}^{2} (\nabla^{2}_{00} f_{\samp'} - \nabla^{2}_{00} f_{\samp} ) + 2 \sum_{a} \langle z_{0}, (\nabla^{2}_{0a} f_{\samp'} - \nabla^{2}_{0a} f_{\samp}) Z_{a} \rangle |      \]
\[ \leq 5 \|\delta\|_{\op} z_{0}^{2} + (2 |z_{0}|) 25 \|\delta\|_{\op} \sum_{a} \|Z_{a}\|_{F}
\leq \|\delta\|_{\op} (17 k z_{0}^{2} + 25 \sum_{a} \|Z_{a}\|_{F}^{2})   \]
In the last step we used Young's inequality ($2pq \leq p^{2} + q^{2}$) for each term with $p = z_0$, $q = \|Z_{a}\|_F$.

By \cref{diagRobustness} we have a bound on the diagonal terms, and by \cref{offdiagRobustness} we have a bound on the off-diagonal terms:
\[ |\sum_{ab} \langle Z_{a}, (\nabla^{2}_{ab} f_{\samp'} - \nabla^{2}_{ab} f_{\samp} ) Z_{b} \rangle | \leq \|\delta\|_{\op} \left( 25 \sum_{a} \|Z_{a}\|_{F}^{2} + 21 \sum_{a \neq b} \|Z_{a}\|_{F} \|Z_{b}\|_{F} \right)   \]
\[ \leq (25 + 21(k-1)) \|\delta\|_{\op} \left( \sum_{a} \|Z_{a}\|_{F}^{2} \right)   \]
So combining all three terms we see:
\[ |\langle Z, (\nabla^{2} f_{\samp'} - \nabla^{2} f_{\samp} ) Z \rangle | \leq \|\delta\|_{\op} \left( 17 k z_{0}^{2} + (25 + 25 + 21 (k-1)) \sum_{a} \|Z_{a}\|_{F}^{2} \right)    \]
\[ \leq 50 k \|\delta\|_{\op} \left( z_{0}^{2} + \sum_{a} \|Z_{a}\|_{F}^{2} \right)  = 50 k \|\delta\|_{\op} \|Z\|^2.  \]
Note that this also gives an upper bound for $\|\nabla^{2} f_{\samp'}\|_{\op}$.
\end{proof}

With \cref{lem:convexRobustness} in hand, we can establish strong convexity near the identity.

\begin{proof}[Proof of \CREFmain{thm:ball-convexity}{Proposition~2.18}]
We can choose $C>0$ such that both \CREFmain{prop:gradient-bound,thm:tensor-convexity}{Propositions~2.11 and~2.17} apply (the former with $\eps\leq\eps_0/9\; $, where $\eps_0$ is the universal constant from \cref{lem:convexRobustness}).
Then the assumptions of \cref{lem:convexRobustness} are satisfied for $\lambda=\frac34$ with failure probability at most
\begin{align*}
  2(k+1)e^{-\eps^2 \frac{nD}{8\dmax}} + k^2 \left( \frac{\sqrt{nD}}{k \dmax} \right)^{-\Omega(\dmin)},
\end{align*}
where the latter term dominates, and there exists a constant $0<c\leq\eps_0$ such that $f$ is $\frac12$-strongly convex at any point $\Theta$ such that $\dop(\Theta, I_D) \leq c$.
\end{proof}

The final lemma we need to prove is \CREFmain{lem:op ball vs frob ball}{Lemma~2.19} which shows that any operator norm ball contains a geodesic ball.

\begin{proof}[Proof of \CREFmain{lem:op ball vs frob ball}{Lemma~2.19}]
If $\Theta = \exp_{I_D}(H)$, then
\begin{align*}
  \norm{\log\Theta}_{\op}
\leq \abs{H_0} + \sum_{a=1}^k \sqrt{d_a} \norm{H_a}_{\op}
\leq \sqrt{\dmax} \left( \abs{H_0} + \sum_{a=1}^k \norm{H_a}_{\op} \right) \\
\leq \sqrt{\dmax} \left( \abs{H_0} + \sum_{a=1}^k \norm{H_a}_F \right)
\leq \sqrt{\dmax} \sqrt{k+1} \norm{H}_F,
\end{align*}
so if $d(\Theta, I_D) = \norm{H}_F \leq r$, then $\dop(\Theta, I_D) = \norm{\log\Theta}_{\op} \leq r \sqrt{(k+1) d_{\max}}$.
\end{proof}

\ifdefined\ARXIV
\section{Proofs of results in Section~\ref{sec:matrix-normal} and Theorem~\ref{thm:matrix-normal}}\label{app:matrix}
\else
\section{Proofs of results in \texorpdfstring{\CREFmain{sec:matrix-normal}{Section~3} and \CREFmain{thm:matrix-normal}{Theorem~1.11}}{Section 3 and Theorem 1.11}}\label{app:matrix}
\fi

Throughout this appendix we assume without loss of generality that $d_1 \leq d_2$.
The proof plan is similar to that in \CREFmain{subsec:proof-sketch}{Section~2.2}, the main difference being that we now work directly with quantum expansion instead of translating into strong convexity.
The key technical result that we will use is \CREFmain{thm:operator-cheeger}{Theorem~3.1}, which states the expansion constant of a random completely positive map can be made constant with \emph{exponentially small} failure probability.
\CREFmain{thm:operator-cheeger}{Theorem~3.1} is proved in \cref{app:cheeky}.

To exploit this result we also use a bound by~\cite{KLR19} which directly controls the operator norm error.
It relies on the notion of a \emph{spectral gap}, which is closely related to quantum expansion and defined as follows.

\begin{definition}[Spectral gap]\label{def:gap}
Let $\Phi\colon\Mat(d_b) \to \Mat(d_a)$ be a completely positive map.
Say $\Phi$ has \emph{spectral gap} $\gamma>0$ if
\begin{align}\label{eq:spectral-gap}
  \sigma_2(\Phi) \leq (1 - \gamma) \frac{\tr \Phi(I_{d_b})}{\sqrt{d_a d_b}}
\end{align}
where $\sigma_2$ denotes the second largest singular value of~$\Phi$.
Note that $\gamma\leq1$.
Moreover, the definition is invariant under rescaling $\Phi \mapsto c\Phi$ for $c>0$.
\end{definition}

\noindent

Recall that by the variational formula for singular values, if we let $K \in \Mat(d_{b})$ be the first (right) singular vector of $\Phi$, we can rewrite the above condition as
\begin{align*}
  \sigma_2(\Phi) = \max_{\langle H, K \rangle = 0} \frac{\norm{\Phi(H)}_{F}}{\norm{H}_{F}} \leq (1 - \gamma) \frac{\tr \Phi(I_{d_b})}{\sqrt{d_a d_b}} .
\end{align*}
On the other hand, the definition of an $(\eps,\eta)$-quantum expander is given in \CREFmain{eq:expansion}{Eq.~(2.10)} as
\begin{align*}
      \norm{\Phi}_0 := \max_{\langle X, I_{d_{a}} \rangle = 0} \max_{\langle H, I_{d_{b}} \rangle = 0} \frac{\langle X, \Phi(H) \rangle}{\norm{X}_{F} \norm{H}_{F}}
\leq \eta \frac{\tr \Phi(I_{d_b})}{\sqrt{d_a d_b}}  .
\end{align*}
Due to the $\eps$-doubly balanced condition in \CREFmain{eq:doubly balanced}{Eq.~(2.9)}, these two notions are closely related, as the following lemma proved in \cite[Lemma~A.3]{FM20} shows.

\begin{lemma}\label{lem:fm20}
There exists a universal constant~$c>0$ with the following property.
If $\Phi$ is an $(\eps,\eta)$-quantum expander and $\eps \leq c(1-\eta)$, then~$\Phi$ has spectral gap~$1-\eta-O(\eps)$.
\end{lemma}

In the next theorem, we state the bound of~\cite[Theorems 1.8 and 3.22]{KLR19} in our language.
Because~$k = 2$, the gradient and Hessian are completely described by the single completely positive map~$\Phi^{(12)}$ (compare the formulas in \CREFmain{lem:gradient,lem:hessian}{Lemmas~2.9 and~2.12} with \CREFmain{eq:hessian channel}{Eq.~(2.6)} and \cref{eq:channel to single marginals}).
Suppose we are given samples $y_1,\dots,y_n$, which we can identify with $d_1\times d_2$ matrices $Y_1,\dots,Y_n$.
Then~$\Phi^{(12)} = \frac1{nD}\Phi_Y$, as discussed below \CREFmain{thm:hess-pisier}{Theorem~2.16}.
Moreover, the double balancedness and spectral gap are invariant under rescaling.
This explains why the following bound can be purely stated in terms of~$\Phi_Y$.
In the following we denote by $\SSPD(d)$  the $d\times d$ positive definite symmetric matrices of unit determinant.

\begin{theorem}\label{thm:klr}
There is a universal constant $C>0$ such that the following holds.
If $d_1 \leq d_2$ and the completely positive map $\Phi_Y$ is $\eps$-doubly balanced and has spectral gap~$\gamma$, where~$\gamma^2 \geq C \eps \log d_1$, then, restricted to $\SSPD(d_1) \ot \SSPD(d_2)$, the function~$f_y$ has a unique minimizer~$P = P_1 \ot P_2$ such that $f_y(P) \geq (1 - \frac{4 \eps^2}{\gamma}) \tr\rho$ and
\begin{align*}
  \max\ \Bigl\{ \norm{P_1 - I_{d_1}}_{\op}, \norm{P_2 - I_{d_2}}_{\op} \Bigr\}
= O\left(\frac{\eps \log d_1}\gamma\right).
\end{align*}
\end{theorem}

We can immediately translate this into a statement about the MLE.

\begin{corollary}[Spectral gap implies MLE nearby]\label{cor:klr}
There is a universal constant $C>0$ such that the following holds.
Let~$\eps,\gamma\in(0,1)$, $1 < d_1 \leq d_2$, and suppose the completely positive map $\Phi_Y$ is $\eps$-doubly balanced and has spectral gap~$\gamma$, where $\gamma^2 \geq C \eps \log d_1$.
Further assume that $\norm{y}_2^2 = nD$.
Then the MLE $\htheta = \htheta_1 \ot \htheta_2$ exists, is unique, and satisfies (using our conventions)
\begin{align*}
  \max \, \Bigl\{ \norm{\htheta_1 - I_{d_1}}_{\op}, \norm{\htheta_2 - I_{d_2}}_{\op} \Bigr\}
= O\Bigl(\frac{\eps \log d_1}\gamma\Bigr).
\end{align*}
\end{corollary}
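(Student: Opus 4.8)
The statement is essentially a translation of \cref{thm:klr} from the unit-determinant slice $\SSPD(d_1) \ot \SSPD(d_2)$ to the full manifold $\P$, so the plan is to carefully account for the single extra degree of freedom---the overall scalar---that distinguishes $\P$ from that slice. First I would write an arbitrary $\Theta \in \P$ uniquely as $\Theta = \lambda\,(P_1 \ot P_2)$ with $\lambda > 0$ and $P_a \in \SSPD(d_a)$. Since $\det(P_1 \ot P_2) = 1$ we have $\log\det\Theta = D\log\lambda$, so with $\rho = \frac1{nD}\sum_i y_i y_i^T$,
\begin{align*}
  f_y(\Theta) = \lambda\,\tr\rho\,(P_1 \ot P_2) - \log\lambda.
\end{align*}
Writing $\tau(P) := \tr\rho\,(P_1 \ot P_2)$, which is strictly positive because $\tr\rho = \norm{y}_2^2/(nD) = 1 \neq 0$ and $P_1 \ot P_2 \succ 0$, the minimization over $\lambda$ for fixed $P$ is attained at $\lambda^\ast(P) = 1/\tau(P)$ with value $1 + \log\tau(P)$. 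As $\log$ is increasing, minimizing $f_y$ over $\P$ is therefore equivalent to minimizing $\tau(P)$ over $\SSPD(d_1) \ot \SSPD(d_2)$; and since $\log\det$ vanishes on that slice, $\tau(P) = f_y(P_1 \ot P_2)$ is exactly the restricted objective appearing in \cref{thm:klr}.

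Next I would invoke \cref{thm:klr} directly. Under its hypotheses ($\Phi_Y$ is $\eps$-doubly balanced with spectral gap $\gamma$ and $\gamma^2 \geq C\eps\log d_1$), it provides a \emph{unique} minimizer $P = P_1 \ot P_2$ of the restricted objective with $\max\{\norm{P_1 - I_{d_1}}_{\op}, \norm{P_2 - I_{d_2}}_{\op}\} = O(\eps\log d_1/\gamma)$ and $f_y(P) \geq (1 - 4\eps^2/\gamma)\tr\rho$. By the reduction above this transfers to existence and uniqueness of the MLE over $\P$, namely $\htheta = \lambda^\ast(P)\,(P_1 \ot P_2)$; and under the determinant-normalization convention $\det\htheta_1^{1/d_1} = \det\htheta_2^{1/d_2}$ its Kronecker factors are $\htheta_a = \sqrt{\lambda^\ast}\,P_a$. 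A point worth spelling out is that attainment and uniqueness genuinely pass through the scale separation: if $\lambda\,(P_1 \ot P_2)$ minimized $f_y$ over $\P$, then $P_1 \ot P_2$ would have to minimize the restricted objective and $\lambda$ would have to equal $\lambda^\ast(P)$.

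Finally I would control the scalar $\lambda^\ast = 1/f_y(P)$ by sandwiching $f_y(P)$: on one hand $f_y(P) \leq f_y(I_{d_1} \ot I_{d_2}) = \tr\rho = 1$, using the assumption $\norm{y}_2^2 = nD$; on the other hand $f_y(P) \geq (1 - 4\eps^2/\gamma)\tr\rho = 1 - 4\eps^2/\gamma$. Since $\gamma \leq 1$, $d_1 \geq 2$, and $\gamma^2 \geq C\eps\log d_1$, the quantity $\eps^2/\gamma$---indeed $\eps$ itself---is at most a small constant once $C$ is large, so $\lambda^\ast \in [1, (1 - 4\eps^2/\gamma)^{-1}]$ yields $\abs{\sqrt{\lambda^\ast} - 1} = O(\eps^2/\gamma)$. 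A triangle inequality then completes the bound:
\begin{align*}
  \norm{\htheta_a - I_{d_a}}_{\op}
  \leq \sqrt{\lambda^\ast}\,\norm{P_a - I_{d_a}}_{\op} + \abs{\sqrt{\lambda^\ast} - 1}
  = O\!\left(\frac{\eps\log d_1}{\gamma}\right) + O\!\left(\frac{\eps^2}{\gamma}\right)
  = O\!\left(\frac{\eps\log d_1}{\gamma}\right),
\end{align*}
where in the last step $\eps^2/\gamma \leq \eps\log d_1/\gamma$ because $\eps \leq 1/(C\log d_1) \leq \log d_1$ for $C$ large and $d_1 \geq 2$. I do not expect a genuine obstacle here: the analytic content---existence and proximity of the slice minimizer---is already packaged in \cref{thm:klr}, and what remains is the bookkeeping of the overall scalar and of the normalization convention for $\htheta_1, \htheta_2$.
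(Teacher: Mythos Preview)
Your proposal is correct and follows essentially the same approach as the paper's proof: separate the overall scalar from the unit-determinant slice, apply \cref{thm:klr} on the slice, solve the one-dimensional scalar problem, sandwich $f_y(P)$ using $\tr\rho = 1$ and the lower bound from \cref{thm:klr}, and finish with the triangle inequality. The only cosmetic difference is that the paper parameterizes by $\htheta_a = \lambda P_a$ (so $\htheta = \lambda^2(P_1\ot P_2)$) rather than your $\htheta = \lambda(P_1\ot P_2)$ with $\htheta_a = \sqrt{\lambda}\,P_a$; the two choices of scalar are related by squaring and lead to identical estimates.
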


\begin{proof}
To compute the MLE, we reparameterize by $\htheta_1 = \lambda P_1$ and $\htheta_2 = \lambda P_2$ where $P_1 \in \SSPD(d_1)$, $P_2 \in \SSPD(d_2)$, and $\lambda \in \R_{> 0}$.
Plugging this reparametrization into \CREFmain{eq:neg log likelihood}{Eq.~(1.3)} for~$f_y$ shows that $(\lambda, P_1, P_2)$ solve
\begin{align*}
  \argmin_{\lambda, P_1, P_2} \lambda^2 f_x(P_1 \ot P_2) - \log(\lambda^2).
\end{align*}
In particular, the MLE $\htheta_1, \htheta_2$ exists uniquely if $f_y$ has a unique minimizer~$P = P_1 \ot P_2$ when restricted to $\SSPD(d_1) \ot \SSPD(d_2)$.
Such unique minimizers exist by \cref{thm:klr}.
Given $P_1$, $P_2$, solving the simple one-dimensional optimization problem for $\lambda$ yields
\begin{align*}
  \lambda = \frac1{\sqrt{f_y(P_1)}}.
\end{align*}
By \cref{thm:klr} and using the assumption that $\tr\rho=\frac{\norm{y}_2^2}{nD}=1$, $f_y(P) \geq 1 - \frac{4 \eps^2}{\gamma}$, and we also have $f_y(P) \leq f_y(I_D) = \tr\rho = 1$ since~$P$ is the minimizer in $\SSPD(d_1) \ot \SSPD(d_2)$.
Therefore,
\begin{align*}
  1
\leq \lambda
\leq \left( 1 - \frac{4 \eps^2}{\gamma}\right)^{-1/2}.
\end{align*}
By our assumptions on $\gamma$ and $\eps$, we have $\frac{\eps^2}\gamma  \leq \frac\eps{\gamma^2} \leq \frac1{C \log d_1}$.
Thus, choosing $C>0$ large enough, we obtain
\begin{align*}
  \abs{\lambda - 1}
= O\left(\frac{\eps^2}{\gamma}\right)
\leq O\left(\frac{\eps \log d_1}{\gamma}\right).
\end{align*}
hence in particular $\lambda = O(1)$.
Since also $\norm{P_a - I_{d_a}}_{\op} = O(\eps\log d_1/\gamma)$ by \cref{thm:klr}, we conclude that
\begin{align*}
  \norm{\htheta_a - I_{d_a}}_{\op}
\leq \lambda \norm{P_a - I_{d_a}}_{\op} + \abs{\lambda - 1}
= O\Bigl(\frac{\eps \log d_1}\gamma\Bigr)
\end{align*}
for $a\in\{1,2\}$.
This completes the proof.
%
\end{proof}

\Cref{lem:fm20,thm:klr}, along with what we have shown so far, already imply a preliminary version of \CREFmain{thm:matrix-normal}{Theorem~1.11}.
Indeed, similarly to the proof of \CREFmain{thm:tensor-convexity}{Proposition~2.17}, one can use \CREFmain{prop:gradient-bound}{Proposition~2.11} and \cref{prp:xnorm} to show that under suitable assumptions on $n$, $t$, the completely positive map~$\Phi^{(12)}$ is a $(t \sqrt{{d_2}/{n d_1}}, \eta)$-quantum expander for some universal constant~$\eta\in(0,1)$ with failure probability
\[ e^{ - \Omega( d_2 t^2)} + \left( \frac{\sqrt{nD}}{d_2} \right)^{ - \Omega(d_1)}. \]
By \cref{thm:klr} and \cref{lem:rel op distances}, with the above failure probability the MLE satisfies
\begin{align*}
  \dop(\Theta'_a, \Theta_a) = O\left(t \sqrt{\frac{d_2}{n d_1}} \log d_1\right),
\end{align*}
which matches \CREFmain{thm:matrix-normal}{Theorem~1.11} for the larger Kronecker factor.

As in the proof of \CREFmain{thm:ball-convexity}{Proposition~2.18}, combining the failure probability bound of \CREFmain{thm:operator-cheeger}{Theorem~3.1} with \cref{lem:convexRobustness} yields the next corollary.

\begin{corollary}\label{cor:matrix-convexity}
There are universal constants $C, c > 0$ and $\lambda\in(0,1)$ such that the following holds.
For $d_1 \leq d_2$, let $x=(x_1,\dots,x_n)$ be independent standard Gaussian random variables in~$\R^{d_1d_2}$, where $n \geq C \frac{d_2}{d_1} \max\{\log d_2, t^2 \}$ and $t\geq1$.
Then, with probability at least $1 - e^{ - \Omega( d_2 t^2)}$, the function~$f_x$ is $\lambda$-strongly convex at any point $\Theta\in\P$ such that $\dop(\Theta, I_D) \leq c$.\!
\end{corollary}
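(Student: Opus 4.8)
The plan is to replay the proof of \cref{thm:ball-convexity} for $k=2$, substituting the exponential-tail estimate \cref{thm:operator-cheeger} for the polynomial-tail estimate \cref{thm:hess-pisier} that was used (via \cref{thm:tensor-convexity}) in the general-$k$ argument. Since $k=2$, the Hessian $\nabla^2 f_x$ is completely determined by the single completely positive map $\Phi^{(12)} = \frac{1}{nD}\Phi_X$, where $X = (X_1,\dots,X_n)$ are the $d_1\times d_2$ matrix flattenings of the samples and hence have independent standard Gaussian entries. Put $\eps_1 := t\sqrt{d_2/(nd_1)}$; the hypothesis $n \geq C\frac{d_2}{d_1}\max\{\log d_2, t^2\}$ forces $\eps_1 \leq 1/\sqrt C$, which we make as small as desired by enlarging $C$. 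Then \cref{thm:operator-cheeger} (applicable since we may assume $d_2>1$, the case $d_1=d_2=1$ being immediate) gives that, with probability at least $1 - e^{-\Omega(d_2 t^2)}$, the map $\Phi_X$, and therefore $\Phi^{(12)}$ by the scale-invariance of \cref{def:expansion}, is an $(\eps_1, \eta)$-quantum expander for the universal constant $\eta \in (0,1)$ supplied by that theorem.

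First I would convert this into strong convexity at the identity. Feeding the expander into \cref{lem:expansion-convexity} with $k=2$ yields $\norm{\frac{\nabla^2 f_x}{\tr\rho} - I_\H}_{\op} \leq \eta + (1+\sqrt 2)\eps_1$, which is bounded away from $1$ once $C$ is large. Combined with the lower bound $\tr\rho \geq \frac{8}{9}$ — which comes for free from \cref{prop:gradient-bound} with its parameter set to the universal value $\eps_2 := \eps_0/9$, since $\nabla_0 f_x = \tr\rho - 1$ — this gives $\nabla^2 f_x \succeq \lambda_0 I_\H$ for a universal $\lambda_0 \in (0,1)$ (for instance $\lambda_0 = \frac{4}{9}(1-\eta)$), i.e.\ $f_x$ is $\lambda_0$-strongly convex at $I_D$. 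This is precisely the $k=2$ strengthening of \cref{thm:tensor-convexity} mentioned before the statement, now with failure probability $e^{-\Omega(d_2 t^2)}$.

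To propagate strong convexity to a $\dop$-ball, I would then invoke \cref{convexRobustness}. Its hypotheses $\norm{\nabla_a f_x(I_D)}_{\op} \leq \eps_0/\sqrt{d_a}$ for $a\in\{1,2\}$ and $\abs{\nabla_0 f_x(I_D)} \leq \eps_0$ are guaranteed by \cref{prop:gradient-bound} with the same parameter $\eps_2 = \eps_0/9$, provided $n \geq 81 d_2/(\eps_0^2 d_1)$, which follows from the sample-size hypothesis for $C$ large, at the cost of a failure probability $O(e^{-\Omega(n d_1)})$. Then \cref{convexRobustness} makes $f_x$ be $(\lambda_0 - O(\delta))$-strongly convex at every $\Theta$ with $\delta := \dop(\Theta, I_D) \leq \eps_0$; choosing a universal $c \in (0,\eps_0]$ so small that $O(c) \leq \lambda_0/2$ and setting $\lambda := \lambda_0/2$ completes the argument. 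It remains only to note that all failure-probability terms collapse to $e^{-\Omega(d_2 t^2)}$: the sample-size hypothesis gives both $n d_1 \geq C d_2 t^2$ and $nD = n d_1 d_2 \geq C d_2^2 t^2$, so a union bound over the events above yields the claimed probability.

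I do not anticipate a genuine obstacle: the statement is a bookkeeping assembly of \cref{thm:operator-cheeger}, \cref{lem:expansion-convexity}, \cref{prop:gradient-bound}, and \cref{convexRobustness}, the one substantive ingredient — the Cheeger-style concentration behind the exponential tail — being packaged in \cref{thm:operator-cheeger} and proved in the appendix. The only points deserving care are tracking that every error probability is dominated by $e^{-\Omega(d_2 t^2)}$ using the sample-size hypothesis, and observing that the universal expansion constant $\eta$ from \cref{thm:operator-cheeger} is not assumed small, so one obtains only some universal $\lambda \in (0,1)$ rather than a named constant such as $\frac34$ — which is all the statement claims.
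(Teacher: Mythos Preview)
Your proposal is correct and follows essentially the same route as the paper, which merely remarks that one should rerun the proof of \cref{thm:tensor-convexity} for $k=2$ with \cref{thm:operator-cheeger} in place of \cref{thm:hess-pisier}, and then transfer strong convexity to a $\dop$-ball via \cref{convexRobustness} as in \cref{thm:ball-convexity}. Your write-up expands exactly this outline, with the same ingredients (\cref{lem:expansion-convexity} for the Hessian bound, \cref{prop:gradient-bound} for both the $\tr\rho$ control and the gradient hypotheses of \cref{convexRobustness}), and the failure-probability bookkeeping is done correctly.
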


We now use \CREFmain{thm:operator-cheeger}{Theorem~3.1} as well as some more refined concentration inequalities to prove \CREFmain{thm:matrix-normal}{Theorem~1.11}.
The additional concentration is required to obtain the tighter bounds on the smaller Kronecker factor.
Throughout this section, we still assume without loss of generality that $d_1 \leq d_2$.
We now implement the strategy discussed in \CREFmain{sec:matrix-normal}{Section~3}, beginning with the concentration bound after one step of flip-flop.

Let $X_1,\dots,X_n$ be random $d_1 \times d_2$ matrices with independent standard Gaussian entries.
Consider new random variables $Y_1,\dots,Y_n$ obtained by one step of the flip-flop algorithm applied to the second, larger Kronecker factor (cf.\ \CREFmain{alg:flip-flop matrix}{Algorithm 1}).
That is, for $i\in[n]$:
\begin{align}\label{eq:one-step}
  Y_i = X_i \left( \frac1{nd_1} \sum_{i=1}^n X_i^T X_i \right)^{-1/2}.
\end{align}
The completely positive map $\Phi^{(12)}$ corresponding to the ``renormalized'' samples $Y_1,\dots,Y_n$ is $\frac1{nD} \Phi_Y$.
By construction, it satisfies
\begin{align*}
  \frac1{n D} \Phi_Y(I_{d_2}) = \frac1{d_2} \sum_{i=1}^{n} X_i \left( \sum_{i=1}^n X_i^T X_i \right)^{-1} X_i^T
\quad\text{and}\quad
  \frac1{n D} \Phi^*_Y(I_{d_1}) = \frac{I_{d_2}}{d_2}.
\end{align*}
Note also that $\tr \Phi_Y(I_{d_2}) = \tr \Phi_Y^*(I_{d_1}) = \|Y\|^2 = nD$. Thus $\Phi_Y$ is $\delta$-doubly balanced if and only if $\|\frac{1}{nD} \Phi_Y(I_{d_2}) - \smash{\frac{I_{d_1}}{d_1}}\|_{\op} \leq \frac{\delta}{d_1}$.

\begin{prop}[Concentration after flip-flop]\label{lem:flipflop-concentration}
There is a universal constant $C'>0$ such that the following holds.
Let $X_1,\dots,X_n$ be random $d_1 \times d_2$ matrices with independent standard Gaussian entries, where $d_1 \leq d_2$.
If $n \geq \frac{d_2}{d_1}$ and $t\geq C'$, then for $\Phi_Y$ with $Y$ as in \cref{eq:one-step} we have, with probability at least $1 - e^{- \Omega( d_1 t^2)}$,
\begin{align*}
  \norm*{\frac1{n D} \Phi_Y(I_{d_2}) - \frac{I_{d_1}}{d_1}}_{\op} \leq t \sqrt{\frac1{nD}}.
\end{align*}
By the above remarks preceding the proposition, this implies $\Phi_Y$ is $t \sqrt{\frac{d_1}{nd_2}}$-doubly balanced.
\end{prop}

The proof of this proposition uses the following result, proved in \cite[Lemma III.5]{hayden2006aspects}, on the overlap of two random projections, as well as the subsequent lemma, \cite[Lemma~5.4]{vershynin2010introduction}, which allows us to employ a net argument.

\begin{theorem}\label{thm:hayden}
Let $P$ be a uniformly (Haar) random orthogonal projection of rank $a$ on $\R^{m}$, let $Q$ be a fixed orthogonal projection of rank $b$ on $\R^{m}$, and let $\eps>0$.
Then,
\begin{align*}
\Pr \left[ \langle P, Q \rangle \not\in (1 \pm \eps) \frac{ab}{m} \right] \leq 2 e^{ - \Omega( ab \eps^2 ) }.
\end{align*}
\end{theorem}

\begin{lemma}\label{lem:versh-net}
Let $A$ be a symmetric $d\times d$ matrix, and let $\mathcal{N}$ be an $\delta$-net of the unit sphere of $\R^d$ for some $\delta \in [0,1)$.
Then,
$$\|A\|_{\op} \leq (1 - 2 \delta)^{-1} \sup_{\xi \in \mathcal{N}} | \langle \xi, A \xi \rangle|.$$
\end{lemma}

With these tools in hand we can now prove \cref{lem:flipflop-concentration}.

\begin{proof}[Proof of \cref{lem:flipflop-concentration}]
For convenience, we consider the differently normalized random variable $Z = Y/\sqrt{nd_1}$.
Note that these satisfy
$Z_i = X_i \Phi_X^*(I_{d_1})^{-1/2} =  X_i (\sum_{i = 1}^n X_i^T X_i)^{-1/2}.$
Thus we wish to prove that
\begin{align}\label{eq:1marg goal}
  \norm*{\sum_{i=1}^n Z_i Z_i^T - \frac{d_2}{d_1} I_{d_1}}_{\op}
\leq t \sqrt{\frac{d_2}{nd_1}}.
\end{align}
Since we are interested in the spectral norm, we will consider the random variable $\langle \xi, \sum_{i = 1}^n Z_i Z_i^{T} \xi \rangle$ for a fixed unit vector $\xi \in \R^{d_1}$.
We will show that this variable is highly concentrated, and apply a union bound over a net of the unit vectors.
To show the concentration, we first cast $\langle \xi, \sum_{i = 1}^n Z_i Z_i^{T} \xi \rangle$ as the inner product between a random orthogonal projection and a fixed one.
Since each $Z_i$ is a $d_1 \times d_2$ matrix, we can consider $Z$ as an $n d_1 \times d_2$ matrix by vertically concatenating the $Z_i$.
By definition of the flip-flop step, $Z^T Z = \sum_{i=1}^n Z_i^T Z_i = I_{d_2}$, so $Z Z^T$ is an orthogonal projection onto a $d_2$-dimensional subspaces of~$\R^{n d_1}$.
In fact, $Z Z^T$ is a uniformly random such projection.
This is because $X$, considered as a $n d_1 \times d_2$ random matrix with i.i.d.\ Gaussian entries, is invariant under left multiplication $X \mapsto OX$ by orthogonal transformations~$O \in O(n d_1)$, hence the same is true for $Z = X (X^T X)^{-1/2}$.
We can now write
\begin{align*}
  \langle \xi,  \sum_{i = 1}^n Z_i Z_i^{T} \xi \rangle = \langle Z Z^T,  \xi \xi^T \ot I_{n} \rangle.
\end{align*}
The matrix $\xi \xi^T \otimes I_{n}$ is a fixed rank $n$ projection on $\R^{n d_1}$.
We now use \cref{thm:hayden} with $P = ZZ^T$, $Q = \xi \xi^T \otimes I_n$, $a = d_2$, $b = n$, and~$m = n d_1$ to obtain
\begin{align}\label{eq:fixed-concentration}
  \Pr\left[ \abs*{\langle \xi, \left( \sum_{i = 1}^n Z_i Z_i^{T} - \frac{d_2}{d_1} I_{d_1} \right) \xi \rangle} > \frac{d_2}{d_1} \eps \right]
\leq 2 e^{ - \Omega( n d_2 \eps^{2} ) }
\end{align}
for any fixed unit vector $\xi\in\R^{d_1}$.

Next we apply a standard net argument for the unit vectors over $\R^{nd_1}$.
We apply \cref{lem:versh-net} with $A = \sum_{i = 1}^n Z_i Z_i^{T} - \smash{\frac{d_2}{d_1}} I_{d_1}$, $d = d_1$, and a net~$\mathcal N$ for~$\delta = 1/4$.
By standard packing bounds (e.g., Lemma~4.2 in \cite{vershynin2010introduction}) we may take $\abs{\mathcal{N}} \leq 9^{d_1}$.
By \cref{eq:fixed-concentration} and the union bound, with failure probability $2 \cdot 9^{d_1} e^{- \Omega (n d_2 \eps^2)}$ we have that $|\langle \xi , A \xi \rangle| \leq \frac{d_2}{d_1} \eps$ for all $\xi \in \mathcal{N}$, and by \cref{lem:versh-net} this event implies $\norm{A}_{\op} \leq 2  \frac{d_2}{d_1} \eps$.
Setting
\[ \eps = t \sqrt{\frac{d_1}{4n d_2}}, \]
we obtain \cref{eq:1marg goal}, i.e.,
\begin{align*}
  \norm*{\sum_{i = 1}^n Z_i Z_i^{T} - \frac{d_2}{d_1} I_{d_1}}_{\op}
\leq 2 \frac{d_2}{d_1} t \sqrt{\frac{d_1}{4n d_2}}
= t \sqrt{\frac{d_2}{n d_1}},
\end{align*}
with failure probability at most $2 \cdot 9^{d_1} e^{- \Omega(d_1 t^2)}$, which is at most $e^{ - \Omega(d_1 t^2)}$, provided $t$ is bounded from below by a large enough constant~$C'>0$.
This concludes the proof.
\end{proof}

The final ingredient needed is the following robustness result for quantum expansion, \cite[Lemma~4.4]{FM20}, which will play a role analogous to our \cref{lem:convexRobustness}.

\begin{lemma}\label{lem:robust expansion}
There is a constant~$c>0$ with the following property:
let $X=(X_1,\dots,X_n)$, $Y=(Y_1,\dots,Y_n)$ be tuples of $d_1\times d_2$ matrices such that $Y_i = X_i R$ for some~$R \in \GL(d_2)$.
Let $0<\eps,\eta<1$.
If $\Phi_X$ is an $(\eps,\eta)$-quantum expander and $\norm{R^TR - I_{d_2}}_{\op} \leq \delta$ for some $\delta\leq c$, then $\Phi_Y$ is an $(\eps + O(\delta), \eta + O(\delta))$-quantum expander.
\end{lemma}

We may finally prove \CREFmain{thm:matrix-normal}{Theorem~1.11}.

\begin{proof}[Proof of \CREFmain{thm:matrix-normal}{Theorem~1.11}]
As discussed in \CREFmain{subsec:proof-sketch}{Section~2.2}, we may assume without loss of generality that~$\Theta_a = I_a$ for~$a\in\{1,2\}$.
We will also assume that $d_1 \leq d_2.$
Let $x=(x_1,\dots,x_n)$ be our tuple of samples, which we can identify with a tuple $X=(X_1,\dots,X_n)$ independent random $d_1\times d_2$ matrices with independent standard Gaussian entries.
Define $Y=(Y_1,\dots,Y_n)$ as in \cref{eq:one-step}.
Consider the following three events:
\begin{enumerate}
\item The operator $\Phi_X$ is a $(t\sqrt{d_2/nd_1},\eta)$-quantum expander for $\eta\in(0,1)$ as in \CREFmain{thm:operator-cheeger}{Theorem~3.1}.
\item The operator $\Phi_Y$ is $t\sqrt{d_1/nd_2}$-doubly balanced.
\item $\abs{\frac{\norm{x}_2^2}{nD} - 1} \leq t\sqrt{d_2/nd_1}$.
\end{enumerate}
By \CREFmain{thm:operator-cheeger}{Theorem~3.1} and our assumptions, the first event occurs with probability at least $1 - e^{ - \Omega(d_2 t^2)}$ provided we choose~$C$ large enough.
By \cref{lem:flipflop-concentration} and our assumptions, the second event occurs with probability at least $1 - e^{ - \Omega( d_1 t^2)}$ assuming $t \geq C'$.
Finally, the third event occurs with probability at least $1 - e^{-\Omega(d_2^2 t^2)}$ by \cref{prp:xnorm} and our assumptions.
By the union bound, all three events occur simultaneously with probability at least $1 - e^{-\Omega(d_1 t^2)}$, which is the desired success probability.

We now show that the three events together imply the desired properties.
We first want to use \cref{lem:robust expansion} to relate the quantum expansion of~$\Phi_X$ and~$\Phi_Y$.
By definition, $Y_i = X_i R$ for $R := ( \frac1{nd_1} \sum_{i=1}^n X_i^T X_i )^{-1/2} = R^T$.
Now note that
\begin{align*}
  R^{-2} - I_{d_2}
= \frac1{nd_1} \sum_{i=1}^n X_i^T X_i - I_{d_2}
= \frac{\norm{x}_2^2}{nD} \left( d_2 \frac{\Phi_X^*(I_{d_1})}{\tr \Phi_X^*(I_{d_1})} - I_{d_2} \right)
+ \left( \frac{\norm{x}_2^2}{nD} - 1 \right) I_{d_2}.
\end{align*}
Therefore, by the first and the third event,
\begin{align*}
  \norm{R^{-2} - I_{d_2}}_{\op}
= O\left(t\sqrt{\frac{d_2}{nd_1}}\right),
\end{align*}
noting that $t\sqrt{\frac{d_2}{nd_1}} \leq \frac1{\sqrt C}$ can be made smaller than any constant by choosing~$C$ large enough.
This also implies that
\begin{align}\label{eq:RR bound}
  \norm{R^TR - I_{d_2}}_{\op}
= \norm{R^2 - I_{d_2}}_{\op}
= O\left(t\sqrt{\frac{d_2}{nd_1}}\right).
\end{align}
Noting again that the right-hand side can be made smaller than any universal constant, we can now apply \cref{lem:robust expansion} to see that $\Phi_Y$ is a $(t\sqrt{d_1/nd_2},\eta')$-quantum expander for some universal constant~$\eta'\in(0,1)$ (the double balancedness follows from the second event!).
With this, \cref{lem:fm20} shows that $\Phi_Y$ has spectral gap~$\gamma$ for a universal constant~$\gamma\in(0,1)$.

Finally, noting that $\norm{Y}_2^2 = \sum_{i=1}^n \tr Y_i^T Y_i = nD$ and using our assumption on $n$, provided we choose $C$ large enough we may apply \cref{cor:klr} with $\eps = t\sqrt{d_1/nd_2}$.
We obtain:
\begin{align}\label{eq:MLE Y bound}
  \max \, \Bigl\{ \norm{\htheta_1(Y) - I_{d_1}}_{\op}, \norm{\htheta_2(Y) - I_{d_2}}_{\op} \Bigr\}
= O\Bigl(t\sqrt{\frac{d_1}{nd_2}} \log d_1\Bigr),
\end{align}
where $\htheta_a(Y)$ denotes components the MLE for the samples $Y=(Y_1,\dots,Y_n)$.
By equivariance, the components of the MLE for the samples $X=(X_1,\dots,X_n)$ are then given by~$\htheta_1(X) = \htheta_1(Y)$ and $\htheta_2(X) = R \, \htheta_2(Y) R$.
This immediately yields the bound
\begin{align*}
  \Dop(\htheta_1(X)\Vert\Theta_1)
= \Dop(\htheta_1(X)\Vert I_{d_1})
= O\Bigl(t\sqrt{\frac{d_1}{nd_2}} \log d_1\Bigr).
\end{align*}
To bound $\Dop(\htheta_2(X)\Vert\Theta_2)$, we use invariance of $\Dop$ and the approximate triangle inequality (\cref{lem:triangle-ineq}) to write
\begin{align*}
  \Dop(\htheta_2(X)\Vert\Theta_2)
&= \Dop(\htheta_2(X)\Vert I_{d_2})
= \Dop(R \, \htheta_2(Y) R\Vert I_{d_2})
= \Dop(\htheta_2(Y)\Vert R^{-2}) \\
&= O\left(\Dop(\htheta_2(Y)\Vert I_{d_2}) + \Dop(I_{d_2}\Vert R^{-2})\right) \\
&= O\left(t\sqrt{\frac{d_1}{nd_2}} \log d_1\right)
+ O\left(t\sqrt{\frac{d_2}{nd_1}}\right)
= O\left(t\sqrt{\frac{d_2}{nd_1}} \log d_1\right)
\end{align*}
using \cref{eq:MLE Y bound,eq:RR bound}; by choosing $C$ large enough we can ensure that the right-hand side is smaller than any universal constant, which justifies the application of \cref{lem:triangle-ineq}.
Reparametrizing $t$ by $t \leftarrow t/C'$ allows us to assume $t \geq 1$ rather than $t \geq C'$.
The bounds on $\dop$ follow from the above and from \cref{lem:rel op distances} by choosing large enough $C$.
\end{proof}

\ifdefined\ARXIV
\section{Proofs of results in Section~\ref{sec:lower}}\label{app:lower}
\else
\section{Proofs of results in \texorpdfstring{\CREFmain{sec:lower}{Section~4}}{Section 4}}\label{app:lower}
\fi

We first recall and, for completeness, prove well-known lower bounds on the accuracy of any estimator for the precision matrix in the Frobenius and operator error from independent samples of a Gaussian.
Informally, these bounds imply that no estimator for a $d\times d$ precision matrix from $n$ samples can have accuracy better than $\sqrt{d^{2}/n}$ in Frobenius error or $\sqrt{d/n}$ in operator norm error with probability more than $1/2$.

\begin{prop}[Frobenius and operator error]\label{prp:standard-lower}
There is $c > 0$ such that the following holds.
Let $x \in \R^{d \times n}$ denote $n$ independent random samples from a Gaussian with precision matrix $\Theta \in \PD(d)$.
Consider any estimator $\htheta = \htheta(x)$ for the precision matrix~$\Theta$, and let $B\subset \PD(d)$ denote the operator norm ball centered at $I_d$ of radius~$1/2$.
\begin{enumerate}
\item Let $\delta^2 = c \, \min \left\{1,d^2/n\right\}$. Then,
\begin{align}
\sup_{\Theta \in B} \Pr\left[ \| \htheta - \Theta\|_F \geq \delta\right] \geq \frac{1}{2}.\label{eq:frob-lower}
\end{align}
\item Let $\delta^2 = c \, \min \left\{1,d/n\right\}$. Then,
 \begin{align}
\sup_{\Theta \in B} \Pr\left[ \| \htheta - \Theta\|_{\op} \geq \delta\right] \geq \frac{1}{2}. \label{eq:op-lower}
\end{align}
\end{enumerate}
As a consequence, we have
\begin{align*}
  \sup_{\Theta \in B}\E[\| \htheta - \Theta\|_F^2] =\Omega\left( \min \left\{\frac{d^2}{n},1\right\}\right)
\text{ and } \sup_{\Theta \in B}\E[\| \htheta - \Theta\|_{\op}^2] = \Omega\left( \min \left\{\frac{d}{n},1\right\}\right).
\end{align*}
\end{prop}

The proof uses Fano's method with mutual information bounded by relative entropy, as in \cite{yang1999information}, and the relationship between the Frobenius error and the relative entropy (which is proportional to Stein's loss).

\begin{lemma}[Fano's inequality]\label{lem:fano}
Let $\{P_i\}_{i \in [m]}$ be a finite set of probability distributions over a set $\mathcal X$, and let $T: \mathcal X \to [m]$ be an estimator for $i$ from a sample of $P_i$. Then
\[ \max_{i\in [m]} \Pr_{X \sim P_i}[T(X) \neq i] \geq 1 - \frac{ \log 2 + \max_{i,j \in [m]} \DKL(P_i\Vert  P_j)}{\log m}. \]
\end{lemma}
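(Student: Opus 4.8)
The plan is to run the standard information-theoretic argument underlying Fano's method. First I would introduce an auxiliary random index $I$ distributed uniformly on $[m]$, draw $X \sim P_I$ conditionally on $I$, and set $\hat I := T(X)$ and $E := \mathbf 1[\hat I \neq I]$. Since
\[
  \Pr[E=1] = \frac1m \sum_{i\in[m]} \Pr_{X\sim P_i}[T(X)\neq i] \leq \max_{i\in[m]} \Pr_{X\sim P_i}[T(X)\neq i],
\]
it suffices to lower bound $\Pr[E=1]$.

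The core is a chain of (in)equalities for the conditional entropy $H(I\mid \hat I)$. On one hand, expanding $H(I,E\mid\hat I)$ in two ways and using that, given $\hat I$ and the event $E=1$, the index $I$ takes at most $m-1$ values, one obtains the usual Fano estimate $H(I\mid\hat I) \leq H(E) + \Pr[E=1]\log(m-1) \leq \log 2 + \Pr[E=1]\log m$. On the other hand, $H(I\mid\hat I) = H(I) - I(I;\hat I) = \log m - I(I;\hat I) \geq \log m - I(I;X)$, the last inequality being the data-processing inequality for the Markov chain $I \to X \to \hat I$.

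It remains to bound the mutual information $I(I;X)$. Writing $\bar P := \frac1m\sum_{j\in[m]} P_j$ for the mixture, one has the identity $I(I;X) = \frac1m\sum_{i\in[m]} \DKL(P_i \Vert \bar P)$, and by convexity of the KL divergence in its second argument, $\DKL(P_i\Vert\bar P) \leq \frac1m\sum_{j\in[m]}\DKL(P_i\Vert P_j) \leq \max_{i,j\in[m]}\DKL(P_i\Vert P_j)$. Combining the three displays gives $\log m - \max_{i,j}\DKL(P_i\Vert P_j) \leq \log 2 + \Pr[E=1]\log m$, and rearranging yields the claimed inequality.

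There is no substantive obstacle here: Fano's inequality and the mutual-information bound by pairwise KL divergences are textbook facts. The only points needing a little care are the standard entropy manipulation in the Fano step (bounding $H(I\mid\hat I)$) and stating the identity $I(I;X) = \frac1m\sum_i\DKL(P_i\Vert\bar P)$ correctly; everything else is elementary rearrangement.
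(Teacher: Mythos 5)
Your proof is correct and is the standard textbook argument for this form of Fano's inequality. The paper itself does not prove this lemma — it states it as a known fact (the surrounding text invokes "Fano's inequality with mutual information bounded by relative entropy, as in Yang--Barron") — so there is no in-paper proof to compare against; your derivation supplies exactly the argument the citation points to. All the key steps check out: the reduction from the max to the uniform-prior error probability, the entropy manipulation $H(I\mid\hat I)\leq H(E)+\Pr[E=1]\log(m-1)\leq\log 2+\Pr[E=1]\log m$, the data-processing step $I(I;\hat I)\leq I(I;X)$ for the Markov chain $I\to X\to\hat I$, the identity $I(I;X)=\frac1m\sum_i\DKL(P_i\Vert\bar P)$ using that $I$ is uniform, and the convexity bound $\DKL(P_i\Vert\bar P)\leq\frac1m\sum_j\DKL(P_i\Vert P_j)\leq\max_{i,j}\DKL(P_i\Vert P_j)$.
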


\begin{proof}[Proof of \cref{prp:standard-lower}]
We first prove \cref{eq:frob-lower}, the lower bound on estimation in the Frobenius norm.
We begin by the standard reduction from estimation to testing.
Let $V_0$ be a $1$-separated set in the Frobenius ball $B_F$ of radius~$1$ in the $d\times d$ symmetric matrices, i.e., the set $B_F = \{A: A \text{ Symmetric}, \|A\|_F \leq 1\}$.

We may take $V_0$ to have cardinality $m \geq 2^{d(d+1)/2}$ because $B_F$ is a Euclidean ball of radius~$1$ in the linear subspace of $d\times d$ symmetric matrices, which has dimension $d(d+1)/2$, and hence any maximal Frobenius $1/2$-packing (collection of disjoint radius $1/2$ Frobenius balls) in $B_F$ has cardinality at least $2^{d(d+1)/2}$.
Let $0 \leq \delta \leq 1/2$, and let $V = I_d + \delta V_0 = \{I_d + \delta v: v \in V_0\}$.
Write $V = \{\Theta_1, \dots, \Theta_m\}$.
Note that $V$ is contained within the operator norm ball $B$.
Let $P_i =\mathcal{N}(0, \Theta^{-1}_i)^{\ot n}$ for $i\in[m]$, and define the estimator~$T$ by
\[ T(x) = \argmin_{i \in [m]} \|\Theta_i - \htheta(x)\|_F. \]
Then, because $V$ is $2\delta$-separated,
\begin{align}\label{eq:pr vs est}
  \Pr_{X \sim P_i} \left[T(X) = i\right] \geq \Pr\left[\|\htheta -  \Theta_i\|_F \leq \delta\right].
\end{align}
In order to apply Fano's inequality, we use the well-known fact that $\DKL(P_i\Vert  P_j) = n \DKL(\mathcal{N}(0, \Theta_i^{-1})\ \Vert  \ \mathcal{N}(0, \Theta_j^{-1})) = O(n\DF(\Theta_j \ \Vert  \ \Theta_i)^2)$ when $\Theta_i^{-1}\Theta_j$ has eigenvalues uniformly bounded away from zero by the proof of \cref{lem:rel op distances}.
This condition on the eigenvalues holds because $I_d/2 \preceq \Theta_j, \Theta_j \preceq 3I_d/2$ for $i,j \in [m]$ by our assumption that $\delta \leq 1/4$.

Moreover, for $i \in [m]$, we have $\kappa(\Theta_i) \leq 3$ and so $\DF(\Theta_j\Vert  \Theta_i) \asymp \|\Theta_i - \Theta_j\|_F = O(\delta)$ by \cref{prop:absRelation}.
Thus we have $\DKL(P_i\Vert  P_j) \leq Cn \delta^2$ for some absolute constant $C$.
Then, by \cref{lem:fano},
\begin{align*}
  \min_{i \in [m]} \Pr_{X \sim P_i}[T(X) = i] \leq \frac{  \log 2 + C n \delta^2}{d(d+1)(\log 2)/2 }.
\end{align*}
If $\delta^2 = c \, \min\{ \frac{d^2}{n}, 1\}$, the right-hand side of the inequality above is bounded by $\frac{1}{2}$ and the assumption $\delta \leq 1/4$ is satisfied provided $c$ is a small enough absolute constant.
In view of \cref{eq:pr vs est}, it follows that
\[ \min_{i \in [m]} \Pr\left[ \|\htheta - \Theta_i\|_F \leq \delta\right] \leq 1/2. \]
Because $V \subset B$, this proves \cref{eq:frob-lower}.

To obtain \cref{eq:op-lower}, the lower bound in operator norm, instead start with a packing $V_0$ of the unit operator norm ball of cardinality $m \geq 2^{d(d+1)/2}$ and define $V = \{ \Theta_1, \dots, \Theta_m \}$ as above.
We modify the proof by bounding $\DKL(P_i \Vert  P_j) = O(n \| \Theta_i - \Theta_j\|_F^2) = O(n d \|\Theta_i - \Theta_j\|_{\op}^2) \leq C nd \delta^2.$
Proceeding as before, we find that for $\delta = c \, \min \{\frac{d}{n}, 1\}$,
\[ \min_{i \in [m]} \Pr\left[ \|\htheta - \Theta_i\|_{\op} \leq \delta\right] \leq 1/2. \]
Again, we have $V \subset B$, so \cref{eq:op-lower} follows.
\end{proof}

The above proof shows the necessity of a scale-invariant dissimilarity measure to obtain error bounds that are independent of the ground truth precision matrix $\Theta$.
Indeed, replacing the packing $V$ by $C V$ for $C \to \infty$ in the proof shows that $\sup_{\Theta \in C B} \Pr[ \| \htheta - \Theta\|_F \geq C \delta ] \geq \frac{1}{2}$.
That is, no fixed bound can be obtained with probability $1/2$.
The result just obtained implies similar lower bounds on the Fisher-Rao and Thompson metrics.

\begin{proof}[Proof of \CREFmain{cor:relative-lower}{Proposition~4.1}]
Since $\kappa(\Theta) \leq 3$ for $\Theta \in B$, \cref{lem:rel op distances,prop:absRelation} imply $\|\Theta - \htheta\|_F \asymp \dFR(\htheta, \Theta)$ and $\|\Theta - \htheta\|_{\op} \asymp \dop(\htheta, \Theta)$.
Thus, the result follows from \cref{prp:standard-lower}.
\end{proof}


We finally give the proof of \CREFmain{lem:reduce-lower}{Lemma~4.3}.

\begin{proof}[Proof of \CREFmain{lem:reduce-lower}{Lemma~4.3}]
If $d_2 \leq nd_1$, then setting $\Theta_2 = I_{d_2}$ shows that $\htheta_1$ has access to precisely $n d_2$ samples from a Gaussian $\R^{d_1}$ with precision matrix $\Theta_1$.
Thus we may take $\tilde{\Theta} = \htheta_1$ in that case, completing the proof. The harder case is $d_2 > n d_1$.

For intuition, let $B$ be any $d_2\times d_2$ matrix such that the last $d_2 - nd_1$ columns are zero.
Consider $n$ samples $X_{i} := \sqrt{\Sigma_{1}} Z_i B^T$, where $Z_i$ are i.i.d standard Gaussian $d_1\times d_2$ matrices. Then any estimate for $\hat{\Theta}_{1}(X)$ has access to at most $n \cdot n d_1$ samples of the Gaussian on $\R^{d_1}$ with precision matrix $\Theta_1$ because $Z_i B^T$ depends only on the first $d_1$ columns of each $Z_i$.
Therefore \CREFmain{cor:relative-lower}{Proposition~4.1} applies to give lower bound $\dFR(\hat{\Theta}_{1}(X), \Theta_{1})^{2} \gtrsim \frac{d_{1}^{2}}{n^{2} d_{1}}$.

However, in order for this to be a well-defined input in the matrix normal model, we must supply \emph{invertible} $B$ with $\Theta_2 = (BB^T)^{-1}$.
For $\delta \geq 0$, let
the first $nd_1$ columns of $B_\delta$ be an orthonormal basis for a random $nd_{1}$-dimensional subspace of $\R^{d_{2}}$,
and let the remaining entries be i.i.d uniform in $[-\delta, \delta]$ (the precise distribution of the remaining entries does not matter as long as they are independent, continuous, and small).
Let $Y_\delta:=(\sqrt{\Sigma_1} Z_1 B_\delta^T, \dots, \sqrt{\Sigma_1} Z_n B_\delta^T)$ denote the resulting random variable with $B_\delta$ and $Z$ chosen independently.
If $\delta = 0$, then, by the argument above, with access to the random variable $Y_\delta:=(\sqrt{\Sigma_1} Z_1 B_\delta^T, \dots, \sqrt{\Sigma_1} Z_n B_\delta^T)$ the estimator $\widehat{\Theta}_1(Y)$ has access to at most $n^2d_1$ samples of a Gaussian on $\R^{d_1}$ with precision matrix $\Theta_1$.
We claim that as $\delta \to 0$, the distribution of $Y_\delta$ tends to that of $Y_0$ in total variation distance.
Thus the distribution of $\widehat{\Theta}_1(Y_\delta)$ converges to that of $\widehat{\Theta}_1(Y_0)$ in total variation.
Since $Y_0$ only depends on $n^2d_1$ samples to the Gaussian on $\R^{d_1}$ with precision matrix $\Theta_1$, which we call $Y$, defining $\tilde{\Theta}(Y) = \widehat{\Theta}_1(Y_0)$ proves the theorem.
\footnote{Actually, as $B$ has a probability zero chance of being singular, the final family of densities $Y'_\delta$ we will use is $Y_\delta$ conditioned on $B$ being invertible.
As $B$ is invertible with probability $1$ for $\delta > 0$, the total variation distance between $Y'_\delta, Y_\delta$ is zero for all $\delta > 0$ and hence $Y'_\delta$ converges to $Y_0$ in total variation distance provided $Y_\delta$ does.}

It remains to prove that $Y_\delta$ converges to $Y_0$ in total variation distance.
First note that $Y_\delta = Y_0 + \delta W$ where $W_i = \sqrt{\Theta_1} Z_i C^T$, where $C$ is a random matrix where the first $nd_1$ columns are zero and the last $d_2 - n d_1$ columns have entries i.i.d uniform on $[-1, 1]$.
Note that the random variables $Y_0$ and $W$ are independent, as the entries of $Z$ are i.i.d. and the supports of $B_{0}$ and $C$ are disjoint.
If we can show that $Y_0$ has a density with respect to the Lebesgue measure on $\R^{nd_1d_2}$, then $Y_0 + \delta W$ converges to $Y_0$ in total variation distance as $\delta \to 0$.
This follows because $Y_0 + \delta W$ has a density obtained by convolving the density of $Y_0$ with the law of $\delta W$, which is an $L_1$ function.
The density of $Y_0 + \delta W$ then converges to that of $Y_0$ in $L_1$ by the continuity of the convolution operator in $L_1$.\footnote{We thank Oliver Diaz for communicating a proof of this fact.}

By invertibility of $\Sigma_1$, it is enough to show that $Y_0$ has a density when $\Sigma_1 = I_{d_1}$.
Consider $Y_0 = ( B_0 Z_1^T, \dots, B_0 Z_n^T)$.
We may think of $Y_0$ as the $d_2 \times n d_1$ random matrix obtained by horizontally concatenating the matrices $B_0Z_i^T$.
\footnote{
Almost every matrix of these dimensions has rank $n d_1$, but if we had set even more of the columns of $B_0$ to zero then $Y_0$ would have rank \emph{less} than $n d_1$ with probability $1$ and hence would not have a density.
This is why we cannot push this argument any further.}

Now consider the $nd_1$ random vectors in $ \R^{d_2}$ that are the columns of the matrix $Y_{0}$.
Because $B_0$ is supported only in its first $nd_1$ columns, the joint distribution of these random vectors may be obtained by sampling $n d_1$ independent standard Gaussian vectors $v_j$ on $\R^{nd_1}$ and then multiplying them by the $d_2 \times nd_1$ matrix $B'$ that is the restriction of $B_0$ to its first $nd_1$ columns.
We have chosen $B'$ such that it is an isometry into a uniformly random subspace of $\R^{d_2}$ of dimension $nd_1$.
Thus $Bv_j/\|v_j\|$ are $nd_1$ many independent, random unit vectors in $\R^{d_2}$.
As the $\|v_j\|$ are also independent, $B v_j$ are thus independent. Each marginal $Bv_i$ has a density; one may sample it by choosing a uniformly random vector and then choosing the length $\|v_i\|$, hence the density is a product density in spherical coordinates. The joint density of the $Bv_j$ is then the product density of the marginal densities.
\end{proof}

\ifdefined\ARXIV
\section{Proofs of results in Section~\ref{sec:flip-flop}}\label{app:flip-flop}
\else
\section{Proofs of results in \texorpdfstring{\CREFmain{sec:flip-flop}{Section~5}}{Section 5}}\label{app:flip-flop}
\fi

We first record a structural property of the flip-flop algorithm in \CREFmain{alg:flip-flop matrix,alg:flip-flop}{Algorithms~1 and~2}.
Note that at the end of each iteration, we update only a single Kronecker factor~$\otheta_a$.
This update has the following property.

\begin{lemma}[Flip-flop update]\label{lemma:flip-flop-update}
  Let $t\in\{1,\dots,T-1\}$ and assume the flip-flop algorithm has not terminated before the $(t+1)$-st iteration.
  Then $\rho_{t+1}^{(a)} = \frac{I_{d_a}}{d_a}$, where $a\in[k]$ denotes the index chosen in the $t$-th iteration.
  As a consequence, $\tr\rho_t=1$ for $t=2,\dots,T$.
\end{lemma}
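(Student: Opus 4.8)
The plan is to track how the one-coordinate partial trace $\rho_t^{(a)}$ evolves when the $t$-th iteration updates the factor $\otheta_a$, and to observe that the update rule in \cref{alg:flip-flop} is designed precisely so that $\rho_{t+1}^{(a)} = I_{d_a}/d_a$. Throughout, let $a\in[k]$ be the index chosen in the $t$-th iteration, write $S = \sum_{i=1}^n \samp_i\samp_i^T$, and decompose the current estimate (after reordering the tensor factors so that $a$ comes first) as $\otheta = \otheta_a \ot \otheta_{\neq a}$, where $\otheta_{\neq a} = \bigotimes_{b\neq a}\otheta_b$. The central auxiliary object is the $d_a\times d_a$ matrix
\[
  T_a := \bigl( (I_{d_a}\ot\otheta_{\neq a}^{1/2})\,S\,(I_{d_a}\ot\otheta_{\neq a}^{1/2}) \bigr)^{(a)},
\]
i.e.\ $S$ conjugated by the square roots of the ``environment'' factors and then traced down to coordinate~$a$. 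Since the $t$-th update modifies only $\otheta_a$ and leaves $\otheta_{\neq a}$ fixed, the matrix $T_a$ is the same in iterations $t$ and $t+1$.

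First I would express $\rho_t^{(a)}$ and $\rho_{t+1}^{(a)}$ through $T_a$. Using the duality characterization of the partial trace (\cref{eq:partial trace duality}), that $\otheta^{1/2}$ splits as the Kronecker product $\otheta_a^{1/2}\ot\otheta_{\neq a}^{1/2}$, and that $H_{(a)} = H\ot I$, one finds for every symmetric $d_a\times d_a$ matrix $H$ that
\[
  \tr\rho_t^{(a)}H
= \frac1{nD}\tr S\,\bigl(\otheta_a^{1/2}H\otheta_a^{1/2}\ot\otheta_{\neq a}\bigr)
= \frac1{nD}\tr \otheta_a^{1/2}T_a\otheta_a^{1/2}H,
\]
where the second equality is again \cref{eq:partial trace duality} applied to $(I_{d_a}\ot\otheta_{\neq a}^{1/2})S(I_{d_a}\ot\otheta_{\neq a}^{1/2})$. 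Hence $\rho_t^{(a)} = \frac1{nD}\otheta_a^{1/2}T_a\otheta_a^{1/2}$, and the identical computation applied to the updated estimate $\otheta' = \otheta'_a\ot\otheta_{\neq a}$ used in iteration $t+1$ gives $\rho_{t+1}^{(a)} = \frac1{nD}(\otheta'_a)^{1/2}T_a(\otheta'_a)^{1/2}$ with the same $T_a$.

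Next I would plug in the flip-flop update rule $\otheta'_a = \frac1{d_a}\otheta_a^{1/2}(\rho_t^{(a)})^{-1}\otheta_a^{1/2}$ from \cref{alg:flip-flop}. Substituting $\rho_t^{(a)} = \frac1{nD}\otheta_a^{1/2}T_a\otheta_a^{1/2}$, the factors $\otheta_a^{\pm1/2}$ cancel and we get $\otheta'_a = \frac{nD}{d_a}T_a^{-1}$, so $(\otheta'_a)^{1/2} = \sqrt{nD/d_a}\,T_a^{-1/2}$; inserting this into the formula for $\rho_{t+1}^{(a)}$ collapses it to $\rho_{t+1}^{(a)} = \frac1{nD}\cdot\frac{nD}{d_a}\,T_a^{-1/2}T_aT_a^{-1/2} = I_{d_a}/d_a$, which is the first assertion. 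The ``consequence'' is then immediate: partial traces preserve the total trace, so $\tr\rho_s = \tr\rho_s^{(b)}$ for every $s$ and $b$; for $t\in\{2,\dots,T\}$ with $\rho_t$ computed, the algorithm has not terminated before iteration $t = (t-1)+1$, so the first part applies with $t-1$ in place of $t$ and yields $\rho_t^{(a)} = I_{d_a}/d_a$ for the index $a$ chosen at step $t-1$, whence $\tr\rho_t = \tr(I_{d_a}/d_a) = 1$.

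The only place that needs care is the tensor-factor bookkeeping in the first two steps — justifying the Kronecker-product splitting of $\otheta^{1/2}$ and, crucially, the fact that the ``environment'' matrix $T_a$ is literally unchanged by the update (since only $\otheta_a$ moves). Once that is set up, everything is a one-line cancellation. A minor caveat, shared with the update step of the algorithm itself, is that the argument presupposes $\rho_t^{(a)}$ (equivalently $T_a$) is invertible; this holds throughout a run of the algorithm provided the relevant flattenings of $S$ have full rank, which we may assume since otherwise the MLE does not exist.
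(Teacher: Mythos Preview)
Your proof is correct and follows essentially the same approach as the paper. The paper's proof is slightly more compact: instead of introducing the auxiliary environment matrix $T_a$, it writes directly $\rho_{t+1}^{(a)} = (\otheta'_a)^{1/2}\otheta_a^{-1/2}\rho_t^{(a)}\otheta_a^{-1/2}(\otheta'_a)^{1/2}$ and then substitutes the update rule, observing that $\otheta_a^{-1/2}\rho_t^{(a)}\otheta_a^{-1/2}$ is precisely the inverse of $\otheta_a^{1/2}(\rho_t^{(a)})^{-1}\otheta_a^{1/2}$; your $T_a$ is just $nD\,\otheta_a^{-1/2}\rho_t^{(a)}\otheta_a^{-1/2}$, so the two computations are the same up to bookkeeping.
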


\begin{proof}
Let $\otheta$ denote the precision matrix at the beginning of the $t$-th iteration.
Then,
\begin{align*}
  \rho_{t+1}^{(a)}
&= \left( \frac1{d_a} \otheta_a^{1/2} \left( \rho_t^{(a)} \right)^{-1} \otheta_a^{1/2} \right)^{1/2} \otheta_a^{-1/2}
\rho_t^{(a)}
\otheta_a^{-1/2} \left( \frac1{d_a} \otheta_a^{1/2} \left( \rho_t^{(a)} \right)^{-1} \otheta_a^{1/2} \right)^{1/2} \\
&= \frac1{d_a} \left( \otheta_a^{1/2} \left( \rho_t^{(a)} \right)^{-1} \otheta_a^{1/2} \right)^{1/2}
\!\!\left( \otheta_a^{1/2} \left( \rho_t^{(a)} \right)^{-1} \otheta_a^{1/2} \right)^{-1}\!\!
\left( \otheta_a^{1/2} \left( \rho_t^{(a)} \right)^{-1} \otheta_a^{1/2} \right)^{1/2} \\
&= \frac1{d_a} I_{d_a}.
\end{align*}
\end{proof}

\noindent
In view of \CREFmain{lem:gradient}{Lemma 2.9} and \CREFmain{remark:gradient-everywhere}{Remark~2.10} and the assumption on the initial guess in \CREFmain{alg:flip-flop matrix}{Algorithm 1}, the above means that in each iteration $\nabla_0 f_x(\otheta) = 0$.
Moreover, from the second iteration onwards, $\nabla_a f_x(\otheta) = 0$ for the $a\in[k]$ chosen in the preceding iteration.
Thus the flip-flop algorithm can be understood as carrying out an alternating minimization or coordinate descent of the objective function~$f_x$.

Next, we discuss direct generalizations of standard convergence results for descent methods under strong convexity to the geodesically convex setting.
To prove that flip-flop converges once the initial conditions are satisfied, we need the following general lemma on strongly geodesically convex functions, which tells us that once the gradient is small then the point must be inside a sublevel set of our function which is contained in a ball where our function is strongly convex.
This result is stated in \cite[Lemma~4.7]{FM20} for the manifold of positive definite matrices of determinant one, but the proof uses no specific properties of this manifold beyond the fact that it is a Hadamard manifold.
Thus it holds for $\SPD$ as well.

\begin{lemma}\label{lem:gradient-strong-convexity-fm}
Let $f \colon \SPD \to \R$ be a geodesically convex function with optimizer $z \in \SPD$ (i.e. $\nabla f(z) = 0$), and further assume that $f$ is $\lambda$-strongly geodesically convex on the ball $B_r(z)$.
If $y\in\SPD$ is such that~$\norm{\nabla f(y)}_F < \lambda r/8$, then $y$ is contained in a sublevel set $S$ of $f$ which in turn is contained in~$B_r(z)$.
In particular, $f$ is $\lambda$-strongly geodesically convex on $S$.
\end{lemma}

The next lemma shows that any descent method which manages to significantly decrease the value of the function with respect to the gradient, if starting from a sublevel set where the function is strongly convex, will converge quickly to the optimum.
The proof of the lemma is a straightforward translation of the proof of \cite[Lemma~4.11]{FM20} which we give here for completeness.

\begin{lemma}\label{lem:descent-sublevel-set}
  Let $f \colon \SPD \rightarrow \R$ be $\lambda$-strongly geodesically convex on a sublevel set~$S$.
  Let $x_0 \in S$ and let $\alpha,\beta>0$ such that $\norm{\nabla f(x_0)}_F^2 \leq \beta$ and $\{x_t\}_{t\in[T]}$ be a sequence satisfying
  \begin{align}\label{eq:descent prop}
    f(x_t) \leq f(x_{t-1}) - \alpha \cdot \min \bigl\{\beta, \, \norm{\nabla f(x_{t-1})}^2_F \bigr\},
  \end{align}
  for $t\in[T]$.
  Then,
  \begin{align*}
    \min_{0\leq t\leq T} \norm{\nabla f(x_t)}^2_F \leq \norm{\nabla f(x_0)}_F^2 \cdot 2^{- T \alpha \lambda}.
  \end{align*}
\end{lemma}

\begin{proof}
  Let $f^*$ be the minimum value of the function $f$.
  Since $f$ is $\lambda$-strongly geodesically convex on~$S$, we have
  \begin{align}\label{eq:g convex standard estimate}
    f^* \geq f(x) - \frac{1}{2\lambda} \norm{\nabla f(x)}_F^2
  \end{align}
  for any $x \in S$.
  Since $\{x_t\}$ is a descent sequence, i.e., $f(x_t) \leq f(x_{t-1})$ for all $t\in[T]$, we know that each $x_t \in S$.
  Therefore, \cref{eq:g convex standard estimate} holds for any $x_t$, $0 \leq t \leq T$.

  We claim that for any $x_t$ such that $\eps := \norm{\nabla f(x_t)}_F^2 \leq \beta$, there exists $\ell \leq 1/\alpha \lambda$ such that $\norm{\nabla f(x_{t + \ell})}_F^2 \leq \eps/2$. This is enough to conclude the proof of the lemma, as with this claim we see that we halve the squared norm of the gradient at every sequence of $1/\alpha \lambda$ steps.

  To prove the claim, we assume that $\norm{\nabla f(x_{t+\ell})}_F^2 \geq \eps/2$ for all $\ell\in[m]$ (this is also true for $\ell=0$).
  We wish to show that $m \leq 1/\alpha\lambda$.
  To see this, note that from \cref{eq:descent prop} we have
  \begin{align*}
    f(x_{t+\ell})
  \leq f(x_{t+\ell-1}) - \alpha \cdot \min \bigl\{\beta, \, \norm{\nabla f(x_{t+\ell-1})}^2_F \bigr\}
  \leq f(x_{t+\ell-1}) - \frac{\alpha\eps}2
  \end{align*}
  for all $\ell\in[m]$, and therefore
  \begin{align*}
    f(x_{t+m}) \leq f(x_t) - \frac{\alpha\eps m}2.
  \end{align*}
  On the other hand, \cref{eq:g convex standard estimate} implies that
  \begin{align*}
    f(x_{t+m})
  \geq f^* \geq f(x_t) - \frac{1}{2\lambda} \norm{\nabla f(x_t)}_F^2
  \geq f(x_t) - \frac{\eps}{2\lambda}.
  \end{align*}
  Together, we find that $m \leq 1/\alpha\lambda$ as claimed.
  This concludes our proof.
\end{proof}

We now show that the flip-flop algorithm produces a descent sequence as in \cref{eq:descent prop}.

\begin{lemma}[Descent]\label{lem:tensor-descent-lemma}
Let $k \geq 2$ and $t\in\{2,\dots,T-1\}$.
Assume that the flip-flop algorithm has not terminated before the $(t+1)$-st iteration.
Let $\smash{\otheta^{(t)}}$, $\smash{\otheta^{(t+1)}}$ denote the precision matrices at the beginning of the $t$-th and the $(t+1)$-st iteration, respectively.
Then,
\begin{align*}
f_x(\otheta^{(t+1)}) \leq f_x(\otheta^{(t)}) - \frac1{6(k-1)} \min \left\{ \frac{k-1}{\dmax}, \norm{\nabla f_x(\otheta^{(t)})}_F^2 \right\}.
\end{align*}
\end{lemma}
\begin{proof}
  Recall that
  \begin{align*}
    f_x(\otheta^{(t)}) &= \tr\rho_t - \frac1D\log\det\otheta^{(t)}.
  \end{align*}
  and similarly for $f_x(\otheta^{(t+1)})$.
  By \cref{lemma:flip-flop-update}, we have $\tr\rho_t = \tr\rho_{t+1} = 1$.
  Moreover, by definition of the update step
  \begin{align*}
    \frac1D\log\det\otheta^{(t+1)} = \frac1D\log\det\otheta^{(t)} - \frac1{d_a}\log\det\left(d_a \rho_t^{(a)}\right).
  \end{align*}
  It follows that
  \begin{align*}
    f_x(\otheta^{(t+1)}) = f_x(\otheta^{(t)}) + \frac1{d_a}\log\det\left(d_a \rho_t^{(a)} \right).
  \end{align*}
  Lemma 5.1 in \cite{GGOW19} states that for any $d\times d$ positive semidefinite matrix~$Z$ of trace~$d$, the following inequality holds:
  \begin{align*}
    \log\det(Z) \leq -\frac16 \min \, \Bigl\{ \norm{Z - I_d}_F^2, 1 \Bigr\}.
  \end{align*}
  Applying this with $Z = d_a \rho_t^{(a)}$, we obtain
  \begin{align*}
    \frac1{d_a}\log\det\left(d_a \rho_t^{(a)} \right)
  &\leq -\frac16 \min\left\{ \norm{\rho_t^{(a)} - \frac{I_{d_a}}{d_a}}_F^2, \frac1{d_a} \right\} \\
  &\leq -\frac16 \min\left\{ \norm{\nabla_a f_x(\otheta^{(t)})}_F^2, \frac1{d_a} \right\} \\
  &\leq -\frac16 \min\left\{ \frac{\norm{\nabla f_x(\otheta^{(t)})}_F^2}{k-1}, \frac1{\dmax} \right\}.
  \end{align*}
  The equality follows from \CREFmain{lem:gradient}{Lemma 2.9} and \CREFmain{remark:gradient-everywhere}{Remark~2.10}.
  In the last inequality we used that $\nabla_0 f(\otheta^{(t)}) = 0$ and at least one other component of the gradient is zero, as follows from \cref{lemma:flip-flop-update}, and that $a\in[k]$ is the index where the gradient has largest norm.
\end{proof}

We can also use \cref{lem:tensor-descent-lemma} to show that the flip-flop algorithm reaches a point with small enough gradient relatively quickly.
This is given by the following lemma, which follows the analysis given by~\cite{GGOW19,burgisser2017alternating}:

\begin{lemma}[Flip-flop reduces gradient]\label{lem:flip-flop-sinkhorn}
For any $\gamma>0$, \CREFmain{alg:flip-flop}{Algorithm 2} with initial guess $\wttheta$ satisfying $\nabla_0 f_x(\wttheta) = 0$ reaches some~$\otheta$ such that $\norm{\nabla f_x(\otheta)}_F < \gamma$ within the first
\begin{align*}
T_0 = \left\lceil 3 + 6(k+1)
\cdot \dop(\wttheta, \htheta) \cdot
\max \left\{ \frac{\dmax}{k-1}, \frac1{\gamma^2} \right\} \right\rceil
\end{align*}
iterations, where $\displaystyle \htheta := \arg\inf_{\Theta\in\SPD} f_x(\Theta)$.

Consequently,
if $\wttheta := \dfrac{1}{f_x(I_D)} \cdot I_D$ and $f_x^* := \inf_{\Theta\in\SPD} f_x(\Theta)$, we have
\begin{align*}
T_0 = \left\lceil 3 + 6(k+1)
\bigl( 1 + \log f_x(I_D) - f_x^* \bigr)
\max \left\{ \frac{\dmax}{k-1}, \frac1{\gamma^2} \right\} \right\rceil
\end{align*}
\end{lemma}
\begin{proof}
We denote by $\otheta^{(t)}$ the precision matrices at the beginning of the $t$-th iteration of the flip-flop algorithm.
In particular, we have $\otheta^{(1)} = \wttheta$.

By \cref{lem:tensor-descent-lemma}, using that $\tr\rho_1 = 1$, we have that
\begin{align*}
  f_x^*
\leq f_x(\otheta^{(T_0)})
\leq f_x(\otheta^{(1)}) - \frac1{6(k-1)} \sum_{t=2}^{T_0-1} \min \left\{ \frac{k-1}{\dmax}, \norm{\nabla f_x(\otheta^{(t)})}_F^2 \right\}
\end{align*}
(we omit the summand for $t=1$).
Therefore, if $\norm{\nabla f_x(\otheta^{(t)})}_F \geq\gamma$ for $t=2,\dots,T_0-1$, then
\begin{align*}
  \frac{T_0-2}{6(k-1)} \min \left\{ \frac{k-1}{\dmax}, \gamma^2 \right\}
\leq f_x(\otheta^{(1)}) - f_x^*
= f_x(\wttheta) - f_x^* = \dop(\wttheta, \htheta)
\end{align*}
where the last equality follows since $\nabla_0 f_x(\wttheta) = 0$.
This implies the desired bound.
Now, when $\wttheta = \dfrac{1}{f_x(I_D)} \cdot I_D$, the right-hand side of the above inequality becomes $1 + \log f_x(I_D) - f_x^*$.
\end{proof}

We are now ready to prove fast convergence of flip-flop under suitable initial conditions.

\begin{proof}[Proof of \CREFmain{prop:meta}{Theorem~5.2}]
By the triangle inequality for $\dop$, the first and third assumptions imply that $f_x$ is $\lambda$-strongly geodesically convex at all $\Theta' \in \SPD$ such that $\dop(\Theta', \htheta) \leq \zeta/2$.
By \CREFmain{lem:op ball vs frob ball}{Lemma~2.19}, it is $\lambda$-strongly geodesically convex on the geodesic ball $B_r(\htheta)$ of radius
\begin{align*}
  r = \frac{\zeta}{2\sqrt{(k+1)\dmax}}.
\end{align*}

First note that our error bounds on the MLE follow if \CREFmain{alg:flip-flop}{Algorithm 2} reaches the stopping criterion within~$T$ iterations, that is, if we reach a precision matrix $\otheta=\otheta^{(t)}$ such that $\norm{\nabla f_x(\otheta)}_F \leq \delta$.
In fact, since $\delta < \frac{\lambda r}8$ by our assumption on~$\delta$, \cref{lem:gradient-strong-convexity-fm} applies (with~$x=\htheta$, $y=\otheta$) and shows that $\otheta \in B_r(\htheta)$.
Now \CREFmain{lem:convex-ball}{Lemma~2.7} applies, since in particular~$r > 2\delta/\lambda$, and shows that $\otheta \in B_{\delta/\lambda}(\htheta)$, that is,
\begin{align*}
  d(\otheta, \htheta) \leq \frac{\delta}{\lambda} \quad \Rightarrow \quad \dFR(\otheta_a, \htheta_a) \leq \sqrt{\frac{d_a}{2}} \cdot \frac{\delta}{\lambda}
\end{align*}
for all $a\in[k]$.
This is the desired distance to the MLE.

We will now analyze the iteration complexity of \CREFmain{alg:flip-flop}{Algorithm 2} with distinct initial guesses:

\textbf{Case 1:} initial guess $\wttheta$ s.t. $\nabla_0 f_x(\wttheta) = 0$.

We first reason about the number of steps required before strong convexity applies.
By \cref{lem:flip-flop-sinkhorn} with $\wttheta$ and $\gamma = \lambda r/8 = \lambda\zeta/16\sqrt{(k+1)\dmax} \leq \sqrt{(k-1)/\dmax}$, within at most
\begin{align}\label{eq:T_0 expr}
 T_0 = \left\lceil 3 + 6(k+1) \cdot \dop(\wttheta, \htheta) \cdot  \frac{64}{r^2 \lambda^2} \right\rceil
\end{align}
iterations, the algorithm reaches a point~$\otheta^{(t_0)}$ such that
\begin{align*}
  \norm{\nabla f_x(\otheta^{(t_0)})}_F < \frac{\lambda r}{8}.
\end{align*}
We can use \cref{lem:gradient-strong-convexity-fm} (with $z=\htheta$, $y=\otheta^{(t_0)}$) to see that $\otheta^{(t_0)}$ is contained in a sublevel set of~$f_x$ on which~$f_x$ is $\lambda$-strongly geodesically convex.

Note also that $\norm{\nabla f_x(\otheta^{(t_0)})}_F^2 \leq \beta := \frac{k-1}{\dmax}$ because $\lambda r/8 = \lambda\zeta/16\sqrt{(k+1)\dmax} \leq \sqrt{(k-1)/\dmax}$ by our assumption that $\zeta \leq 16 \sqrt{(k+1)(k-1)} / \lambda$.

Therefore, \cref{lem:tensor-descent-lemma} shows that each subsequent step of the algorithm will decrease the value of the objective function in accordance with the requirements of \cref{lem:descent-sublevel-set}, with parameters $\alpha = \frac1{6(k-1)}$ and $\beta$ as defined above.
Thus, for any $\delta > 0$, within at most
\begin{align*}
   T_1
&:= \left\lceil \frac{6(k-1)}{\lambda} \log_2 \frac{\norm{\nabla f_x(\otheta^{(t_0)})}_F^2}{\delta^2} \right\rceil
\leq \left\lceil \frac{12(k-1)}{\lambda} \log_2 \frac{\norm{\nabla f_x(\otheta^{(t_0)})}_F}{\delta} \right\rceil
\\ &\leq \left\lceil \frac{18(k-1)}{\lambda} \log \frac{\lambda r}{8\delta} \right\rceil
= \left\lceil \frac{18(k-1)}{\lambda} \log \frac{\lambda\zeta}{16\sqrt{(k+1)\dmax} \cdot \delta} \right\rceil
\end{align*}
further iterations we will encounter a point $\otheta=\otheta^{(t)}$ such that $\norm{\nabla f_x(\otheta)}_F \leq \delta$, i.e., such that the algorithm will stop.


As the above shows that the iteration complexity of \CREFmain{alg:flip-flop}{Algorithm 2} is bounded by $T_0 + T_1$, combining the above expressions yields our desired bound.

\textbf{Case 2:} initial guess $\wttheta$ s.t. $\dop(\wttheta, \htheta) \leq \frac{\lambda \zeta}{100 \dmax \sqrt{k(k+1)}}  = \frac{\lambda r}{50 \sqrt{k \dmax}}$ and $\nabla_0 f_x(\wttheta) = 0$.

Let $\nu := \frac{\lambda r}{50 \sqrt{k \dmax}}$.
In this case, there is $H = (0; H_1, \dots, H_k)$ such that $H_a \in \Mat(d_a)$ are symmetric matrices with $\norm{H_a}_{op} \leq \norm{H}_{op} \leq \dop(\wttheta, \htheta) \leq  \nu$ such that $\wttheta =  \htheta^{1/2} e^H \htheta^{1/2}$.
Thus,
\begin{align*}
    \norm{\nabla f_x(\wttheta)}_F^2
    &= |\nabla_0 f_x(\wttheta)|^2 + \sum_{a=1}^k \norm{\nabla_a f_x(\wttheta)}_F^2 = \sum_{a=1}^k \norm{\nabla_a f_x(\wttheta)}_F^2
    \leq \sum_{a=1}^k d_a \cdot \norm{\nabla_a f_x(\wttheta)}_{op}^2 \\
    &= \sum_{a=1}^k d_a \cdot \norm{\nabla_a f_{\htheta^{1/2} x}(e^H)}_{op}^2
    \leq \sum_{a=1}^k d_a \cdot \left(\frac{25}{4} \cdot \norm{H}_{op} \right)^2
\end{align*}
where in the last line above we used \CREFmain{remark:gradient-everywhere}{Remark~2.10} for the first equality and \cref{constantRobustness} for the second inequality.
Since $\norm{H}_{op} \leq \nu$, we have
$$ \norm{\nabla f_x(\wttheta)}_F^2
\leq \sum_{a=1}^k d_a \cdot \left(\frac{25}{4} \cdot \norm{H}_{op} \right)^2
\leq k \dmax \cdot \left(\frac{25}{4} \cdot \dop(\wttheta, \htheta) \right)^2
\leq \left( \frac{\lambda r}{8} \right)^2$$
and thus \cref{lem:gradient-strong-convexity-fm} tells us that $\wttheta$ is contained in a sublevel set of~$f_x$ on which~$f_x$ is $\lambda$-strongly geodesically convex.

Therefore, \cref{lem:tensor-descent-lemma} shows that each subsequent step of the algorithm will decrease the value of the objective function in accordance with the requirements of \cref{lem:descent-sublevel-set}, with parameters $\alpha = \frac1{6(k-1)}$ and $\beta$ as defined above.
Thus, for any $\delta > 0$, within at most
\begin{align*}
&\left\lceil \frac{6(k-1)}{\lambda} \log_2 \frac{\norm{\nabla f_x(\wttheta)}_F^2}{\delta^2} \right\rceil
\leq \left\lceil \frac{12(k-1)}{\lambda} \log_2 \frac{25 \sqrt{k \dmax} \cdot \dop(\wttheta, \htheta)}{4 \delta} \right\rceil
\end{align*}
further iterations we will encounter a point $\otheta=\otheta^{(t)}$ such that $\norm{\nabla f_x(\otheta)}_F \leq \delta$, i.e., such that the algorithm will stop.

\textbf{Case 3:} initial guess $\frac{1}{f_x(I_D)} \cdot I_D$.

We only need to bound our expression for~$T_0$ in \cref{eq:T_0 expr}.
By \cref{lem:flip-flop-sinkhorn}, it is enough to bound $1 + \log f_x(I_D) - f_x^*$, where $f_x^* := f_x(\htheta)$.
On the one hand,
\begin{align*}
  f_x^*
&= f_x(\htheta)
= \tr \htheta \rho_1 - \frac1D \log \det \htheta
= 1 - \frac1D \log \det \left( \Theta^{1/2} \Theta^{-1/2} \htheta \Theta^{-1/2} \Theta^{1/2} \right) \\
&\geq 1 - \frac\zeta2 - \frac1D \log \det \Theta
\geq 1 - \frac\zeta2 - \log \norm{\Theta}_{\op},
\end{align*}
where the third equality follows since $\nabla_0 f_x(\htheta) = \tr \htheta \rho_1 - 1 = 0$ at the MLE; the final inequality holds because $\dop(\htheta, \Theta) \leq \zeta/2$ by our third assumption, hence $\Theta^{-1/2} \htheta \Theta^{-1/2} \preceq e^{\zeta/2} I_D$.
On the other hand,
\begin{align*}
  f_x(I_D)
= \tr \rho_1
= \tr \Theta^{-1} \Theta \rho_1
\leq \norm{\Theta^{-1}}_{\op} \tr \Theta \rho_1
\leq \frac32 \norm{\Theta^{-1}}_{\op},
\end{align*}
using the second assumption, which states that $\abs{\nabla_0 f_x(\Theta)} = \abs{\tr \Theta \rho_1 - 1} \leq \frac12$.
Thus,
\begin{align*}
  \log f_x(I_D) - f_x^*
&\leq \log \frac32 + \log \norm{\Theta^{-1}}_{\op} - 1 + \frac\zeta2 + \log \norm{\Theta}_{\op} \leq
\log \kappa(\Theta),
\end{align*}
using the assumption that $\zeta\leq1$.
Finally, we obtain
\begin{align*}
 T_0
 &\leq \left\lceil 3 + 6 (k+1) \bigl( 1 + \log \kappa(\Theta) \bigr) \frac{64}{r^2 \lambda^2} \right\rceil
 = \left\lceil 3 + 1536 \bigl( 1 + \log \kappa(\Theta) \bigr) \frac{(k+1)^2 \dmax}{\zeta^2 \lambda^2} \right\rceil.
\end{align*}
\end{proof}

\end{appendix}

\else
\begin{supplement}
\stitle{}
\vspace{-10pt}
\sdescription{The full proofs of the claims mentioned above and a full discussion and comparison between this work and previous works can be found in~\cite{FORW25supp}.}
\end{supplement}
\fi

\bibliographystyle{imsart-number}
\bibliography{bibliography}

\begin{thebibliography}{38}

\bibitem{AmariInfoGeo}
\begin{bbook}[author]
\bauthor{\bsnm{Amari},~\bfnm{Shun'ichi}\binits{S.}}
(\byear{2016}).
\btitle{Information Geometry and its Applications}
\bvolume{194}.
\bpublisher{Springer}.
\end{bbook}
\endbibitem

\bibitem{arbas2023polynomial}
\begin{binproceedings}[author]
\bauthor{\bsnm{Arbas},~\bfnm{Jamil}\binits{J.}},
  \bauthor{\bsnm{Ashtiani},~\bfnm{Hassan}\binits{H.}} \AND
  \bauthor{\bsnm{Liaw},~\bfnm{Christopher}\binits{C.}}
(\byear{2023}).
\btitle{Polynomial time and private learning of unbounded gaussian mixture
  models}.
In \bbooktitle{International Conference on Machine Learning}
\bpages{1018--1040}.
\bpublisher{PMLR}.
\end{binproceedings}
\endbibitem

\bibitem{bacak2014convex}
\begin{bbook}[author]
\bauthor{\bsnm{Bac{\'a}k},~\bfnm{Miroslav}\binits{M.}}
(\byear{2014}).
\btitle{{Convex analysis and optimization in Hadamard spaces}}
\bvolume{22}.
\bpublisher{Walter de Gruyter}.
\end{bbook}
\endbibitem

\bibitem{bhatia2009positive}
\begin{bbook}[author]
\bauthor{\bsnm{Bhatia},~\bfnm{Rajendra}\binits{R.}}
(\byear{2009}).
\btitle{Positive definite matrices}
\bvolume{24}.
\bpublisher{Princeton University Press}.
\end{bbook}
\endbibitem

\bibitem{BL08}
\begin{barticle}[author]
\bauthor{\bsnm{Bickel},~\bfnm{P.}\binits{P.}} \AND
  \bauthor{\bsnm{Levina},~\bfnm{E.}\binits{E.}}
(\byear{2008}).
\btitle{Regularized estimation of large covariance matrices}.
\bjournal{Annals of Statistics}
\bvolume{36}.
\end{barticle}
\endbibitem

\bibitem{brown2001bayesian}
\begin{barticle}[author]
\bauthor{\bsnm{Brown},~\bfnm{Philip~J}\binits{P.~J.}},
  \bauthor{\bsnm{Kenward},~\bfnm{Michael~G}\binits{M.~G.}} \AND
  \bauthor{\bsnm{Bassett},~\bfnm{Eryl~E}\binits{E.~E.}}
(\byear{2001}).
\btitle{Bayesian discrimination with longitudinal data}.
\bjournal{Biostatistics}
\bvolume{2}
\bpages{417--432}.
\end{barticle}
\endbibitem

\bibitem{burgisser2018efficient}
\begin{binproceedings}[author]
\bauthor{\bsnm{B{\"u}rgisser},~\bfnm{Peter}\binits{P.}},
  \bauthor{\bsnm{Franks},~\bfnm{Cole}\binits{C.}},
  \bauthor{\bsnm{Garg},~\bfnm{Ankit}\binits{A.}},
  \bauthor{\bsnm{Oliveira},~\bfnm{Rafael}\binits{R.}},
  \bauthor{\bsnm{Walter},~\bfnm{Michael}\binits{M.}} \AND
  \bauthor{\bsnm{Wigderson},~\bfnm{Avi}\binits{A.}}
(\byear{2018}).
\btitle{Efficient algorithms for tensor scaling, quantum marginals, and moment
  polytopes}.
In \bbooktitle{2018 IEEE 59th Annual Symposium on Foundations of Computer
  Science (FOCS)}
\bpages{883--897}.
\bpublisher{IEEE}.
\end{binproceedings}
\endbibitem

\bibitem{burgisser2019towards}
\begin{binproceedings}[author]
\bauthor{\bsnm{B{\"u}rgisser},~\bfnm{Peter}\binits{P.}},
  \bauthor{\bsnm{Franks},~\bfnm{Cole}\binits{C.}},
  \bauthor{\bsnm{Garg},~\bfnm{Ankit}\binits{A.}},
  \bauthor{\bsnm{Oliveira},~\bfnm{Rafael}\binits{R.}},
  \bauthor{\bsnm{Walter},~\bfnm{Michael}\binits{M.}} \AND
  \bauthor{\bsnm{Wigderson},~\bfnm{Avi}\binits{A.}}
(\byear{2019}).
\btitle{Towards a theory of non-commutative optimization: geodesic 1st and 2nd
  order methods for moment maps and polytopes}.
In \bbooktitle{2019 IEEE 60th Annual Symposium on Foundations of Computer
  Science (FOCS)}
\bpages{845--861}.
\bpublisher{IEEE}.
\end{binproceedings}
\endbibitem

\bibitem{burgisser2017alternating}
\begin{binproceedings}[author]
\bauthor{\bsnm{B{\"u}rgisser},~\bfnm{Peter}\binits{P.}},
  \bauthor{\bsnm{Garg},~\bfnm{Ankit}\binits{A.}},
  \bauthor{\bsnm{Oliveira},~\bfnm{Rafael}\binits{R.}},
  \bauthor{\bsnm{Walter},~\bfnm{Michael}\binits{M.}} \AND
  \bauthor{\bsnm{Wigderson},~\bfnm{Avi}\binits{A.}}
(\byear{2018}).
\btitle{{Alternating Minimization, Scaling Algorithms, and the Null-Cone
  Problem from Invariant Theory}}.
In \bbooktitle{9th Innovations in Theoretical Computer Science Conference (ITCS
  2018)}
(\beditor{\bfnm{Anna~R.}\binits{A.~R.}~\bsnm{Karlin}}, ed.).
\bseries{Leibniz International Proceedings in Informatics (LIPIcs)}
\bvolume{94}
\bpages{24:1--24:20}.
\bpublisher{Schloss Dagstuhl--Leibniz-Zentrum fuer Informatik},
  \baddress{Dagstuhl, Germany}.
\end{binproceedings}
\endbibitem

\bibitem{CLIME}
\begin{barticle}[author]
\bauthor{\bsnm{Cai},~\bfnm{Tony}\binits{T.}},
  \bauthor{\bsnm{Liu},~\bfnm{Weidong}\binits{W.}} \AND
  \bauthor{\bsnm{Luo},~\bfnm{Xi}\binits{X.}}
(\byear{2011}).
\btitle{A Constrained $\ell_{1}$ Minimization Approach to Sparse Precision
  Matrix Estimation}.
\bjournal{Journal of the American Statistical Association}
\bvolume{106}.
\end{barticle}
\endbibitem

\bibitem{Cai2016}
\begin{barticle}[author]
\bauthor{\bsnm{Cai},~\bfnm{Tony}\binits{T.}},
  \bauthor{\bsnm{Liu},~\bfnm{Weidong}\binits{W.}} \AND
  \bauthor{\bsnm{Zhou},~\bfnm{Harrison}\binits{H.}}
(\byear{2016}).
\btitle{Estimating Sparse Precision Matrix: Optimal rates of convergence and
  adaptive estimation}.
\bjournal{The Annals of Statistics}
\bvolume{44}.
\end{barticle}
\endbibitem

\bibitem{derksen2020matrix}
\begin{barticle}[author]
\bauthor{\bsnm{Derksen},~\bfnm{Harm}\binits{H.}} \AND
  \bauthor{\bsnm{Makam},~\bfnm{Visu}\binits{V.}}
(\byear{2021}).
\btitle{Maximum likelihood estimation for matrix normal models via quiver
  representations}.
\bjournal{SIAM Journal on Applied Algebra and Geometry}
\bvolume{5}
\bpages{338--365}.
\end{barticle}
\endbibitem

\bibitem{derksen2022maximum}
\begin{binproceedings}[author]
\bauthor{\bsnm{Derksen},~\bfnm{Harm}\binits{H.}},
  \bauthor{\bsnm{Makam},~\bfnm{Visu}\binits{V.}} \AND
  \bauthor{\bsnm{Walter},~\bfnm{Michael}\binits{M.}}
(\byear{2022}).
\btitle{Maximum likelihood estimation for tensor normal models via castling
  transforms}.
In \bbooktitle{Forum of Mathematics, Sigma}
\bvolume{10}
\bpages{e50}.
\bpublisher{Cambridge University Press}.
\end{binproceedings}
\endbibitem

\bibitem{dutilleul1999mle}
\begin{barticle}[author]
\bauthor{\bsnm{Dutilleul},~\bfnm{Pierre}\binits{P.}}
(\byear{1999}).
\btitle{The {MLE} algorithm for the matrix normal distribution}.
\bjournal{Journal of Statistical Computation and Simulation}
\bvolume{64}
\bpages{105--123}.
\end{barticle}
\endbibitem

\bibitem{FM20}
\begin{binproceedings}[author]
\bauthor{\bsnm{Franks},~\bfnm{William~Cole}\binits{W.~C.}} \AND
  \bauthor{\bsnm{Moitra},~\bfnm{Ankur}\binits{A.}}
(\byear{2020}).
\btitle{Rigorous guarantees for Tyler's M-estimator via quantum expansion}.
In \bbooktitle{Conference on Learning Theory}
\bpages{1601--1632}.
\bpublisher{PMLR}.
\end{binproceedings}
\endbibitem

\bibitem{GGOW19}
\begin{barticle}[author]
\bauthor{\bsnm{Garg},~\bfnm{Ankit}\binits{A.}},
  \bauthor{\bsnm{Gurvits},~\bfnm{Leonid}\binits{L.}},
  \bauthor{\bsnm{Oliveira},~\bfnm{Rafael}\binits{R.}} \AND
  \bauthor{\bsnm{Wigderson},~\bfnm{Avi}\binits{A.}}
(\byear{2019}).
\btitle{Operator scaling: theory and applications}.
\bjournal{Foundations of Computational Mathematics}
\bpages{1--68}.
\end{barticle}
\endbibitem

\bibitem{H07}
\begin{barticle}[author]
\bauthor{\bsnm{Hastings},~\bfnm{MB}\binits{M.}}
(\byear{2007}).
\btitle{Random unitaries give quantum expanders}.
\bjournal{Physical Review A}
\bvolume{76}
\bpages{032315}.
\end{barticle}
\endbibitem

\bibitem{hayden2006aspects}
\begin{barticle}[author]
\bauthor{\bsnm{Hayden},~\bfnm{Patrick}\binits{P.}},
  \bauthor{\bsnm{Leung},~\bfnm{Debbie~W}\binits{D.~W.}} \AND
  \bauthor{\bsnm{Winter},~\bfnm{Andreas}\binits{A.}}
(\byear{2006}).
\btitle{Aspects of generic entanglement}.
\bjournal{Communications in mathematical physics}
\bvolume{265}
\bpages{95--117}.
\end{barticle}
\endbibitem

\bibitem{KLR19}
\begin{binproceedings}[author]
\bauthor{\bsnm{Kwok},~\bfnm{Tsz~Chiu}\binits{T.~C.}},
  \bauthor{\bsnm{Lau},~\bfnm{Lap~Chi}\binits{L.~C.}} \AND
  \bauthor{\bsnm{Ramachandran},~\bfnm{Akshay}\binits{A.}}
(\byear{2019}).
\btitle{Spectral Analysis of Matrix Scaling and Operator Scaling}.
In \bbooktitle{2019 IEEE 60th Annual Symposium on Foundations of Computer
  Science (FOCS)}
\bpages{1184--1204}.
\bpublisher{IEEE}.
\end{binproceedings}
\endbibitem

\bibitem{Lyu2020Tlasso}
\begin{barticle}[author]
\bauthor{\bsnm{Lyu},~\bfnm{Xiang}\binits{X.}},
  \bauthor{\bsnm{Wei~Sun},~\bfnm{Will}\binits{W.}},
  \bauthor{\bsnm{Wang},~\bfnm{Zhaoran}\binits{Z.}},
  \bauthor{\bsnm{Liu},~\bfnm{Han}\binits{H.}},
  \bauthor{\bsnm{Yang},~\bfnm{Jian}\binits{J.}} \AND
  \bauthor{\bsnm{Chengk},~\bfnm{Guang}\binits{G.}}
(\byear{2020}).
\btitle{Tensor graphical model: Non-convex optimization and statistical
  inference}.
\bjournal{Transactions on Pattern Analysis and Machine Intelligence}
\bvolume{8}
\bpages{2024-2037}.
\end{barticle}
\endbibitem

\bibitem{manceur2013maximum}
\begin{barticle}[author]
\bauthor{\bsnm{Manceur},~\bfnm{Ameur~M}\binits{A.~M.}} \AND
  \bauthor{\bsnm{Dutilleul},~\bfnm{Pierre}\binits{P.}}
(\byear{2013}).
\btitle{Maximum likelihood estimation for the tensor normal distribution:
  {Algorithm}, minimum sample size, and empirical bias and dispersion}.
\bjournal{Journal of Computational and Applied Mathematics}
\bvolume{239}
\bpages{37--49}.
\end{barticle}
\endbibitem

\bibitem{mardia1993spatial}
\begin{barticle}[author]
\bauthor{\bsnm{Mardia},~\bfnm{Kanti~V}\binits{K.~V.}} \AND
  \bauthor{\bsnm{Goodall},~\bfnm{Colin~R}\binits{C.~R.}}
(\byear{1993}).
\btitle{Spatial-temporal analysis of multivariate environmental monitoring
  data}.
\bjournal{Multivariate environmental statistics}
\bvolume{6}
\bpages{347--385}.
\end{barticle}
\endbibitem

\bibitem{mitchell2004learning}
\begin{barticle}[author]
\bauthor{\bsnm{Mitchell},~\bfnm{Tom~M}\binits{T.~M.}},
  \bauthor{\bsnm{Hutchinson},~\bfnm{Rebecca}\binits{R.}},
  \bauthor{\bsnm{Niculescu},~\bfnm{Radu~S}\binits{R.~S.}},
  \bauthor{\bsnm{Pereira},~\bfnm{Francisco}\binits{F.}},
  \bauthor{\bsnm{Wang},~\bfnm{Xuerui}\binits{X.}},
  \bauthor{\bsnm{Just},~\bfnm{Marcel}\binits{M.}} \AND
  \bauthor{\bsnm{Newman},~\bfnm{Sharlene}\binits{S.}}
(\byear{2004}).
\btitle{Learning to decode cognitive states from brain images}.
\bjournal{Machine learning}
\bvolume{57}
\bpages{145--175}.
\end{barticle}
\endbibitem

\bibitem{P86}
\begin{barticle}[author]
\bauthor{\bsnm{Pisier},~\bfnm{Gilles}\binits{G.}}
(\byear{1986}).
\btitle{Probabilistic methods in the geometry of Banach spaces}.
\bjournal{Letta G., Pratelli M. (eds) Probability and Analysis}
\bvolume{1206}.
\end{barticle}
\endbibitem

\bibitem{pisier2012grothendieck}
\begin{barticle}[author]
\bauthor{\bsnm{Pisier},~\bfnm{Gilles}\binits{G.}}
(\byear{2012}).
\btitle{Grothendieck's theorem, past and present}.
\bjournal{Bulletin of the American Mathematical Society}
\bvolume{49}
\bpages{237--323}.
\end{barticle}
\endbibitem

\bibitem{P14}
\begin{barticle}[author]
\bauthor{\bsnm{Pisier},~\bfnm{Gilles}\binits{G.}}
(\byear{2014}).
\btitle{Quantum expanders and geometry of operator spaces}.
\bjournal{Journal of the European Mathematical Society}
\bvolume{16}
\bpages{1183--1219}.
\end{barticle}
\endbibitem

\bibitem{RBLZ08}
\begin{barticle}[author]
\bauthor{\bsnm{Rothman},~\bfnm{A.}\binits{A.}},
  \bauthor{\bsnm{Bickel},~\bfnm{P.}\binits{P.}},
  \bauthor{\bsnm{Levina},~\bfnm{E.}\binits{E.}} \AND
  \bauthor{\bsnm{Zhu},~\bfnm{J.}\binits{J.}}
(\byear{2008}).
\btitle{Sparse permutation invariant covariance estimation}.
\bjournal{Electronic Journal of Statistics}
\bvolume{2}.
\end{barticle}
\endbibitem

\bibitem{skovgaard1984riemannian}
\begin{barticle}[author]
\bauthor{\bsnm{Skovgaard},~\bfnm{Lene~Theil}\binits{L.~T.}}
(\byear{1984}).
\btitle{A Riemannian geometry of the multivariate normal model}.
\bjournal{Scandinavian journal of statistics}
\bpages{211--223}.
\end{barticle}
\endbibitem

\bibitem{Snyder16}
\begin{barticle}[author]
\bauthor{\bsnm{Snyder},~\bfnm{David~A.}\binits{D.~A.}}
(\byear{2016}).
\btitle{On the relation of Schatten norms and the Thompson metric}.
\bjournal{arxiv}.
\end{barticle}
\endbibitem

\bibitem{tsiligkaridis2013convergence}
\begin{barticle}[author]
\bauthor{\bsnm{Tsiligkaridis},~\bfnm{Theodoros}\binits{T.}},
  \bauthor{\bsnm{Hero},~\bfnm{Alfred O~III}\binits{A.~O.~I.}} \AND
  \bauthor{\bsnm{Zhou},~\bfnm{Shuheng}\binits{S.}}
(\byear{2013}).
\btitle{On convergence of {Kronecker} graphical lasso algorithms}.
\bjournal{IEEE Transactions on Signal Processing}
\bvolume{61}
\bpages{1743--1755}.
\end{barticle}
\endbibitem

\bibitem{cencov1978algebraic}
\begin{barticle}[author]
\bauthor{\bsnm{{\v{C}}encov},~\bfnm{NN}\binits{N.}}
(\byear{1978}).
\btitle{Algebraic foundation of mathematical statistics}.
\bjournal{Statistics: A Journal of Theoretical and Applied Statistics}
\bvolume{9}
\bpages{267--276}.
\end{barticle}
\endbibitem

\bibitem{vershynin2010introduction}
\begin{binproceedings}[author]
\bauthor{\bsnm{Vershynin},~\bfnm{Roman}\binits{R.}}
(\byear{2012}).
\btitle{Introduction to the non-asymptotic analysis of random matrices}.
In \bbooktitle{Compressed Sensing, Theory and Applications}
(\beditor{\bfnm{Y.}\binits{Y.}~\bsnm{Eldar}} \AND
  \beditor{\bfnm{G.}\binits{G.}~\bsnm{Kutyniok}}, eds.).
\bpublisher{Cambridge University Press}.
\end{binproceedings}
\endbibitem

\bibitem{W19}
\begin{bbook}[author]
\bauthor{\bsnm{Wainwright},~\bfnm{Martin~J}\binits{M.~J.}}
(\byear{2019}).
\btitle{High-dimensional statistics: A non-asymptotic viewpoint}.
\bpublisher{Cambridge University Press}.
\end{bbook}
\endbibitem

\bibitem{werner2008estimation}
\begin{barticle}[author]
\bauthor{\bsnm{Werner},~\bfnm{Karl}\binits{K.}},
  \bauthor{\bsnm{Jansson},~\bfnm{Magnus}\binits{M.}} \AND
  \bauthor{\bsnm{Stoica},~\bfnm{Petre}\binits{P.}}
(\byear{2008}).
\btitle{On estimation of covariance matrices with {Kronecker} product
  structure}.
\bjournal{IEEE Transactions on Signal Processing}
\bvolume{56}
\bpages{478--491}.
\end{barticle}
\endbibitem

\bibitem{wiesel2012geodesic}
\begin{barticle}[author]
\bauthor{\bsnm{Wiesel},~\bfnm{Ami}\binits{A.}}
(\byear{2012}).
\btitle{Geodesic convexity and covariance estimation}.
\bjournal{IEEE Transactions on Signal Processing}
\bvolume{60}
\bpages{6182--6189}.
\end{barticle}
\endbibitem

\bibitem{XZG17}
\begin{binproceedings}[author]
\bauthor{\bsnm{Xu},~\bfnm{Pan}\binits{P.}},
  \bauthor{\bsnm{Zhang},~\bfnm{Tingting}\binits{T.}} \AND
  \bauthor{\bsnm{Gu},~\bfnm{Quanquan}\binits{Q.}}
(\byear{2017}).
\btitle{Efficient algorithm for sparse tensor-variate gaussian graphical models
  via gradient descent}.
In \bbooktitle{Artificial Intelligence and Statistics}
\bpages{923--932}.
\bpublisher{PMLR}.
\end{binproceedings}
\endbibitem

\bibitem{yang1999information}
\begin{barticle}[author]
\bauthor{\bsnm{Yang},~\bfnm{Yuhong}\binits{Y.}} \AND
  \bauthor{\bsnm{Barron},~\bfnm{Andrew}\binits{A.}}
(\byear{1999}).
\btitle{Information-theoretic determination of minimax rates of convergence}.
\bjournal{Annals of Statistics}
\bpages{1564--1599}.
\end{barticle}
\endbibitem

\bibitem{zhou2014gemini}
\begin{barticle}[author]
\bauthor{\bsnm{Zhou},~\bfnm{Shuheng}\binits{S.}}
(\byear{2014}).
\btitle{Gemini: {G}raph estimation with matrix variate normal instances}.
\bjournal{The Annals of Statistics}
\bvolume{42}
\bpages{532--562}.
\end{barticle}
\endbibitem

\end{thebibliography}

\end{document}